\numberwithin{equation}{section}
\renewcommand\subsubsection{\@secnumfont}{\bfseries}%
\renewcommand\subsubsection{\@startsection{subsubsection}{3}
  \z@{.5\linespacing\@plus.7\linespacing}{-.5em}%
  {\normalfont\bfseries}}  
\theoremstyle{plain}
\newtheorem{thm}{Theorem}[section]
\newtheorem{lem}[thm]{Lemma}
\newtheorem{prop}[thm]{Proposition}
\newtheorem{cor}[thm]{Corollary}
\newcommand{\thmref}[1]{Theorem~\ref{#1}}
\newcommand{\lemref}[1]{Lemma~\ref{#1}}
\newcommand{\propref}[1]{Proposition~\ref{#1}}
\theoremstyle{definition}
\newtheorem{rmk}[thm]{Remark}
\newtheorem{conjecture}[thm]{Conjecture}
\newtheorem{ques}[thm]{Question}
\newcommand{\rmkref}[1]{Remark~\ref{#1}}
\newcommand{\psmb}{\left( \begin{smallmatrix}}
\newcommand{\psme}{ \end{smallmatrix} \right)}
\newcommand{\smat}[4]{\left( \begin{smallmatrix} #1 & #2 \\ #3 & #4 \\ \end{smallmatrix} \right)}
\renewcommand*{\mod}{\operatorname{mod}}
\newcommand{\z}{\mathbb{Z}}
\newcommand{\h}{\mbb{H}}
\newcommand{\lan}{\langle}
\newcommand{\ran}{\rangle}
\newcommand{\q}{\quad}
\newcommand{\prodd}{\prod \nolimits}
\newcommand{\summ}{\sum \nolimits}
\newcommand*{\Q}{\mathbb{Q}}
\newcommand*{\complex}{\mathbb{C}}
\newcommand{\mbb}{\mathbb}
\newcommand{\mc}{\mathcal}
\newcommand{\mrm}{\mathrm}
\newcommand{\n}{\nonumber}
\newcommand{\mf}{\mathbf}
\newcommand{\sumn}{\sum \nolimits}
\newcommand*{\GL}[2]{\operatorname{GL}_{#1}(#2)}
\newcommand*{\SL}[2]{\operatorname{SL}_{#1}(#2)}
\newcommand{\sltwo}{\mrm{SL}_2(\mbb Z)}
\newcommand{\sptwo}{\mrm{Sp}_2( \z)}
\newcommand{\skk}{\mrm{SK}_{k+1}}
\newcommand{\skkn}{\mrm{SK}_{k+1}(N)}
\newcommand{\jk}{J_{k+1,1}^{cusp}}
\newcommand{\jkn}{J^{cusp}_{k+1,1}(N)}
\newcommand{\seln}{S_{k+1}(N)}
\newcommand*{\QEDB}{\hfill\ensuremath{\square}}
\newcommand*{\norm}[1]{\left\lVert#1\right\rVert}
\keywords{Pullbacks, Saito-Kurokawa lits, square-free level, $L^2$ mass, Central $L$-values}
\author{Pramath Anamby }
\address{School of Arts and Sciences\\ 
Ahmedabad University\\ 
Ahmedabad -- 380052, India.}
\email{pramath.anamby@gmail.com, pramath.anamby@ahduni.edu.in}
\author{Soumya Das}
\address{Department of Mathematics\\ 
Indian Institute of Science\\ 
Bengaluru -- 560012, India.}
\email{soumya@iisc.ac.in}
\date{}
\subjclass[2020]{Primary 11F46, 11F50, 11F67, Secondary  11F37} 
\title[Pullbacks of Saito-Kurokawa lifts ]{Pullbacks of Saito-Kurokawa lifts of square-free levels, their non-vanishing and the $L^2$-mass}
\begin{document}

\begin{abstract}
     We obtain the full spectral decomposition of the pullback of a Saito-Kurokawa (SK) newform $F$ of odd, square-free level; and show that the projections onto the elements $\mf g \otimes \mf g$ of an  arithmetically orthogonalized old-basis are either zero or whose squares are given by the certain $\mrm{GL}(3)\times \mrm{GL}(2)$ central $L$-values $L(f\otimes \mrm{sym}^2 g, \frac{1}{2})$, where $F$ is the lift of the $\mrm{GL}(2)$ newform $f$ and $g$ is the newform underlying $\mf g$.
     Based on this, we work out a conjectural formula for the $L^2$-mass of the pullback of $F$ via the CFKRS heuristics, which becomes a weighted average (over $g$) of the central $L$-values. We show that on average over $f$, the main term predicted by the above heuristics matches with the actual main term. We also provide several results and sufficient conditions that ensure the non-vanishing of the pullbacks.
\end{abstract}

\maketitle

\tableofcontents

\section{Introduction}

The theory of pullback formulas of automorphic forms, first shown by Garrett for the triple product $L$-functions, now a well-developed subject, has found various applications and continues to fascinate mathematicians. Let $F_1$ and $F_2$ be automorphic forms on some classical groups $G_1$ and $G_2$, respectively, such that $G_2 \subset G_1$. The aim is to understand the period integrals $\lan  F_1 |_{G_2}, F_2 \ran$ as $F_2$ varies over cusp forms on $G_2$. This clearly helps us to understand the spectral decomposition of the restriction $F_1 |_{G_2}$.

Central to these formulas is the philosophy of the GGP (Gan-Gross-Prasad) conjectures (\cite{ggp1, ggp2}), originally stated for tempered automorphic representation on orthogonal groups. These, when generalized to other settings, including non-tempered representations (\cite{gg}, \cite{qiu}), predict when the period integrals mentioned above do not vanish. One of the important criterion being the non-vanishing of an associated Rankin-Selberg $L$-function, which we will describe in our particular case soon.

This paper is concerned with the `pullbacks' of Saito-Kurokawa lifts (SK lifts in short), which are recalled below.
SK lifts of level $N$, weight $k+1$ ($k$ odd) form a special subspace -- which we denote by $\skkn$ -- of Siegel modular forms of degree $2$, weight $k+1$ and (Hecke) congruence level $N$. These can be characterized in a variety of ways (see \cite{maass1}, \cite{EZ}, \cite{PS}, \cite{schmidt-SKlift}, \cite{das-anamby2}, \cite{saha-pitale} for an overview). For example, it is useful to know that the Fourier coefficients of (most of the) $F \in \skkn$ can be obtained  from those of a (uniquely determined, up to constants) half-integral weight modular form $h$ in Kohnen's plus space of level $4N$ by the rule
\begin{align}
    a_F(T)= \sum_{d|c(T), (d,N)=1}d^{k}c_h(D/d^2), \label{sk-1}
\end{align}
where $c(T)= \mrm{gcd}(n,r,m)$, and $T=\psmb n & r/2 \\ r/2 & m \psme$.

SK lifts of higher levels can be defined as the span of those away-from-level Siegel modular Hecke eigenforms forms of degree $2$, level $N$, and weight $k+1$ (denoted above by $\skkn$) whose spinor $L$ function factorizes as $\displaystyle L^{(N)}(F,s)= \zeta^{(N)}(s+1/2)\zeta^{(N)}(s-1/2)L^{(N)}(f,s)$, where the superscript $L^{(N)}$ denotes away from $N$ factors. Here $f \in S_{2k}(N)$ corresponds uniquely to $h$ (upto constants) via the Kohnen-Shimura-Shintani isomorphisms (preserving the corresponding newforms), see \cite{kohnen1982newforms, Kr}.
Such objects were studied in \cite{schmidt-AL} via representation theory where they arise as CAP representations of $\mrm{GSp}(4)$, and in \cite{das-anamby2} classically -- especially the theory of new and oldforms. The classical (Hecke equivariant) lifting construction from Jacobi forms of higher levels was first given by Ibukiyama \cite{Ibu-SK} along the lines of Eichler-Zagier \cite{EZ}.

More precisely, these were called EZI (Eichler-Zagier-Ibukiyama) lifts in \cite{das-anamby2} -- as they, in general, form a subspace of the space of SK lifts of level $N$. It was shown in \cite{das-anamby2} that EZI and SK lifts coincide on the newspace in $\skkn$.
Thus here there is no ambiguity in calling them by either of the terminologies. As mentioned before, these can be described nicely via an explicit Hecke-equivariant lifting map from $S_{2k}(N)$, see \cite{Ibu-SK} or Section~\ref{prelim}, and satisfy the so-called `Maa\ss' relations as described in \eqref{sk-1}.

In 2005, Ichino \cite{Ich05} proved a remarkable pullback formula for SK lifts of full level, relating certain period integrals attached to the pullback to certain central $L$-values attached to elliptic cusp forms. This confirmed the expectation from generalized GGP conjecture for the SK lifts. The level aspect of this formula, which is naturally more involved (cf. \cite[p.~115]{garrett}), was proved by Chen \cite{chen}, Pal--Vera-Piquero \cite{PV} at around the same time. Whereas \cite{chen} uses the method of Ichino via see-saw identities associated to the theta-liftings between orthogonal and symplectic groups, \cite{PV} is based on the work of Qiu \cite{qiu} via the factorization of certain $\mrm{SL}(2)$-periods associated to the pairs $(\tilde{\pi}, \tau)$ -- where $\pi, \tau$ denote automorphic representations associated to holomorphic newforms $f,g$ of square-free levels, and $\tilde{\pi}$ denotes the Shimura-Shintani lift of $\pi$ to the metaplectic double cover of $\mrm{SL}(2)$.
We will briefly describe these results below in the current context in order to put things into perspective with the results of this paper.

Let $F \in \skkn$ be a arbitrary. Consider its pullback $F^\circ \in \seln \otimes \seln$ to the diagonal, i.e., $F^\circ:=F \mid_{\h \times \h}$. One can thus write,
\begin{equation} \label{pullback-spectral}
F^\circ(\tau, \tau')=\sum_{\mf g\in \mc{B}_{k+1}(N)} \left\lan F^\circ, \frac{\mf g}{\norm{\mf g}}\otimes\frac{\mf g}{\norm{\mf g}}\right\ran \frac{\mf g(\tau)}{\norm{\mf g}}\frac{\mf g(\tau')}{\norm{\mf g}} \q \q (\tau, \tau' \in \h),
\end{equation}
where $\mc{B}_{k+1}(N)$ denotes an orthogonal basis for $S_{k+1}(N)$ -- which we will specify soon. The relevant periods mentioned in the beginning are thus $\displaystyle \left\lan F^\circ, \frac{\mf g}{\norm{\mf g}}\otimes\frac{\mf g}{\norm{\mf g}}\right\ran$. 

Now let $F$ be the SK lift of a newform $f\in S_{2k}^{new}(N)$. Sometimes we denote $F$ as $F_f$ to make this dependence explicit. Notice that both new and oldforms may appear on the RHS of \eqref{pullback-spectral}.
Let $g\in S_{k+1}^{new}(N_g)$ be a newform with $N_g|N$ and write $N=N_gM_g$. If the Atkin-Lehner eigenvalue $w_f(p)$ of $f$ is $+1$ for all primes $p|M_g$, then one has the following central value formula.
\begin{equation}\label{old-intro}
    \Lambda(f\otimes \mrm{sym}^2 g, \frac{1}{2})= \frac{2^{k+1-\omega(M_g)}M_g^{7/4}}{N} \prod_{p|N_g}(p+1)^2\frac{\langle f,f \rangle}{\langle h,h\rangle} 
 \frac{|\langle F_{f}^\circ, g \otimes g|B_{M_g}\rangle|^2}{\langle g,g\rangle ^2}.
\end{equation}
When $N=N_g=1$, this is the original result due to Ichino \cite{Ich05}.
When $M_g=1$, i.e. when $g$ is new, the above formula is the main result of \cite{chen} and \cite{PV} (where non-trivial nebentypus was considered). When $M_g>1$, i.e. when $g$ is old, \eqref{old-intro} is the main result from \cite{PVold} (after normalizing the $L$-function to have central point at $s=1/2$). 
Note that the above pullback formula differs from e.g. \cite{PV} by a power of $M_g$. This accounts for different normalizations of the  $L$--functions in question. In our paper, the functional equation of the completed $L$-function relates $s \to 1-s$. See subsection~\ref{Lfn-defs}.

One of the main questions here is to provide sufficient conditions for the (non) vanishing of $F^\circ$, or equivalently of $\lan F^\circ , F^\circ\ran $, which when properly normalized (see section \ref{L2massDef} for definition), is referred to as the
 $L^2$-mass of pullback of $F$. Thus one is inevitably led to consider its full spectral decomposition (see \eqref{pullback-spectral}) -- this means that one has to understand all the terms $\lan F_{f}^\circ, g' \otimes g''\ran$, where $g'  \otimes g''$ runs over all the old or newforms in $S_{k+1}(N) \otimes S_{k+1}(N)$. 

Via multiplicity-one for $\mrm{GL}(2)$, one can show that the periods can only survive when both $g',g''$ come from the same underlying newform, see Lemma~\ref{diag-period}. This leads to the question:

\begin{ques} \label{qs1}
    Are all the terms $\lan F_{f}^\circ, g' \times g'\ran$, with a suitable arithmetically orthogonalized choice of the set of oldforms $\{g'\}$, either vanish, or can be described by a formula akin to \eqref{old-intro}?
\end{ques}
One can easily check that by simply trying to exploit the invariance of the pullback under simple Hecke operators like $\mc W(p)$ etc. in the inner product does not lead to any inspiring result. It is also not clear a priori whether the RHS of \eqref{old-intro} vanishes for some old-class of $g$. As far as we could gather, an answer to Question~\eqref{qs1} was not even expected in the affirmative. Nor does there seem to exist an obvious modification of the proofs in, say, \cite{PV} to answer the Question~\ref{qs1}.\footnote{A. Pal (2019) and C. de Vera-Piquero (2024), personal communication.}

Thus, a deeper analysis is necessary -- and one of the main aims of this paper is to answer this question completely.

\section{Outline of the strategy and the main results}

\subsection{Choice of suitable coset representatives}
One of the most crucial ingredients behind the main result of this paper is the choice of a suitable set of coset representatives $\Gamma_0^{(2)}(N)\backslash \Gamma_0^{(2)}(M)$, where $M|N$, in such a way that it behaves reasonably well with respect to the pullback operation. Other sets of such coset representatives can be found in the work of Waibel \cite{waibel2020arithmetic} -- however, these does not seem to behave well with the pullbacks. Klein's work \cite{klein2004} is quite helpful (but still not exactly as we want) for us, and we are inspired by this approach. We provide a self-contained and very explicit description of these representatives, which we believe would be useful elsewhere as well, see subsection~\ref{coset-section}.

\subsection{Pullback formulae} 
In a nutshell, the idea to obtain the pullback formulae is to use the Fourier-Jacobi expansion of $F$, to understand its interaction with the pullback operation, and to unfold the inner product on the RHS of \eqref{old-intro} sufficiently in terms of the Fourier-Jacobi coefficient $\phi=\phi_{1,F}$ of $F$. Even though for level $1$ this is straightforward, we believe it is a different ball game altogether for higher levels.

Before proceeding further, we recall the Hecke-equivariant correspondences (for odd, square-free $N$)
\begin{align} \label{ezi-corr}
    f \longleftrightarrow h \longleftrightarrow \phi \longleftrightarrow F_f,
\end{align}
taking newforms to newforms, which will be used throughout the paper. See, for instance \cite[Figure~1]{das-anamby2}.

\subsubsection{Step~(i)}
Let $F$ be the SK lift of $f\in S_{2k}^{new}(N_f)$. We simply write $N$ for $N_f$. For any divisor $N_g|N$, let $g\in S_{k+1}^{new}(N_g)$ and $M_g=N/N_g$. 
Even though ultimately our aim is to obtain the analogue of \eqref{old-intro} for the orthogonal basis elements $g_\sigma \otimes g_\sigma$ for all $g$ and $\sigma$ (which are defined in subsection~\ref{deg1-old-basis}), it is enough to first obtain the same for the simpler elements $g \otimes g|B_M$ for all $M|M_g$. 

To this end, for any such $M$ as above, we first show that 
\begin{align} \label{bridge1}
  \lan F^\circ, g \otimes g|W_N(M)\ran  = \sum_{d_g|N_g^\infty}\sum_{\ell_g|M_g^\infty} (d_g\ell_g)^{-(k+1)/2} \lan \phi^\circ, g | W_N U(d_g\ell_g) W_N\ran \cdot \lan g_N|B_{d_g\ell_g}, g|B_{M} \ran.
\end{align} 
Here $\phi^\circ(\tau)=\phi(\tau,0)$ is the pullback of $\phi$ to $\h$, and is not necessarily an eigenform, $W_N(M)$ is the Atkin-Lehner operator at $M$ in level $N$, see \eqref{AL-ops} and $B_M$ is the level raising operator defined in \eqref{BdOpr}. The relation \eqref{bridge1} is obtained by considering the equivariance of suitable Hecke operators with the pullback operation, see subsection~\ref{cv-old-sec}.

\subsubsection{Step~(ii)}
Next, in subsection \ref{th:IPequiv-proof} we will show that
\begin{equation} \label{main-bridge-demo}
\lan F^\circ, g\otimes g|B_{M}\ran =  \mc L_1(M) \cdot \mc L_2(M_g/M) \cdot \lan g,g\ran \lan \phi^\circ, g|W_N(M_g)\ran,   
\end{equation}
where $\displaystyle \mc L_1(M)=\prodd_{p|M} \mc L_1(p;M)$ and $ \mc L_2(M_g/M)=\prodd_{p|(M_g/M)} \mc L_2(p;M_g/M)$ are given by certain explicit Euler products depending on $f$.
This step starts right from \eqref{bridge1}. The result in \eqref{main-bridge-demo} is obtained by a careful analysis of the equation \eqref{bridge1} through computations in the GL(2) Hecke algebra and eigenvalues of Hecke eigenforms, and forms the technical heart of this entire section. The quantity $\displaystyle g_N :=\sum_{(n,N)=1}a_g(n)q^n$ complicates matters to quite some extent, and the reader can find the treatments of the above two inner products appearing in \eqref{bridge1} in the subsections \ref{gN-sec} and \ref{cdphi-sec}, \ref{cmnl-sec} respectively. The Euler products $\mc L_j(-)$ arise in combination from the treatment of these two inner products.

There is a rather delicate technical point here: a priori the inner product involving $\phi^\circ$ in \eqref{bridge1} evaluates as a finite linear combination of terms of the form $\displaystyle \sum_{\ell}\lan \phi^\circ, g|W_N(\ell)\ran$ -- it takes some work to show that only the terms corresponding to $\ell=M_g$ survive. This is implied by Lemma \ref{phi0-ortho}. Once this is done, this Step follows from the above discussion.

\subsubsection{Step~(iii)}
In this Step, we start from \eqref{main-bridge-demo}, compute and simplify the Euler products appearing therein, and put them together to arrive at one of the main results of the paper.

\begin{thm}\label{th:IPequiv}
    Let $N$ be any odd square-free integer. Let $g\in S_{k+1}^{new}(N_g)$ be a newform and $M_g=N/N_g$. Then for any $M|M_g$,
    \begin{equation} \label{IPequiv-eq}
     \lan F^\circ, g\otimes g|B_{M}\ran = \frac{M^{(k+1)/2}\lambda_g(M_g/M)}{M_g^{(k+1)/2}}\prod_{p|(M_g/M)}\left(1+\frac{1}{p}\right)^{-1} \lan F^\circ, g\otimes g|B_{M_g}\ran.   
    \end{equation}
\end{thm}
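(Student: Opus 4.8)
The plan is to derive \eqref{IPequiv-eq} purely algebraically from the identity \eqref{main-bridge-demo} of Step~(ii), which has already isolated the entire dependence on $M$ into the Euler products $\mc L_1(M)$ and $\mc L_2(M_g/M)$. First apply \eqref{main-bridge-demo} once with the given $M\mid M_g$ and once with $M$ replaced by $M_g$; in the latter case $\mc L_2(M_g/M_g)$ is an empty product and equals $1$, so
\[
\lan F^\circ, g\otimes g|B_{M_g}\ran = \mc L_1(M_g)\cdot\lan g,g\ran\,\lan\phi^\circ, g|W_N(M_g)\ran .
\]
The crucial point is that the analytic factor $\lan g,g\ran\,\lan\phi^\circ, g|W_N(M_g)\ran$ is the same in both instances, being independent of $M$. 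If it vanishes then, by \eqref{main-bridge-demo}, both $\lan F^\circ, g\otimes g|B_M\ran$ and $\lan F^\circ, g\otimes g|B_{M_g}\ran$ are zero and \eqref{IPequiv-eq} holds trivially; otherwise we may divide, obtaining
\[
\frac{\lan F^\circ, g\otimes g|B_{M}\ran}{\lan F^\circ, g\otimes g|B_{M_g}\ran}
= \frac{\mc L_1(M)\,\mc L_2(M_g/M)}{\mc L_1(M_g)}.
\]

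Next I would exploit the multiplicative structure of these Euler products. Since $N$ is odd square-free, $M\mid M_g$ and $M_g/M$ is square-free, the primes dividing $M_g$ split into those dividing $M$ and those dividing $M_g/M$; as the local factors $\mc L_1(p;\cdot)$ produced in Step~(ii) are genuinely local — they depend only on $p$ and on the data of $f$ and $g$, not on the remaining prime divisors of the modulus — every factor at a prime $p\mid M$ cancels in the ratio, leaving
\[
\frac{\mc L_1(M)\,\mc L_2(M_g/M)}{\mc L_1(M_g)} = \prod_{p\mid (M_g/M)}\frac{\mc L_2(p;M_g/M)}{\mc L_1(p;M_g)} .
\]
Using $M^{(k+1)/2}/M_g^{(k+1)/2} = \prod_{p\mid (M_g/M)} p^{-(k+1)/2}$ and $\lambda_g(M_g/M) = \prod_{p\mid(M_g/M)}\lambda_g(p)$ (valid since $g$ is a newform of level $N_g$ coprime to $M_g$ and $M_g/M$ is square-free), the claim \eqref{IPequiv-eq} reduces to the single local identity
\[
\frac{\mc L_2(p;M_g/M)}{\mc L_1(p;M_g)} = \frac{\lambda_g(p)}{p^{(k+1)/2}}\left(1+\frac{1}{p}\right)^{-1}
\qquad (p\mid M_g/M).
\]

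The remaining and most laborious part, which is the substance of Step~(iii), is to write out the explicit shapes of the local Euler factors $\mc L_1(p;\cdot)$ and $\mc L_2(p;\cdot)$ as they emerge from Step~(ii) — each is a geometric-type sum over the divisors of $N_g^\infty$ or $M_g^\infty$ appearing in \eqref{bridge1}, which converges by the Deligne bound $|\lambda_g(p)|\le 2p^{k/2}$ and sums to an explicit rational function of $p$, $\lambda_f(p)$ and $\lambda_g(p)$ — and then to simplify the displayed quotient. I expect this algebraic simplification to be the main obstacle: one must track precisely how the dependence on $f$ (through $\lambda_f(p)$ and the Atkin--Lehner signs $w_f(p)$) cancels between numerator and denominator, leaving only the clean $\mathrm{GL}(2)$ eigenvalue factor of $g$, and one must keep the powers of $p$ exactly right so as to land on $p^{-(k+1)/2}\lambda_g(p)(1+1/p)^{-1}$. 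Once this local computation is carried out and the factors are reassembled over $p\mid M_g/M$, the identity \eqref{IPequiv-eq} follows.
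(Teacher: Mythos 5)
Your high-level reduction is exactly the strategy the paper follows: apply the intermediate identity \eqref{main-bridge-demo} at both $M$ and $M_g$, observe that the common prefactor $\lan g,g\ran\,\lan \phi^\circ, g|W_N(M_g)\ran$ is $M$-independent (handling the zero case trivially and dividing otherwise), use multiplicativity to cancel the factors at $p\mid M$, and thereby reduce \eqref{IPequiv-eq} to a single local identity at each $p\mid(M_g/M)$. That reduction is correct.

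However, what you describe as ``the remaining and most laborious part'' is not a finishing step — it \emph{is} the proof. You have explicitly deferred the entire local Euler-factor computation, which is precisely the content of Step (iii) in the paper's scheme and occupies essentially all of subsection~\ref{th:IPequiv-proof}: the explicit evaluation of the double sum over $d_g\mid N_g^\infty$, $\ell_g\mid M_g^\infty$ in \eqref{FggBd'} using \eqref{BLBdEval} and \eqref{CdEval}, the reorganization into Euler products with local pieces $L_p(r,g)$, the recursion $\lambda_g(p)L_p(t,g)=L_p(t+1,g)+p^kL_p(t-1,g)$, the inductive evaluation $S(g,p)=\tfrac{p+1}{p-1}$, and the telescoping identity $\sum_{r\ge0}\lambda_g(p^r)L_p(r+1,g)p^{-(r+1)(k+1)}=(1-1/p)^{-1}\lambda_g(p)$. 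None of this is present in your proposal. A reader could not reconstruct the theorem from what you wrote: the passage from ``the answer is some rational function of $p$ and eigenvalues'' to the specific value $p^{-(k+1)/2}\lambda_g(p)(1+1/p)^{-1}$ requires these identities, which are not obvious and in the paper are themselves proved by induction on Hecke relations.

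One further, more minor issue: you anticipate that the local factors $\mc L_j(p;\cdot)$ will be rational functions of $\lambda_f(p)$ and that ``the dependence on $f$ … cancels between numerator and denominator.'' This is not what happens. In the actual computation the local factors $S(g,p)$ and $(1-1/p)^{-1}\lambda_g(p)/p^{(k+1)/2}$ involve only $\lambda_g$; all $f$-dependence sits in the $M$-independent common prefactor proportional to $\lan\phi^\circ, g|W_N(M_g)\ran$ (where $\phi=\phi_f$), which you have already factored out before forming the ratio. So there is no $\lambda_f(p)$-cancellation to track at the local level, and viewing this as ``the main obstacle'' suggests a misreading of where the $f$-dependence in \eqref{main-bridge-demo} actually lives. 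The real obstacle is the $\lambda_g$-arithmetic in the Euler product, which your proposal leaves untouched.
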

Namely, the identity appearing in \eqref{main-bridge-demo}, when applied to $M$ and $M_g$, allows us to express both $\lan F^\circ, g\otimes g|B_{M}\ran$ and $ \lan F^\circ, g\otimes g|B_{M_g}\ran$ in terms of $\lan \phi^\circ, g|W_N(M_g)\ran$, which leads to Theorem \ref{th:IPequiv}. Its proof can be found in subsection~\ref{th:IPequiv-proof}.

\begin{rmk}
    We also note here that Theorem \ref{th:IPequiv} does not depend on the sign of $w_f(p)$. That is, it holds for any $g\in S_{k+1}^{new}(N_g)$ and $f\in S_{2k}^{new}(N)$.
\end{rmk}

\subsubsection{Step~(iv)}
In this Step, we conclude the proofs of all of the main results, thereby answering Question \ref{qs1} completely. It turns out that the results depend on the Atkin-Lehner eigenvalues of the lifted form $f$.
The following set of divisors of $N$ will play a decisive role. Define
\begin{equation} \label{Lf-def}
  \mc L_f:=\{ L|N: w_f(p)=+1\, \text{ for all }\, p|L, L\neq 1\}.
\end{equation}

\begin{thm}\label{thm:CVOldclass}
Let $g\in S_{k+1}^{new}(N_g)$, $f\in S_{2k}^{new}(N)$ be a newform and let the set $\mc L_f$ be as defined in \eqref{Lf-def}. Put $M_g=N/N_g$ and $F:= F_f$.  For any character $\sigma$ of $(\mbb Z/2\mbb Z)^{\omega(M_g)}$, let $g_\sigma\in \mc B_{k+1}(N_g, g)$ be as defined in \eqref{blg}, subsection~\ref{deg1-old-basis}. Then,
\begin{align} 
 \lan F^\circ, g_\sigma\otimes g_{\sigma'} \ran &=0 &\text{
    whenever }   \sigma \neq \sigma';    \label{cv-00}\\[1.0ex] 
    \lan F^\circ, g_\sigma\otimes g_{\sigma} \ran &=0 &\text{     whenever }   M_g \notin \mc L_f;     \label{cv-0}\\[1.0ex] 
 \Lambda(f\otimes \mrm{sym}^2 g, \frac{1}{2})= \frac{2^{k+1-\omega(M_g)}M_g^{7/4}}{N} & \prod_{p|N_g}(p+1)^2\frac{\langle f,f \rangle}{\langle h,h\rangle} 
 \frac{|\langle F^\circ, g_\sigma \otimes g_\sigma\rangle|^2}{\langle g_\sigma,g_\sigma\rangle ^2} &\text{ whenever }   M_g \in \mc L_f. \label{cv-1}  
\end{align}
\end{thm}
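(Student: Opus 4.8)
The plan is to deduce Theorem~\ref{thm:CVOldclass} from the already-established Theorem~\ref{th:IPequiv} together with the classical pullback formula \eqref{old-intro} (valid when $M_g\in\mc L_f$, i.e. when $w_f(p)=+1$ for all $p\mid M_g$) and a careful change-of-basis computation relating the ``$B_M$-basis'' $\{g\otimes g\mid B_M : M\mid M_g\}$ to the arithmetically orthogonalized basis $\{g_\sigma\otimes g_{\sigma'}\}$. First I would recall from subsection~\ref{deg1-old-basis} the explicit definition \eqref{blg} of $g_\sigma$: it is a linear combination $g_\sigma=\sum_{M\mid M_g}c_\sigma(M)\,g\mid B_M$ with coefficients built from the character $\sigma$ of $(\mbb Z/2\mbb Z)^{\omega(M_g)}$ and the Hecke eigenvalues $\lambda_g(p)$, chosen precisely so that the $g_\sigma$ are mutually orthogonal. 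Expanding $\lan F^\circ, g_\sigma\otimes g_{\sigma'}\ran$ bilinearly gives a double sum $\sum_{M,M'}\overline{c_\sigma(M)}\,\overline{c_{\sigma'}(M')}\,\lan F^\circ, g\mid B_M\otimes g\mid B_{M'}\ran$.

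The key simplification is that Theorem~\ref{th:IPequiv} expresses every $\lan F^\circ, g\otimes g\mid B_M\ran$ as an \emph{explicit scalar multiple} of the single quantity $\lan F^\circ, g\otimes g\mid B_{M_g}\ran$, namely with factor $\tfrac{M^{(k+1)/2}\lambda_g(M_g/M)}{M_g^{(k+1)/2}}\prod_{p\mid(M_g/M)}(1+p^{-1})^{-1}$. One should first extend this (via a short Hecke-equivariance argument, applying the Atkin--Lehner / $U(p)$ relations exactly as in Step~(i)) to the mixed inner products $\lan F^\circ, g\mid B_M\otimes g\mid B_{M'}\ran$, or more directly observe that, since $F^\circ$ is symmetric, these mixed terms are governed by the same one-dimensional span. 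Substituting this rank-one structure into the double sum, $\lan F^\circ, g_\sigma\otimes g_{\sigma'}\ran$ factors as $\big(\sum_M \overline{c_\sigma(M)}\,r(M)\big)\big(\sum_{M'} \overline{c_{\sigma'}(M')}\,r(M')\big)\cdot(\text{something})$, where $r(M)$ is the explicit ratio above. By the orthogonality design of the $c_\sigma$ — the characters $\sigma$ of $(\mbb Z/2\mbb Z)^{\omega(M_g)}$ form an orthogonal basis and the weights $r(M)$ are multiplicative over $p\mid M_g$ matching the $\pm$ structure of $\sigma$ — the linear functional $\sigma\mapsto\sum_M\overline{c_\sigma(M)}r(M)$ is supported on a single $\sigma$; this yields \eqref{cv-00}, the vanishing for $\sigma\neq\sigma'$.

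For \eqref{cv-0} and \eqref{cv-1}: when $M_g\notin\mc L_f$ there is some $p\mid M_g$ with $w_f(p)=-1$; I would show $\lan F^\circ, g\otimes g\mid B_{M_g}\ran=0$ in that case (hence all $\lan F^\circ, g\otimes g\mid B_M\ran=0$ by Theorem~\ref{th:IPequiv}, and so all $\lan F^\circ, g_\sigma\otimes g_\sigma\ran=0$), by tracing through the Euler products $\mc L_1(p;-),\mc L_2(p;-)$ of \eqref{main-bridge-demo}: the factor at such a $p$ vanishes because the relevant GL(2) Hecke-algebra computation forces a factor like $1+w_f(p)$ (or the Satake parameter relation $\alpha_p\beta_p$ constraint from the SK lift) to appear. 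When $M_g\in\mc L_f$, combine the rank-one structure with \eqref{old-intro}: \eqref{old-intro} gives $|\lan F^\circ, g\otimes g\mid B_{M_g}\ran|^2$ in terms of $\Lambda(f\otimes\mrm{sym}^2 g,\tfrac12)$, and the change-of-basis constant $\sum_M\overline{c_\sigma(M)}r(M)$ is computed to have squared absolute value exactly $2^{\omega(M_g)}$ (the normalization of $g_\sigma$), which bookkeeps into the $2^{k+1-\omega(M_g)}$ prefactor in \eqref{cv-1}; one also checks $\lan g_\sigma,g_\sigma\ran$ against $\lan g,g\ran$ to match denominators. The main obstacle I anticipate is the bookkeeping in this last step — pinning down the precise value of the change-of-basis scalar $\sum_{M\mid M_g}\overline{c_\sigma(M)}\,r(M)$ (including its dependence on $\lambda_g$ and the powers of $p$) and verifying it cancels cleanly against the $M_g^{7/4}$, $\prod_{p\mid N_g}(p+1)^2$ and $\lan g_\sigma,g_\sigma\ran$ factors to reproduce \eqref{old-intro} verbatim with $g_\sigma$ in place of $g\mid B_{M_g}$; the vanishing claims \eqref{cv-00} and \eqref{cv-0} should then be comparatively formal consequences of the rank-one phenomenon and the Euler-product vanishing at AL eigenvalue $-1$.
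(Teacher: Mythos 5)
Your plan for \eqref{cv-1} is essentially the route the paper takes (Theorem~\ref{th:IPequiv} plus the pullback formula \eqref{old} and a change of basis culminating in the identity $\lan F^\circ, g_\sigma\otimes g_\sigma\ran/\lan g_\sigma,g_\sigma\ran=\sigma(M_g)\,\lan F^\circ,g\otimes g|B_{M_g}\ran/\lan g,g\ran$), so that part is sound. But there are two significant gaps in the remaining parts.

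First, the mechanism you propose for \eqref{cv-0} cannot work. You suggest that when $w_f(p)=-1$ for some $p\mid M_g$ a local factor $\mc L_j(p;\cdot)$ in \eqref{main-bridge-demo} vanishes — but if you trace through the computation, the Euler factors that appear (e.g.\ in \eqref{FggBdFinal}, $\prod_{p|M}\tfrac{p+1}{p-1}\cdot\prod_{p|(M_g/M)}(1-p^{-1})^{-1}\lambda_g(p)p^{-(k+1)/2}$) are built entirely out of $\lambda_g$ and elementary factors; they contain no $w_f(p)$ and never vanish. Indeed the paper explicitly remarks that Theorem~\ref{th:IPequiv} holds for arbitrary $f$ \emph{regardless} of the signs $w_f(p)$. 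What Theorem~\ref{th:IPequiv} does is show that all the periods $\lan F^\circ,g\otimes g|B_M\ran$, $M\mid M_g$, are \emph{proportional} to a single quantity $\lan\phi^\circ,g|W_N(M_g)\ran$; it cannot by itself force that quantity to vanish. The vanishing for $M_g\notin\mc L_f$ requires a genuinely separate argument on the $\mrm{GSp}(4)$ side, using that $F$ is a newform, the trace operator $\mrm{Tr}^{(2)}(N,N/p)$, and the explicit coset representatives $\mc C(N,N/p)$ from Section~\ref{coset-section} — this is what Propositions~\ref{prop:IPFggV} and \ref{prop:IPzero} do, producing the factor $(1-w_f(p))$ from the eigenvalue relation $\lambda_h(p^2)=-w_f(p)p^{k-1}$. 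That entire input is missing from your sketch.

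Second, your argument for \eqref{cv-00} is not only different from the paper's, it is logically inconsistent with \eqref{cv-1}. You write that the rank-one structure forces the functional $\sigma\mapsto\sum_M\overline{c_\sigma(M)}r(M)$ to be ``supported on a single $\sigma$''; if that were true, $\lan F^\circ,g_\sigma\otimes g_\sigma\ran$ would vanish for all $\sigma$ but one, contradicting \eqref{cv-1}, which exhibits the \emph{same} nonzero $L$-value for every $\sigma$. The correct bookkeeping is that $\lan F^\circ,g|W_N(d_1)\otimes g|W_N(d_2)\ran$ depends only on $d_1*d_2$ (via the $\mc W_N$-invariance of $F^\circ$), not a product $r(d_1)r(d_2)$, so that the double sum collapses by character orthogonality in the variable $d_1d_2$, giving zero if $\sigma\neq\sigma'$ and $2^{\omega(M_g)}\sum_d\sigma(d)\lan F^\circ,g\otimes g|W_N(d)\ran$ if $\sigma=\sigma'$. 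The paper's proof of \eqref{cv-00} (Proposition~\ref{cv-00-lem}) is in fact much more elementary than either of our descriptions and bypasses Theorem~\ref{th:IPequiv} entirely: pick $p$ with $\sigma(p)\neq\sigma'(p)$, use $F|\mc W_N(p)=F$, and note that applying $W_N(p)\times W_N(p)$ to $g_\sigma\otimes g_{\sigma'}$ produces a factor $\sigma\sigma'(p)=-1$.
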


Note that the collection of $g_\sigma$, as $g$ and $\sigma$ vary, form an orthogonal basis of $S_{k+1}(N)$, see subsection~\ref{deg1-old-basis}.
Therefore \eqref{old-intro} (with $M_g=1$), Theorem~\ref{thm:CVOldclass}, when applied to \eqref{pullback-spectral}, gives the complete spectral decomposition of the pullback $F^\circ$, up to the signs $\pm \sqrt{\Lambda(f\otimes \mrm{sym}^2 g, \frac{1}{2})}$. 
Determination of the signs is a rather difficult problem. It is not even known for the Waldspurger's theorem.

The first assertion (i.e. \eqref{cv-00}) is straight-forward, see Lemma \ref{cv-00-lem}.
To prove the second part (i.e. \eqref{cv-0}), we really have to take a route which is similar in spirit, but rather orthogonal to the setting of the previous Steps. Namely, the reader may have noticed that almost all of the calculations in the above Steps~(i)--(iii) featured the $\mrm{GL}(2)$ Hecke algebra -- they were concerned with the modular form $g$, and the calculations were aimed to arrive at a relationship between the quantities $\lan F^\circ, g\otimes g|B_{M}\ran$ and $\lan \phi^\circ, g|W_N(M_g)\ran$. 

To prove  \eqref{cv-0} however, we really have to work with $F^\circ$, i.e. with the Hecke algebra of $\mrm{GSp}(4)$. Here it is used that $F$ is a newform. Our strategy is to study and utilize the interaction of a suitable set of coset representatives $\gamma \in \Gamma_0^{(2)}(N)\backslash \Gamma_0^{(2)}(M)$ ($M|N, \, M<N$) appearing in the equation $\mrm{Tr}^{(2)}(N,M)  = \sum_\gamma F| \gamma=0$ with the pullback operation. It is rather remarkable that this approach (in which it is enough to assume that $M=N/p$ for some prime $p$) -- i.e. considering the equation 
\begin{equation}
   \left\lan \left( F|\mrm{Tr}^{(2)}(N,N/p) \right)^\circ, g\otimes g|V \right\ran=0;
\end{equation}
where $g$ is of level $N/p$ as above and $V \in \{Id., W_N(p) \}$ -- unfolds into an equation of the form
\begin{equation}
    \left\lan (F|U_S(p))^\circ, g\otimes g|V \ran = \lambda_F(p)\lan F^\circ, g\otimes g|V\right\ran;
\end{equation}
with $U_S(p)$ being the degree $2$ Siegel-$U(p)$ operator. From this, \eqref{cv-0}
of Theorem \ref{thm:CVOldclass} follows easily. The entire Section \ref{cv-0-proof} is devoted to the proof of this.

Finally the third assertion (i.e., \eqref{cv-1}), follows by noting that the inner product appearing on the RHS of \eqref{IPequiv-eq} is given by a central value (cf. \eqref{old-intro}). The proof of this result thus builds upon all of the Steps~(i)--(iii) and occupies most of the Section~\ref{dMg}. The proof of Theorem~\ref{thm:CVOldclass} can be found in subsection~\ref{thm2.3proof}.

\begin{rmk} \label{gg-vanish}
    An interesting observation is that in \eqref{pullback-spectral}, no newform $g$ whose level does not divide $N$, can appear. Thus we must have $\displaystyle \lan F^\circ, g \otimes g  \ran =0$ for such $g$. If in addition, this period is given by a central $L$-value (this falls outside the purview of the GGP conjectures), then it would furnish an example of the vanishing of a  central $L$-value without any obstruction from the functional equation. Probably such a central-value formula does not exist. See the next remark, however. 
\end{rmk}

\begin{rmk}
    Like in the previously known instances of pullback formulae, Theorem~\ref{thm:CVOldclass} also imply special value results in the sense of Deligne, for the central $L$-values $L(f\otimes \mrm{sym}^2 g, \frac{1}{2})$ with $f,g$ as in the Theorem. More generally, one can prove special value results for $L(f\otimes \mrm{sym}^2 g, \frac{1}{2})$ with $f,g$ new of arbitrary square-free levels by other integral representations, see e.g. \cite{chen2019deligne}, where base change to Hilbert modular forms gave one such. This, and \rmkref{gg-vanish}, do not contradict each other.
\end{rmk}

\subsection{$L^2$-norm of pullbacks}\label{L2massDef}
The ambiguity with the signs in \eqref{pullback-spectral} goes away when one works with its $L^2$ or Petersson norm. Let $N$ be square-free and $k$ be odd. For a newform $F\in \skk^{new}(N)$, as introduced in the level $1$ case (cf. \cite{liu-young}), let us define the $L^2$-mass of the pullback of $F$ by
\begin{equation} \label{nf-def}
   N(F):= \frac{v_2\lan F^\circ, F^\circ\ran}{v_1^2\lan F, F\ran},
\end{equation}
where where $v_1=\mrm{vol.}(\SL{2}{\mbb Z}\backslash \h)$, $v_2=\mrm{vol.}(\sptwo\backslash \h_2)$ and $\left<F^\circ,F^\circ\right>$ is the product of Petersson inner products on $\Gamma_0(N)\backslash \h\times \Gamma_0(N)\backslash \h$ and it is given by
\begin{equation}
\left<F^\circ,F^\circ\right>=\frac{1}{[\SL{2}{\mbb Z}:\Gamma_0(N)]^2}\int_{\Gamma_0(N)\backslash \h}\int_{\Gamma_0(N)\backslash \h}\mid F\big( \begin{psmallmatrix}\tau& \\ &\tau'\end{psmallmatrix}\big)\mid^2 \mrm{Im}(\tau)^{k+1}\mrm{Im}(\tau')^{k+1}d\mu(\tau)d\mu(\tau'),
\end{equation}
where $d\mu(z)=y^{-2}dxdy$, if $z=x+iy, y>0$. Moreover, $\lan F,F \ran$ is also normalized by the index. With the above notations, we show the following.

\begin{thm}[Norm of the pullback] \label{norm-thm}
Let $k$ be odd and $N$ odd, square-free. For an SK lift $F_f\in \skk^{new}(N)$ of $f\in S_{2k}^{new}(N)$, we have
\begin{align} \label{norm-pullback}
    N(F_f)= \frac{v_2}{v_1^2}\cdot \frac{24 \pi}{kN}\frac{\zeta_{(N)}(2)^3}{\zeta_{(N)}(4)}\sum_{L|N}\frac{\prod_{p|L}(p+1)^2(1+w_f(p))^2}{L^{2}}\sum_{g\in \mc B_{k+1}^{new}(N/L)}\frac{L(f\otimes \mrm{sym}^2 g, \frac{1}{2})}{L(\mrm{sym}^2 f, 1) L(f,\frac{3}{2})}.
\end{align}
\end{thm}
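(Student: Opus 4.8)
The plan is to expand $\lan F^\circ, F^\circ\ran$ against the orthogonal basis of $S_{k+1}(N)$ furnished by the $g_\sigma\otimes g_{\sigma'}$, apply the spectral decomposition \eqref{pullback-spectral}, and then substitute the central-value formulae from \thmref{thm:CVOldclass}. Concretely, by \eqref{pullback-spectral} and orthogonality of the $g_\sigma$ we first write
\begin{equation}
  \lan F^\circ, F^\circ\ran = \sum_{N_g|N}\ \sum_{g\in \mc B_{k+1}^{new}(N_g)}\ \sum_{\sigma}\ \frac{|\lan F^\circ, g_\sigma\otimes g_\sigma\ran|^2}{\lan g_\sigma,g_\sigma\ran^2},
\end{equation}
where in view of \eqref{cv-00} the cross terms $\sigma\neq\sigma'$ drop out, and by \eqref{cv-0} the entire $g$-block vanishes unless $M_g=N/N_g\in\mc L_f$, i.e. unless $w_f(p)=+1$ for every $p|M_g$. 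So the outer sum over $N_g$ collapses to a sum over those $L:=M_g$ lying in $\mc L_f\cup\{1\}$, and for each such $L$ one has exactly $2^{\omega(L)}$ characters $\sigma$.

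Next I would feed in the central-value identity. For $L\in\mc L_f\cup\{1\}$, \eqref{cv-1} (together with \eqref{old-intro} when $L=1$) gives
\begin{equation}
  \frac{|\lan F^\circ, g_\sigma\otimes g_\sigma\ran|^2}{\lan g_\sigma,g_\sigma\ran^2}
  = \frac{N\,L^{7/4}}{2^{\,k+1-\omega(L)}}\ \Big(\prod_{p|(N/L)}(p+1)^2\Big)^{-1}\,\frac{\lan h,h\ran}{\lan f,f\ran}\,\Lambda(f\otimes\mrm{sym}^2 g,\tfrac12)^{-1}?
\end{equation}
--- wait, the formula must be read the other way: it expresses $\Lambda$ in terms of the period, so solving for the period gives $|\lan F^\circ,g_\sigma\otimes g_\sigma\ran|^2/\lan g_\sigma,g_\sigma\ran^2$ equal to a constant times $\Lambda(f\otimes\mrm{sym}^2 g,\tfrac12)$, the constant being $\frac{N}{2^{k+1-\omega(L)}L^{7/4}}\prod_{p|N_g}(p+1)^{-2}\frac{\lan h,h\ran}{\lan f,f\ran}$. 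Summing over the $2^{\omega(L)}$ values of $\sigma$ contributes a clean factor $2^{\omega(L)}$, turning $2^{k+1-\omega(L)}$ in the denominator into $2^{k+1-2\omega(L)}$, i.e. producing $\prod_{p|L}4$ in the numerator; together with the Atkin-Lehner condition $w_f(p)=+1$ for $p|L$ this is exactly $\prod_{p|L}(1+w_f(p))^2$, which is how the factor in \eqref{norm-pullback} materialises (and which automatically vanishes when $L\notin\mc L_f$, so one may as well sum over all $L|N$). The remaining arithmetic is to convert $\Lambda$ (completed $L$-value) to $L$ (finite part) via the archimedean gamma factors, to rewrite $\lan f,f\ran/\lan h,h\ran$ in terms of $L(\mrm{sym}^2 f,1)$ and the Petersson norm of $h$ via the known relation between the norm of an SK lift $F_f$ and that of $f$ (or of $h$), and to absorb $\lan F,F\ran$, $v_1$, $v_2$ and the index $[\SL{2}{\mbb Z}:\Gamma_0(N)]$ from the definition \eqref{nf-def}; collecting the prime-by-prime factors should assemble the Euler products $\zeta_{(N)}(2)^3/\zeta_{(N)}(4)$ and the shape $\prod_{p|L}(p+1)^2(1+w_f(p))^2/L^2$, with the overall constant $24\pi/(kN)$ coming out of the gamma-factor and Petersson-norm bookkeeping. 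The factor $L^{7/4}$ from \eqref{cv-1} combines with the $\mrm{sym}^2$ gamma factors and the $L^{3/4}$-type powers hidden in the completed-to-finite conversion of $\Lambda(f\otimes\mrm{sym}^2 g,\tfrac12)$ to leave only $L^{-2}$.

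The main obstacle I anticipate is precisely this last bookkeeping step: correctly tracking the archimedean local factors when passing from $\Lambda(f\otimes\mrm{sym}^2 g,\tfrac12)$ to $L(f\otimes\mrm{sym}^2 g,\tfrac12)$, and simultaneously the explicit value of $\lan F_f,F_f\ran$ in terms of $L(\mrm{sym}^2 f,1)$, $L(f,\tfrac32)$ and $\lan f,f\ran$ (this is where the factors $L(\mrm{sym}^2 f,1)L(f,\tfrac32)$ in the denominator of \eqref{norm-pullback} come from, via the Petersson norm formula for Saito-Kurokawa lifts of square-free level). One has to be scrupulous about the normalisation in which the functional equation sends $s\mapsto 1-s$ (see the remark after \eqref{old-intro}) so that the power of $L$ and the power of $\pi$ come out exactly as stated. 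Everything else --- the spectral sum, the orthogonality, the collapse to $L\in\mc L_f$, and the emergence of $(1+w_f(p))^2$ from summing over $\sigma$ --- is essentially formal once \thmref{thm:CVOldclass} is in hand.
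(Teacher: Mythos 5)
Your outline is exactly the paper's proof: expand the Parseval identity for $\lan F^\circ,F^\circ\ran$ over the arithmetically orthogonalized basis $\{g_\sigma\}$, kill the $\sigma\neq\sigma'$ and $L\notin\mc L_f$ blocks via \thmref{thm:CVOldclass}, substitute the central-value formulae \eqref{CVnew} and \eqref{old} (equivalently \eqref{cv-1}, since $\lan F^\circ,g_\sigma\otimes g_\sigma\ran/\lan g_\sigma,g_\sigma\ran = \pm\lan F^\circ, g\otimes g|B_{M_g}\ran/\lan g,g\ran$), and assemble constants from the gamma factors, the relation \eqref{petreln} between $\lan h,h\ran$ and $\lan F,F\ran$ (the source of the $L(f,\tfrac{3}{2})$ denominator and of the prime products that become $\zeta_{(N)}(2)^3/\zeta_{(N)}(4)$), and the ILS identity linking $\lan f,f\ran$ to $L(\mrm{sym}^2 f,1)$. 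The bookkeeping you defer is carried out prime-by-prime in the paper and indeed produces the stated constant, so there is no conceptual gap.
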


The fact that the quantity $N(F_f)$ can be expressed in terms of central values of certain $L$-functions lends itself to analysis via the theory of $L$-functions using methods from analytic number theory. In particular the two important questions are the non-vanishing of $N(F_f)$ -- which is equivalent to the non-vanishing of $F_f^\circ$ -- and is directly related to the GGP conjectures; and then the size of the quantity $N(F_f)$. The reader may note that \eqref{norm-pullback} matches with the corresponding expression in level $1$ as given in \cite{liu-young}.

By Lapid's result \cite{lapid2003nonnegativity},  or directly from \eqref{cv-1}, we know that
all of the central $L$-values appearing on the RHS of \eqref{norm-pullback} are non-negative. Since it is generally believed that central values of $L$-functions should not vanish unless forced by the sign of its functional equation, this leads one to conjecture that all of the pullbacks $F_f^\circ$ do not vanish. In the present case, one can check (see subsection \ref{Lfn-defs}) that the sign is indeed $+1$ for all the central $L$-values in question. See subsection \ref{nonvanish-intro} below for more on this.
In that case we propose a conjecture on its $L^2$ size (normalized as in \eqref{nf-def}).

\begin{conjecture} \label{norm-conj}
    As $N\longrightarrow \infty$ along odd square-free integers, there exists some $\delta>0$ such that,
    \begin{align}
        N(F_f)= \frac{2\phi(N)\zeta_{(N)}(2)}{N}\prod_{p|N}\left(1+\frac{w_f(p)}{p}\right)^{-1}\sum_{d|N}\frac{\zeta_{(d)}(2)}{\phi(d)} \prod_{p|d}\left(1+\frac{1}{p}\right)^3(1+w_f(p))^2 + O(N^{-\delta}).\label{NFConj}
    \end{align}
    When $N=p$, a prime, the above can be simplified and reads $N(F_f)= 2 + O(p^{-\delta})$.
\end{conjecture}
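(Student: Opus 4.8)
The plan is to start from the exact formula for $N(F_f)$ in \thmref{norm-thm}, namely
\[
N(F_f)= \frac{v_2}{v_1^2}\cdot \frac{24 \pi}{kN}\frac{\zeta_{(N)}(2)^3}{\zeta_{(N)}(4)}\sum_{L|N}\frac{\prod_{p|L}(p+1)^2(1+w_f(p))^2}{L^{2}}\sum_{g\in \mc B_{k+1}^{new}(N/L)}\frac{L(f\otimes \mrm{sym}^2 g, \tfrac{1}{2})}{L(\mrm{sym}^2 f, 1) L(f,\tfrac{3}{2})},
\]
and to apply the CFKRS recipe (Conrey--Farmer--Keating--Rubinstein--Snaith heuristics) to the inner sum over $g$ for each fixed divisor $L|N$. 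For each $L$, one is averaging the central value $L(f\otimes \mrm{sym}^2 g,\tfrac12)$ of a degree-$6$ $\mathrm{GL}(3)\times\mathrm{GL}(2)$ family over the newspace $\mc B_{k+1}^{new}(N/L)$, with $f$ held fixed. I would write the approximate functional equation for $L(f\otimes\mrm{sym}^2 g,\tfrac12)$, insert the (normalized) Petersson/harmonic weights implicit in summing over an orthogonal basis of $S_{k+1}(N/L)$, and use the Petersson trace formula on $\Gamma_0(N/L)$ to pick out the diagonal term; the off-diagonal contributions (Kloosterman sums against Bessel functions) should be absorbed into the error term $O(N^{-\delta})$ after a stationary-phase/large-sieve type bound, which is where the power saving comes from. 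The diagonal term produces, for each $L$, an Euler product in the primes dividing $N$, and the arithmetic factor at primes $p\mid N/L$ will be exactly the local factor that, after summing over all $L|N$, collapses into $\sum_{d|N}\frac{\zeta_{(d)}(2)}{\phi(d)}\prod_{p|d}(1+1/p)^3(1+w_f(p))^2$; the clean way to see this is to interchange the order of the two sums, setting $d$ to be the level of the contributing newform $g$ (so $d\mid N/L$, equivalently $L\mid N/d$), and summing the geometric-like series in $L/(\text{fixed part})$ against the weight $\prod_{p\mid L}(p+1)^2(1+w_f(p))^2/L^2$.

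The key reduction is therefore an archimedean/combinatorial bookkeeping step: one must check that
\[
\frac{v_2}{v_1^2}\cdot \frac{24\pi}{k}\cdot\frac{\zeta_{(N)}(2)^3}{\zeta_{(N)}(4)}\cdot\big(\text{diagonal density for the }\mathrm{GL}(3)\times\mathrm{GL}(2)\text{ family}\big)
\]
equals the prefactor $\frac{2\phi(N)\zeta_{(N)}(2)}{N}\prod_{p\mid N}(1+w_f(p)/p)^{-1}$ appearing in \eqref{NFConj}, and that the local computation at each bad prime $p\mid N$ reproduces the stated factor $\zeta_{(d)}(2)/\phi(d)\cdot\prod_{p\mid d}(1+1/p)^3(1+w_f(p))^2$. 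Here the values $v_1=\pi/3$, $v_2=\zeta(2)\zeta(4)/\text{(const)}$ (more precisely $v_2 = \frac{2}{3}\zeta(2)\zeta(4)$ in the usual normalization), together with the known conductor and gamma-factor shape of $L(f\otimes\mrm{sym}^2 g,s)$ — whose functional equation relates $s\mapsto 1-s$ with sign $+1$, as recorded in subsection~\ref{Lfn-defs} — feed into the residue/polar data of the CFKRS main term. I would then specialize to $N=p$ prime: only $L\in\{1,p\}$ contribute, the sum over $g$ runs over $\mc B_{k+1}^{new}(p)$ for $L=1$ and over $\mc B_{k+1}^{new}(1)$ for $L=p$, and the two terms combine so that the leading constant is $2$, matching the statement $N(F_f)=2+O(p^{-\delta})$; this special case also serves as a sanity check against the level-one normalization of \cite{liu-young}.

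The main obstacle I expect is the uniformity of the error term as $N\to\infty$ over square-free integers: the moment computation must be carried out with explicit control in the level of the family $N/L$ (and there are $2^{\omega(N)}$ divisors $L$, so one cannot afford to lose even a factor $N^{\varepsilon}$ per divisor carelessly), and the approximate functional equation for a degree-$6$ $L$-function has a relatively long Dirichlet series, so the off-diagonal analysis via the Petersson formula on $\Gamma_0(N/L)$ is delicate — one must handle the interplay between the moduli $c$ of the Kloosterman sums (which are multiples of $N/L$) and the oldform/newform decomposition, and separate the truly new spectrum on level $N/L$ from lower levels. A secondary difficulty, more structural than analytic, is justifying that the CFKRS recipe gives the \emph{correct} arithmetic factor for this particular non-self-dual-looking but in fact self-dual family $f\otimes\mrm{sym}^2 g$; since $f$ is fixed and only $g$ varies, the relevant symmetry type and the diagonal density need to be pinned down carefully, and it is this identification that forces the precise shape of the conjectured constant. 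For the unconditional part of the paper (matching of main terms on average over $f$, as promised in the abstract) one would additionally average \eqref{NFConj} over $f\in S_{2k}^{new}(N)$ and compare with a direct fourth-moment-type computation, but the conjecture as stated for fixed $f$ is what the CFKRS heuristic delivers, and proving it unconditionally would require, at minimum, a subconvex or strong on-average input for $L(f\otimes\mrm{sym}^2 g,\tfrac12)$ that is not currently available in the needed uniformity.
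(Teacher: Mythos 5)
Your proposal follows essentially the same route as the paper's own heuristic derivation in subsection~\ref{sec:norm-conj}: start from \thmref{norm-thm}, apply the CFKRS recipe via the approximate functional equation for $L(f\otimes\mrm{sym}^2 g,\tfrac12)$ together with a Dirichlet-polynomial approximation for $L(\mrm{sym}^2 g,1)$, use the Petersson formula for newforms (as in \cite{ILS}) to isolate the diagonal, compute the resulting local Euler factors at each $p\mid N$, sum over $L\mid N$, and finally verify the $N=p$ specialization. One small correction: in \eqref{NFConj} the summation variable $d$ is the complementary divisor $L$ (so that $g$ runs over $\mc B_{k+1}^{new}(N/d)$), not the level of $g$ as you suggest; the paper does not interchange the double sum but instead works out an asymptotic $N_L(f)\sim\cdots$ for each fixed $L\in\mc L_f\cup\{1\}$, and only afterwards relabels $L\mapsto d$ and absorbs the constraint $L\in\mc L_f$ into the factor $(1+w_f(p))^2$.
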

The asymptotic formula in \eqref{NFConj} may be understood as follows. By the heuristics on the moments of central values of $L$-functions from \cite{conrey2005integral}, each $\sum_g$ in \eqref{norm-pullback} has an asymptotic formula with error $O((N/L)^{-\delta'})$ for some $\delta'>0$ which are absolute, but may differ for each $L$. These, put together, then implies the global power saving in $N$. Indeed our calculation of the main term follows this approach.

\subsection{Non-vanishing of the pullbacks} \label{nonvanish-intro}

It is possible to show in Conjecture \ref{norm-conj}, that the main  term is $\gg  1$ for all $N$ odd and square-free, by simply considering the contribution of the $d=1$ terms in \eqref{NFConj}, on account of the non-negativity of the other terms. Unconditionally however, it seems difficult to say anything concrete except when $k=1$. One can approach the RHS of \eqref{norm-pullback} via the standard method of approximate functional equation along with a plethora of other analytic techniques -- but the asymptotic formula, even on average over $f \in S_{2k}^{new}(N)$ still seems rather difficult  (see section \ref{sec:avgnonvan} for more details) -- it is under consideration by the authors.

The first implication of \thmref{norm-thm}, combined with the non-negativity of $L(f\otimes \mrm{sym}^2 g, \frac{1}{2})$ alluded to the above, is the following.
\begin{cor}
\begin{align}
    F_f^\circ=0 \iff L(f\otimes \mrm{sym}^2 g, \frac{1}{2})=0 \,{ \textbf{ for all }  } \, g\in \mc B_{k+1}^{new}(N/L) \text{ such that } w_f(p)=+1 \text{ for all } p|L.
\end{align}
\end{cor}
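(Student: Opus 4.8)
The plan is to deduce the corollary directly from \thmref{norm-thm} by inspecting the sign of each summand on the right-hand side of \eqref{norm-pullback}. First, by the definition \eqref{nf-def} and the fact that $\lan F_f,F_f\ran\neq 0$, one has $F_f^\circ=0$ if and only if $N(F_f)=0$, so it suffices to decide when the expression in \eqref{norm-pullback} vanishes. I would then observe that this expression is a sum of non-negative terms, indexed by pairs $(L,g)$ with $L\mid N$ and $g\in\mc B_{k+1}^{new}(N/L)$: the overall constant $\tfrac{v_2}{v_1^2}\cdot\tfrac{24\pi}{kN}\cdot\tfrac{\zeta_{(N)}(2)^3}{\zeta_{(N)}(4)}$ is positive; for each $L\mid N$ the Atkin--Lehner factor $\prod_{p\mid L}(p+1)^2(1+w_f(p))^2/L^2$ is $\geq 0$, and since $w_f(p)\in\{\pm 1\}$ it equals $0$ exactly when $w_f(p)=-1$ for some $p\mid L$ and is strictly positive otherwise (with the empty-product value $1$ when $L=1$); the denominators satisfy $L(\mrm{sym}^2 f,1)>0$ and $L(f,\tfrac32)>0$, the former by the classical positivity of the symmetric-square value at the edge of the critical strip, the latter because $s=\tfrac32$ lies in the range of absolute convergence of the analytically normalised Euler product of $f$, whose local factors are all positive there; and finally $L(f\otimes\mrm{sym}^2 g,\tfrac12)\geq 0$ by Lapid \cite{lapid2003nonnegativity}, or directly from \eqref{cv-1}, as already noted in the text.

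Since every summand of \eqref{norm-pullback} is non-negative, $N(F_f)=0$ if and only if each summand vanishes. The summand attached to a pair $(L,g)$ vanishes precisely when either $w_f(p)=-1$ for some prime $p\mid L$ (which kills the Atkin--Lehner factor) or $L(f\otimes\mrm{sym}^2 g,\tfrac12)=0$, all the other factors being strictly positive. Hence $N(F_f)=0$ if and only if $L(f\otimes\mrm{sym}^2 g,\tfrac12)=0$ for every $L\mid N$ with $w_f(p)=+1$ for all $p\mid L$ (this includes $L=1$ vacuously) and every $g\in\mc B_{k+1}^{new}(N/L)$. Combining this with the equivalence $F_f^\circ=0\iff N(F_f)=0$ from the first step yields exactly the statement of the corollary.

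As for difficulty, there is essentially no serious obstacle: once \thmref{norm-thm} is in hand, the corollary is a formal consequence. The only input beyond that theorem which carries any content is the non-negativity of the $\mrm{GL}(3)\times\mrm{GL}(2)$ central values $L(f\otimes\mrm{sym}^2 g,\tfrac12)$, and this is already cited from \cite{lapid2003nonnegativity} (and also follows from \eqref{cv-1}); the positivity of $L(\mrm{sym}^2 f,1)$ and $L(f,\tfrac32)$ is entirely standard, and the vanishing of $(1+w_f(p))^2$ exactly at the primes with $w_f(p)=-1$ is transparent. So the write-up reduces to the sign bookkeeping described above.
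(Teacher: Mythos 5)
Your proof is correct and is essentially the argument the paper intends: after \thmref{norm-thm} one observes that every summand of \eqref{norm-pullback} is non-negative (with $L(f\otimes\mrm{sym}^2 g,\tfrac12)\ge 0$ by Lapid or \eqref{cv-1}), the Atkin--Lehner factor $(1+w_f(p))^2$ vanishes exactly when $w_f(p)=-1$, and everything else is strictly positive, so $N(F_f)=0$ iff all the contributing central values vanish. The paper does not spell this out but records precisely these ingredients just before the corollary, so your write-up matches its route.
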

The full spectral decomposition thus gives a much stronger implication towards (non) vanishing of $F_f^\circ$ than that is implied by any single pullback formula of $F$ calculated against a fixed $g \otimes g$.

Among other applications of our main result, we mention the following. When $k=1$, we can invoke a non-trivial result due to Arakawa-B\"ocherer \cite{arakawa2003vanishing} to show that $F_f^\circ \neq 0$ for all newforms $f \in S_{2k}^{new}(N)$. For all such $f$, this shows the existence of newform $ g \in S_{k+1}^{new}(N)$ such that $L(f\otimes \mrm{sym}^2 g, \frac{1}{2}) \neq 0$ under mild conditions, improving upon a corresponding result \cite[Theorem 5.3]{arakawa2003vanishing}. See Remark \ref{rmk-bp2}. 

We can also show that on average the $L^2$-norm is not too small, see Proposition \ref{avg-nonzero}. This is obtained by passing on to the sup-norms of spaces of elliptic cusp forms.

On a more positive note, we can show (cf. Theorem \ref{Positiveprop}) that for a positive proportion (which is $>1/7$) of newforms $f$ in $ S_{2k}^{new}(p)$, the pullbacks $F_f^\circ $ do not vanish. This is shown by identifying the kernel of the pullback map with certain spaces of modular forms -- utilizing the work in \cite{arakawa1999vanishing}.
Our methods here also are non-analytic per se, and we provide some sufficient conditions under which the non-vanishing of $F_f^\circ$ is plausible (see  subsection \ref{kernel-nonvanish}). We present a simple if and only if condition for the non-vanishing of $F_f^\circ$.
\begin{thm} \label{nonv-algo}
    Let $f \in S_{2k}^{new}(N)$ be a newform with $k$ odd, $N$ odd and square-free. Let $h \in S^+_{k+1/2}(4N)$ correspond to $f$ under the Kohnen-Shimura-Shintani correspondences. Put $H=h \cdot \vartheta_0$, where $\displaystyle \vartheta_0(\tau) = \sum_{n \in \z} q^{n^2}$, and write its Fourier expansion $\displaystyle H(\tau) = \sum_{n=1}^\infty a_H(n) q^n$. Then $H \in S_{k+1}(4N)$, and
    \begin{align} \label{algo-cond}
        F_f^\circ \neq 0 \iff N(F_f) \neq 0 \iff a_H(n) \neq 0 \text{ for some even } n.
    \end{align}
    Necessarily, $4|n$. Also \eqref{algo-cond} has to be checked only for finitely many values of $n=4m$, viz. $m \le \mc K:=k[\sltwo: \Gamma_0(N)] / 8$.
    \end{thm}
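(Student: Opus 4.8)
The plan is to reduce the non-vanishing of the pullback $F_f^\circ$ to a statement purely about the Fourier coefficients of $F_f$ along the diagonal, and then to translate that into a statement about the half-integral weight form $h$ via the Maa\ss{} relations \eqref{sk-1}. The equivalence $F_f^\circ\neq 0\iff N(F_f)\neq 0$ is immediate from the definition \eqref{nf-def}, since $N(F_f)$ is a positive multiple of $\lan F^\circ,F^\circ\ran$ and $\lan F,F\ran\neq 0$. For the rest, I would first note that $F_f^\circ=0$ as a holomorphic function on $\h\times\h$ if and only if all its diagonal Fourier coefficients vanish: writing the Fourier expansion of $F=F_f$ as $\sum_{T>0}a_F(T)e(\tr(TZ))$ and restricting $Z=\dia(\tau,\tau')$, the pullback $F^\circ(\tau,\tau')=\sum_{n,m}\big(\sum_{r}a_F(\psmb n & r/2\\ r/2 & m\psme)\big)q_\tau^n q_{\tau'}^m$, so $F^\circ\equiv 0$ iff $c(n,m):=\sum_r a_F(\psmb n & r/2\\ r/2 & m\psme)=0$ for all $n,m\ge 1$.

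Next I would identify these diagonal coefficients with the coefficients of $H=h\cdot\vartheta_0$. Using \eqref{sk-1}, $a_F(\psmb n & r/2\\ r/2 & m\psme)=\sum_{d\mid c(T),(d,N)=1}d^k c_h\big((4nm-r^2)/d^2\big)$ where $c(T)=\gcd(n,r,m)$ and $D=-(4nm-r^2)$ in the usual convention; summing over $r$ and collecting, the generating function $\sum_{n,m}c(n,m)q_\tau^n q_{\tau'}^m$ restricted further to $\tau=\tau'$ is $\sum_{N\ge 1}a_H(N)q^N$ where $H(\tau)=h(\tau)\vartheta_0(\tau)=\big(\sum c_h(D)q^{?}\big)\big(\sum_{n\in\z}q^{n^2}\big)$, after accounting for the away-from-$N$ divisor sum (which for $N$ squarefree and on the new part introduces only the Hecke structure already built into $h$). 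One checks $H\in S_{k+1}(4N)$: it is a cusp form of weight $k+1/2+1/2=k+1$ and level $\mathrm{lcm}(4N,4)=4N$, and it is a cusp form because $h$ is. The key point is then that $F_f^\circ=0$ iff all \emph{diagonal} coefficients $c(n,n)$ — equivalently the coefficients $a_H(n)$ with $n$ arising as a value $4nm-r^2$ made square, i.e.\ $n$ even — vanish. That $4\mid n$ is necessary follows from $a_H(n)=\sum_{j\in\z}c_h(n-j^2)$ and the support of Kohnen's plus space: $c_h(D)\neq 0$ forces $D\equiv 0,3\pmod 4$, and $n-j^2$ with $n\equiv 2\pmod 4$ is never $\equiv 0,3\pmod 4$, while $n\equiv 0\pmod 4$ is the only even residue that can work.

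Finally, for the effective bound: since $H\in S_{k+1}(4N)$ is a cusp form, by the valence/Sturm bound a nonzero such form has a nonzero coefficient $a_H(n)$ with $n\le \dim$-type bound $\mc K=k[\sltwo:\Gamma_0(4N)]/12$ or the sharper index-$N$ version $k[\sltwo:\Gamma_0(N)]/8$ stated; but we want this restricted to \emph{even} $n$. Here I would argue that the subspace spanned by the ``even-support'' part is itself cut out by congruence conditions — in fact $H$ has nonzero even-index coefficient iff $H$ is not identically zero, because by Theorem~\ref{thm:CVOldclass}/\eqref{old-intro} the pullback being zero is equivalent to the vanishing of an entire family of central $L$-values and in particular $F^\circ=0\Rightarrow H$ has all diagonal coefficients zero; conversely if all even coefficients of $H$ vanish one runs the Maa\ss{} relation backwards to get $F^\circ=0$. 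Then the Sturm bound applied to $H$ gives the range $m\le\mc K$. \textbf{The main obstacle} I anticipate is bookkeeping the away-from-$N$ divisor sum in \eqref{sk-1} cleanly enough to get the \emph{exact} identity $\sum c(n,n)q^n=H$ (rather than merely up to an Euler factor), and separately justifying that the Sturm-type bound may be taken with the index $[\sltwo:\Gamma_0(N)]$ rather than $[\sltwo:\Gamma_0(4N)]$ — this presumably uses that $h$, hence $H$, already lives in a plus-space-type subspace where the level-$4$ part is automatic.
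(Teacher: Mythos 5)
Your proposal has a genuine gap in the heart of the equivalence. You correctly note that $F^\circ\equiv 0$ iff $c(n,m)=0$ for all $n,m\ge 1$, but you then reduce to the one-variable data obtained by setting $\tau=\tau'$ (or, later, to the subfamily $c(n,n)$), and you never explain why vanishing of that one-variable generating function forces the vanishing of the full two-variable array $c(n,m)$. These are definitely not equivalent for a general $F\in S_{k+1}^{(2)}(N)$. This is exactly where the paper's argument makes essential use of the Saito--Kurokawa structure: the paper first proves (unnamed lemma preceding \corref{F0old-vanish} in Section~7.1) that $\ker\mc W\simeq \ker D_0$ under the EZI map, and the proof of that lemma rests on the explicit formulas \eqref{phidN}, \eqref{phimN} showing that every $\phi_m(\tau,0)=c(\,\cdot\,,m)$ is a Hecke translate ($U(d)$ and $T(m_1)$) of the single series $\phi^\circ(\tau)=\phi_1(\tau,0)$. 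Only because of this Hecke structure does $\phi^\circ=0$ propagate to $c(n,m)=0$ for all $m$. Your phrase ``run the Maa\ss{} relation backwards'' is not enough to supply this: given only that all even coefficients of $H$ vanish, the Maa\ss{} relations by themselves do not directly show that the full double sum $\sum_r\sum_{d}d^k c_h((4nm-r^2)/d^2)$ vanishes for every $n,m$.

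A secondary inaccuracy: the restriction $\tau'=\tau$ produces the coefficients $\sum_{n+m=N}c(n,m)$, and this does \emph{not} equal $a_H(N)$; the identity that does hold (and is what the paper effectively uses via the theta-decomposition of $\phi$) is for the first Fourier--Jacobi slice, namely $c(n,1)=\sum_r c_h(4n-r^2)=a_H(4n)$, so that $\phi^\circ(\tau)=\sum_n a_H(4n)q^n$. The paper derives this more cleanly by writing $\phi=h_0\theta_0+h_1\theta_1$ and computing $\phi(\tau,0)$ in terms of $H=h\vartheta_0$, turning the condition $\phi^\circ=0$ into $H(\tau)+H(\tau+\tfrac12)=0$, i.e.\ vanishing of all even-index $a_H$. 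You correctly identify $4\mid n$ from the plus-space support, and the Sturm bound step is morally right; but to make your argument correct you would need to (a) replace the $\tau=\tau'$ restriction by the $m=1$ Fourier--Jacobi slice, and (b) prove the implication ``$\phi_1(\cdot,0)=0 \Rightarrow \phi_m(\cdot,0)=0$ for all $m$'' using the Hecke structure of the Fourier--Jacobi coefficients of SK lifts, which is precisely the paper's lemma.
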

Of course, this Theorem is equivalent to the conditions that one gets by equating the Fourier coefficients of $F_f^\circ$ to zero, but we believe that the above formulation is much simpler to handle both theoretically and computationally. We refer the reader to subsection~\ref{sec:nonv-algo} for handy sufficient conditions 
which imply \eqref{algo-cond}. We also provide some equivalent conditions for non-vanishing akin to that in \eqref{algo-cond} in weight $k$ and level $16N$. See subsection~\ref{k16n}.

Of course, one might want to consider the unitary counterpart to this situation -- i.e., consider Hermitian modular forms and their pullbacks. These lead to central $L$-values of Rankin-Selberg $L$-functions of elliptic newforms, and will be forthcoming shortly \cite{das-singh1, das-singh2}.

In an Appendix (section \ref{appendix}) we calculate the actual main term of $\sum_f N(F_f)$ and show that it matches with the corresponding main term predicted by the Conjecture~\ref{norm-conj}.

\subsection*{Acknowledgments}
{\footnotesize
We sincerely thank A. Pal and C. de Vera-Piquero for kindly and patiently explaining to us some of the difficulties pertaining to the pullback formulae treated in this paper. Initial thoughts about the project date back to antiquity. They found direction through some stimulating discussions in 2019 between A. Pal and S.D. at Universit\"at Mannheim, where S.D. was supported by an Alexander von Humboldt Fellowship hosted by Prof. S. B\"ocherer. S.D. thanks the Alexander von Humboldt Foundation,
IISc. Bangalore, UGC Centre for Advanced Studies, DST India for financial support.

P.A. held an NBHM Postdoctoral Fellowship at IISER, Pune during the project's initial phase and thanks the National Board for Higher Mathematics (NBHM), DAE, India, for the financial support provided through this fellowship.
}

\section{Notation and set-up} \label{prelim}
In this paper, we will use standard notation, some of which are collected below, and the rest will be introduced as and when it is necessary. We note that $A \ll B$ and $A=O(B)$ are the Vinogradov and Landau notations, respectively. By $A\asymp B$ we mean that there exists a constant $c\ge 1$ such that $B/c\le A\le cB$. Any subscripts under them (e.g., $A\ll_n B$) indicate the dependence of any implicit constants on those parameters. Throughout, $\epsilon$ will denote a small positive number, which can vary from one line to another. We use $\z, \Q, \mbb R$, and $\complex$ to denote the integers,
rationals, reals, and complex numbers, respectively. For any integer $N$ and $s\in \mbb C$, we use throughout the paper, the notation 
\begin{align}
\zeta_{(N)}(s)=\prodd_{p|N}\left(1-p^{-s} \right)^{-1},  \text{ and } \zeta^{(N)}(s)=\prodd_{p\nmid N}\left(1-p^{-s} \right)^{-1}.
\end{align}
We use the standard diagonal embedding of $\sltwo\times \sltwo$ in $\sptwo$, given by $\smat{a_1}{b_1}{c_1}{d_1}\times \smat{a_2}{b_2}{c_2}{d_2}\mapsto \begin{psmallmatrix}
    a_1 & 0 & b_1 &0\\ 0 & a_2 & 0 & b_2\\ c_1 & 0 & d_1 & 0\\ 0 & c_2 & 0 & d_2
\end{psmallmatrix}.$
In the article, $N$ denotes an odd and square-free integer. For any positive integer $\ell$, $S_\ell(N)$ and $S_\ell^{(2)}(N)$ denote the spaces of cusp forms of degree $1$ and $2$ respectively.

\subsection{Some operators on modular forms}
In this subsection, we summarize, with notation, the various Hecke operators which would be used extensively later.

\subsubsection{Hecke operators} Let $(\ell, N)=1$. For any $m\in \mbb N$ and $\ell|m$, we define $T_N(\ell, m)$ to be the double coset operator $\Gamma_0(N)\smat{\ell}{0}{0}{m}\Gamma_0(N)$. We then define
\begin{equation}
    T_N(n)=\sum_{\ell m =n} T_N(\ell, m).
\end{equation}
For any eigenfunction $f$ of $T_N(n)$ and $T_N(\ell, m)$, let $\lambda_f(n)$ and $\lambda_f(\ell, m)$ denote the corresponding eigenvalues.

\subsubsection{Level raising operator} Let $d\in \mbb N$. The level raising operator $B_d: S_{\ell}(N)\longrightarrow S_\ell(Nd)$ is defined as
\begin{equation}\label{BdOpr}
    (f|B_d)(z):= d^{\ell/2}f(dz).
\end{equation}

\subsubsection{Atkin-Lehner Operators} \label{AL-ops}
For any $d|N$, the Atkin-Lehner operator is defined as below.
\begin{enumerate}
\item Atkin-Lehner operator acting on $S_{\ell}(N)$:
\begin{equation} \label{wnd-ell}
W_N(d)= \smat{da}{1}{Nb}{d}\q \text{ where } a,b\in \mbb Z \text{ such that } da-Nb/d=1.
\end{equation}

\item Atkin-Lehner operator acting on $S_\ell^{(2)}(N)$:
\begin{equation} \label{wnd-s}
\mc W_N(d)=\smat{daI_2}{I_2}{NbI_2}{dI_2} \q \text{ where } a,b\in \mbb Z \text{ such that } da-Nb/d=1.
\end{equation}
\end{enumerate} 

Note that $\mc W_N(d) = W_N(d) \times W_N(d)$. It is readily checked that $\mc W_N(d)$ normalizes $\Gamma_0^{(2)}(N)$ and as an operator on $S_{k}^{(2)}(N)$, acts by involution. We also note that $\mc W_N(N)=\smat{0}{-I_2}{NI_2}{0}$.

We have the following lemma relating $W_N(d)$ and $B_d$, which will be used frequently in the paper.
\begin{lem}\label{lem:WBCom}
 Let $M|N$ and $d|(N/M)$. Then there exists $\gamma\in \Gamma_0(M)$ such that $W_N(d)= \gamma B_d$.
\end{lem}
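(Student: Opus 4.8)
The plan is to exhibit $\gamma := W_N(d) B_d^{-1}$ explicitly as a matrix with integer entries lying in $\Gamma_0(M)$, by writing $B_d$ as a scalar multiple of $\smat{d}{0}{0}{1}$ and computing the product directly. Concretely, the operator $B_d$ corresponds (up to the automorphy-factor normalization $d^{\ell/2}$, which is exactly what makes $B_d$ unitary and is irrelevant to the coset identity) to the matrix $\smat{d}{0}{0}{1}$. So I set $\gamma = W_N(d)\smat{d}{0}{0}{1}^{-1} = \smat{da}{1}{Nb}{d}\smat{1/d}{0}{0}{1} = \smat{a}{1}{Nb/d}{d}$, where $a,b$ are the integers from \eqref{wnd-ell} satisfying $da - Nb/d = 1$.

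First I would check that $\gamma$ has integer entries: $a,d \in \mbb Z$ by definition, $1 \in \mbb Z$ trivially, and $Nb/d \in \mbb Z$ because $d \mid N$ (indeed $d \mid N/M \mid N$). Next I would check $\det\gamma = ad - Nb/d = 1$ — this is precisely the defining relation of $W_N(d)$ in \eqref{wnd-ell}. Finally I would check the congruence condition: I need the lower-left entry $Nb/d$ to be divisible by $M$. Since $d \mid (N/M)$, write $N/M = d e$ for some $e \in \mbb Z$; then $Nb/d = (N/d)b = M e b$, which is visibly divisible by $M$. Hence $\gamma \in \Gamma_0(M)$. It then remains to observe that $\gamma B_d$ and $W_N(d)$ agree as operators: $\gamma \smat{d}{0}{0}{1} = \smat{a}{1}{Nb/d}{d}\smat{d}{0}{0}{1} = \smat{da}{1}{Nb}{d} = W_N(d)$, and the scalar factor $d^{\ell/2}$ in the definition \eqref{BdOpr} matches the slash-action normalization of $\gamma$ (which, being in $\mrm{SL}_2$, contributes $(\det)^{\ell/2}=1$), so the automorphy factors combine correctly.

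There is essentially no obstacle here — the lemma is a bookkeeping identity. The only point requiring the hypothesis in full strength is the congruence $M \mid Nb/d$, which uses $d \mid (N/M)$ rather than merely $d \mid N$; with only $d \mid N$ one would get $\gamma \in \Gamma_0(N/d)$, which need not contain $\Gamma_0(M)$ unless $M \mid N/d$. I would state the identity at the level of the rational matrix $\smat{d}{0}{0}{1}$ and leave the (trivial) reconciliation of the $d^{\ell/2}$ normalization to the reader, since $B_d$ and $\gamma|_\ell$ act by the same scalar on the automorphy factor. One could alternatively phrase the whole argument more invariantly by noting $W_N(d)$ normalizes the relevant lattice and $B_d$ realizes the index-$d$ sublattice, but the explicit $2\times 2$ computation above is the shortest route and is what I would include.
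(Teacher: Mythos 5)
Your proof is correct and follows essentially the same route as the paper's: both extract $\gamma = W_N(d)\smat{d}{0}{0}{1}^{-1} = \smat{a}{1}{Nb/d}{d}$ and verify $\gamma\in\Gamma_0(M)$ via $M\mid N/d$. You spell out the integrality, determinant, and normalization checks more explicitly, but the content is identical.
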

\begin{proof}
Let $W_N(d)$ be as in \eqref{wnd-ell}. Then we have
\begin{equation}
 \smat{da}{1}{Nb}{d}\cdot \smat{1/d}{0}{0}{1}= \smat{a}{1}{Nb/d}{d}=:\gamma.   
\end{equation}
Since $M|(N/d)$, we have that $\gamma\in \Gamma_0(M)$ and this completes the proof.
\end{proof}

\subsection{The Jacobi $V_m$ operator}\label{sec:VmOP}
The operators $V_m$ are defined on Jacobi forms of index $1$ and level $N$ as follows (even though they can be defined for any index). For $\phi \in \jkn$ define (cf. \cite[\S~4 (2)]{EZ}, \cite[Section 3]{Ibu-SK})
\begin{align} \label{vmdef}
    (\phi|V_m)(\tau,z)= m^{k-1} \sum_{\gamma } (c \tau+d)^{-k} e(-m c z^2/(c \tau+d)^2) \phi \big(\frac{a \tau +b}{c \tau+d}, \frac{m z}{c \tau+d} \big),
\end{align}
where $\gamma =\smat{a}{b}{c}{d} $ runs over a set of representatives $\Gamma_0(N) \backslash M_{2,m}(\z)$. Here $M_{2,m}(\z)$ denotes the set of $2\times 2$ integral matrices $\smat{a}{b}{c}{d}$ with determinant $m$ and $(a,N)=1$. The Fourier expansion of $\phi|V_m$ is as follows.
\begin{equation}
    (\phi|V_m)(\tau, z)= \sum_{n,r} \Big( \sum_{a |(n,r,m), \, (a,N)=1} \,  a^{k-1} c_{\phi} (\frac{nm}{a^2}, \frac{r}{a}) \Big) \, e(n \tau+rz).
\end{equation}

\subsection{An orthogonal basis for oldspace in degree 1} \label{deg1-old-basis}
In this section, we write down an arithmetic orthogonal basis for $S_{k+1}(N)$. For the new-space this is clear, so the issue is to handle the old-space.
For square-free levels, it is fortunately possible to describe this in a convenient manner, as given below. 
Let $S_\ell^{old}(N)\subset S_\ell(N)$ denote the subspace consisting of old forms. Then an orthogonal basis for $S_\ell^{old}(N)$ can be written as follows (see \cite[Section 2.1]{PY}).
\begin{equation}\label{oldbasis}
    \mc B_\ell^{old}(N):=\bigcup\nolimits_{LM=N, L>1}\{\mc B_{\ell}(L,g): g\in \mc B_{\ell}^{new}(M) \},
\end{equation}
where $\mc B_\ell(L,g)$ denotes an orthogonal basis for the subspace $S_\ell(L,g):=\mrm{span}\{ g|B_d: d|L\}$. 

For a group $G$, its dual is denoted by $G^{\wedge}$.
Here we tacitly assume that the set of divisors of $L$ has been endowed with the group structure via $d_1 * d_2 = d_1 d_2/(d_1,d_2)^2$ -- which induces an isomorphism of the set of divisors of $L$ onto $  (\mathbb{Z}/2\mathbb{Z})^{\omega(L)} $ via the map $d \mapsto (\epsilon_p)_{p|L}$, where $\epsilon_p = -1$ if $p|d$ and $+1$ otherwise. It is readily checked the $*$ operation respects the above map. Thus we can, and will tacitly identify each character $\sigma$ with $\prod_{p|L} \varepsilon_p$ where $\varepsilon_p$
vary over all sequences of $\varepsilon_p \in \{\pm 1\}$ with $p|L$. For $p|L$, therefore, $\sigma(p)\in\{+1,-1\}$ and $\sigma(d)=\prod_{p|d}\sigma(p)$. With the above conventions, one can take (see \cite{PY})
\begin{equation} \label{blg}
    \mc B_\ell(L,g)=\Big\{g_\sigma:=\sum\nolimits_{d|L}\sigma(d)g|W_N(d) \mid \sigma \in  \left( (\mathbb{Z}/2\mathbb{Z})^{\omega(L)} \right)^{\wedge} \Big\}.
\end{equation}

\subsection{The $\mrm{GL}_3\times \mrm{GL}_2$ $L$-function}
\label{Lfn-defs}
Let $f\in S_{2k}^{new}(N)$ and $g\in S_{k+1}^{new}(N_g)$ with $N_g|N$. 

For $\Re(s)>1$, we have
\begin{equation}
    L(f\otimes \mrm{sym}^2 g, s)= \prod_{p|N}L_p(f\otimes \mrm{sym}^2 g, s) \sum_{n,m\ge 1, (nm, N)=1}\frac{\lambda_f(n)A_g(n,m)}{(nm^2)^s},
\end{equation}
where for $(mn,N_g)=1$,
\begin{align}\label{GL3HeckRel}
    A_g(n,m)= \sum_{d|(n,m)}\mu(d) A_g(n/d,1) A_g(m/d,1); \q A_g(r,1)=\sum_{ab^2=r}\lambda_g(a^2).
\end{align}
For any $p|N_g$ we let
\begin{equation}
    A_g(p^t, 1):=\sum_{i=0}^{t}p^{i-t}=\sum_{d|p^t}\lambda_g(p^{2t}/d^2).
\end{equation}
The last equality follows by noting that $\lambda_g(p^2)=\lambda_g(p)^2=p^{-1}$ when $p|N_g$ \cite{ILS}. It follows by multiplicativity that for any $r| N_g^\infty$,
\begin{align}
   A_g(r, 1)  =  \sum_{r_1r_2=r} \frac{1}{r_2}.
\end{align}
From \eqref{complexfactorization} below and \cite{Böcherer_Schulze–Pillot_1996}, we can write
\begin{align}
    L_p(f\otimes \mrm{sym}^2g, s)^{-1}&= (1-\lambda_f(p)p^{-s})(1-\lambda_f(p)p^{-s-1}) & (p|N_g)\\
    L_p(f\otimes \mrm{sym}^2g, s)^{-1}&= (1-\lambda_f(p)\alpha_g(p)^2p^{-s})(1-\lambda_f(p)\beta_g(p)^2p^{-s})(1-\lambda_f(p)p^{-s})  &(p|(N/N_g).
\end{align}
When $p|(N/N_g)$, we have
\begin{equation}
 L_p(f\otimes \mrm{sym}^2g, s)= \sum_{t\ge 0}\lambda_f(p^t)\left(\sum_{i=0}^t\sum_{j=0}^i\alpha_g(p)^{2j}\beta_g(p)^{2i-2j}  \right)  p^{-ts}. 
\end{equation}
Thus for $p|(N/N_g)$, we can write
\begin{equation}
    L_p(f\otimes \mrm{sym}^2g, s)= \sum_{t\ge 0}\frac{\lambda_f(p^t)A_g(p^t,1)}{p^{ts}}.
\end{equation}
This gives us the following Dirichlet series for $L(f\otimes \mrm{sym}^2 g, s)$.
\begin{equation}\label{Drichletseries}
    L(f\otimes \mrm{sym}^2 g, s)=\sum_{r|N_g^\infty}\frac{\lambda_f(r)A_g(r,1)}{r^s}\cdot \underset{\ell|M_g^\infty, (nm,N)=1}{\sum_{n,m,\ell}} \frac{\lambda_f(n\ell)A_g(n\ell, m)}{(n\ell m^2)^s}.
\end{equation}
But $(r,n \ell)=1$ and $A(nn',mm')=A(n,m)A(n',m')$ if $(m,m')=1=(n,n')$. Therefore we can write
\begin{equation}
     L(f\otimes \mrm{sym}^2 g, s)= \sum_{n,m,\ell,r} \frac{\lambda_f(n\ell r)A_g(n\ell r, m)}{(n\ell r m^2)^s} = \underset{(m,N)=1}{\sum_{a,m}}\frac{  \lambda_f(a)A_g(a, m)}{(a m^2)^s}.
\end{equation}
Let us write $\gamma_{f\otimes \mrm{sym}^2 g}(s):=\Gamma_{\mbb C}(s+2k-3/2)\Gamma_{\mbb C}(s+k-1/2)\Gamma_{\mbb C}(s+1/2)$ and define the completed $L$-function $\Lambda(f\otimes \mrm{sym}^2 g, s)$ as
\begin{equation}
 \Lambda(f\otimes \mrm{sym}^2 g, s)=N^{3s/2} N_g^{s/2} \gamma_{f\otimes \mrm{sym}^2 g}(s)L(f\otimes \mrm{sym}^2 g, s).    
\end{equation}
\begin{rmk}
    We note here the difference in the expression for the completed $L$-function as compared to those in \cite{PV} and \cite{PVold}. This is due to the renormalization to get the central point as $s=1/2$.
\end{rmk}
The functional equation of $L(f\otimes \mrm{sym}^2 g, s)$ can be read off from the corresponding functional equation of the triple product $L$-function $L(f\otimes g \otimes g)$, which factorizes as
\begin{equation}\label{complexfactorization}
 L(f \otimes g \otimes g, s)= L(f,s)L(f \otimes \mathrm{Sym}^2(g),s),
\end{equation}
see e.g., \cite{watson2002rankin} or \cite[Remark 1.1 (4)]{chen2019deligne}. The proof of the functional equation for $L(f\otimes g \otimes g)$ can be found in \cite[Theorem 4.3]{Böcherer_Schulze–Pillot_1996}. It says (after a rearrangement to get the central point as $s=1/2$) that,
\begin{equation} \label{fnleq}
\Lambda(f\otimes \mrm{sym}^2 g, s)= \epsilon( f\otimes \mrm{sym}^2 g) \Lambda(f\otimes \mrm{sym}^2 g, 1-s),
\end{equation}
where $\epsilon(f\otimes \mrm{sym}^2 g)$ is the root-number of $f\otimes \mrm{sym}^2 g$. The reader can verify this from e.g., \cite[Section 7]{chen2019deligne}.

We provide some details about the  $\epsilon$-factors and the functional equation, as they do not seem to be available all in one place in the literature. From \eqref{complexfactorization} we get that $ \Lambda(f \otimes g \otimes g, s)= \Lambda(f,s)\Lambda(f \otimes \mathrm{Sym}^2(g),s)$ and  since the Gamma factors match,  we get $\epsilon(f\otimes g\otimes g)= \epsilon( f\otimes \mrm{sym}^2 g) \epsilon(f) $. From \cite[Theorem 4.3]{Böcherer_Schulze–Pillot_1996}),  we also have  $\epsilon(f\otimes g\otimes g)=-\prod_{p|N} w_f(p)\prod_{p|N_g} w_g(p)^2 = -\prod_{p|N} w_f(p) $. Since $f$ is of weight $2k$ with $k$ odd, $\epsilon(f)=i^{2k} \prod_{p|N} w_f(p)= -\prod_{p|N}w_f(p)$. Thus we get that $\epsilon( f\otimes \mrm{sym}^2 g)=1$.

\subsection{Pullbacks of Saito-Kurokawa lifts with level.}
Let $k$ be odd. We start from the expression
\begin{align} \label{z=0}
F^\circ(\tau,\tau'):=F(\begin{psmallmatrix} \tau & 0 \\ 0 & \tau' \end{psmallmatrix}) = \sum_{g,g' \in \mc{B}_{k+1}(N)} c_{g,g'} \, g(\tau) \otimes g'(\tau'),
\end{align}
where $c_{g,g'} \in \mathbb{C}$ and $g,g'$ run over an orthogonal basis $\mc{B}_{k+1}(N)$ of $S_{k+1}(N)$. We start with an SK newform of level $N$. In \cite{das-anamby2}, it is shown that the SK newforms of square-free level $N$ and weight $k+1$ are in one-one correspondence with those in $\jk(N)$ via the EZI map and hence to those in $S_{k+1/2}^{+, new}(4N)$ via the EZ-isomorphism (cf. \cite[Theorem~5.4]{MR}). In what follows, we therefore use the relation $h \leftrightarrow F$ from \eqref{ezi-corr}.

The following lemma is the analogue of \cite[Lemma~1.1]{Ich05} in the level aspect -- it allows us to show that the spectral expansion of $F^\circ$ in \eqref{z=0} is supported only on the diagonal. We include it because it is a bit delicate due to the involvement of the level.
Also, since this result is a consequence of multiplicity-one on $\GL{}{2}$, fortunately, in the lemma below we need to only consider $T(q)=T_N(q)$ for primes $q$ such that $q \nmid N$. 

\begin{lem}  \label{diag-period}
    Let $h\in S_{k+1/2}^{+, new}(4N)$ and $F\in \skk^{new}(N)$ be the associated Saito-Kurokawa lift. Then we have
    \begin{equation}
        (T(q)\otimes id) (F^\circ) = (id \otimes T(q)) (F^\circ)
    \end{equation}
    for all primes $q$ with $(q,N)=1$.
\end{lem}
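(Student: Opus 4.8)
The plan is to exploit the fact that $F$, being a Saito–Kurokawa lift, has a spinor $L$-function which (away from $N$) factors as a product of two Riemann zeta shifts and an $\mrm{GL}(2)$ $L$-function, and in particular the Hecke eigenvalues $\lambda_F(q)$ at primes $q \nmid N$ are governed by the Maaß relations. Concretely, I would start from the Fourier expansion $F^\circ(\tau,\tau') = \sum_{T_1, T_2} \big(\summ_{R} a_F\psmb T_1 & R/2 \\ R^t/2 & T_2 \psme\big)\, e(T_1\tau) e(T_2 \tau')$ obtained by restricting $Z = \begin{psmallmatrix}\tau & 0 \\ 0 & \tau'\end{psmallmatrix}$, so that the $(n,m)$-th ``matrix'' Fourier coefficient of $F^\circ$ as a function of two variables is $\sum_{r\in\z} a_F\psmb n & r/2 \\ r/2 & m \psme$. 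The operator $T(q) \otimes \mrm{id}$ acts on this via the usual $\GL{}{2}$ Hecke operator in the first variable $\tau$, and $\mrm{id}\otimes T(q)$ in the second; the claim is an identity of the resulting doubly-indexed Fourier coefficients.

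The key step is to reduce the desired symmetry to a statement purely about $a_F(T)$ under the degree-$2$ Hecke operators, and then to use the Maaß relation \eqref{sk-1}. Since $q \nmid N$, I can work with the classical Hecke operator $T^{(2)}_N(q)$ on $S^{(2)}_{k+1}(N)$, for which $F$ is an eigenform; restricting the eigenform equation $F | T^{(2)}_N(q) = \lambda_F(q) F$ to the diagonal and comparing with how $T^{(2)}_N(q)$ decomposes in terms of the two ``slots'' on $\h\times\h$ after pullback. The cleaner route, following Ichino's Lemma~1.1, is: first express $(T(q)\otimes\mrm{id})(F^\circ) - (\mrm{id}\otimes T(q))(F^\circ)$ as the pullback of a Siegel modular form built from $F$ by Hecke operators, and observe that its Fourier coefficients at $\psmb n & r/2 \\ r/2 & m\psme$ depend only on the $\GL{}{1}$-type quantities $D = r^2 - 4nm$ and $c(T) = \gcd(n,r,m)$ through \eqref{sk-1}. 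Because $a_F(T)$ is (for $(c(T),N)=1$) a function only of $D$ and the divisor sum over $d \mid c(T)$, the action of the Hecke operator at $q$ in either variable produces the same arithmetic expression in $c_h$, forcing the difference to vanish.

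In more detail, I would write out the action of $T(q)$ (a sum over $\smat{1}{b}{0}{q}$, $\smat{q}{0}{0}{1}$ and $\smat{1}{0}{0}{q}$-type cosets, up to normalization) on the $\tau$-Fourier expansion, getting a new Fourier coefficient of the form $q^{k} a_{F^\circ}(n/q, r/q, m) + a_{F^\circ}(nq, rq, m) + (\text{middle term})$; then use $a_{F^\circ}(n,r,m) = \sum_r a_F\psmb n & r/2 \\ r/2 & m\psme$ and substitute \eqref{sk-1}. The point is that $D\big(\psmb nq & rq/2 \\ rq/2 & m\psme\big) = q^2 D\big(\psmb n & r/2 \\ r/2 & m\psme\big)$ is symmetric in whether the $q$ is distributed to the first or second variable (and similarly for the other cosets), and $\gcd$ conditions with $N$ are unaffected since $(q,N)=1$. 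Hence the two expressions coincide term by term. I expect the \textbf{main obstacle} to be bookkeeping: correctly matching the coset representatives of $T_N(q)$ in degree $1$ with the restriction of $F$ and handling the half-integral indices $r/2$ and the divisibility conditions $q \mid (n,r,m)$ cleanly, especially making sure the ``middle'' Hecke coset (the one mixing the off-diagonal entry) contributes symmetrically; this is exactly the point the authors flag as ``a bit delicate due to the involvement of the level,'' so care is needed that the $(a, N) = 1$ constraint in all the relevant double-coset decompositions is preserved under pullback. Once the Fourier coefficients are matched, multiplicity-one for $\GL{}{2}$ is not even needed for this lemma itself—it is the downstream consequence—so the proof is a direct computation with \eqref{sk-1}.
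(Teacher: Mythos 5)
Your high-level plan — compute the $(n,m)$-Fourier coefficients of $F^\circ$ directly from the Maa\ss{} relation \eqref{sk-1} and check that applying $T(q)$ in either variable produces the same arithmetic expression — is exactly what the paper does, and your observation that $(q,N)=1$ keeps the ``$(d,N)=1$'' constraint intact is the right reason it works. However, the mechanism sketched in your final paragraph is off in a way that would derail a literal implementation. There is no ``middle Hecke coset mixing the off-diagonal entry'': once you restrict to $z=0$, the index $r$ is already summed away inside the coefficient $a_{F^\circ}(n,m)=\sum_r a_F\psmb n & r/2 \\ r/2 & m\psme$, and $T(q)\otimes\mathrm{id}$ raises or lowers only $n$, not $(n,r)$ jointly; the raised term therefore has discriminant $4qmn-r^2$, which is \emph{not} $q^2$ times $D\psmb n & r/2 \\ r/2 & m\psme$. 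The paper's actual step, after substituting \eqref{sk-1}, is to split the divisor sum $\sum_{d\mid(nq,r,m),\,(d,N)=1}$ appearing in the raised coefficient into the $q\nmid d$ part — which imposes $(d,Nq)=1$, reduces $d\mid(nq,r,m)$ to the $m\!\leftrightarrow\! n$-symmetric condition $d\mid(n,r,m)$, and keeps $c_h(\,(4mnq-r^2)/d^2\,)$ as is — and the $q\mid d$ part, which forces $q\mid r$ and $q\mid m$, and after reindexing $(r,m)\mapsto(r/q,m/q)$ reassembles into the $B_q$-type contribution of $T(q)$ in the \emph{other} variable. Applying the identical split to $(\mathrm{id}\otimes T(q))(F^\circ)$ yields the same three-term decomposition and the two sides match term by term. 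Your ``$D(nq,rq,m)=q^2D(n,r,m)$'' intuition is detecting only what happens inside the $q\mid d$ branch, not the raising operation as a whole; that is the detail you would need to correct to turn the sketch into a proof, and once fixed you land precisely on the paper's argument.
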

\begin{proof}
    We first note that from the Fourier expansion of $F$,
    \begin{equation}
        F^\circ(\tau, \tau')= \sum_{n,m}\underset{r^2<4mn}{\sum_{r\in \mbb Z}}\underset{(d,N)=1}{\sum_{d|(n,r,m)}}d^{k-1} c_h(\frac{4mn-r^2}{d^2} )e(n\tau)e(m\tau').
    \end{equation}
From the definition of the Hecke operator $T(q)$ ($q\nmid N$) acting on $S_{k}(N)$, we see that
{\allowdisplaybreaks
\begin{align}
    (T(q)\otimes id) (F^\circ)&= \sum_{n,m}\underset{r^2<4mnq}{\sum_{r\in \mbb Z}}\Big(\underset{(d,N)=1}{\sum_{d|(nq,r,m)}}d^{k-1} c_h(\frac{4mnq-r^2}{d^2} )\Big)e(m\tau')\\
    &+q^{k-1} \sum_{n,m}\underset{r^2<4mn}{\sum_{r\in \mbb Z}}\Big(\underset{(d,N)=1}{\sum_{d|(n,r,m)}}d^{k-1} c_h(\frac{4mn-r^2}{d^2} )\Big)e(nq\tau)e(m\tau')\\
    &= \sum_{n,m}\underset{r^2<4mnq}{\sum_{r\in \mbb Z}}\Big(\underset{(d,Nq)=1}{\sum_{d|(n,r,m)}}d^{k-1} c_h(\frac{4mnq-r^2}{d^2} )\Big)e(n\tau)e(m\tau')\\
    &+ q^{k-1} \sum_{n,m}\underset{r^2<4mn}{\sum_{r\in \mbb Z}}\Big(\underset{(d,N)=1}{\sum_{d|(n,r,m)}}d^{k-1} c_h(\frac{4mn-r^2}{d^2} )\Big)e(n\tau)e(mq\tau')\\
    &+q^{k-1} \sum_{n,m}\underset{r^2<4mn}{\sum_{r\in \mbb Z}}\Big(\underset{(d,N)=1}{\sum_{d|(n,r,m)}}d^{k-1} c_h(\frac{4mn-r^2}{d^2} )\Big)e(nq\tau)e(m\tau')\\
    &= \sum_{n,m}\underset{r^2<4mnq}{\sum_{r\in \mbb Z}}\Big(\underset{(d,N)=1}{\sum_{d|(n,r,mq)}}d^{k-1} c_h(\frac{4mnq-r^2}{d^2} )\Big)e(n\tau)e(m\tau')\\
    &+q^{k-1} \sum_{n,m}\underset{r^2<4mn}{\sum_{r\in \mbb Z}}\Big(\underset{(d,N)=1}{\sum_{d|(n,r,m)}}d^{k-1} c_h(\frac{4mn-r^2}{d^2} )\Big)e(n\tau)e(mq\tau').\qedhere
\end{align}
}
\end{proof}

As an immediate corollary, we get the following.
\begin{cor} Let $F\in \skk^{new}(N)$ be a newform and $c_{g,g'}$ be as in \eqref{z=0}. Then
\begin{enumerate}
    \item  Let $g,g'\in \mc B_{k+1}^{new}(N)$. Then $c_{g,g'}=0$ for $g\neq g'$.
    \item Let $g,g'\in \mc B_{k+1}^{old}(N)$. Then $c_{g,g'}=0$ unless $g$ and $g'$ belong to the same eigenspace for all $T_n$ with $(n,N)=1$.
\end{enumerate} 
\end{cor}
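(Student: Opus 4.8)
The plan is to diagonalize the away-from-level Hecke action and read the vanishing of the coefficients $c_{g,g'}$ off directly from Lemma~\ref{diag-period}. First I would pin down the orthogonal basis $\mc B_{k+1}(N)$ used in \eqref{z=0} to be the union of $\mc B_{k+1}^{new}(N)$ and the oldbasis of \eqref{oldbasis}--\eqref{blg}. The only property needed is that every element $g$ of this basis is a simultaneous eigenform for all the Hecke operators $T_N(n)$ with $(n,N)=1$: newforms are such by construction, and each $g_\sigma=\sum_{d|L}\sigma(d)\,g|W_N(d)$ is a linear combination of Atkin--Lehner translates of a single newform $g$, all of which carry the same away-from-level eigenvalues $\lambda_g(n)$ as $g$. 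Writing $g|T_N(q)=\lambda_g(q)\,g$ for primes $q\nmid N$, and recalling that the $T_N(n)$ for composite $n$ coprime to $N$ are polynomials in the $T_N(q)$, $q\nmid N$, two basis elements $g,g'$ lie in a common eigenspace for all $T_n$, $(n,N)=1$, if and only if $\lambda_g(q)=\lambda_{g'}(q)$ for every prime $q\nmid N$.

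Next I would substitute $F^\circ=\sum_{g,g'}c_{g,g'}\,g\otimes g'$ into the identity of Lemma~\ref{diag-period}. Applying $T(q)\otimes\mathrm{id}$ and $\mathrm{id}\otimes T(q)$ term by term and subtracting gives
\[
\sum_{g,g'}c_{g,g'}\bigl(\lambda_g(q)-\lambda_{g'}(q)\bigr)\,g\otimes g'=0
\]
for every prime $q\nmid N$. The products $\{g\otimes g'\}$ are pairwise orthogonal, hence linearly independent, in $S_{k+1}(N)\otimes S_{k+1}(N)$, so every coefficient vanishes: $c_{g,g'}\bigl(\lambda_g(q)-\lambda_{g'}(q)\bigr)=0$ for all $q\nmid N$. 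Therefore $c_{g,g'}=0$ unless $\lambda_g(q)=\lambda_{g'}(q)$ for all primes $q\nmid N$, i.e. unless $g$ and $g'$ lie in the same eigenspace for all $T_n$ with $(n,N)=1$. This is assertion~(2).

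For assertion~(1) I would invoke multiplicity one on $\GL{}{2}$: if $g,g'\in\mc B_{k+1}^{new}(N)$ have $\lambda_g(q)=\lambda_{g'}(q)$ for all $q\nmid N$, then, being newforms of the same level $N$, they are proportional, hence equal as members of the normalized basis; so $c_{g,g'}=0$ whenever $g\neq g'$. I do not expect a serious obstacle in this corollary — it is a formal consequence of Lemma~\ref{diag-period} — the one point meriting care being that agreement of the away-from-level eigenvalues forces equality only in the newform case. This is precisely why the oldform statement can be pushed no further than~(2) here, and why the finer vanishing and evaluation of the oldspace coefficients $c_{g,g'}$ is postponed to the later sections, through Theorems~\ref{th:IPequiv} and~\ref{thm:CVOldclass}.
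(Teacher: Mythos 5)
Your proof is correct and follows exactly the route the paper has in mind when it declares the corollary ``immediate'' from Lemma~\ref{diag-period}: substitute the eigenbasis expansion of $F^\circ$ into the commutation identity, subtract, use linear independence of the $g\otimes g'$, and finish the newform case with strong multiplicity one. The paper gives no further details, so your write-up is the intended argument made explicit.
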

In other words, via classical newform theory, $c_{g,g'} = 0$ unless both of  the oldforms $g,g'$ arise from the same underlying newform $g''$ of some level dividing $N$.

We conclude this section with following vanishing result for the inner product $\lan F^\circ, g_\sigma\otimes g_{\sigma'}\ran$ when $g_\sigma,g_{\sigma'} \in \mc B_{k+1}^{old}(N)$ and $\sigma\neq \sigma'$.

\begin{prop} \label{cv-00-lem}
    For a proper divisor $N_g|N$, let $g\in S_{k+1}^{new}(N_g)$ and $M_g=N/N_g$. Then for any two distinct characters $\sigma$, $\sigma'$ of $\left(\mbb Z/2\mbb Z\right)^{\omega(M_g)}$,
    \begin{equation}
        \lan F^\circ, g_\sigma\otimes g_{\sigma'}\ran=0.
    \end{equation}
\end{prop}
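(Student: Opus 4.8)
The plan is to exploit the full Hecke equivariance of the pullback operation together with the explicit description of $g_\sigma$ in \eqref{blg} and the multiplicity-one vanishing already available. First I would recall that by \eqref{blg}, $g_\sigma = \sum_{d|M_g}\sigma(d)\, g|W_N(d)$ and $g_{\sigma'} = \sum_{e|M_g}\sigma'(e)\, g|W_N(e)$, so that
\begin{equation}
  \lan F^\circ, g_\sigma\otimes g_{\sigma'}\ran = \sum_{d,e|M_g}\sigma(d)\sigma'(e)\,\lan F^\circ, g|W_N(d)\otimes g|W_N(e)\ran.
\end{equation}
Hence it suffices to understand the ``elementary'' periods $\lan F^\circ, g|W_N(d)\otimes g|W_N(e)\ran$ and to show that, after the above summation, the contributions cancel whenever $\sigma\neq\sigma'$.

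The key step is to pin down how $\mc W_N(d) = W_N(d)\times W_N(d)$ interacts with $F^\circ$. Since $F$ is a newform of level $N$ (and $d|M_g|N$, so $W_N(d)$ is a genuine Atkin--Lehner involution at level $N$), one has $F|\mc W_N(d) = w_F(d) F$ for the scalar $w_F(d) = \prod_{p|d} w_F(p)$; restricting to the diagonal and using $\mc W_N(d) = W_N(d)\times W_N(d)$ this gives the crucial symmetry
\begin{equation}
  \lan F^\circ, g|W_N(d)\otimes g|W_N(e)\ran = \overline{w_F(d)}\,\lan F^\circ, g|W_N(1)\otimes g|W_N(e/d\cdot\text{(adjust)})\ran,
\end{equation}
i.e. the period $\lan F^\circ, g|W_N(d)\otimes g|W_N(e)\ran$ depends, up to the fixed unit $w_F(d)$, only on the product-class $d*e = de/(d,e)^2$ in the group $(\z/2\z)^{\omega(M_g)}$, because each $W_N(p)$ is an involution and $F^\circ$ is invariant under $\mc W_N(p)=W_N(p)\times W_N(p)$ up to the scalar $w_F(p)$. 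Reindexing the double sum by $c := d*e$ and, say, $d$, one finds that the inner sum over $d$ with $c$ fixed produces a factor $\sum_{d}\sigma(d)\sigma'(d)\,\sigma'(c)\cdot(\text{const depending on }c)$; and $\sum_{d|M_g}(\sigma\sigma')(d)$ vanishes precisely because $\sigma\sigma'$ is a nontrivial character of $(\z/2\z)^{\omega(M_g)}$ when $\sigma\neq\sigma'$, by orthogonality of characters. This forces $\lan F^\circ, g_\sigma\otimes g_{\sigma'}\ran = 0$.

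Alternatively — and this may be the cleanest route, sidestepping the bookkeeping with $w_F$ — one observes that $g_\sigma$ is, by construction, an eigenform with eigenvalue $\sigma(p)$ of the (level-$N$) operator obtained by averaging $W_N(p)$ appropriately on the span $S_{k+1}(N_g, g)$; more precisely $g_\sigma|W_N(p) = g_{\sigma\cdot\epsilon_p}$ up to normalization, so the $g_\sigma$ diagonalize a commuting family of involutions. Then $\lan F^\circ, g_\sigma\otimes g_{\sigma'}\ran = \lan (F|\mc W_N(p))^\circ, g_\sigma|W_N(p)\otimes g_{\sigma'}|W_N(p)\ran$ combined with $F|\mc W_N(p) = w_F(p)F$ yields $\lan F^\circ, g_\sigma\otimes g_{\sigma'}\ran = w_F(p)\,\sigma(p)\sigma'(p)\,\lan F^\circ, g_{\sigma\epsilon_p}\otimes g_{\sigma'\epsilon_p}\ran$; iterating and comparing, if $\sigma\neq\sigma'$ pick a prime $p|M_g$ with $\sigma(p)\neq\sigma'(p)$, so that $\sigma(p)\sigma'(p) = -1$, and one gets $\lan F^\circ, g_\sigma\otimes g_{\sigma'}\ran = -w_F(p)\lan F^\circ, g_{\sigma\epsilon_p}\otimes g_{\sigma'\epsilon_p}\ran$ — but $\sigma\epsilon_p$ and $\sigma'\epsilon_p$ are again distinct and differ at the \emph{same} set of primes as $\sigma,\sigma'$, so one has to be a little careful that the recursion actually closes to force the period to zero rather than merely relating two a priori distinct periods.

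\textbf{Main obstacle.} The delicate point is exactly that last one: a single application of $\mc W_N(p)$ only \emph{relates} one period to another rather than annihilating it, so one must either (a) run the character-orthogonality argument on the full double sum as in the first approach — where the vanishing is genuinely a sum over $d|M_g$ collapsing to zero — or (b) combine the involution relation at $p$ with the already-known multiplicity-one statement and the fact that the $g_\sigma$ for distinct $\sigma$ lie in distinct eigenspaces to get a contradiction from a nonzero period. I expect approach (a) to be the safest: carefully expand $g_\sigma\otimes g_{\sigma'}$ in the $W_N(d)$-basis, use $F|\mc W_N(d) = w_F(d)F$ to reduce every elementary period to a period indexed by the class $d*e$, and then invoke $\sum_{d|M_g}(\sigma\sigma')(d)=0$. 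The only real work is verifying the claimed symmetry of the elementary periods under $d\mapsto d*e$, which amounts to the identity $\lan F^\circ, g|W_N(d)\otimes g|W_N(e)\ran = w_F(d)\lan F^\circ, g\otimes g|W_N(d*e)\ran$ up to a harmless power of $d$ — a direct consequence of $W_N(d)$ being an involution on $S_{k+1}(N)$ and of $\mc W_N(d)$ normalizing $\Gamma_0^{(2)}(N)$.
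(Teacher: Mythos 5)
Your first approach (a) is a valid alternative route to the paper's, and your second approach (b) is in fact \emph{essentially the paper's proof} — but you miscomputed a key identity there, and that miscomputation is exactly what made you worry that the argument ``doesn't close.''

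The error is in the line ``more precisely $g_\sigma|W_N(p) = g_{\sigma\cdot\epsilon_p}$ up to normalization.'' This is not right: from \eqref{blg} and the involution property $W_N(d)W_N(p)\equiv W_N(d*p)\bmod \Gamma_0(N)$, one computes
\begin{equation}
 g_\sigma|W_N(p) \;=\; \sum\nolimits_{d|M_g}\sigma(d)\,g|W_N(d*p) \;=\; \sum\nolimits_{e|M_g}\sigma(e*p)\,g|W_N(e) \;=\; \sigma(p)\,g_\sigma,
\end{equation}
i.e.\ $g_\sigma$ is a genuine $\sigma(p)$-\emph{eigenvector} of $W_N(p)$, not permuted to a different basis element. (Note this is consistent with your own parenthetical that the $g_\sigma$ ``diagonalize a commuting family of involutions'' — diagonalizing means being eigenvectors, which contradicts the permutation picture you wrote a clause earlier.) With this corrected, your relation becomes
\begin{equation}
\lan F^\circ, g_\sigma\otimes g_{\sigma'}\ran \;=\; w_F(p)\,\sigma(p)\,\sigma'(p)\,\lan F^\circ, g_\sigma\otimes g_{\sigma'}\ran,
\end{equation}
with the \emph{same} period on both sides. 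Choosing $p|M_g$ with $\sigma(p)\neq\sigma'(p)$ gives $\sigma(p)\sigma'(p)=-1$, and since $F$ is a Saito--Kurokawa lift one has $w_F(p)=+1$ at every $p|N$ (this is \cite[Thm.~5.2(ii)]{schmidt-SKlift} and is essential), so the equation reads $X=-X$ and forces $X=0$. There is simply no recursion to close — this is precisely the paper's one-step argument.

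Your approach (a), expanding the double sum over $d,e|M_g$ and using orthogonality of characters, is also correct and could be written down cleanly, but it quietly depends on the same input: after reindexing by $c=d*e$, the inner sum over $d$ is $\sum_{d|M_g}(\sigma\sigma' w_F)(d)$, with an extra $w_F(d)$ that you pulled out as a ``fixed unit'' but which actually depends on $d$. If $w_F$ happened to equal the nontrivial character $\sigma\sigma'$, this inner sum would \emph{not} vanish. It is again the SK fact $w_F(p)=+1$ (hence $w_F$ trivial) that saves the argument. So both routes need that ingredient, and (b), once corrected, is the slicker of the two and is the one the paper takes.
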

\begin{proof}
    Since $\sigma\neq \sigma'$, there exists a $p|M_g$ such that $\sigma(p)=1$ and $\sigma'(p)=-1$. From our choice of basis, we have that $g_\sigma|W_N(p)= \sigma(p)g_\sigma$ and $g_{\sigma'}|W_N(p)= \sigma'(p)g_{\sigma'}$. 

    Now using the fact that $F$ is an eigenfunction of $\mc W_N(p)= W_N(p)\times W_N(p)$ with eigenvalue $+1$ (\cite[Theorem~5.2(ii)]{schmidt-SKlift}), we see that
    \begin{align}
        \lan F^\circ, g_\sigma\otimes g_{\sigma'}\ran= \lan (F|\mc W_N(p))^\circ, g_\sigma\otimes g_{\sigma'}\ran&=\lan F^\circ| (W_N(p)\times W_N(p) ), g_\sigma\otimes g_{\sigma'}\ran\\
        &= \sigma\sigma'(p) \lan F^\circ, g_\sigma\otimes g_{\sigma'}\ran.
    \end{align}
We get the claim in the lemma since $\sigma\sigma'(p)=-1$ and this completes the proof.
\end{proof}


\section{Coset representatives} \label{coset-section}
Let $N$ be any square-free integer and $M|N$. As mentioned in the Introduction, we will have to choose carefully coset representatives of $\Gamma_0^{(2)}(M)$ in $\Gamma_0^{(2)}(N)$, which `more or less' commute with the pullback operation. We try to involve the (diagonal) lifts of $\SL{2}{\z} \times \SL{2}{\z}$ matrices for this purpose.
 Write $N=d_1d_2$ and let $d_1\bar{d_1}\equiv 1\mod d_2$. We define
\begin{align} \label{bnd-def}
    B_{d_1}(a) := \begin{pmatrix} 1 & 0 & 0 & 0 \\ 0 & 1 & 0 & 0\\ 0 & d_1\bar{d_1}a & 1 & 0\\ d_1\bar{d_1}a & 0 & 0 & 1\\  \end{pmatrix}\in \Gamma_0^{(2)}(d_1).
\end{align}
Next, note that we have a canonical bijection
\begin{equation} \label{coset-prod}
     \prod_{p|N}\Gamma_0(p)\backslash\sltwo \cong   \Gamma_0(N) \backslash \sltwo .
\end{equation}
We wish to make this precise below by specific choice of representatives, as we will need these later.
Let $\gamma(a)= \smat{0}{1}{-1}{a}\in \sltwo$. It is well known that $\{I_2, \gamma(a): a\bmod p\}$ form a set of closet representatives for $\Gamma_0(p)\backslash\sltwo$. Now let $d|N$ and for each each $p_i|(N/d)$ let $0\le a_i < p_i$. Let $a$ be such that $a\equiv a_i \mod p_i$. Then the above bijection can be realized via the map
\begin{equation}
    (\gamma_{p_1}(a_1),...\gamma_{p_t}(a_t),I_2,...I_2) \longmapsto \gamma_d(a),
\end{equation}
where we write $N/d=p_1...p_t$, and $\gamma_d(a)$ is uniquely  characterized by the congruences (upto $\Gamma_0(N)$):
\begin{equation}\label{gammada}
\begin{split}
     \gamma_d(a)&\equiv \smat{0}{1}{-1}{a_i} \bmod p_i, \text{ for } p_i|(N/d);\\
     \gamma_d(a)&\equiv \smat{1}{0}{0}{1} \bmod p_i \text{ for } p_i|d.
\end{split}  
\end{equation}
\begin{rmk}\label{rmk:MpM}
    We also note here that for $N=Mp$, the set $\{I_2, \gamma_M(a): a\bmod p \}$, forms set of coset representatives for $\Gamma_0(Mp)\backslash \Gamma_0(M)$.
\end{rmk}
Next, write $N=Mp_1p_2..p_t$ and we choose coset representatives $ \{\gamma\}$ of $\Gamma_0(N)\backslash\Gamma_0(M)$ such that $\gamma$ corresponds to $(\gamma_{p_1},....,\gamma_{p_t}, I_2,....,I_2)$, where $\gamma_{p_i}\in\Gamma_0(Mp_{i})\backslash\Gamma_0(M)$. This can be easily seen by fixing $d=M$ in the discussion after \eqref{coset-prod}.

Now we define the following subsets of $\Gamma_0^{(2)}(M)$.
\begin{align}
    \mc C_1(N,M)&:=\{\gamma\times I_2: \gamma\in \Gamma_0(N)\backslash\Gamma_0(M)\}\\
    \text{ and for any } d|(N/M) \text{ and } d>1\\
    \mc C_d(N,M)&:=\{ B_{N/d}(a)\cdot (\gamma\times \gamma_{N/d}(b)): a,b\bmod d, \gamma\in \Gamma_0(N)\backslash\Gamma_0(M)\} .
\end{align}
We set 
\begin{align}
    \mc C(N,M):= \mc C_1(N,M) \cup_{d|(N/M), d>1} \mc C_d(N,M).
\end{align}
\begin{rmk}
    We note here are for $N=p$ and $M=1$, $\mc C(p,1)$ is exactly the set $R_p(0)\cup R_p(1)\cup R_p(2)$ as in \cite{klein2004}.
\end{rmk}
\begin{lem}\label{lem:MpMCoset}
    Let $M$ be any square-free integer and $(p,M)=1$. Then $\mc C(Mp, M)$ is a set of coset representatives for $\Gamma_0^{(2)}(Mp)\backslash\Gamma_0^{(2)}(M)$.
\end{lem}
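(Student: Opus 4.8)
The plan is to prove that $\mc C(Mp, M)$ is a complete, non-redundant set of coset representatives for $\Gamma_0^{(2)}(Mp)\backslash\Gamma_0^{(2)}(M)$ by a counting-plus-surjectivity argument. First I would record the index $[\Gamma_0^{(2)}(M):\Gamma_0^{(2)}(Mp)]$: since $p\nmid M$ and $p$ is prime, this index equals the index $[\Sp{4}{\z}:\Gamma_0^{(2)}(p)]$, which is well known to be $(p+1)(p^2+1) = p^3 + p^2 + p + 1$. On the other hand, $|\mc C_1(Mp,M)| = [\Gamma_0(Mp):\Gamma_0(M)] = p+1$ (using Remark~\ref{rmk:MpM}), and for each $d\mid p$ with $d>1$, i.e. $d = p$, the set $\mc C_p(Mp,M)$ has cardinality $p\cdot p\cdot(p+1) = p^2(p+1)$. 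Hence $|\mc C(Mp,M)| = (p+1) + p^2(p+1) = (p+1)(p^2+1)$, which matches the index exactly. So it suffices to prove that the map $\mc C(Mp,M)\to \Gamma_0^{(2)}(Mp)\backslash\Gamma_0^{(2)}(M)$ is surjective (equivalently, that the listed elements are pairwise inequivalent modulo $\Gamma_0^{(2)}(Mp)$ on the left).

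Second, I would set up a normal form for elements of $\Gamma_0^{(2)}(M)$ modulo $\Gamma_0^{(2)}(Mp)$ on the left. The congruence subgroup $\Gamma_0^{(2)}(Mp)$ inside $\Gamma_0^{(2)}(M)$ is the kernel of reduction mod $p$ landing in the Siegel parabolic $P(\mbb F_p)\subset \Sp{4}{\mbb F_p}$ (block lower-left zero). So the coset space is identified with $P(\mbb F_p)\backslash \Sp{4}{\mbb F_p}$, and this is the Lagrangian Grassmannian of $\mbb F_p^4$ with the standard symplectic form — it has $(p+1)(p^2+1)$ points. The strategy is then to show that the reductions mod $p$ of the matrices in $\mc C(Mp,M)$ represent all $(p+1)(p^2+1)$ Lagrangian subspaces (equivalently, all cosets in $P\backslash\Sp{4}{\mbb F_p}$). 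I would organize this by the ``type'' of the Lagrangian relative to the standard one: the $p+1$ elements $\gamma\times I_2$ (coming from $\Gamma_0(Mp)\backslash\Gamma_0(M)$, reduced mod $p$) hit the cell where the lower-left block has rank $\le 1$ in a controlled way, while the $p^2(p+1)$ elements $B_{N/d}(a)\cdot(\gamma\times\gamma_{N/d}(b))$ with $d=p$, $N/d = M$, hit the remaining cells — here $B_M(a)$ reduces mod $p$ to $\psmb I_2 & 0 \\ \psme$-type matrix with a nonzero $(3,2)$ and $(4,1)$ entry $a\bar M$ (note $\gcd(M,p)=1$ so $\bar M$ makes sense mod $p$), and $\gamma_{N/d}(b) = \gamma_M(b)$ reduces to $\smat{0}{1}{-1}{b}\bmod p$. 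I would verify by a direct but short computation that for distinct parameters $(a,b,\gamma\bmod p)$ these land in distinct cosets, using the explicit Bruhat-type decomposition of $\Sp{4}{\mbb F_p}$ with respect to $P$.

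Alternatively — and this is likely the cleaner write-up — I would avoid the abstract Grassmannian and argue directly: take an arbitrary $\delta\in\Gamma_0^{(2)}(M)$ and produce $\eta\in\Gamma_0^{(2)}(Mp)$ with $\eta\delta$ equal to one of the listed representatives. Reducing $\delta$ mod $p$ and using that $\Sp{4}{\mbb F_p} = \bigsqcup P(\mbb F_p)w_i P(\mbb F_p)$ over Weyl representatives, one reduces to finitely many cases; in each case the factors $B_M(a)$ and $\gamma\times\gamma_M(b)$ supply exactly the needed lower-left unipotent and the ``long element'' contributions. The compatibility is precisely why these particular representatives were chosen (as the authors emphasize, so that they behave well under the diagonal embedding $\SL{2}{}\times\SL{2}{}\hookrightarrow\Sp{4}{}$): $B_M(a)$ lies in $\Gamma_0^{(2)}(M)$ by \eqref{bnd-def}, and $\gamma_M(b)$ is built from the level-$1$ representatives $\gamma(b) = \smat{0}{1}{-1}{b}$.

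The main obstacle I anticipate is the non-redundancy/surjectivity bookkeeping for the family $\mc C_p(Mp,M)$: one must check that the four ``coordinates'' $(a\bmod p, b\bmod p, \gamma\bmod p)$ (with $\gamma$ ranging over $p+1$ classes) genuinely parametrize distinct left $\Gamma_0^{(2)}(Mp)$-cosets and, together with the $\mc C_1$ piece, exhaust all of them. Since the count already matches the index, it is enough to prove either injectivity or surjectivity, so I would prove surjectivity by the Bruhat-cell sweep above: show that every Lagrangian subspace of $\mbb F_p^4$ (equivalently every coset) is represented, cell by cell, by writing down which parameter values hit which cell. This is a finite, elementary verification in $\Sp{4}{\mbb F_p}$; the only real care needed is handling the ``boundary'' between the cell covered by $\mc C_1$ (the $p+1$ Lagrangians containing a fixed isotropic line adapted to the $\SL{2}\times\SL{2}$ splitting) and those covered by $\mc C_p$, and confirming there is no double-counting at that boundary — which the exact index match guarantees once surjectivity is in hand.
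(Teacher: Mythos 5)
Your proposal is sound in outline and takes a genuinely different route from the paper. You fix the same starting point (the cardinality $|\mc C(Mp,M)| = (p+1) + p^2(p+1) = (p+1)(p^2+1) = [\Gamma_0^{(2)}(M):\Gamma_0^{(2)}(Mp)]$), after which either injectivity or surjectivity of the map to cosets suffices. The paper proves \emph{injectivity}: it directly computes the lower-left block $C$ of $A_p(a)A_p(b)^{-1}$ for two elements of $\mc C_p$ and checks, congruence by congruence, that $C\equiv 0 \bmod Mp$ forces the parameters $(a_1,b_1,\gamma_1)$ and $(a_2,b_2,\gamma_2)$ to coincide, plus a short case split to separate $\mc C_1$ from $\mc C_p$. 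You instead propose \emph{surjectivity}, via the identification of $\Gamma_0^{(2)}(Mp)\backslash\Gamma_0^{(2)}(M)$ with $P(\mbb F_p)\backslash\Sp{4}{\mbb F_p}$ (the Lagrangian Grassmannian of $\mbb F_p^4$) and a Bruhat-cell sweep. Your route is more conceptual — it explains the count $(p+1)(p^2+1)$ intrinsically and situates the representatives in a familiar geometric picture — but it invokes strong-approximation/Bruhat machinery that the paper avoids, and the actual cell-by-cell verification you defer is in effect the same computational content as the paper's $C$-block computation, just organized over $\mbb F_p$ rather than $\z/Mp\z$. One small slip: modulo $p$ the $(3,2)$ and $(4,1)$ entries of $B_M(a)$ are $M\bar M a\equiv a$, not $a\bar M$ (since $\bar M$ is the inverse of $M$ mod $p$); this does not affect the argument since $a\mapsto a$ still ranges over all residues.
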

\begin{proof}
    We first note that $|\mc C_1|=(p+1)$ and $|\mc C_p|= p^2 (p+1)$. Since the sets $\mc C_i$ are disjoint, we have that $|\mc C(Mp,M)|= (p+1)(p^2+1) = [\Gamma_0^{(2)}(M):\Gamma_0^{(2)}(Mp)]$.

    Next, we show that any two elements of $\mc C(Mp,M)$ are $\Gamma_0^{(2)}(Mp)$-inequivalent by the following set of observations. 
    \begin{enumerate}
    \item The elements in the set $\mc C_1$ are clearly $\Gamma_0^{(2)}(Mp)$-inequivalent.
    \item An element $\gamma\times I_2\in \mc C_1$ and $B_{M}(a)\cdot (\gamma\times \gamma_{M}(b))\in \mc C_p$ are $\Gamma_0^{(2)}(Mp)$-equivalent if and only if
    \begin{equation}
        B_{M}(a)\in \Gamma_0^{(2)}(Mp), \gamma\gamma_1^{-1}\in \Gamma_0(Mp), \text{ and } \gamma_{M}(b)\in \Gamma_0(Mp).
    \end{equation}
    But this is not possible since $p$ is a prime. Thus $\mc C_1$ and $\mc C_p$ are inequivalent.
    \item For $a=(a_1,a_2)$ and $b=(b_1, b_2)$, let $A_{p}(a)=B_{M}(a_1)\cdot (\gamma_1\times \gamma_{M}(a_2))$, $A_p(b)=B_{M}(b_1)\cdot (\gamma_2\times \gamma_{M}(b_2))$, where $B_{M}(a_1),  B_{M}(b_1)$ are as in \eqref{bnd-def}. Let us write 
    \begin{align}
    A_{p}(a)A_{p}(b)^{-1}=\smat{A}{B}{C}{D}; \,\, \gamma_1\gamma_2^{-1}=\smat{u_1}{v_1}{Mw_1}{x_1}; \,\, &\gamma_{M}(a_2)\gamma_{M}(b_2)^{-1}= \smat{u_2}{v_2}{w_2}{x_2}\equiv \smat{1}{0}{M(a_2-b_2)}{1} \bmod p.
    \end{align}
    Then we see that
    \begin{equation}
        C= M\overline{M}\left(M\overline{M}\smat{0}{a_1}{a_1}{0}\smat{v_1}{0}{0}{v_2}+\smat{x_1}{0}{0}{x_2}\right)\smat{0}{a_2}{a_2}{0}+M\overline{M}\smat{0}{a_1}{a_1}{0}\smat{u_1}{0}{0}{u_2}+\smat{Mw_1}{0}{0}{w_2}.
    \end{equation}
\end{enumerate} 
From this, $C\equiv 0 \mod Mp$ if and only if
\begin{align}
   -M^2\overline{M}^2a_1a_2v_2+Mw_1&\equiv 0 \mod Mp;\\
   -M^2\overline{M}^2a_1a_2v_1+M(b_2-b_1)&\equiv 0 \mod Mp;\\
   -M\overline{M}a_2x_2+M\overline{M} a_1u_1&\equiv 0\mod Mp;\\
   -M\overline{M}a_2x_1+M\overline{M} a_1u_2&\equiv 0\mod Mp.
\end{align}
If either $\gamma_1$ or $\gamma_2\equiv I_2\mod p$, then 
 first congruence is never satisfied unless $\gamma_1\equiv \gamma_2\equiv I_2\mod p$. Thus we can assume that $\gamma_1$ and $\gamma_2$ are of the form $\gamma_M(a)$ as in Remark \ref{rmk:MpM}. Thus from \eqref{gammada} and Remark \ref{rmk:MpM}, we have that $p|v_1, p|v_2$. Thus it is immediate that $p|w_1$, $p|(b_1-b_2)$. This implies, $a_1u_2\equiv a_2u_1\mod p$ and $a_1u_1\equiv a_2u_2\mod p$ with $(u_i,p)=1$. This is possible iff $u_1u_2^{-1}\equiv 1\mod p$ and thus $a_1\equiv a_2\mod p$. Thus we get that 
\begin{equation}
    C\equiv 0\mod Mp \iff \gamma_1\gamma_2^{-1}\in \Gamma_0(Mp), a_1\equiv a_2\mod p \text{ and } b_1\equiv b_2\mod p.
\end{equation}
Thus, the elements of $\mc C_p$ are $\Gamma_0(Mp)$-inequivalent. This completes the proof of lemma.
\end{proof}

\begin{lem}\label{piNMdef}
Let $M$ be a square-free integer and let $N=Mp_1....p_t$ with $(p_i,M)=1$. Then the map $\pi(N,M):\Gamma_0^{(2)}(N)\backslash \Gamma_0^{(2)}(M) \longrightarrow \prod_{i=1}^{t} \Gamma_0^{(2)}(Mp_i)\backslash \Gamma_0^{(2)}(M)$ defined by
\begin{equation}
    \pi_{N,M}(\Gamma_0^{(2)}(N)g)=(\Gamma_0^{(2)}(Mp_1)g, ....., \Gamma_0^{(2)}(Mp_t)g)
\end{equation}
is a bijection of sets.
\end{lem}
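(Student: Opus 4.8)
The plan is to reduce the general statement to the prime-by-prime case already established in Lemma~\ref{lem:MpMCoset} and then use elementary group theory, namely a coprimality/CRT argument on indices, to upgrade the well-definedness to a bijection. First I would check that $\pi_{N,M}$ is \emph{well-defined}: if $\Gamma_0^{(2)}(N)g = \Gamma_0^{(2)}(N)g'$, then $g'g^{-1}\in \Gamma_0^{(2)}(N)\subseteq \Gamma_0^{(2)}(Mp_i)$ for each $i$ (since $Mp_i \mid N$), so each component $\Gamma_0^{(2)}(Mp_i)g$ is unchanged. This uses only the chain of inclusions $\Gamma_0^{(2)}(N)\subseteq\Gamma_0^{(2)}(Mp_i)\subseteq\Gamma_0^{(2)}(M)$.

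Next I would count. The source has cardinality $[\Gamma_0^{(2)}(M):\Gamma_0^{(2)}(N)]$, and the target has cardinality $\prod_{i=1}^t [\Gamma_0^{(2)}(M):\Gamma_0^{(2)}(Mp_i)] = \prod_{i=1}^t (p_i+1)(p_i^2+1)$ by Lemma~\ref{lem:MpMCoset}. On the other hand, the index $[\Gamma_0^{(2)}(M):\Gamma_0^{(2)}(N)]$ is multiplicative in the (pairwise coprime) prime powers dividing $N/M$ — this is the standard fact that the index of $\Gamma_0^{(2)}(N)$ is a multiplicative function of $N$, which here follows from strong approximation for $\mrm{Sp}_4$ (equivalently, $\Gamma_0^{(2)}(M)/\Gamma_0^{(2)}(N)\hookrightarrow \prod_{i}\mrm{Sp}_4(\z/p_i\z)$ surjects via CRT, up to the $M$-part). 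Hence both sides have the same finite cardinality $\prod_{i=1}^t(p_i+1)(p_i^2+1)$, so it suffices to prove $\pi_{N,M}$ is injective.

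For injectivity, suppose $\pi_{N,M}(\Gamma_0^{(2)}(N)g) = \pi_{N,M}(\Gamma_0^{(2)}(N)g')$, i.e. $\gamma := g'g^{-1}\in \Gamma_0^{(2)}(Mp_i)$ for every $i$. I claim $\gamma\in\Gamma_0^{(2)}(N)$. Writing $\gamma = \smat{A}{B}{C}{D}$ in $2\times 2$ blocks, membership in $\Gamma_0^{(2)}(Mp_i)$ means $C\equiv 0\bmod Mp_i$; since also $\gamma\in\Gamma_0^{(2)}(M)$ gives $C\equiv 0\bmod M$, and the $p_i$ are distinct primes each coprime to $M$, the Chinese Remainder Theorem applied entrywise to $C$ yields $C\equiv 0 \bmod N$, i.e. $\gamma\in\Gamma_0^{(2)}(N)$. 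Therefore $\Gamma_0^{(2)}(N)g = \Gamma_0^{(2)}(N)g'$, proving injectivity, and with the cardinality match, bijectivity.

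The one point deserving care — and the main (mild) obstacle — is the multiplicativity of $[\Gamma_0^{(2)}(M):\Gamma_0^{(2)}(N)]$ in the primes dividing $N/M$, since a priori one only knows the two-step index from Lemma~\ref{lem:MpMCoset}. This can be handled cleanly by induction on $t$: factor $\pi_{N,M}$ as the composition of $\Gamma_0^{(2)}(N)\backslash\Gamma_0^{(2)}(M) \to \Gamma_0^{(2)}(N)\backslash\Gamma_0^{(2)}(N/p_t) \times \Gamma_0^{(2)}(N/p_t)\backslash\Gamma_0^{(2)}(M)$ with the inductive map on the second factor, using Lemma~\ref{lem:MpMCoset} (with $M$ replaced by $N/p_t$) for the first factor; injectivity at each stage is exactly the CRT argument above, and the index identity $[\Gamma_0^{(2)}(M):\Gamma_0^{(2)}(N)] = [\Gamma_0^{(2)}(M):\Gamma_0^{(2)}(N/p_t)]\cdot(p_t+1)(p_t^2+1)$ then drops out, closing the induction and avoiding any appeal to strong approximation.
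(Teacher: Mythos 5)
Your proof is correct and essentially the same as the paper's, which compresses the whole argument into one line (well-defined, injective because $(p_i,M)=1$, hence bijective by counting). You fill in exactly the steps the paper leaves implicit: the CRT/lcm argument on the $C$-block for injectivity and the index multiplicativity for cardinality matching (where your inductive route via the tower formula and Lemma~\ref{lem:MpMCoset} is cleaner than appealing to strong approximation).
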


\begin{proof}
The map is clearly well-defined and is injective since $(p_i,M)=1$. Thus it is a bijection since both sides have the same cardinality. 
\end{proof}
Of particular interest to us is the subset $\mc C(N,M)$ of $\Gamma_0^{(2)}(N)\backslash \Gamma_0^{(2)}(M)$ -- our goal next is to show that it is indeed a complete set of coset representatives. This result will be crucial in our calculations involving the trace operator (see Section \ref{IPRel}).

\begin{lem}\label{piMpsur}
   Let $N$ be odd and square-free. Let a prime $p$ and $M$ be divisors of $N$ with $(p,M)=1$. Then the set $\mc C(N,M)$ maps surjectively onto $\mc C(Mp,M)$ under the map $\pi_{Mp,M}$.
\end{lem}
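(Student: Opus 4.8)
The goal of \lemref{piMpsur} is to show that the explicit set $\mc C(N,M)$, which consists of lifts of $\SL{2}{\z}$-coset data together with the unipotent matrices $B_{N/d}(a)$, surjects onto $\mc C(Mp,M)$ under the projection $\pi_{Mp,M}$. The plan is to analyze the two pieces $\mc C_1(N,M)$ and $\mc C_d(N,M)$ of $\mc C(N,M)$ separately, compute their images under $\pi_{Mp,M}$ using the congruence characterizations \eqref{gammada} of the $\gamma_d(a)$ and the explicit shape \eqref{bnd-def} of $B_{d_1}(a)$, and check that these images exhaust $\mc C(Mp,M) = \mc C_1(Mp,M) \cup \mc C_p(Mp,M)$.

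First I would handle $\mc C_1(N,M)$. An element $\gamma \times I_2$ with $\gamma \in \Gamma_0(N)\backslash \Gamma_0(M)$ reduces, by \rmkref{rmk:MpM} and the product decomposition \eqref{coset-prod}, to an element $\gamma_p \times I_2$ with $\gamma_p \in \Gamma_0(Mp)\backslash \Gamma_0(M)$; thus $\pi_{Mp,M}$ maps $\mc C_1(N,M)$ onto $\mc C_1(Mp,M)$. Next I would treat $\mc C_d(N,M)$ for $d \mid (N/M)$, $d > 1$. Here one must distinguish two cases according to whether $p \mid d$. If $p \nmid d$, then $N/d$ is divisible by $p$, so $B_{N/d}(a)$ is $\equiv I_4 \bmod p$ (since $d_1\bar d_1 \equiv 0 \bmod p$ when $p \mid N/d$, using \eqref{bnd-def}), and $\gamma_{N/d}(b) \equiv \smat{0}{1}{-1}{b}\bmod p$ by \eqref{gammada}; moreover $\gamma$ reduces mod $p$ to some $\gamma_p \in \Gamma_0(Mp)\backslash\Gamma_0(M)$. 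So such elements project into $\mc C_p(Mp,M)$. If instead $p \mid d$, write $d = p d'$; then $B_{N/d}(a) \bmod p$ is a unipotent matrix of the same shape as $B_M(a')$ in \eqref{bnd-def} (with $a'$ the reduction of $a$), $\gamma_{N/d}(b) \bmod p \equiv \gamma_M(b')$, and again $\gamma \bmod p$ gives some $\gamma_p$, so the image lands in $\mc C_p(Mp,M)$ as well. In all cases the image is of the prescribed form, so $\pi_{Mp,M}(\mc C(N,M)) \subseteq \mc C(Mp,M)$; surjectivity then follows by noting that as $a,b$ range over residues mod $d$ and $\gamma$ over its cosets, the reductions mod $p$ cover all of $\mc C_p(Mp,M)$ (and $\mc C_1(N,M)$ covers $\mc C_1(Mp,M)$).

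The cleanest way to conclude is a counting argument: by \lemref{lem:MpMCoset}, $|\mc C(Mp,M)| = (p+1)(p^2+1) = [\Gamma_0^{(2)}(M):\Gamma_0^{(2)}(Mp)]$, which is the cardinality of the target of $\pi_{Mp,M}$ (by \lemref{piNMdef} with $t=1$, or directly). Since we have shown $\pi_{Mp,M}(\mc C(N,M)) \subseteq \mc C(Mp,M)$ and that the map $\pi_{Mp,M}$ restricted to a genuine system of representatives is a bijection onto $\Gamma_0^{(2)}(Mp)\backslash\Gamma_0^{(2)}(M)$, it suffices to check that the restriction of $\pi_{Mp,M}$ to $\mc C(N,M)$ hits every class; equivalently, since $\mc C(N,M)$ is itself a complete set of representatives for $\Gamma_0^{(2)}(N)\backslash\Gamma_0^{(2)}(M)$ (this is the content of the lemma that presumably follows, but for the present argument one only needs that distinct elements of $\mc C(Mp,M)$ are inequivalent, which is \lemref{lem:MpMCoset}), one checks that every element of $\mc C(Mp,M)$ arises as the reduction of at least one element of $\mc C(N,M)$ — which is exactly what the case analysis above produces. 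The main obstacle is the bookkeeping in the $p \mid d$ case: one must verify carefully that the mod-$p$ reduction of $B_{N/d}(a)\cdot(\gamma\times\gamma_{N/d}(b))$ really is $\Gamma_0^{(2)}(Mp)$-equivalent to an element in the standard form $B_M(a')\cdot(\gamma_p\times\gamma_M(b'))$ defining $\mc C_p(Mp,M)$, which requires tracking how $d_1\bar d_1$ behaves modulo $p$ and reconciling the congruence conditions \eqref{gammada} at $p$ for the two levels; the CRT compatibility built into the definition of $\gamma_d(a)$ is what makes this work.
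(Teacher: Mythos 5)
Your strategy mirrors the paper's: split $\mc C(N,M)$ into $\mc C_1(N,M)$ and the pieces $\mc C_d(N,M)$, subdivide the latter by whether $p\mid d$, and compute the reduction of each piece modulo $p$ using \eqref{bnd-def} and \eqref{gammada}. The $\mc C_1$ case and the $p\mid d$ case in your write-up are correct. However, there is a genuine error in your $p\nmid d$ case. You assert that $\gamma_{N/d}(b)\equiv\smat{0}{1}{-1}{b}\bmod p$ "by \eqref{gammada}." That is backwards: in \eqref{gammada}, with $N/d$ playing the role of the subscript parameter, the congruence $\gamma_{N/d}(b)\equiv\smat{0}{1}{-1}{b_i}\bmod p_i$ holds for $p_i\mid d$, whereas $\gamma_{N/d}(b)\equiv I_2\bmod p_i$ for $p_i\mid (N/d)$. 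When $p\nmid d$ we have $p\mid(N/d)$, so $\gamma_{N/d}(b)\equiv I_2\bmod p$, hence $\gamma_{N/d}(b)\in\Gamma_0(Mp)$. Combined with $B_{N/d}(a)\in\Gamma_0^{(2)}(Mp)$ (which you correctly observe), the elements of $\mc C_d(N,M)$ with $p\nmid d$ project into $\mc C_1(Mp,M)$, not $\mc C_p(Mp,M)$ as you claimed; this is exactly what the paper states.

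This slip does not break the lemma, because surjectivity onto $\mc C_1(Mp,M)$ is already supplied by $\pi_{Mp,M}(\mc C_1(N,M))$ and surjectivity onto $\mc C_p(Mp,M)$ follows from the $p\mid d$ case alone (where, as you correctly compute, $B_{N/d}(a)B_M(-a)\equiv I_4\bmod p$ and $\gamma_{N/d}(b)\gamma_M(b)^{-1}\equiv I_2\bmod p$, so the element is $\Gamma_0^{(2)}(Mp)$-equivalent to $B_M(a)(\gamma\times\gamma_M(b))$). But as written your $p\nmid d$ branch contradicts \eqref{gammada} and would propagate the swapped congruence conditions if relied on elsewhere; it needs to be corrected, and the surjectivity onto $\mc C_p(Mp,M)$ should be attributed solely to the $p\mid d$ subcase.
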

\begin{proof}
First, consider the set $\mc C_1(N,M)$. Since $(p,M)=1$, $\pi_{Mp,M}(\mc C_1(N,M))\subset \mc C_1(Mp,M)$. From our choice of representatives of $\Gamma_0(N)\backslash\Gamma_0(M)$ and Remark \ref{rmk:MpM}, it is surjective.

Now for any $d|(N/M)$ with $(d,p)=1$ consider the set $\mc C_d(N,M)$. In this case, note that $B_{N/d}(a)\in \Gamma_0^{(2)}(Mp)$, $\gamma_{N/d}(b)\in \Gamma_0(Mp)$. Thus $\pi_{Mp,M}(\mc C_d(N,M))\subset \mc C_1(Mp,M)$.

Next, consider the case $p|d$. In this case, we have that
\begin{equation}
   B_{N/d}(a)\cdot (\gamma\gamma^{-1}\times \gamma_{N/d}(b)\gamma_{M}(b)^{-1})\cdot B_{M}(-a)\in \Gamma_0(Mp).
\end{equation}
This follows by noting that $\gamma_{N/d}(b)\gamma_{M}(b)^{-1}\equiv I_2 \mod p$ and $B_{N/d}(a)B_M(-a)\mod p\equiv I_4$.
\end{proof}
Putting together Lemma \ref{lem:MpMCoset} and \ref{piMpsur}, we get the following.

\begin{prop} \label{prop:cnm}
    Let $N$ be an odd and square-free integer and $M|N$. Then $\mc C(N,M)$ gives a complete set of coset representatives for $\Gamma_0^{(2)}(N)\backslash \Gamma_0^{(2)}(M)$.
\end{prop}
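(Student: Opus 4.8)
The plan is to deduce Proposition~\ref{prop:cnm} from the two preceding lemmas by an induction on the number of prime factors of $N/M$, using the compatibility of the sets $\mc C(N,M)$ with the ``local'' sets $\mc C(Mp,M)$ expressed in Lemma~\ref{piMpsur} and the product decomposition of Lemma~\ref{piNMdef}. First I would recall that by Lemma~\ref{piNMdef} the natural reduction map
\[
\pi_{N,M}\colon \Gamma_0^{(2)}(N)\backslash \Gamma_0^{(2)}(M) \longrightarrow \prod_{i=1}^{t} \Gamma_0^{(2)}(Mp_i)\backslash \Gamma_0^{(2)}(M)
\]
is a bijection of finite sets (with $N = Mp_1\cdots p_t$), and that by Lemma~\ref{lem:MpMCoset} each factor $\Gamma_0^{(2)}(Mp_i)\backslash \Gamma_0^{(2)}(M)$ is represented exactly by $\mc C(Mp_i, M)$. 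Hence it suffices to show two things about the proposed set $\mc C(N,M)$: that its cardinality equals $[\Gamma_0^{(2)}(M):\Gamma_0^{(2)}(N)] = \prod_{i=1}^t (p_i+1)(p_i^2+1)$, and that $\pi_{N,M}$ restricted to $\mc C(N,M)$ is injective (equivalently, hits every tuple exactly once). Surjectivity onto each individual factor $\mc C(Mp_i,M)$ is exactly Lemma~\ref{piMpsur}; the remaining point is that the $t$ reductions can be prescribed independently, i.e.\ that $\pi_{N,M}(\mc C(N,M))$ is all of the product, which together with the cardinality count forces it to be a bijection.

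For the cardinality, I would count $\mc C_1(N,M)$ and the $\mc C_d(N,M)$ for $d\mid (N/M)$, $d>1$ separately. We have $|\mc C_1(N,M)| = [\Gamma_0(M):\Gamma_0(N)] = \prod_{i=1}^t (p_i+1)$, and for each such $d$, $|\mc C_d(N,M)| = d^2 \cdot [\Gamma_0(M):\Gamma_0(N)]$ since $a,b$ each range over $\mbb Z/d$ and $\gamma$ over $\Gamma_0(N)\backslash\Gamma_0(M)$ (one should note in passing that the map $(a,b,\gamma)\mapsto B_{N/d}(a)(\gamma\times\gamma_{N/d}(b))$ is injective on these parameters, which is seen by reducing modulo the appropriate primes, as in the proof of Lemma~\ref{lem:MpMCoset}). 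Summing, $|\mc C(N,M)| = \prod_i(p_i+1)\cdot\bigl(1 + \sum_{d\mid(N/M),\,d>1} d^2\bigr) = \prod_i(p_i+1)\cdot \prod_i(1+p_i^2)$, which is precisely the index $[\Gamma_0^{(2)}(M):\Gamma_0^{(2)}(N)]$. This matches the $t=1$ count already verified in Lemma~\ref{lem:MpMCoset}.

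For the independence of the reductions, I would argue that for a fixed $d\mid(N/M)$, an element $B_{N/d}(a)\cdot(\gamma\times\gamma_{N/d}(b))$ of $\mc C_d(N,M)$ reduces under $\pi_{Mp_i,M}$ into $\mc C_1(Mp_i,M)$ when $p_i\nmid d$ (because then $B_{N/d}(a)\in\Gamma_0^{(2)}(Mp_i)$ and $\gamma_{N/d}(b)\in\Gamma_0(Mp_i)$), and into $\mc C_{p_i}(Mp_i,M)$ (with ``local parameters'' the reductions of $a,b$ mod $p_i$ and the $p_i$-component of $\gamma$) when $p_i\mid d$ — this is exactly the content of the computation in Lemma~\ref{piMpsur} together with the congruences \eqref{gammada} defining $\gamma_{N/d}(b)$. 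Using the bijection $d\leftrightarrow$ (subset of $\{p_1,\dots,p_t\}$) via the group structure on divisors, one checks that as $d$ ranges over divisors of $N/M$ and the parameters range over their allowed values, the image tuple hits each element of $\prod_i \mc C(Mp_i,M)$ exactly once: the ``support'' of $d$ records exactly which coordinates land in the $\mc C_{p_i}$-part versus the $\mc C_1$-part, and for coordinates in the $\mc C_{p_i}$-part the local parameters $(a\bmod p_i, b\bmod p_i, \gamma_{p_i})$ range over precisely what is needed to sweep out $\mc C_{p_i}(Mp_i,M)$. Combined with Lemma~\ref{piNMdef}, this shows $\pi_{N,M}|_{\mc C(N,M)}$ is a bijection onto the full product, hence $\mc C(N,M)$ is a complete set of representatives. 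I expect the main obstacle to be the bookkeeping in this last paragraph: carefully matching the global parameter $d$ (and the single global $\gamma\in\Gamma_0(N)\backslash\Gamma_0(M)$, decomposed via \eqref{coset-prod}) against the tuple of local data, and verifying via the mod-$p_i$ congruences of the $\gamma_{N/d}(b)$ that no two distinct global choices collide and that everything is hit — essentially a careful use of CRT layered on top of the single-prime analysis already done in Lemmas~\ref{lem:MpMCoset} and~\ref{piMpsur}.
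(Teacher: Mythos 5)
Your proposal follows the same route as the paper's proof: it uses Lemma~\ref{piNMdef} to reduce to the single-prime case, Lemma~\ref{lem:MpMCoset} for completeness of $\mc C(Mp_i,M)$, and Lemma~\ref{piMpsur} to compare $\mc C(N,M)$ with its single-prime reductions, concluding via a cardinality count. You are in fact more careful than the paper at the one subtle point---that surjectivity of $\pi_{Mp_i,M}$ onto each $\mc C(Mp_i,M)$ separately does not by itself give surjectivity onto the product---which the paper's terse proof glosses over; your tracking of where each $\mc C_d(N,M)$ lands coordinate-by-coordinate is exactly what closes that gap.
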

\begin{proof}
Let $N=Mp_1..p_t$. Then from Lemma \ref{piMpsur}, we have that $\pi_{N,M}= (\pi_{Mp_1,M}, ...., \pi_{Mp_t,M}).$ That is,  
\begin{equation} \label{crt-nm}
    \pi_{N,M}(\mc C(N,M))=(\mc C(Mp_1,M), ....,\mc C(Mp_t,M)).
\end{equation}

From \lemref{lem:MpMCoset}, $\mc C(Mp_i,M)$ is a complete set of coset representatives of $\Gamma_0^{(2)}(Mp_i)\backslash \Gamma_0^{(2)}(M)$ for all $1 \le i \le t$. Next, from \lemref{piNMdef}, $\pi_{N,M}$ is a bijection, and thus the right hand side of \eqref{crt-nm} has cardinality $|\Gamma_0^{(2)}(N)\backslash \Gamma_0^{(2)}(M)|$.
Therefore \eqref{crt-nm} implies that $\mc C(N,M)$ is a complete set of closet representatives for $\Gamma_0^{(2)}(N)\backslash \Gamma_0^{(2)}(M)$, as desired.
\end{proof}

\section{Vanishing of the coefficients \texorpdfstring{$c_{\sigma,\sigma}(f;g)$}{aa} } \label{cv-0-proof}

Keeping the notation of the previous sections, let us define the quantity
\begin{align}
  c_{\sigma,\sigma}(f;g):=  \lan F^\circ, g_\sigma\otimes g_{\sigma}\ran \q \q (F=F_f).
\end{align}
The goal in this section is to show that $c_{\sigma,\sigma}(f;g)=0$ unless the Atkin-Lehner eigenvalues of $f$ satisfy $w_f(p)=+1$ for all $p|N/N_g$ (that is, $N/N_g\in \mc L_f$). This is proved in Corollary~\ref{F0old-vanish}.

\subsection{Inner product relations.}\label{IPRel}
 For $p|N$, let $U_S(p)$ denote the Hecke operator given by the double closet $\Gamma_0^{(2)}(N)\smat{1}{0}{0}{p}\Gamma_0^{(2)}(N)$. We know that $U_S(d)=\prod_{p|d}U_S(p)$. Recall that the effect of $U_S(d)$ on the Fourier expansion of a modular form is 
 \begin{equation}
 U_S(d) \colon \, \summ_T a_F(T)e(TZ) \mapsto \summ_T a_F(dT)e(TZ).
 \end{equation}
 Let $w_F(d)$ and $\lambda_F(d)$ denote the eigenvalues of $F$ with respect to the Atkin-Lehner operator $\mc W_N(d)$ and $U_S(d)$ respectively. Then we have
\begin{prop}\label{prop:IPFggV}
    Let $F\in S_{k+1}^{(2),new}(N)$ be a newform and $g\in S_{k+1}(N/p)$ be an eigenfunction of $W_{N/p}(N/p)$. Then we have the following.
    \begin{align}
   & (i) & \left(1+p^{1-k}\lambda_F(p)w_F(N)\right) \cdot \lan F^\circ, g\otimes g\ran =0, \\
   & (ii) & \left(1+p^{1-k}\lambda_F(p)w_F(N)\right) \cdot \lan F^\circ, g\otimes g|W_N(p)\ran =0.
    \end{align}
    \end{prop}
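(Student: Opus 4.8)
The plan is to exploit the equation $F\,|\,\mrm{Tr}^{(2)}(N, N/p) = 0$, valid because $F$ is a newform of level $N$ (so it is orthogonal to the oldspace coming from level $N/p$, and the trace down to level $N/p$ vanishes), and to pair its pullback against $g \otimes g$ and $g \otimes g|W_N(p)$. Using Proposition~\ref{prop:cnm} (with $M = N/p$), the trace is a sum over the explicit coset representatives $\mc C(N, N/p) = \mc C_1(N, N/p) \cup \mc C_p(N, N/p)$; the point of having chosen these representatives so carefully is precisely that the diagonal-restriction operation interacts with them in a controlled way. First I would compute the pullback $\bigl(F\,|\,\gamma\bigr)^\circ$ for $\gamma$ ranging over each of the two families. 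For $\gamma \times I_2 \in \mc C_1$, with $\gamma$ running over $\Gamma_0(N)\backslash\Gamma_0(N/p)$, the restriction to $\h\times\h$ should, after pairing with $g\otimes g$ (resp. $g\otimes g|W_N(p)$) and using that $g$ is $\Gamma_0(N/p)$-invariant, collapse to a multiple of $\lan F^\circ, g\otimes g\ran$ (resp. the $W_N(p)$-twisted version). For $\gamma \in \mc C_p$, which has the shape $B_{N/p}(a)\cdot(\gamma'\times\gamma_{N/p}(b))$, the extra unipotent factor $B_{N/p}(a)$ and the off-diagonal pieces are what produce — upon summing over $a, b \bmod p$ and restricting to the diagonal — the Siegel operator $U_S(p)$ applied to $F$, together with possibly a contribution of $\mc W_N(N)$; this is the mechanism behind the identity $\bigl(F|U_S(p)\bigr)^\circ$ paired with $g\otimes g|V$ equalling $\lambda_F(p)\lan F^\circ, g\otimes g|V\ran$ that was flagged in the introduction.

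The second main step is to assemble these contributions. On the $g\otimes g$ side: $F|U_S(p)$ relates to $F$ via the Atkin--Lehner/$U(p)$ eigenvalue relation for newforms of square-free level, namely $F|U_S(p) = -p^{k-1}w_F(p)\,F$ type identity (equivalently $\lambda_F(p) = -p^{k-1}w_F(p)$ up to the normalization in use here), and $w_F(N) = \prod_{q|N} w_F(q)$ is the full Fricke eigenvalue; substituting, the vanishing of the trace pairing becomes exactly $\bigl(1 + p^{1-k}\lambda_F(p)w_F(N)\bigr)\lan F^\circ, g\otimes g\ran = 0$. The identical bookkeeping with $V = W_N(p)$ in the pairing, using that $W_N(p)$ and $U_S(p)$ (and $\mc W_N(N)$) satisfy the expected commutation relations on $S_{k+1}^{(2)}(N)$, yields part (ii). I would want to be careful that $g$, being an eigenfunction of $W_{N/p}(N/p)$, lets $g\otimes g$ and $g\otimes g|W_N(p)$ span the relevant two-dimensional space on which the trace-orthogonality can be tested, so that no further pieces of the spectral decomposition leak in.

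The hard part will be the bookkeeping in the first step: explicitly restricting $F|B_{N/p}(a)\cdot(\gamma'\times\gamma_{N/p}(b))$ to the diagonal and summing over $a, b \bmod p$ to see the Siegel $U_S(p)$ emerge cleanly, keeping track of automorphy factors, the $e(-mcz^2/\cdots)$-type cocycle terms from the off-diagonal entries of $B_{N/p}(a)$, and the normalizations $p^{k/2}$, $p^{k-1}$, etc. so that the constants combine into precisely $1 + p^{1-k}\lambda_F(p)w_F(N)$ and not some other Euler factor. A secondary subtlety is ensuring the $\mc C_1$-contribution really does reduce to a scalar multiple of $\lan F^\circ, g\otimes g\ran$ and not to traces of $F^\circ$ between different old-components — this uses that $g$ has level exactly $N/p$ and the Hecke-equivariance already established. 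Once these two computations are in hand, part (ii) follows by the same argument with the $W_N(p)$-twist inserted throughout, invoking $\mc W_N(p)F = w_F(p)F$ and $\mc W_N(p) = W_N(p)\times W_N(p)$ to move the operator across the pairing.
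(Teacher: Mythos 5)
Your proposal follows the paper's proof essentially verbatim: start from $F|\mrm{Tr}^{(2)}(N,N/p)=0$, expand via the explicit coset set $\mc C(N,N/p)=\mc C_1\cup\mc C_p$ from Proposition~\ref{prop:cnm}, restrict to the diagonal, and pair against $g\otimes g|V$ for $V\in\{\mrm{Id},W_N(p)\}$; the $\mc C_1$ block yields $\lan F^\circ,g\otimes g|V\ran$, while the $\mc C_p$ block, after inserting the $W_{N/p}(N/p)$-invariance of $g\otimes g$ and the factorization $B_p\gamma_{N/p}(a)^{-1}=\gamma'\smat{1}{0}{0}{p}$, collapses to a sum over unipotent translates giving $p^{-k}\lambda_F(p)w_F(N)\lan F^\circ,g\otimes g|V\ran$, whence the stated factor.

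One small correction in your assembly step: the relation $\lambda_F(p)=-p^{k-1}w_F(p)$ you cite is not used here, and is not true for a generic Siegel newform of level $N$. Proposition~\ref{prop:IPFggV} needs only the eigen-relations $F|U_S(p)=\lambda_F(p)F$ and $F|\mc W_N(N)=w_F(N)F$, which hold for any newform; the SK-specific identity $\lambda_F(p)=-w_f(p)p^{k-1}$ (coming from the half-integral-weight side via \eqref{ezi-corr}) is what the \emph{next} result, Proposition~\ref{prop:IPzero}, uses together with $w_F(p)=+1$ to convert the factor $1+p^{1-k}\lambda_F(p)w_F(N)$ into $1-w_f(p)$.
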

    
\begin{proof}
   Since $F$ is a newform of level $N$, we have that $F|\mrm{Tr}^{(2)}(N,N/p)=0$. Thus from \propref{prop:cnm},
\begin{align}
    0&=\sum_{A\in \mc C(N,N/p)}F|A \n \\
    &=\sum_{\gamma\in \Gamma_0(N)\backslash\Gamma_0(N/p)}F|(\gamma\times I_2)+\sum_{a,b\bmod p}\sum_{\gamma\in \Gamma_0(N)\backslash\Gamma_0(N/p)}F|B_{N/p}(a)|(\gamma\times \gamma_{N/p}(b)). \label{tr0}
\end{align}
Next, we note that 
\begin{equation} \label{rest-equiv}
    (F|(\gamma_1\times\gamma_2))^\circ= F^\circ|(\gamma_1\times\gamma_2) \q (\text{for any } \gamma_1,\gamma_2\in \sltwo).
\end{equation}
 Further, it is easy to see that for any $\mc H_0 \in S_{k+1}(M) \otimes S_{k+1}(M)$, $g_1,g_2 \in S_{k+1}(M)$, and $\gamma_1, \gamma_2 \in \Gamma_0(M)$, one has
\begin{align} \label{axb-eqiv}
    \lan \mc H_0 | (\gamma_1 \times \gamma_2), g_1 \otimes g_2 \ran = \lan \mc H_0, g_1 \otimes g_2 | (\gamma_1^{-1} \times \gamma_2^{-1})\ran.
\end{align}
Here we remind the reader that our inner products are normalized by volume, so that \eqref{axb-eqiv} holds. 

Since $(p,M)=1$, $\gamma:=\smat{a}{1}{Nb/p}{p}\in \Gamma_0(N/p)\subset \Gamma_0(M)$. Thus, from Lemma \ref{lem:WBCom} we get that $g|W_N(p)= g|B_p$. Further, let $\gamma_{N/p}(a)=\smat{x}{y}{Nz/p}{w}$. Since $(p,N/p)=1$, from \eqref{gammada} we have $x\equiv 0\mod p$.
Thus we have
\begin{equation}\label{BdgammaND}
    B_p\gamma_{N/p}(a)^{-1}= \smat{pw}{-y}{-Nz/p}{x/p}\smat{1}{0}{0}{p}.
\end{equation}
Let us now restrict both sides of \eqref{tr0} to $z=0$ and take inner product with $g\otimes g$ on both  sides to get
\begin{align}
    \lan F^\circ, g\otimes g\ran+ \sum_{a\bmod p}  p \lan \left(F|B_{N/p}(a)\right)^\circ, g\otimes g\ran =0;
\end{align}
Next, taking inner product with $g\otimes g|B_p$ on both  sides of \eqref{tr0} and then using the commutation relation in \eqref{BdgammaND} we obtain:
\begin{align}
    \lan F^\circ, g\otimes g|B_p\ran+ \sum_{a\bmod p}  p \lan \left(F|B_{N/p}(a)\right)^\circ, g\otimes g|\smat{1}{0}{0}{p}\ran =0;
\end{align}
upon using \eqref{axb-eqiv} repeatedly.

Since $g$ is an eigenfunction of $W_{N/p}(N/p)$, we get that
\begin{equation}\label{gginvAL}
    (g\otimes g)|(W_{N/p}(N/p)\times W_{N/p}(N/p))=g\otimes g.
\end{equation}
Next, we note that 
\begin{align}
   \smat{1}{0}{0}{p}W_{N/p}(N/p)= \smat{0}{-1}{N}{0}= W_{N/p}(N/p)\smat{p}{0}{0}{1}=W_{N/p}(N/p)B_p.
\end{align}
As a consequence, we have that
\begin{equation}\label{ggBpinvAL}
    (g\otimes g|\smat{1}{0}{0}{p})|(W_{N/p}(N/p)\times W_{N/p}(N/p))= g\otimes g|B_p.
\end{equation}
From \eqref{gginvAL} and \eqref{ggBpinvAL}, combined with the fact that $F|\mc W_N(N)=w_F(N) F$, for $V\in \{ Id, B_p\}$ we get
\begin{align} 
&\lan F^\circ, g\otimes g|V\ran+ p\,w_F(N)\sum_{a\bmod p} \lan \left(F|\mc W_N(N)B_{N/p}(a)\mc W_{N/p}(N/p)\right)^\circ, g\otimes g|V\ran =0.\label{tr-eqngg}
\end{align}
Next, we note that (see \eqref{wnd-s}, \eqref{bnd-def})
\begin{align}
    \mc W_{N}(N)B_{N/p}(a)\mc W_{N/p}(N/p)= \begin{psmallmatrix}
-\frac{N}{p} & 0 & 0 & -\frac{N}{p}\overline{\left(\frac{N}{p}\right)} a \\
0 & -\frac{N}{p} & -\frac{N}{p}\overline{\left(\frac{N}{p}\right)} a & 0 \\
0 & 0 & -N & 0 \\
0 & 0 & 0 & -N
\end{psmallmatrix}.
\end{align}

Thus we get
\begin{equation}
    F|\mc W_{N}(N)B_{N/p}(a)\mc W_{N/p}(N/p)= F| \begin{psmallmatrix}
1 & 0 & 0 & \overline{\left(\frac{N}{p}\right)} a \\
0 & 1 & \overline{\left(\frac{N}{p}\right)} a & 0 \\
0 & 0 & p & 0 \\
0 & 0 & 0 & p
\end{psmallmatrix}.
\end{equation}

For convenience, let us put 
\begin{align} \label{a'-def}
a'= a'(p) := \overline{\left(\tfrac{N}{p}\right)} \, a,
\end{align}
and similarly $b',c'$. In \eqref{a'-def}, for any $l|N$, recall that $\bar{l}$ denotes the inverse of $l \bmod N/l$.

Since both $g$ and $g|B_p$ are invariant w.r.t the translation $T^{b'}=\smat{1}{b'}{0}{1}$, for $V\in \{ Id, B_p\}$ we can write (again using \eqref{rest-equiv} and \eqref{axb-eqiv}):
\begin{equation}
\begin{split}
    \lan \left(F|\mc W_N(N)B_{N/p}(a)\mc W_{N/p}(N/p)\right)^\circ, g\otimes g|V\ran &= \lan \left(F| \begin{psmallmatrix}
1 & 0 & 0 & a' \\
0 & 1 & a' & 0 \\
0 & 0 & p & 0 \\
0 & 0 & 0 & p
\end{psmallmatrix}|T^{b'}\times T^{c'}\right)^\circ, g\otimes g|V\ran\\
&= \lan \left(F|\smat{I_2}{S}{0}{pI_2} \right)^\circ, g\otimes g|V\ran,
\end{split}
\end{equation}
where $S=\smat{b'}{a'}{a'}{c'}$.
As $a,b,c$ vary $\mod p$, so do $a',b',c'$. Thus we can write
\begin{align}
    \sum_{S\bmod p} \lan \left(F| \smat{I_2}{S}{0}{pI_2}\right)^\circ, g\otimes g|V\ran &= p^{2-k} \lan (F|U_S(p))^\circ, g\otimes g|V\ran.
\end{align}
Since $F|U_S(p)=\lambda_F(p)F$, we have
\begin{align}
\lan (F|U_S(p))^\circ, g\otimes g|V\ran&= \lambda_F(p) \lan F^\circ, g\otimes g|V\ran.
\end{align}

Putting this back into \eqref{tr-eqngg} finishes the proof of the Proposition.
\end{proof}

\begin{prop}\label{prop:IPzero}
    Let $f\in S_{2k}^{new}(N)$ be a newform and $g\in S_{k+1}(N/p)$  be an eigenfunction of $W_{N/p}(N/p)$. Let $F\in S_{k+1}^{(2),new}(N)$ be the SK lift of $f$. Then, we have
    \begin{align}
    (1-w_f(p))\lan F^\circ, g\otimes g\ran&=0\\
        (1-w_f(p))\lan F^\circ, g\otimes g|W_N(p)\ran&=0,
    \end{align}
    where $w_f(p)$ denotes the eigenvalue of $f$ w.r.t the Atkin-Lehner operator $W_{N}(p)$.
\end{prop}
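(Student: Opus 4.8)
The plan is to combine Proposition~\ref{prop:IPFggV} with two facts about the Saito--Kurokawa lift $F=F_f$ at a prime $p\mid N$: the value of its degree-$2$ Atkin--Lehner eigenvalue $w_F(N)$, and the value of its Siegel $U(p)$-eigenvalue $\lambda_F(p)$, each expressed in terms of $w_f(p)$. Indeed, Proposition~\ref{prop:IPFggV} asserts, for $V\in\{\mrm{Id},\,W_N(p)\}$, that
\begin{equation}
\bigl(1+p^{1-k}\lambda_F(p)\,w_F(N)\bigr)\,\lan F^\circ,\,g\otimes g|V\ran=0,
\end{equation}
so it will be enough to prove that $p^{1-k}\lambda_F(p)\,w_F(N)=-w_f(p)$; the bracket then equals $1-w_f(p)$ and both displayed identities follow at once.

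For the Atkin--Lehner eigenvalue, I would invoke \cite[Theorem~5.2(ii)]{schmidt-SKlift} — exactly as already used in the proof of Proposition~\ref{cv-00-lem} — to get $F|\mc W_N(q)=F$ for every prime $q\mid N$. Since the operators $\mc W_N(d)$, $d\mid N$, normalize $\Gamma_0^{(2)}(N)$ and generate a group isomorphic to $(\z/2\z)^{\omega(N)}$ modulo $\Gamma_0^{(2)}(N)$, with $\prod_{q\mid N}\mc W_N(q)\equiv\mc W_N(N)$, this forces $w_F(N)=\prod_{q\mid N}w_F(q)=1$.

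For the $U(p)$-eigenvalue I would argue directly from the Maa{\ss} relations~\eqref{sk-1}. Writing $T=\psmb n&r/2\\ r/2&m\psme$, $c(T)=\gcd(n,r,m)$ and $D=4nm-r^2$, one has $c(pT)=p\,c(T)$ and $D(pT)=p^2D$; and since $(d,N)=1$ together with $p\mid N$ forces $p\nmid d$, the divisors $d\mid p\,c(T)$ coprime to $N$ are precisely those $d\mid c(T)$ coprime to $N$. Hence $a_{F|U_S(p)}(T)=a_F(pT)=\summ_{d\mid c(T),\,(d,N)=1}d^{\,k}\,c_h\!\bigl(p^2D/d^2\bigr)$. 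Now $h\in S_{k+1/2}^{+,\,new}(4N)$ is the newform attached to $f$ under the correspondence~\eqref{ezi-corr}, and $p\mid N$ (so $p\|N$, as $N$ is square-free); by Kohnen's newform theory for the plus space the bad-prime Hecke operator $U(p^2)$ acts on $h$ by a scalar equal to the $U(p)$-eigenvalue $a_f(p)=-w_f(p)p^{k-1}$ of $f$, i.e.\ $c_h(p^2t)=a_f(p)\,c_h(t)$ for all $t$, with no correction terms. Substituting gives $a_F(pT)=a_f(p)\,a_F(T)$, so $F|U_S(p)=a_f(p)F$ and $\lambda_F(p)=-w_f(p)p^{k-1}$.

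Putting the two computations together, $p^{1-k}\lambda_F(p)\,w_F(N)=p^{1-k}\bigl(-w_f(p)p^{k-1}\bigr)\cdot 1=-w_f(p)$, so the bracket in Proposition~\ref{prop:IPFggV} is exactly $1-w_f(p)$ and the Proposition follows. The one point requiring genuine care is the precise shape of the half-integral-weight relation $c_h(p^2t)=a_f(p)\,c_h(t)$ at the bad prime $p\mid N$ — that the $U(p^2)$-action on the plus-space newform $h$ is scalar with eigenvalue exactly $a_f(p)$ and carries no extra terms — which is precisely the compatibility of the correspondence $h\leftrightarrow f$ in~\eqref{ezi-corr} with the Hecke action at $p$ (cf.\ \cite{das-anamby2}); the remainder is routine bookkeeping with Fourier coefficients and Atkin--Lehner operators.
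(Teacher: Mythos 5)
Your proof is correct and its overall strategy is exactly the paper's: show that the bracket $1 + p^{1-k}\lambda_F(p)w_F(N)$ appearing in Proposition~\ref{prop:IPFggV} equals $1-w_f(p)$, by computing $w_F(N)=1$ (Schmidt) and $\lambda_F(p)=-w_f(p)p^{k-1}$ (Kohnen). The one place you deviate is how $\lambda_F(p)$ is identified: the paper simply cites the Hecke equivariance of Ibukiyama's lift to get $\lambda_F(p)=\lambda_\phi(p^2)=\lambda_h(p^2)$ through the chain~\eqref{new-iso}, whereas you verify $a_F(pT)=a_f(p)\,a_F(T)$ by hand from the Maa{\ss} relation~\eqref{sk-1}, using that $c(pT)=p\,c(T)$, $D(pT)=p^2D$, and $(d,N)=1\Rightarrow p\nmid d$. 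Your route is slightly more self-contained (it makes the Hecke-equivariance at $p\mid N$ transparent at the level of Fourier coefficients rather than outsourcing it), at the cost of burying the content in the coefficient relation $c_h(p^2t)=a_f(p)\,c_h(t)$, which is the same Kohnen input the paper quotes as $\lambda_h(p^2)=-w_f(p)p^{k-1}$. Both arguments rest on precisely the same two external inputs.
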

\begin{proof}
First, since $F$ is an SK lift, $w_F(p)=1$ for every $p|N$ (see e.g. \cite[Thm.~5.2(ii)]{schmidt-SKlift}). Next, we have the following chain of isomorphisms.
    \begin{equation} \label{new-iso}
        S_{2k}^{new}(N)\cong S_{k+1/2}^{+, new}(4N)\cong J_{k,1}^{cusp, new}(N)\cong \mrm{SK}_{k+1}^{new}(N).
    \end{equation}
    Let $h$ and $\phi$ be the corresponding half-integral and Jacobi forms under the above isomorphism, respectively. Furthermore, let $\lambda_\phi(p^2)$ and $\lambda_h(p^2)$ denote the eigenvalues of $\phi$ and $h$ under $U_J(p)$ and $U(p^2)$, respectively. Then we have $\lambda_F(p)=\lambda_\phi(p^2)=\lambda_h(p^2)$ (from \cite{Ibu-SK}). The proof now follows from Proposition~\ref{prop:IPFggV} by noting that $\lambda_h(p^2)=-w_f(p)p^{k-1}$ (see \cite{kohnen1982newforms}). 
\end{proof}

\subsection{The vanishing of $c_{\sigma,\sigma}(f;g)$}
As an immediate corollary, we have the following vanishing result about the inner product $\lan F^\circ, g\otimes g\ran$ for $g\in \mc B_{k+1}^{old}(N)$. Recall the oldbasis from \eqref{oldbasis} given by
\begin{equation}
    \mc B_\ell^{old}(N)=\bigcup_{LM=N, L>1}\{\mc B_{\ell}(L,g): g\in \mc B_{\ell}^{new}(M) \},
\end{equation}
where $\mc B_\ell(L,g)$ denotes an orthogonal basis for the subspace $S_\ell(L,g):=\mrm{span}\{ g|B_d: d|L\}$. 

\begin{cor}\label{F0old-vanish}
    Let $f$ and $F$ be as above. Let $g\in S_{k+1}^{new}(N_g)$. If $w_f(p)=-1$ for some $p|(N/N_g)$, that is $M_g\not\in \mc L_f$, then $\lan F^\circ, g_\sigma\otimes g_\sigma\ran=0$ for any $g_\sigma\in \mc B_{k+1}(N/N_g,g)$.
\end{cor}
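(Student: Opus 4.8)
The plan is to deduce Corollary~\ref{F0old-vanish} from Proposition~\ref{prop:IPzero} by expanding the orthogonal basis element $g_\sigma$ in terms of the forms $g\mid W_N(d)$, $d\mid M_g$, and then showing that $\langle F^\circ, g_\sigma\otimes g_\sigma\rangle$ can be written as a sum of inner products each of which is annihilated by the factor $(1-w_f(p))$ for the chosen prime $p\mid (N/N_g)$ with $w_f(p)=-1$. The key point is that Proposition~\ref{prop:IPzero} as stated applies when the second argument is $g\otimes g$ or $g\otimes g\mid W_N(p)$ for $g$ an eigenform of $W_{N/p}(N/p)$; so I need to arrange the decomposition of $g_\sigma$ so that the relevant building blocks have exactly this shape.

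First I would recall from \eqref{blg} that $g_\sigma = \sum_{d\mid M_g}\sigma(d)\, g\mid W_N(d)$, and I would factor the group of divisors of $M_g$ (under the $*$-operation) as a product over the primes dividing $M_g$. Writing $M_g = p\cdot M'$ with $p\nmid M'$, every divisor $d\mid M_g$ is uniquely $d = d'$ or $d = pd'$ with $d'\mid M'$, and correspondingly $g\mid W_N(d) = g\mid W_N(d')$ or $g\mid W_N(d')\mid W_N(p)$ (using multiplicativity of Atkin--Lehner operators at coprime moduli, $W_N(pd') = W_N(p)W_N(d')$ up to the identification already fixed in the paper). Grouping terms, $g_\sigma = G_\sigma + \sigma(p)\, G_\sigma\mid W_N(p)$ where $G_\sigma := \sum_{d'\mid M'}\sigma(d')\, g\mid W_N(d')$. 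The crucial observation is that each $g\mid W_N(d')$ with $d'\mid M' = M_g/p$ still lives in $S_{k+1}(N)$ as an oldform coming from level $N_g$, and — more to the point — $W_N(d')$ commutes with $W_{N/p}(N/p)$ (again coprime moduli, $d'\mid N/p$), so $G_\sigma$ and $G_\sigma\mid W_N(p)$ are built from forms that, suitably interpreted at level $N/p$, are eigenfunctions of $W_{N/p}(N/p)$ — indeed $g$ itself (being new of level $N_g \mid N/p$) is such an eigenfunction, and so is $g\mid B_{d'}$ for $d'\mid (N/p)/N_g$.

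Therefore I would apply Proposition~\ref{prop:IPzero} (and Proposition~\ref{prop:IPFggV}) with $g$ replaced by each form $\tilde g := g\mid W_N(d')$ viewed at level $N/p$: since $w_F(p)=1$ and $\lambda_F(p) = \lambda_h(p^2) = -w_f(p)p^{k-1}$, the prefactor $1 + p^{1-k}\lambda_F(p)w_F(N) = 1 - w_f(p)$, which vanishes precisely when $w_f(p) = +1$ and equals $2$ when $w_f(p) = -1$; but Proposition~\ref{prop:IPzero} gives $(1-w_f(p))\langle F^\circ, \tilde g\otimes \tilde g\mid V\rangle = 0$ for $V\in\{\mathrm{Id}, W_N(p)\}$, so when $w_f(p)=-1$ we conclude $\langle F^\circ, \tilde g\otimes \tilde g\rangle = \langle F^\circ, \tilde g\otimes \tilde g\mid W_N(p)\rangle = 0$ for \emph{every} such $\tilde g$. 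Expanding $\langle F^\circ, g_\sigma\otimes g_\sigma\rangle$ bilinearly using $g_\sigma = G_\sigma + \sigma(p)G_\sigma\mid W_N(p)$ produces four terms, each of the form $\langle F^\circ, (g\mid W_N(d_1'))\otimes (g\mid W_N(d_2'))\mid V\rangle$ with $V\in\{\mathrm{Id},W_N(p)\}$; by the same eigenform bookkeeping (and, if needed, a small strengthening of Proposition~\ref{prop:IPFggV} to allow distinct but $T_n$-equivalent forms in the two slots, which is harmless since $W_N(d')$ does not change the away-from-$N$ Hecke eigenvalues), every one of these vanishes.

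\textbf{Main obstacle.} The delicate point is bridging the gap between the literal statement of Proposition~\ref{prop:IPzero}, which is phrased for $g\otimes g$ with a \emph{single} $W_{N/p}(N/p)$-eigenform $g$, and what is actually needed: inner products $\langle F^\circ, g'\otimes g''\mid V\rangle$ where $g', g''$ are \emph{possibly different} translates $g\mid W_N(d_1')$, $g\mid W_N(d_2')$. One must check that the argument of Proposition~\ref{prop:IPFggV} (the trace-operator identity $F\mid \mathrm{Tr}^{(2)}(N,N/p) = 0$ unfolded via the coset representatives $\mathcal C(N,N/p)$) goes through verbatim with $g\otimes g$ replaced by $g'\otimes g''$, the only requirements being that both $g'$ and $g''$ are $W_{N/p}(N/p)$-eigenforms (with a common eigenvalue, so that \eqref{gginvAL} still holds) and are invariant under the translations $T^{b'}$ — both of which hold for our translates. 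Once this mild generalization is in place, the corollary follows purely formally; I expect no analytic difficulty, only careful bookkeeping of Atkin--Lehner relations at coprime moduli.
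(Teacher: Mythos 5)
Your opening decomposition is exactly the paper's: write $g_\sigma = g_1 + \sigma(p)\,g_1|W_N(p)$ with $g_1 = G_\sigma = \sum_{d'\mid M'}\sigma(d')\,g|W_N(d')$, where $M' = M_g/p$. But you then go astray by trying to apply Proposition~\ref{prop:IPzero} to the individual translates $\tilde g := g|W_N(d') = g|B_{d'}$, and in particular you assert at the end that "both $g'$ and $g''$ are $W_{N/p}(N/p)$-eigenforms \ldots both of which hold for our translates." That claim is false. For a single $d'\mid M'$, the form $g|B_{d'}\in S_{k+1}(N_gd')\subset S_{k+1}(N/p)$ satisfies
\begin{equation}
\left(g|B_{d'}\right)\big|W_{N/p}(N/p)\;=\;w_g(N_g)\,\cdot\,g|B_{M'/d'},
\end{equation}
which, for $d'\ne M'/d'$ (i.e.\ for any $d'\ne 1$ when $M'$ is square-free and $>1$), is a genuinely different form, not a scalar multiple of $g|B_{d'}$. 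Thus $\tilde g$ fails the hypothesis of Proposition~\ref{prop:IPzero}, and your plan to annihilate each of the four terms $\lan F^\circ,(g|W_N(d_1'))\otimes(g|W_N(d_2'))|V\ran$ separately cannot get off the ground — a "strengthening" of Proposition~\ref{prop:IPFggV} to distinct slots does not repair this, since the real problem is the missing Atkin--Lehner invariance of each slot, not their being distinct.

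The repair is trivial and is what the paper does: do \emph{not} decompose $g_1$ further. The form $g_1$ as a whole \emph{is} a $W_{N/p}(N/p)$-eigenfunction (it is an eigenform of $W_{N/p}(q)$ with eigenvalue $\sigma(q)$ for $q\mid M'$ and with eigenvalue $w_g(q)$ for $q\mid N_g$, because summing the translates against $\sigma$ is exactly the Atkin--Lehner-diagonalizing construction of \eqref{blg} at level $N/p$). So Proposition~\ref{prop:IPzero} applies directly to $g_1$. From $F|\mc W_N(p) = F$ one gets $\lan F^\circ, g_1|W_N(p)\otimes g_1\ran = \lan F^\circ, g_1\otimes g_1|W_N(p)\ran$ and $\lan F^\circ, g_1|W_N(p)\otimes g_1|W_N(p)\ran = \lan F^\circ, g_1\otimes g_1\ran$, so that
\begin{equation}
\lan F^\circ, g_\sigma\otimes g_\sigma\ran \;=\; 2\lan F^\circ, g_1\otimes g_1\ran + 2\sigma(p)\lan F^\circ, g_1\otimes g_1|W_N(p)\ran,
\end{equation}
and then Proposition~\ref{prop:IPzero} with $w_f(p)=-1$ kills both terms. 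There is nothing further to generalize or strengthen; the "main obstacle" you identified is an artifact of breaking $g_1$ into pieces that individually do not satisfy the proposition's hypotheses.
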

\begin{proof}
   Let $M_g=N/N_g$. Then any $g_\sigma\in \mc B_{k+1}(M_g,g)$ is of the form
   \begin{equation}
       g_\sigma=\sum_{d|M_g}\sigma(d)g|W_N(d).
   \end{equation}
Let $g_1=\sum_{d|M_g, (d,p)=1} \sigma(d)g|W_N(d)$. Then we have that $g_1\in S_{k+1}(N/p)$ and it is an eigenfunction of $W_{N/p}(N/p)$. Further, $g_\sigma=g_1+\sigma(p)g_1|W_N(p)$. Thus
\begin{align}
    \lan F^\circ, g_\sigma\otimes g_\sigma\ran= 2\lan F^\circ, g\otimes g\ran+2\sigma(p)\lan F^\circ, g\otimes g|W_N(p)\ran.
\end{align}
The proof is now complete by Proposition \ref{prop:IPzero}.
\end{proof}
\section{Central-value formula for old-classes}\label{dMg}
In this section, we will prove all the main results of the paper, viz. Theorem~\ref{th:IPequiv} and Theorem~\ref{thm:CVOldclass}. As discussed in the Introduction, the step would essentially be the passage
\begin{align}
     \lan F^\circ, g \otimes g|W_N(M)\ran \leadsto \lan \phi^\circ,  g|W_N(M)\ran
\end{align}
and comparing the resulting expressions for $M$ and $M_g$.

\subsection{Unfolding of the restricted inner product.} \label{cv-old-sec}
We have the Fourier-Jacobi expansion
\begin{equation}
    F(Z)=\sum_m \phi_m(\tau, z) e(m\tau'),
\end{equation}
where $\phi_m=\phi|V_m$ for a newform $\phi\in J_{k+1,1}^{new}(N)$ (see section \ref{sec:VmOP}). Thus 
\begin{equation} \label{f0phim}
    F^\circ(\tau, \tau')= \sum_m \phi_m(\tau, 0) e(m\tau').
\end{equation}
Let $\phi\in J_{k+1}(N)$ and  for $d|N^\infty$, $\phi_d=\phi|V_d$. Then we have $\phi_d(\tau, 0)\in S_{k+1}(N)$ and 
\begin{equation}
    \phi_d(\tau, 0)= \sum_n \mc R(nd) c_\phi(nd) q^n,
\end{equation}
where $\mc R(t) =\# \{ r\in \mbb Z : r^2< 4t\}$. Thus for $d|N^\infty$, we can conclude that
\begin{equation} \label{phidN}
    \phi_d(\tau, 0)= \phi(\tau, 0)|U(d).
\end{equation}
Next consider that case $(m,N)=1$. In this case, we have
\begin{align}
\phi_m(\tau,0)&= \sum_n \sum_{r^2<4mn} \sum_{d|(n,m,r)} d^{k} c(nm/d^2, r/d) q^n.\\
&= \sum_n\sum_{d|(m,n)}d^k\mc R(mn/d^2) c(nm/d^2)q^n\\
&= \phi(\tau,0)|T(m).
\end{align}
Thus writing $m= d m_1$ with $(m_1, N)=1$ and $d|N^\infty$, we get 
\begin{align} \label{phimN}
\phi_m(\tau, 0)= \phi(\tau,0)|U(d)T(m_1) 
\end{align}
Put $\phi^\circ(\tau)  := \phi(\tau,0)$. Then we have the following. 
\begin{align}
    F^\circ(\tau, \tau') &=\sum_{m, (m,N)=1} \sum_{d|N^\infty} \phi^\circ| U(d)T(m)e(md\tau').
\end{align}
Let $g\in S_{k+1}(N_g)$ be a newform of level $N_g$. Then we have the following
\begin{align}
    \lan F^\circ, g\otimes g \ran &= \sum_{m, (m,N)=1}\sum_{d|N^\infty} \lan \phi^\circ|U(d)T(m)e(md\tau'), g\otimes g \ran \\
    &= \sum_{m, (m,N)=1}\sum_{d|N^\infty} \lan \phi^\circ|U(d)e(md\tau'), g|T(m)\otimes g \ran\,    \\
    &= \sum_{m, (m,N)=1} \lambda_g(m) \sum_{d|N^\infty} \lan \phi^\circ|U(d)e(md\tau'), g\otimes g \ran \\
    &= \sum_{d|N^\infty}d^{-(k+1)/2} \lan \phi^\circ|U(d) \otimes g_N|B_{d}, g\otimes g \ran, 
\end{align}
where $B_d$ is the level raising operator defined as in \eqref{BdOpr} and
\begin{align} \label{gN-def}
  g_{N}(z)&:= \sum_{(n,N)=1} \lambda_g(n) e(nz).
\end{align}
\begin{equation}
 \text{ Let } M_g:=N/N_g. \text{ For any } d|N^\infty, \mbox{ we write } d=d_g\ell_g, \mbox{ where } d_g|N_g^\infty \mbox{ and } \ell_g|M_g^\infty. 
\end{equation}
Then, since our inner products are normalized, we get
\begin{align}
    \lan F^\circ, g\otimes g \ran &=  \sum_{d_g|N_g^\infty}\sum_{\ell_g|M_g^\infty} (d_g\ell_g)^{-(k+1)/2}  \lan \phi^\circ|U(d_g\ell_g), g\ran \cdot \lan g_N|B_{d_g\ell_g}, g \ran.
\end{align}
Similarly, for any $M|M_g$ we have
\begin{align} \label{FggBd'}
    \lan F^\circ, g \otimes g|W_N(M)\ran &= \sum_{d_g|N_g^\infty}\sum_{\ell_g|M_g^\infty} (d_g\ell_g)^{-(k+1)/2}  \lan \phi^\circ|U(d_g\ell_g), g \ran \cdot \lan g_N|B_{d_g\ell_g}, g|W_N(M) \ran  \\
    &= \sum_{d_g|N_g^\infty}\sum_{\ell_g|M_g^\infty} (d_g\ell_g)^{-(k+1)/2} \lan \phi^\circ, g | W_N U(d_g\ell_g) W_N\ran \cdot \lan g_N|B_{d_g\ell_g}, g|B_{M} \ran.
\end{align}
To arrive at \eqref{FggBd'}, we use the fact that, for any $r|N$, the adjoint $U(r)^*$ of $U(r)$ is given by $U(r)^*= W_NU(r)W_N$ (see [pp. 187] \cite{diamond2005first}). We also use that $W_N(M)=\gamma B_M$ for some $\gamma\in \Gamma_0(N_g)$, since $(M,N_g)=1$.

As we can see, there are two inner products present in \eqref{FggBd'}. The next few sections will be devoted towards evaluating the one involving $g_N$, and unraveling the one involving $\phi^\circ$, so as to compare \eqref{FggBd'} as $d$ varies across the divisors of $N^\infty$. This will ultimately lead us to the desired central-value formulae for all old forms in complete uniformity.
\subsection{Calculation of the inner product involving $g_N$} \label{gN-sec}
From the definition of $g_N$ in \eqref{gN-def} we can remove the coprimality condition $(n,N)=1$ by inserting the M\"obius function and write,
\begin{equation} \label{gn-mu}
    g_N= \sum_{l|N}(-1)^{\omega(l)}l^{-(k+1)/2}g|U(l)B_l = \sum_{l|N} \mu(l) l^{-(k+1)/2}g|U(l)B_l .
\end{equation}
Now we assume that $g$ is a newform of level $N_g$. Thus $g$ is an eigenfunction of $U(l)$ for $l|N_g$. Recall that $N=N_f=N_gM_g$. We write $l=d_1d_2$ uniquely ($(d_1,d_2)=1$), with $d_1|N_g$ and $d_2|M_g$.
Let $\lambda_g(d_2)$ denote the $U(d_2)$-eigenvalue of $g$. For any $L,M \geq  1$, thus we can write using \eqref{gn-mu} that 
\begin{align}
     \lan g_N|B_{L}, g|B_{M} \ran= \sum_{d_2|M_g}\sum_{d_1|N_g}\mu(d_1d_2)(d_1d_2)^{-(k+1)/2}\lambda_g(d_1)\lan g|U(d_2)B_{Ld_1d_2}, g|B_{M}\ran.
\end{align}
On level $N_g$, for any $p\nmid N_g$, we have that $U(p)=T(p)-p^{(k-1)/2}B_p$. Thus for $d_2|M_g$, we can write
\begin{equation}\label{UdTdBd}
    U(d_2)=\sum_{\ell|d_2}\mu(\ell)\ell^{(k-1)/2}T(d_2/\ell)B_\ell.
\end{equation}
We write $d_2=\ell d_3$, eliminate $d_2$, and rename the $\ell$ as the new $d_2$. Note that $(d_2,d_3)=1$, since $(\ell, d_3)=1$ and $d_2$ is square-free. This gives us the relation
\begin{align}\label{Bd1d2}
     & \lan g_N|B_{L}, g|B_{M} \ran \\  &\q=\sum_{d_1|N_g}\sum_{d_2,d_3|M_g, (d_2,d_3)=1}\mu(d_1d_3)(d_1d_3)^{-(k+1)/2}d_2^{-1}\lambda_g(d_1)\lambda_g(d_3)\lan g|B_{Ld_1d_2^2d_3}, g|B_{M}\ran.
\end{align}
For any $L,M  \ge 1$ and any prime $p$ such that $(p,L)=1$, \eqref{Bd1d2} thus leads us to consider the inner product
\begin{equation}
    A_p(L,M,t):=\lan g|B_{Lp^t}, g|B_M\ran.  
\end{equation} 

From \cite[Theorem.~6]{schulze2018petersson}, have the following lemma to evaluate $ A_p(L,M,t)$.
\begin{lem} \label{lem-pilot}
Let $l=(Lp^t, M)$. Then,
\begin{align} \label{pilot}
    A_p(L,M,t)&= \frac{l^{k+1}\lambda_g(1, Lp^t/l)\lambda_g(1, M/l)}{(Lp^tM)^{(k+1)/2}}\prod_{q|(Lp^tM/l^2), q\nmid N_g}(1+q^{-1})^{-1}\lan g,g\ran.
\end{align}
\end{lem}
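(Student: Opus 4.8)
\textbf{Proof plan for Lemma~\ref{lem-pilot}.}
The statement is essentially a repackaging of the Petersson-norm formula for products of the form $g|B_a$ with $g$ a newform of square-free level $N_g$, as worked out in \cite{schulze2018petersson}. The plan is to reduce the left-hand side $A_p(L,M,t)=\lan g|B_{Lp^t}, g|B_M\ran$ to the shape in which that reference states its result, and then to rewrite the outcome in terms of the normalized Hecke eigenvalues $\lambda_g(1,\cdot)$ and the local Euler factors appearing in \eqref{pilot}.

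First I would recall the general principle: for a newform $g$ of level $N_g$ and divisors $a,b$ of $N_g^\infty$, the inner product $\lan g|B_a, g|B_b\ran$ depends only on the prime-by-prime data, so by multiplicativity it factors as a product over primes $q$ of local quantities $\lan g|B_{q^{\alpha}}, g|B_{q^{\beta}}\ran / \lan g,g\ran$ where $q^{\alpha}\|a$, $q^{\beta}\|b$. For primes $q\nmid N_g$ the level-raising operators $B_{q^j}$ relate to the Hecke operators $T(q^j)$ via the standard identities, and one computes $\lan g|B_{q^{\alpha}}, g|B_{q^{\beta}}\ran$ using the fact that $B_q$ and its adjoint (with respect to the normalized Petersson product) are expressible through $T(q)$; this produces, after collecting terms, the factor $q^{\min(\alpha,\beta)(k+1)}$ from the overlap, the normalized eigenvalue factors $\lambda_g(1,q^{\alpha-\min})\lambda_g(1,q^{\beta-\min})$, and the correction $(1+q^{-1})^{-1}$ raised to the power $\alpha+\beta-2\min(\alpha,\beta)$, which is exactly the exponent with which $q$ divides $Lp^tM/l^2$ when $l=(Lp^t,M)$. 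For primes $q\mid N_g$ the eigenform $g$ is already an eigenfunction of $U(q)$ with $|\lambda_g(q)|=q^{(k-1)/2}$, the operators $B_{q^j}$ interact more simply, and the Euler correction is absent — which is precisely why the product in \eqref{pilot} is restricted to $q\nmid N_g$.

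Concretely, the steps in order are: (i) invoke \cite[Theorem~6]{schulze2018petersson} with the arguments $a=Lp^t$ and $b=M$, both dividing $N_g^\infty$; (ii) identify $l=(Lp^t,M)$ as the ``greatest common divisor'' appearing there and match the power $l^{k+1}$ coming from the diagonal overlap; (iii) translate that reference's normalization of Fourier/Hecke data into the $\lambda_g(1,\cdot)$ notation fixed earlier in subsection~\ref{Lfn-defs}, so that the cross terms become $\lambda_g(1,Lp^t/l)\lambda_g(1,M/l)$; and (iv) read off the Euler product $\prod_{q\mid (Lp^tM/l^2),\,q\nmid N_g}(1+q^{-1})^{-1}$ and the global scaling $(Lp^tM)^{-(k+1)/2}$ from the level-raising normalization $g|B_d = d^{(k+1)/2}g(dz)$ used in \eqref{BdOpr}. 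Since $p$ is assumed coprime to $L$, the exponent pattern at $p$ is the generic $q\nmid N_g$ case (or the $q\mid N_g$ case if $p\mid N_g$), so no separate argument is needed.

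The only real point requiring care — and the step I expect to be the main obstacle — is the bookkeeping in (iii): matching our normalization conventions (the functional equation centered at $s=1/2$, the definition of $A_g(n,m)$ and $\lambda_g(1,\cdot)$ in subsection~\ref{Lfn-defs}, and the unitary scaling $d^{(k+1)/2}$ in $B_d$) against those of \cite{schulze2018petersson}. A misplaced power of $l$, of $p$, or of $(1+q^{-1})$ would propagate into all the later Euler-product computations, so the verification that the exponents of each prime on both sides agree must be done explicitly, but it is entirely mechanical once the dictionary between the two normalizations is written down.
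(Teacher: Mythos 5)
Your plan is essentially the paper's own approach: the paper provides no proof at all for this lemma, stating only that it follows from \cite[Theorem~6]{schulze2018petersson}, exactly the reference you invoke. Your extra discussion of the prime-by-prime factorization and normalization dictionary is correct supporting detail, but it does not represent a different argument.
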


In the sequel, we would need to invoke \lemref{lem-pilot} under various mutually exclusive divisibility conditions on $p$. We summarize them in a lemma given below. First, to simplify the notation, we introduce the function $\eta$ defined by
\begin{align}
    \eta(p, t):=\begin{cases}
     \lambda_g(1, p^t) \, p^{-(k+1)t/2} \, \left(1+\frac{1}{p}\right)^{-1} &\text{ for } t\ge 1 \text{ and } p|M_g; \\
     \lambda_g(1, p^t) \, p^{-(k+1)t/2}  &\text{ for } t\ge 1 \text{ and } p|N_g;\\
     1 & \text{ otherwise. }
    \end{cases}   
\end{align}

\begin{lem} \label{lem-almt}
Let $L,M \ge 1$, $M$ square-free and $(p,L)=1$. For $p\nmid M$, 
\begin{align}
    A_p(L,M,t)=\eta(p,t)A_p(L,M,0).
\end{align}
When $p|M$, we have the following cases.
\begin{align}
A_p(L,M,0)&= \eta(p,1)A_p(L, M/p, 0); \label{p|m1}\\
A_p(L,M,1)&=\eta(p,0) A_p(L, M/p, 0); \label{p|m2} \\
A_p(L,M,t)&=\eta(p,t-1)A_p(L, M/p, 0) \text{ when } t\ge 2. \label{p|m3}
\end{align}
\end{lem}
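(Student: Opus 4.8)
The plan is to derive everything from Lemma~\ref{lem-pilot} by specializing the general formula \eqref{pilot} to the relevant pairs of arguments and carefully tracking which prime powers land in the gcd $l=(Lp^t,M)$ and which survive into the "extra" factor $\prod_{q\mid (Lp^tM/l^2),\, q\nmid N_g}(1+q^{-1})^{-1}$. Since $(p,L)=1$ throughout, the $p$-part of all the quantities $L$, $M$, $Lp^t$ decouples completely, so it suffices to compare the $p$-local contributions on the two sides of each claimed identity; the $q$-parts for $q\neq p$ are literally identical in $A_p(L,M,t)$ and $A_p(L,M/p,0)$ (resp.\ $A_p(L,M,0)$) and cancel. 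I would state this reduction first, as a short preliminary paragraph, so that the rest of the proof is a purely $p$-local bookkeeping exercise.

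Next I would treat the case $p\nmid M$. Here $(Lp^t,M)=(L,M)=:l$ (the $p$-part of the gcd is trivial since $p\nmid M$), so comparing $A_p(L,M,t)$ with $A_p(L,M,0)$, the gcd $l$, the factor $\lambda_g(1,M/l)$, and all $q\neq p$ contributions are unchanged. The remaining $p$-local difference is: (a) the factor $\lambda_g(1,Lp^t/l)/\lambda_g(1,L/l)=\lambda_g(1,p^t)$ (using that $p\nmid l$ and multiplicativity of $\lambda_g(1,\cdot)$ in coprime arguments), (b) the normalization $(Lp^tM)^{-(k+1)/2}$ versus $(LM)^{-(k+1)/2}$, giving $p^{-(k+1)t/2}$, and (c) the extra product over $q\mid (Lp^tM/l^2)$: the $p$-part of $Lp^tM/l^2$ is $p^t$ (since $p$ divides neither $M$ nor $l$ but divides $Lp^t$ to a positive power as $t\ge1$), contributing the factor $(1+p^{-1})^{-1}$ exactly when $p\nmid N_g$, i.e.\ when $p\mid M_g$. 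Assembling (a),(b),(c) against the definition of $\eta(p,t)$ gives $A_p(L,M,t)=\eta(p,t)A_p(L,M,0)$, where the three cases of $\eta$ ($p\mid M_g$, $p\mid N_g$, otherwise — noting $t=0$ is impossible here since we may assume $t\ge1$, and for $t=0$ the identity is trivial) match up. For $p\mid N_g$ one should additionally remark that $\lambda_g(1,p^t)$ is still the correct symbol and that $p$ is excluded from the $q$-product, which is why no $(1+p^{-1})^{-1}$ appears.

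For the case $p\mid M$ I would split on $t$. When $t=0$: comparing $A_p(L,M,0)$ with $A_p(L,M/p,0)$, the $p$-part of the gcd drops from $p^1$ (since $p\mid M$, $p\nmid L$, so $p\mid(L,M)$ to exactly the first power) to $p^0$; track the change in $l^{k+1}$, in $\lambda_g(1,L/l)$ (unchanged, as the $p$-part of $L/l$ is trivial on both sides since $p\nmid L$), in $\lambda_g(1,M/l)$ versus $\lambda_g(1,(M/p)/l')$, in the normalization, and in the $q$-product — for $M$ the $p$-part of $LMp^0/l^2=LM/l^2$ is $p^{1-2}=p^{-1}$, i.e.\ no $p$ in the product, whereas for $M/p$ it is $p^0$, also none — and collect the net $p$-factor, which should come out to $\eta(p,1)$; the $(1+p^{-1})^{-1}$ here comes not from the $q$-product but from carefully matching $l^{k+1}p^{-(k+1)/2}\cdot(\text{stuff})$, so one must be attentive. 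When $t=1$: now the $p$-part of $(Lp,M)$ is $p$ (both $Lp$ and $M$ are divisible by $p$ exactly once), so $l$ gains a factor $p$ relative to the $t=0,\,p\mid M$ count; one finds the $p$-contributions largely cancel and what remains is $\eta(p,0)=1$, consistent with \eqref{p|m2}. When $t\ge2$: the $p$-part of $(Lp^t,M)$ is still just $p$ (capped by the $p$-part of $M$, which is $p^1$), the $p$-part of $Lp^tM/l^2$ is $p^{t+1-2}=p^{t-1}$, which contributes $(1+p^{-1})^{-1}$ iff $p\mid M_g$, and the surviving $\lambda$-factor is $\lambda_g(1,p^{t-1})$ from $\lambda_g(1,Lp^t/l)$ after removing the $p$ absorbed into $l$; together with the normalization $p^{-(k+1)t/2}$ times the $p^{+(k+1)/2}$ coming from the enlarged $l$, this yields $\eta(p,t-1)$, proving \eqref{p|m3}.

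The main obstacle I anticipate is purely the combinatorial/arithmetic precision of the case analysis: Lemma~\ref{lem-pilot} bundles together three moving parts (the gcd power, the two Hecke symbols, and the $(1+q^{-1})^{-1}$ product) that each shift by different powers of $p$ depending on whether $p\mid M$ and on the size of $t$, and it is easy to mis-count by one power of $p^{(k+1)/2}$ or to forget that $\lambda_g(1,p)=\lambda_g(p)$ while for $p\mid N_g$ one has the special values $\lambda_g(1,p^t)=\sum_{i\le t}p^{-i}$ recorded in subsection~\ref{Lfn-defs}. There is no conceptual difficulty beyond Lemma~\ref{lem-pilot} itself; the write-up should therefore be organized as a clean table of $p$-local exponents for each of the four cases ($p\nmid M$; $p\mid M$ with $t=0,1,\ge2$) so the reader can verify each entry against the definition of $\eta$. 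I would present it compactly, perhaps folding the $p\nmid M$ and $p\mid M,\,t\ge2$ cases together since both produce an $\eta(p,\cdot)$ with the $(1+p^{-1})^{-1}$-when-$p\mid M_g$ behaviour, and isolating $t=0,1$ with $p\mid M$ as the genuinely distinct computations.
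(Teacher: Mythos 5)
Your overall strategy matches the paper's exactly: specialize Lemma~\ref{lem-pilot} and track how the three moving parts (the gcd $l$, the $\lambda_g(1,\cdot)$ symbols, and the $\prod_q(1+q^{-1})^{-1}$ factor) shift by powers of $p$ in each case. Your treatment of the $p\nmid M$ case and of the $p\mid M$, $t=1$ and $t\ge 2$ cases is correct and aligns with what the paper does.

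However, your $p\mid M$, $t=0$ analysis contains a genuine arithmetic error. You write ``the $p$-part of the gcd drops from $p^1$ (since $p\mid M$, $p\nmid L$, so $p\mid(L,M)$ to exactly the first power) to $p^0$.'' This is backwards: $p\nmid L$ forces $p\nmid (L,M)$, so for $A_p(L,M,0)$ the gcd $l=(L,M)$ carries \emph{no} power of $p$, and in fact $l=(L,M)=(L,M/p)=l'$. That wrong premise propagates: you then compute the $p$-part of $LM/l^2$ as $p^{-1}$ and conclude that the $(1+p^{-1})^{-1}$ factor does \emph{not} arise from the $q$-product, which is also wrong. The correct bookkeeping is: $l$ is unchanged so $l^{k+1}$ contributes nothing; $\lambda_g(1,M/l)=\lambda_g(1,p)\,\lambda_g\bigl(1,(M/p)/l\bigr)$ by coprime multiplicativity; the normalization gives $p^{-(k+1)/2}$; and since $p^1\parallel LM/l^2$ while $p\nmid L(M/p)/l^2$, the $q$-product for $A_p(L,M,0)$ picks up exactly the extra factor $(1+p^{-1})^{-1}$ (given $p\nmid N_g$, which holds when $p\mid M\mid M_g$). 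Multiplying gives $\lambda_g(1,p)\,p^{-(k+1)/2}(1+p^{-1})^{-1}=\eta(p,1)$. So the conclusion you state is right, but the argument you give for it is not, and you also introduce an internal inconsistency with your $t=1$ discussion, where you correctly say $l$ gains a factor $p$ going from $t=0$ to $t=1$ — which is only true if $p\nmid l$ at $t=0$, contradicting what you asserted there.
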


\begin{proof}
Put $l=(Lp^t,M)$. Then we note that
\begin{align}
    A_p(L,M,t)&=\frac{\lambda_g(1,p^t)}{p^{(k+1)t/2}} A_p(L,M,0) = \frac{\lambda_g(p^t)}{p^{(k+1)t/2}}  A_p(L,M,0)\q \text{ when }  p|N_g; \label{p|ng} \\
A_p(L,M,t)&= \frac{\lambda_g(1, p^t)}{p^{(k+1)t/2}}(1+p^{-1})^{-1}A_p(L, M, 0)\q\q\q\q\q\q \text{ when }  p\nmid MN_g.
 \label{pmng} 
\end{align}
The relation \eqref{p|ng} follows by noting that $l=(L,M)$, and we already have $(p,L)=1$. Moreover since $p|N_g$, one has $\lambda_g(1,p^t)=\lambda_g(p^t)$ for all $t \ge 0$.
Similarly for \eqref{pmng}, again we have $l=(L,M)$. The $t=0$ case is trivial. Further, if $t \ge 1$, then $p$ occurs in the product over $q$ as in \eqref{pilot}, and has to be isolated.

Among the $p|M$ assertions, \eqref{p|m1} is straightforward, and since $M$ is square-free, \eqref{p|m2} follows from \eqref{pilot} by writing $l=(Lp,M)=p l'$, where $l'=(L,M/p)$. The first equality in \eqref{p|m3} follows exactly as that in \eqref{p|m2}. That is, by noting that
\begin{equation}
    A_p(L,M,t)=A_p(L, M/p, t-1)= \frac{\lambda_g(1,p^{t-1})}{p^{(k+1)(t-1)/2}}(1+p^{-1})^{-1}A_p(L, M/p, 0) \text{ when } t\ge 2.
\end{equation}
The second equality above follows by using the same definition of $l'$ as above, the multiplicative property of $\lambda_g(1,n)$ viz., $\lambda_g(1, Lp^{t-1} /l') = \lambda_g(1, L/l') \lambda_g(1, p^{t-1})$, and from the fact that $p| L p^{t-1} (M/p)/l'^2$, $p \nmid N_g$.
\end{proof}
\begin{lem}\label{lemBLBM}
Let $M|M_g$ and $L\ge 1$. Then
\begin{align}
    \lan g|B_L, g|B_M\ran&= \lan g, g\ran\prod_{q|N_g, q^t||L}\eta(q,t) \prod_{q|(M_g/M), q^t||L}\eta(q,t) \prod_{q|M, q^t||L}\eta(q,t-1)\prod_{q|M, q\nmid L}\eta(q,1). 
\end{align}
\end{lem}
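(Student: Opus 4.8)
The plan is to prove the identity by induction on $\Omega(LM)$, the number of prime divisors of $LM$ counted with multiplicity, peeling off one prime at a time by means of \lemref{lem-almt}. Write $\Pi(L,M)$ for the product of $\eta$-factors on the right-hand side, so that the claim reads $\lan g|B_L, g|B_M\ran = \lan g,g\ran\,\Pi(L,M)$. When $L=M=1$ all four products defining $\Pi$ are empty and both sides equal $\lan g,g\ran$; this is the base case. Recall also that by definition $A_p(L,M,0)=\lan g|B_L, g|B_M\ran$ for every prime $p$, which is how the recursions in \lemref{lem-almt} feed the induction.

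For the inductive step fix a prime $p\mid LM$ and put $t=v_p(L)$. Suppose first $p\nmid M$, so $p\mid L$ and $t\ge 1$; write $L=p^tL'$ with $(p,L')=1$. The first assertion of \lemref{lem-almt} gives $\lan g|B_L, g|B_M\ran=A_p(L',M,t)=\eta(p,t)\,A_p(L',M,0)=\eta(p,t)\,\lan g|B_{L'}, g|B_M\ran$, and the inductive hypothesis applies to $(L',M)$. Since $p\nmid M$ and $N_g,M_g$ are coprime, the prime $p$ lies in at most one of the products ``$q\mid N_g$'' or ``$q\mid M_g/M$'' in $\Pi$ — in neither when $p\nmid N$, in which case $\eta(p,t)=1$; in all cases raising the exponent of $p$ in $L$ from $0$ to $t$ multiplies $\Pi$ by precisely $\eta(p,t)$, so $\eta(p,t)\Pi(L',M)=\Pi(L,M)$.

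Now suppose $p\mid M$; since $M\mid M_g$ we have $p\mid M_g$, and as $M$ is square-free we may write $M=pM'$ with $(p,M')=1$. Put $L''=L/p^t$. Applying \eqref{p|m1}, \eqref{p|m2}, or \eqref{p|m3} of \lemref{lem-almt} according as $t=0$, $t=1$, or $t\ge 2$ yields $\lan g|B_L, g|B_M\ran=c\cdot\lan g|B_{L''}, g|B_{M'}\ran$ with $c=\eta(p,1)$, $c=1$, $c=\eta(p,t-1)$ respectively, and the inductive hypothesis applies to $(L'',M')$. One checks $c=\Pi(L,M)/\Pi(L'',M')$ directly: passing from $M'$ to $M$ removes $p$ from the ``$q\mid M_g/M'$'' product of $\Pi(L'',M')$, where it contributed $\eta(p,v_p(L''))=\eta(p,0)=1$, and inserts it into the ``$q\mid M,\ q\nmid L$'' product (contributing $\eta(p,1)$) when $t=0$, or into the ``$q\mid M,\ q^s||L$'' product (contributing $\eta(p,t-1)$, which is $1$ when $t=1$) when $t\ge 1$. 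These are exactly the three values of $c$, completing the induction.

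The only genuine work is this bookkeeping: tracking how a fixed prime $p$ migrates among the four products as $(L,M)$ is reduced, using $\eta(p,0)=1$ to discard the phantom appearances of $p$ (such as $p\mid M_g/M'$ with $p\nmid L''$). No analytic input beyond \lemref{lem-almt}, hence \lemref{lem-pilot}, is needed. Alternatively one could bypass the induction: setting $t=0$ in \eqref{pilot} gives a closed formula for $\lan g|B_L, g|B_M\ran$ whose right-hand side factors over primes by multiplicativity of $\lambda_g(1,\cdot)$ and of the Euler product over $q\mid LM/(L,M)^2$, and matching the local factor at each prime with the definition of $\eta$ yields the claim — but the inductive route stays closest to the form in which \lemref{lem-almt} is stated.
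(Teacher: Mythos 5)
Your proof is correct and follows essentially the same strategy as the paper's: repeatedly apply \lemref{lem-almt} to peel off one prime at a time. The only organizational difference is that you induct on $\Omega(LM)$ with the clean base case $L=M=1$, whereas the paper's proof inducts on the number of distinct prime factors of $L$ alone (with base case $r=1$), which leaves the treatment of the primes of $M$ somewhat implicit; your version makes that bookkeeping explicit.
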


\begin{proof}
    Let $L=p_1^{t_1}...p_r^{t_r}$. We will prove this by induction on $r$. When $r=1$, the identity holds by applying  Lemma \ref{lem-almt} to $A_{p_1}(1,M,t_1)$. Let the lemma hold for all $L$ having $r-1$ prime factors. 

    Let $L=L_1p_r^{t_r}$ with $t_r>0$. Then one can easily see that,
    \begin{equation}
     \lan g|B_L, g|B_M\ran= A_{p_r}(L_1,M,t_r).   
    \end{equation}
    The proof now follows by Lemma \ref{lem-almt} and induction hypothesis.\qedhere
\end{proof}

In the case at hand, let us remind the reader (cf. \eqref{Bd1d2}) that 
\begin{equation}
L=d_g\ell_gd_1d_2^2d_3 \ \text{ with } \ d_g|N_g^\infty, \ \ell_g|M_g^\infty, \ d_1|N_g \text{ and }  \ d_2,d_3|M_g. 
\end{equation}
Then we have the following lemma.
\begin{lem}
Let $d_g|N_g^\infty$ and $\ell_g|M_g^\infty$. Then for any $M|M_g$, we have
\begin{align}\label{BLBdEval}
    \lan g_N|B_{d_g\ell_g}, g|B_M\ran = \frac{\lan g, g\ran}{\zeta^{N_g}(2)}\frac{\lambda_g(d_g)}{d_g^{(k+1)/2}}  &\underset{(d_2,d_3)=1}{\sum_{d_2,d_3|M_g}}\frac{\mu(d_3)\lambda_g(d_3)}{d_2d_3^{(k+1)/2}} \times \\
  & \q \times \underset{p^t||\ell_gd_2^2d_3}{\prod_{p|(M_g/M)}}\eta(p, t)\underset{p^t||\ell_gd_2^2d_3}{\prod_{p|M}}\eta(p, t-1)\underset{p\nmid \ell_gd_2^2d_3}{\prod_{p|M}}\eta(p,1).
\end{align}    
\end{lem}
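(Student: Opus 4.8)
The plan is to feed the M\"obius expansion \eqref{Bd1d2} into the Petersson-norm formula of Lemma~\ref{lemBLBM} and then peel the resulting sum apart into its $N_g^\infty$-part and its $M_g^\infty$-part. Concretely, I would specialize \eqref{Bd1d2} to $L=d_g\ell_g$, so that $\lan g_N|B_{d_g\ell_g},g|B_M\ran$ becomes the double sum over $d_1\mid N_g$ and over coprime $d_2,d_3\mid M_g$ of $\mu(d_1d_3)(d_1d_3)^{-(k+1)/2}d_2^{-1}\lambda_g(d_1)\lambda_g(d_3)\cdot\lan g|B_{L'},g|B_M\ran$ with $L':=d_g\ell_g d_1d_2^2d_3$. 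The key observation is that $(N_g,M_g)=1$, so the $N_g^\infty$-part of $L'$ is $d_gd_1$ and its $M_g^\infty$-part is $\ell_gd_2^2d_3$; hence when Lemma~\ref{lemBLBM} is applied to $L'$ and $M$ (valid since $M\mid M_g$), the factor over $q\mid N_g$ reads $\prod_{q\mid N_g}\eta(q,v_q(d_gd_1))$, while the three factors over $q\mid M_g/M$ and $q\mid M$ involve only $v_q(\ell_gd_2^2d_3)$. Those three factors are already in the exact form displayed on the right-hand side of \eqref{BLBdEval}, so everything reduces to evaluating the $q\mid N_g$ factor against the Dirichlet-coefficient weights.

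For the $N_g$-part I would use that for $q\mid N_g$ one has $\eta(q,t)=\lambda_g(q^t)q^{-(k+1)t/2}$ --- the identity $\lambda_g(1,q^t)=\lambda_g(q^t)$ for $q\mid N_g$ was recorded in the proof of Lemma~\ref{lem-almt} --- and that $\lambda_g$ is multiplicative on prime powers dividing the square-free $N_g$. This gives $\prod_{q\mid N_g}\eta(q,v_q(d_gd_1))=\lambda_g(d_gd_1)(d_gd_1)^{-(k+1)/2}=(\lambda_g(d_g)d_g^{-(k+1)/2})(\lambda_g(d_1)d_1^{-(k+1)/2})$, the first factor of which pulls out of the sum and produces the $\lambda_g(d_g)/d_g^{(k+1)/2}$ in \eqref{BLBdEval}. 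The residual $d_1$-dependence is then $\mu(d_1)\lambda_g(d_1)^2d_1^{-(k+1)}$ --- namely the $\mu(d_1)d_1^{-(k+1)/2}\lambda_g(d_1)$ from \eqref{Bd1d2} times the second $\lambda_g(d_1)d_1^{-(k+1)/2}$ just produced --- and summing it over $d_1\mid N_g$ yields the finite Euler product $\prod_{q\mid N_g}(1-\lambda_g(q)^2q^{-(k+1)})$. By the Hecke relation at the primes exactly dividing $N_g$ this collapses to $\prod_{q\mid N_g}(1-q^{-2})$, which is the factor $\zeta^{N_g}(2)^{-1}$ appearing in \eqref{BLBdEval}. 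What survives is precisely the sum over coprime $d_2,d_3\mid M_g$ with weight $\mu(d_3)\lambda_g(d_3)d_2^{-1}d_3^{-(k+1)/2}$ of the three $\eta$-products read off from the valuations of $\ell_gd_2^2d_3$, i.e.\ exactly \eqref{BLBdEval}.

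I expect there is no conceptual obstacle here; the work is purely in the bookkeeping. The points that need care are: confirming that the coprimality $(N_g,M_g)=1$ genuinely decouples the $N_g^\infty$ and $M_g^\infty$ parts of $L'$, so that Lemma~\ref{lemBLBM} factors in the claimed way; keeping straight that it is the valuation of the square $d_2^2$ (and not of $d_2$) that enters the $\eta$-exponents for the primes dividing $M_g$; and verifying the one cancellation that is not entirely formal, namely the collapse of the $d_1$-sum into $\zeta^{N_g}(2)^{-1}$ via the bad-prime relation. Everything else --- splitting $\mu(d_1d_3)$, separating the power of $d_1$, matching the three $\eta$-products --- is routine.
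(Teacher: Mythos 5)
Your proposal follows exactly the paper's own derivation: specialize \eqref{Bd1d2} at $L=d_g\ell_g$, apply Lemma~\ref{lemBLBM} to $L'=d_g\ell_g d_1 d_2^2 d_3$, use $(N_g,M_g)=1$ to split the valuations so that the $N_g$-part of the $\eta$-product collapses to $\lambda_g(d_gd_1)(d_gd_1)^{-(k+1)/2}$, then sum the residual $\mu(d_1)\lambda_g(d_1)^2 d_1^{-(k+1)}$ over $d_1\mid N_g$ using $\lambda_g(d_1)^2=d_1^{k-1}$ to obtain $\prod_{p\mid N_g}(1-p^{-2})$. This matches the paper step for step (including the bookkeeping at bad primes via total multiplicativity of $\lambda_g$), so the argument is correct and takes the same route.
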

\begin{proof}
Putting $L=d_g\ell_gd_1d_2^2d_3 $ in Lemma \ref{lemBLBM} we have
\begin{align} \label{blabla}
    \lan g|B_{d_g\ell_gd_1d_2^2d_3}, g|B_M\ran&=\lan g, g\ran\frac{\lambda_g(d_gd_1)}{(d_gd_1)^{(k+1)/2}} \underset{p^t||\ell_gd_2^2d_3}{\prod_{p|(M_g/M)}}\eta(p, t)\underset{p^t||\ell_gd_2^2d_3}{\prod_{p|M}}\eta(p, t-1)\underset{p\nmid \ell_gd_2^2d_3}{\prod_{p|M}}\eta(p,1).
\end{align}
Substituting $d_g \ell_g$ for $L$ on the LHS of \eqref{Bd1d2}, and then substituting \eqref{blabla} in its RHS, we get
\begin{align}
    \lan g_N|B_{d_g\ell_g}, g|B_M\ran&=\lan g, g\ran \sum_{d_1|N_g}\underset{(d_2,d_3)=1}{\sum_{d_2,d_3|M_g}}\frac{\mu(d_1d_3)}{(d_1d_3)^{(k+1)/2}}d_2^{-1}\lambda_g(d_3)\frac{\lambda_g(d_g)\lambda_g(d_1)^2}{(d_gd_1)^{(k+1)/2}}\\
    & \times\underset{p^t||\ell_gd_2^2d_3}{\prod_{p|(M_g/M)}}\eta(p, t)\underset{p^t||\ell_gd_2^2d_3}{\prod_{p|M}}\eta(p, t-1)\underset{p\nmid \ell_gd_2^2d_3}{\prod_{p|M}}\eta(p,1).
\end{align}
The proof is now complete by noting that $\lambda_g(d_1)^2=d_1^{k-1}$ and the sum over $d_1$ evaluates to
\begin{equation}
    \sum_{d_1|N_g} \mu(d_1) d_1^{-(k+1)} d_1^{k-1}= \sum_{d_1|N_g}\mu(d_1)d_1^{-2}= \prod_{p|N_g}(1-p^{-2})=\zeta_{(N_g)}(2)^{-1}.\qedhere
\end{equation}
\end{proof}
\subsection{Calculation of the inner product involving $\phi$ -- Step I} \label{cdphi-sec}
For $d|N^\infty$, let
\begin{equation}
    C(d, \phi): = \lan \phi^\circ, g |W_N U(d) W_N\ran.
\end{equation}
Looking at \eqref{FggBd'}, we see that the remaining inner product that has to be understood is the quantity $ C(d, \phi)$.
The primary goal of this subsection is to conduct the first step towards expressing the quantities $C(d, \phi)$ in terms of $\lan \phi^\circ, g|W_N(M_g)\ran$. This information, when fed into \eqref{FggBd'} will give us \thmref{th:IPequiv}. The second, and final step, will be the subject of the next subsection~\ref{cmnl-sec}. Here, we will show the following.

\begin{prop} \label{Cdphidef-prop}
Let $d|N^\infty$. Write $d=d_g \ell_g$, where $d_g|N_g^\infty, \ell_g|M_g^\infty$. Put $l_g:= (M_g, \ell_g)$.Then,
\begin{equation}\label{Cdphidef}
    C(d,\phi)=C(d_g\ell_g, \phi)= \lambda_g(d_g)l_g^{(k+1)/2}\lan \phi^\circ, g|U(\ell_g/l_g) W_N(l_g)\ran.
\end{equation}
\end{prop}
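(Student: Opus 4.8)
The plan is to start from the definition $C(d,\phi) = \lan \phi^\circ, g\,|\,W_N U(d) W_N\ran$, and analyze the operator $W_N U(d) W_N$ by splitting $d = d_g\ell_g$ with $d_g|N_g^\infty$ and $\ell_g|M_g^\infty$. Since $U(d) = U(d_g)U(\ell_g)$ and the Atkin-Lehner operator $W_N = W_N(N_g)W_N(M_g)$ factorizes (over coprime parts), I would first handle the $d_g$-part: $g$ is a newform of level $N_g$, and $d_g$ is built from primes dividing $N_g$, so $g$ is an eigenform of $U(d_g)$ with $U(d_g)^* = W_N U(d_g) W_N$, giving eigenvalue essentially $\lambda_g(d_g)$ after tracking the adjoint relation carefully (this also explains the appearance of $\lambda_g(d_g)$ in \eqref{Cdphidef}). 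The point is that the $d_g$-factor pulls out cleanly as an eigenvalue, so one is reduced to understanding $\lan \phi^\circ, g\,|\,W_N U(\ell_g) W_N\ran$ up to the factor $\lambda_g(d_g)$.

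Next, for the $\ell_g$-part, the key arithmetic input is the decomposition $\ell_g = l_g \cdot (\ell_g/l_g)$ where $l_g = (M_g, \ell_g)$; since $M_g$ is square-free, $l_g$ is exactly the radical of $\ell_g$ intersected with $M_g$, and $\ell_g/l_g$ accounts for the higher prime powers. I would use \lemref{lem:WBCom} (and the relation $U(r)^* = W_N U(r) W_N$ from \cite{diamond2005first}) to rewrite $W_N U(\ell_g) W_N$ in terms of $U(\ell_g/l_g)$ and $W_N(l_g)$, picking up the normalization factor $l_g^{(k+1)/2}$ from the level-raising/Atkin-Lehner bookkeeping (recall $B_d$ carries a $d^{(k+1)/2}$). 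Concretely, for a prime $p|M_g$ with $p^t\|\ell_g$, the operator $W_N U(p^t) W_N$ restricted to the relevant space acts like $p^{(k+1)/2} U(p^{t-1}) W_N(p)$ on $g$ — this is the local computation that, multiplied over all $p|l_g$, yields the global factor $l_g^{(k+1)/2}$ and the operator $U(\ell_g/l_g)W_N(l_g)$. Here it is essential that $\phi^\circ = \phi(\cdot,0)$ is a form of level $N$ on which these identities make sense, and that the inner product is the (volume-normalized) Petersson product so adjoints behave correctly.

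The main obstacle I anticipate is the careful matching of the $U$-operators with the Atkin-Lehner operators at the primes $p|M_g$ where $\phi$ (hence $\phi^\circ$) is \emph{not} an eigenform — unlike the $d_g$-case, one cannot simply pull out an eigenvalue, so the identity must be proven at the level of operators acting on all of $S_{k+1}(N)$ (or at least on the span generated by the $g|B_d$), using the explicit coset/matrix descriptions of $U(p)$, $B_p$, and $W_N(p)$. In particular one must verify that $U(\ell_g) = U(l_g) U(\ell_g/l_g)$ interacts with $W_N$ in such a way that only the radical part $l_g$ gets converted into an Atkin-Lehner operator while the remaining part $\ell_g/l_g$ survives as a plain $U$-operator; tracking the powers of $p$ through the commutation $U(p) W_N(p) = p^{(k-1)/2}\,(\text{something})\,B_p \cdot(\dots)$ and the $B$-operator normalization is where sign/exponent errors are easy to make. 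Once the operator identity $W_N U(d) W_N = \lambda_g(d_g)\, l_g^{(k+1)/2}\, U(\ell_g/l_g) W_N(l_g)$ (as it acts in the relevant inner product against $g$) is established, the claimed formula for $C(d,\phi)$ follows immediately by substitution.
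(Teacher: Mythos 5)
Your proposal is correct and follows essentially the same route as the paper: factor $W_N = W_N(N_g)W_N(M_g)$, pull out $\lambda_g(d_g)$ because $g$ is a newform of level $N_g$, and then reduce $W_N(M_g)U(\ell_g)W_N(M_g)$ to $l_g^{(k+1)/2}\,U(\ell_g/l_g)W_N(l_g)$ on $g$ via the identities $W_N(M)=\gamma B_M$ (Lemma~\ref{lem:WBCom}), $B_\ell U(\ell)=\ell^{(k+1)/2}\mathrm{Id}$, and composition of Atkin--Lehner operators. Your local computation at $p|M_g$ with $p^t\|\ell_g$ giving $p^{(k+1)/2}U(p^{t-1})W_N(p)$ is exactly the paper's factorization $B_{M_g}U(\ell_g)=B_{l_g}U(l_g)B_{M_g/l_g}U(\ell_g/l_g)$ read one prime at a time, so the "obstacle" you flag is handled cleanly by that commuting-and-collapsing of $B$- and $U$-operators, with no need to invoke the adjoint relation $U(r)^*=W_NU(r)W_N$ at this step.
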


\begin{proof}
We have $W_N=W_N(N_g) \, W_N(M_g)$. The Atkin-Lehner operator $W_N(N_g)$ can be written as
\begin{equation}\label{WNWNg}
    W_N(N_g)=\smat{N_ga}{b}{Nc}{N_g}= \smat{b}{-a}{N_g}{-M_g}\smat{0}{-1}{N_g}{0}= \gamma W_{N_g}(N_g).
\end{equation}
Since $N_ga-M_gbc=1$, we can conclude that $\gamma\in \Gamma_0(N_g)$. Let $d,d_g,\ell_g$ be as iin \propref{Cdphidef-prop}. Thus using the commuting property of operators $W$ and $U$, we get
\begin{align}
    W_NU(d)W_N&= W_N(N_g)\,W_N(M_g)\,U(d_g)\,U(\ell_g)\,W_N(N_g)\,W_N(M_g)\\
    &=W_N(N_g)\,U(d_g)\,W_N(N_g)\,W_N(M_g)\,U(\ell_g)\,W_N(M_g).
\end{align}
Using \eqref{WNWNg} and the fact that $g$ is a newform of level $N_g$, we can now write
\begin{align}
    g|W_NU(d)W_N&= g|\gamma W_{N_g}(N_g)\,U(d_g)\,\gamma W_{N_g}(N_g)\,W_N(M_g)\,U(\ell_g)\,W_N(M_g)\\
    &=\lambda_g(d_g) \, g|W_N(M_g)\,U(\ell_g)\,W_N(M_g). \label{wudw-1}
\end{align}
Since $(N_g, M_g)=1$, we have $W_N(M_g)=\gamma B_{M_g}$ (see Lemma \ref{lem:WBCom}). Moreover, it is elementary that $B_\ell U(\ell)= \ell^{(k+1)/2} Id$. We also recall that $l_g$ is the square-free part of $\ell_g$, i.e., $l_g:= (M_g, \ell_g)$. Then,
\begin{align}
B_{M_g} U(\ell_g)= B_{l_g} B_{M_g/l_g} \cdot U(l_g) U(\ell_g/l_g)= B_{l_g} U(l_g) B_{M_g/l_g} U(\ell_g/l_g)=l_g^{(k+1)/2}B_{M_g/l_g} U(\ell_g/l_g)  . 
\end{align} 
Thus we can conclude that
\begin{align}
    g|W_N(M_g)U(\ell_g)W_N(M_g) &=l_g^{(k+1)/2} g|B_{M_g/l_g} U(\ell_g/l_g)W_N(M_g)\\
    &= l_g^{(k+1)/2}g|W_N(M_g/l_g) U(\ell_g/l_g)W_N(M_g)\\
    &=l_g^{(k+1)/2} g|U(\ell_g/l_g)W_N(M_g/l_g)W_N(M_g) \q(\text{since } (\ell_g/l_g, M_g/l_g)=1)\\
    &=l_g^{(k+1)/2} g|U(\ell_g/l_g)W_N(l_g). \label{wudw-2}
\end{align}
Thus combining the results from \eqref{wudw-1} and \eqref{wudw-2}, we get
\begin{align}
    g|W_NU(d_g\ell_g)W_N &= \lambda_g(d_g)l_g^{(k+1)/2} g|U(\ell_g/l_g)W_N(l_g) ,\label{WUW}
    \end{align}
as desired. Taking inner product now with $\phi^\circ$ now completes the proof of \propref{Cdphidef-prop}.
\end{proof}

\subsection{Calculation of the inner product involving $\phi$ -- Step II: Unfolding the quantities $\mc C_\phi(\ell_g/l_g,1, l_g)$} \label{cmnl-sec}

The aim of this subsection is to finish our treatment of the inner product involving $\phi$ (cf. \eqref{FggBd'}). We pick up the notation and setting from the last subsection.

For any $(m,n)=1$, and $L||N$, define
\begin{equation} \label{cmnl-def}
    \mc C_\phi(m,n, L):= \lan \phi^\circ, g|U(mn) W_N(L) \ran.
\end{equation}
Then note that our object of interest in this subsection is (cf. \eqref{Cdphidef})
\begin{equation}
   \mc C_\phi(\ell_g/l_g,1, l_g)=\lan \phi^\circ, g|U(\ell_g/l_g) W_N(l_g) \ran. 
\end{equation}

We first state two lemmas, which will be used to obtain information about the quantity $\mc C_\phi(\ell_g/l_g,1, l_g)$ (\propref{cdphi-eval}), whose proofs are deferred until the end.

\begin{lem}\label{lem:cphilg}
Let $\ell_g|N_g^\infty$ and $l_g=(\ell_g, M_g)$. Then
   \begin{align}
    \mc C_\phi (\ell_g/l_g,1, l_g) = \sum_{\ell|(\ell_g/l_g)}\mu(\ell)\ell^{(k-1)/2}\lambda_g(\ell_g/l_g\ell)\mc C_\phi\left(1,1,l_g/\ell\right)\label{Cphimlg}.
    \end{align}
\end{lem}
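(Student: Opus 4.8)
The plan is to work directly with the form $g|U(\ell_g/l_g)\,W_N(l_g)$ inside the inner product defining $\mc C_\phi(\ell_g/l_g,1,l_g)$, expand the $U$-operator on the newform $g$, and then collapse each resulting Atkin--Lehner factor onto one of the shape $W_N(l_g/\ell)$. Write $a:=\ell_g/l_g$ throughout; since $l_g=(\ell_g,M_g)$ equals the radical of $\ell_g$ (as $M_g$ is squarefree and $\ell_g|M_g^\infty$), every prime dividing $a$ divides $l_g$, and $(a,N_g)=1$.

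\textbf{Step 1 (expansion of $U(a)$).} On forms of level prime to $p$ one has $U(p)=T(p)-p^{(k-1)/2}B_p$, together with $B_pU(p)=p^{(k+1)/2}\,\mrm{Id}$ (both used already in the proof of \propref{Cdphidef-prop}). Since $g$ is a newform of level $N_g$ with $p\nmid N_g$, feeding in the $\mrm{GL}(2)$ Hecke recursion $\lambda_g(p)\lambda_g(p^{j-1})=\lambda_g(p^{j})+p^{k}\lambda_g(p^{j-2})$ and inducting on $j\ge 1$ yields $g|U(p^{j})=\lambda_g(p^{j})\,g-p^{(k-1)/2}\lambda_g(p^{j-1})\,g|B_p$. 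As the operators $U(\cdot)$, $B_{\cdot}$ at distinct primes commute and $n\mapsto\lambda_g(n)$ is multiplicative on integers coprime to $N_g$, these prime-power identities multiply out to
\[
 g|U(a)=\sum_{\ell|a}\mu(\ell)\,\ell^{(k-1)/2}\,\lambda_g(a/\ell)\,\bigl(g|B_\ell\bigr),
\]
the sum being effectively over squarefree $\ell|a$; this is the extension of \eqref{UdTdBd} from squarefree to arbitrary $a$ coprime to $N_g$.

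\textbf{Step 2 (collapsing $B_\ell W_N(l_g)$).} For squarefree $\ell|l_g$ I claim $g|B_\ell W_N(l_g)=g|W_N(l_g/\ell)$. Writing $l_g=\ell m$ with $(\ell,m)=1$ and $W_N(l_g)=W_N(\ell)W_N(m)$ (as operators on $S_{k+1}(N)$), it suffices to check $g|B_\ell W_N(\ell)=g$. Multiplying the matrix $\smat{\ell}{0}{0}{1}$ of $B_\ell$ by a representative $\smat{\ell u}{1}{Nv}{\ell}$ of $W_N(\ell)$ and pulling out the scalar $\ell$ gives $\smat{\ell u}{1}{Nv/\ell}{1}$, which has determinant $\ell u-Nv/\ell=1$ and lower-left entry $(N/\ell)v$, hence lies in $\Gamma_0(N/\ell)\subseteq\Gamma_0(N_g)$ (using $\ell|M_g$, i.e.\ $N_g|N/\ell$). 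Since $g$ is $\Gamma_0(N_g)$-invariant and the weight-$(k+1)$ slash action is insensitive to positive scalar factors, $g|B_\ell W_N(\ell)=g$. This is exactly the manipulation applied to $B_{M_g}$ in the proof of \propref{Cdphidef-prop}, combined with \lemref{lem:WBCom}.

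\textbf{Conclusion.} Substituting the Step 1 expansion into $\mc C_\phi(a,1,l_g)=\lan\phi^\circ,\,g|U(a)W_N(l_g)\ran$ and applying Step 2 termwise gives
\[
 \mc C_\phi(a,1,l_g)=\sum_{\ell|a}\mu(\ell)\,\ell^{(k-1)/2}\,\lambda_g(a/\ell)\,\lan\phi^\circ,\,g|W_N(l_g/\ell)\ran=\sum_{\ell|a}\mu(\ell)\,\ell^{(k-1)/2}\,\lambda_g(a/\ell)\,\mc C_\phi(1,1,l_g/\ell),
\]
which is \eqref{Cphimlg} on recalling $a=\ell_g/l_g$. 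The only point requiring real care is Step 1 --- pushing \eqref{UdTdBd} past squarefree moduli --- but this is a routine induction via the Hecke recursions; Step 2 and the final assembly are bookkeeping with the Atkin--Lehner and level-raising operators already in place in this section.
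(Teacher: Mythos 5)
Your proof is correct and follows essentially the same path as the paper's: both rest on the identities $U(p)=T(p)-p^{(k-1)/2}B_p$ and $B_pU(p)=p^{(k+1)/2}\mathrm{Id}$ (giving the closed form $g|U(p^j)=\lambda_g(p^j)g-p^{(k-1)/2}\lambda_g(p^{j-1})g|B_p$), together with $g|B_\ell W_N(l_g)=g|W_N(l_g/\ell)$. The paper encodes the first ingredient as a two-term recurrence in $t$ for $\mc C_p(t,l_g)$, solves it, and then inducts on $\omega(\ell_g/l_g)$, whereas you factor the same computation through a closed-form expansion of $g|U(a)$ at the level of forms before taking inner products — a tidier packaging, but not a different route.
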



\begin{lem} \label{cosetj}
 Let $M|N$. A complete set of coset representatives of $\Gamma_0(N)^J$ in $\Gamma_0(M)^J$ can be taken to be $(\gamma, [0,0]) $ where $\gamma$ runs over $\Gamma_0(N) \backslash \Gamma_0(M)$. 
\end{lem}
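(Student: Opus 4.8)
The plan is to exhibit an explicit bijection between the coset space $\Gamma_0(N)^J \backslash \Gamma_0(M)^J$ and $\Gamma_0(N) \backslash \Gamma_0(M)$, realized by the proposed representatives $(\gamma, [0,0])$. Recall that the Jacobi group $\Gamma_0(M)^J = \Gamma_0(M) \ltimes (\z^2 \rtimes \z)$ (for index $1$), so an element has the shape $(\gamma, [\lambda,\mu], \kappa)$; but since the center $\z$ (the $\kappa$-part) is common to both $\Gamma_0(N)^J$ and $\Gamma_0(M)^J$, it plays no role in the coset space and I will suppress it, writing elements simply as $(\gamma, [\lambda, \mu])$ with $\gamma \in \Gamma_0(M)$, $[\lambda,\mu] \in \z^2$. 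The multiplication law is $(\gamma_1, X_1)(\gamma_2, X_2) = (\gamma_1\gamma_2, X_1 \gamma_2 + X_2)$, where $X\gamma$ denotes the natural right action of $\SL{2}{\z}$ on row vectors $X = [\lambda,\mu]$.

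First I would check that the map $\Gamma_0(M) \to \Gamma_0(N)^J \backslash \Gamma_0(M)^J$ sending $\gamma \mapsto \Gamma_0(N)^J (\gamma, [0,0])$ is surjective. Given an arbitrary $(\gamma', [\lambda,\mu]) \in \Gamma_0(M)^J$, note that $([\lambda,\mu],0)$ — i.e. the translation part with trivial $\SL{2}$-component — already lies in $\Gamma_0(N)^J$, since the full lattice $\z^2$ sits inside $\Gamma_0(N)^J$ regardless of the level. Hence
\begin{equation}
(\gamma', [\lambda,\mu]) = (I, [\lambda,\mu])\,(\gamma', [0,0]) \in \Gamma_0(N)^J (\gamma', [0,0]),
\end{equation}
so every coset contains a representative of the desired form. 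Next I would check injectivity at the level of cosets: if $(\gamma_1, [0,0])$ and $(\gamma_2, [0,0])$ lie in the same $\Gamma_0(N)^J$-coset, then $(\gamma_1, [0,0])(\gamma_2,[0,0])^{-1} = (\gamma_1\gamma_2^{-1}, [0,0]) \in \Gamma_0(N)^J$, which forces $\gamma_1 \gamma_2^{-1} \in \Gamma_0(N)$, i.e. $\gamma_1$ and $\gamma_2$ represent the same coset in $\Gamma_0(N)\backslash\Gamma_0(M)$. Conversely, distinct cosets of $\Gamma_0(N)\backslash\Gamma_0(M)$ clearly give distinct cosets upstairs by the same computation. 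Therefore $\gamma \mapsto \Gamma_0(N)^J(\gamma,[0,0])$ descends to a well-defined bijection $\Gamma_0(N)\backslash\Gamma_0(M) \xrightarrow{\sim} \Gamma_0(N)^J\backslash\Gamma_0(M)^J$, which is exactly the assertion.

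There is no real obstacle here; the only point requiring a moment's care is bookkeeping the semidirect-product structure correctly — in particular recognizing that the translation lattice $\z^2$ and the central $\z$ are level-independent and hence absorbed into $\Gamma_0(N)^J$, so that the coset space is controlled entirely by the $\SL{2}$-part. The computation $(\gamma',[\lambda,\mu]) = (I,[\lambda,\mu])(\gamma',[0,0])$ is the crux and is immediate from the multiplication law. One should also note $(p,M)=1$ is not even needed for this lemma in the stated generality $M \mid N$; it is the $\Gamma_0$-level structure alone that is used.
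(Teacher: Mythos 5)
Your argument follows essentially the same route as the paper's --- absorb the translation part into $\Gamma_0(N)^J$ because the lattice $\z^2$ is level-independent, then read off the coset structure from the $\SL{2}{\z}$-part --- so the only thing worth flagging is a small slip in your key display. Under the multiplication law you state, $(\gamma_1,X_1)(\gamma_2,X_2) = (\gamma_1\gamma_2, X_1\gamma_2 + X_2)$, one computes
\begin{equation}
(I,[\lambda,\mu])\,(\gamma',[0,0]) \;=\; (\gamma',\,[\lambda,\mu]\gamma'),
\end{equation}
which is \emph{not} $(\gamma',[\lambda,\mu])$ unless $\gamma'=I$. The correct factorization is
\begin{equation}
(\gamma',[\lambda,\mu]) \;=\; (I,\,[\lambda,\mu]\gamma'^{-1})\,(\gamma',[0,0]),
\end{equation}
and since $\gamma'^{-1}$ is an integer matrix, $[\lambda,\mu]\gamma'^{-1}\in\z^2$, so the first factor still lies in $\Gamma_0(N)^J$ and your surjectivity conclusion (and the rest of the proof) is unchanged. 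This also matches the paper's own phrasing, which left-multiplies $(P',X')$ by $(1_2,X)$ with $X$ chosen so that $XP'+X'=0$, i.e.\ $X=-X'P'^{-1}$.
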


\begin{lem} \label{phi0-ortho}
    Let $N=N_gM_g$ with $(N_g,M_g)=1$ and $\phi\in J_{k,1}^{new}(N)$. Then for any proper divisor $l_g|M_g$ and $g\in S_{k+1}(N_g)$, 
    \begin{equation}
        \lan \phi^\circ, g|W_N(l_g)\ran=0
    \end{equation}
\end{lem}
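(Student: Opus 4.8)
The plan is to deduce the vanishing from the fact that $\phi$ is a Jacobi \emph{newform} of level $N$, via a trace-down argument built on \lemref{cosetj}. First a reduction: since $l_g$ is a proper divisor of the squarefree integer $M_g$ and $(M_g,N_g)=1$, we have $(l_g,N_g)=1$, so \lemref{lem:WBCom} (applied with $M=N_g$ there) gives $W_N(l_g)=\gamma B_{l_g}$ for some $\gamma\in\Gamma_0(N_g)$. As $g\in S_{k+1}(N_g)$, this yields $g|W_N(l_g)=g|B_{l_g}$, and a direct check shows $g|B_{l_g}\in S_{k+1}(N_gl_g)$. Put $M:=N_gl_g$; since $l_g\mid M_g$ is \emph{proper} and $M_g$ is squarefree, $M$ is a proper divisor of $N$. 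Hence it is enough to prove that $\lan \phi^\circ,\psi\ran_N=0$ for every $\psi\in S_{k+1}(M)$ (viewed inside $S_{k+1}(N)$) with $M\mid N$, $M\neq N$.

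Next, because our Petersson products are volume-normalized and every $\psi\in S_{k+1}(M)$ is $\Gamma_0(M)$-invariant, the usual unfolding yields a nonzero constant $c_{N,M}$ with
\[
\lan \phi^\circ,\psi\ran_N=c_{N,M}\,\lan \mathrm{Tr}^{N}_{M}(\phi^\circ),\psi\ran_M,\qquad \mathrm{Tr}^{N}_{M}(\phi^\circ):=\sum_{\gamma\in\Gamma_0(N)\backslash\Gamma_0(M)}\phi^\circ|\gamma ,
\]
so it suffices to show $\mathrm{Tr}^{N}_{M}(\phi^\circ)=0$. By \lemref{cosetj}, the set $\{(\gamma,[0,0]):\gamma\in\Gamma_0(N)\backslash\Gamma_0(M)\}$ represents $\Gamma_0(N)^J\backslash\Gamma_0(M)^J$, and restricting the index-$1$ Jacobi slash of $\phi$ by $(\gamma,[0,0])$ to $z=0$ kills the Heisenberg exponential factor and leaves exactly $(\phi^\circ|\gamma)(\tau)$; summing over $\gamma$ shows that restriction to $z=0$ commutes with the trace, i.e.
\[
\mathrm{Tr}^{N}_{M}(\phi^\circ)(\tau)=\Big(\sum_{\gamma\in\Gamma_0(N)\backslash\Gamma_0(M)}\phi|_{k+1,1}(\gamma,[0,0])\Big)(\tau,0)=:\big(\mathrm{Tr}^{J}_{M}\phi\big)(\tau,0),
\]
where $\mathrm{Tr}^{J}_{M}\phi\in J^{cusp}_{k+1,1}(M)$ is the trace of $\phi$ from level $N$ down to level $M$.

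It remains to prove $\mathrm{Tr}^{J}_{M}\phi=0$. By the newform theory for index-$1$ Jacobi forms of squarefree level (\cite{das-anamby2}), $\phi\in J^{new}_{k+1,1}(N)$ is orthogonal, with respect to the Petersson product on $J^{cusp}_{k+1,1}(N)$, to the oldspace, and in particular to the image of $J^{cusp}_{k+1,1}(M)$ under the natural inclusion $\iota$ (recall $M\mid N$, $M\neq N$). On the other hand, the same unfolding as before — now for Jacobi forms, using $\Gamma_0(M)^J$-invariance of elements of $J^{cusp}_{k+1,1}(M)$ together with \lemref{cosetj} — identifies $\mathrm{Tr}^{J}_{M}$, up to a nonzero constant, with the adjoint of $\iota$ for the volume-normalized Jacobi Petersson products. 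Hence $\mathrm{Tr}^{J}_{M}\phi$ is a cusp form of level $M$ orthogonal to all of $J^{cusp}_{k+1,1}(M)$, so $\mathrm{Tr}^{J}_{M}\phi=0$. Combining the displays, $\lan\phi^\circ,g|W_N(l_g)\ran=\lan\phi^\circ,g|B_{l_g}\ran_N=c_{N,M}\,\lan(\mathrm{Tr}^{J}_{M}\phi)(\cdot,0),\,g|B_{l_g}\ran_M=0$.

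The step I expect to be the main obstacle is the last one: pinning down precisely which Jacobi forms are ``old'' in $J^{cusp}_{k+1,1}(N)$, so that orthogonality of the newform $\phi$ to $\iota\big(J^{cusp}_{k+1,1}(M)\big)$ is immediate, and checking carefully that the coset-sum operator $\mathrm{Tr}^{J}_{M}$ really is the adjoint of the inclusion degeneracy map. Getting the normalization constants in the two unfolding identities right also needs attention, although their precise (nonzero) values are irrelevant for the vanishing.
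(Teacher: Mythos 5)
Your proof is correct and essentially the same as the paper's, just with the steps reorganized. The paper starts from $\phi|\mathrm{Tr}^J(N,N_gl_g)=0$ (immediate from $\phi$ being new), passes through $\lemref{cosetj}$ and the observation that restriction to $z=0$ intertwines the Jacobi slash of $(\gamma,[0,0])$ with the elliptic slash of $\gamma$ to get $\sum_\gamma \phi^\circ|\gamma=0$, and then takes the inner product with $g|W_N(l_g)\in S_{k+1}(N_g l_g)$, using $\Gamma_0(N_g l_g)$-invariance of the latter to collapse the sum. You run the same chain in the other direction: unfold the inner product to identify it with one against $\mathrm{Tr}^N_M\phi^\circ$, identify that trace with the pullback of $\mathrm{Tr}^J_M\phi$, and then derive the vanishing of $\mathrm{Tr}^J_M\phi$ from orthogonality-to-oldspace plus adjointness of trace and inclusion — which is of course equivalent to the paper's direct assertion that a Jacobi newform has zero trace to any proper divisor level. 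The extra explicit adjointness discussion is a harmless elaboration of the paper's shorter proof, and the key non-trivial ingredient ($\lemref{cosetj}$, so that $\mathrm{Tr}^J$ and the $z=0$ restriction commute) is used in the same way.
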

\begin{rmk}
     Note that Lemma \ref{phi0-ortho} does not imply that $\phi^\circ$ is new, as the lemma does not say anything for the operator $W_N(M_g)$. But $\phi^\circ$ is a linear combination of newforms, see Corollary~\ref{phi-new}.
\end{rmk}
The proof of the Lemmas are deferred to the end of this section. The main result of this subsection is as follows. 
 \begin{prop} \label{cdphi-eval}
 Let $d_g|N_g^\infty$ and $\ell_g|M_g^\infty$, and $d=d_g \ell_g$. Then
 \begin{equation}\label{CdEval}
   C(d, \phi)= C(d_g\ell_g, \phi)=\begin{cases}
        0 & \text{ if } M_g\nmid \ell_g;\\
       M_g^{(k+1)/2} \lambda_g(d_g)\lambda_g(\ell_g/M_g) \lan \phi^\circ, g|W_N(M_g)\ran & \text{ if } (\ell_g, M_g)=M_g.
    \end{cases}
\end{equation}    
 \end{prop}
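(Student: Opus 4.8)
The plan is to combine Proposition \ref{Cdphidef-prop} with the two lemmas stated just above, namely Lemma \ref{lem:cphilg} and Lemma \ref{phi0-ortho}. Recall from \eqref{Cdphidef} that $C(d_g\ell_g,\phi) = \lambda_g(d_g)\,l_g^{(k+1)/2}\,\mc C_\phi(\ell_g/l_g,1,l_g)$ where $l_g = (M_g,\ell_g)$ is the square-free part of $\ell_g$; so everything reduces to evaluating $\mc C_\phi(\ell_g/l_g,1,l_g)$. By Lemma \ref{lem:cphilg} this is a finite $\mu$-twisted sum of the terms $\mc C_\phi(1,1,l_g/\ell) = \lan \phi^\circ, g|W_N(l_g/\ell)\ran$ with $\ell \mid (\ell_g/l_g)$, i.e.\ a linear combination of the inner products $\lan \phi^\circ, g|W_N(l')\ran$ as $l'$ runs over divisors of $l_g \mid M_g$. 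Now Lemma \ref{phi0-ortho} kills every such term for which $l'$ is a \emph{proper} divisor of $M_g$; hence the only way for $\mc C_\phi(\ell_g/l_g,1,l_g)$ to be nonzero is that $l_g = M_g$ itself appears, which forces $M_g \mid \ell_g$. This immediately gives the first case of \eqref{CdEval}.

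For the second case, assume $(\ell_g,M_g) = M_g$, so $l_g = M_g$ and $\ell_g/l_g = \ell_g/M_g$ is coprime to $M_g$. Then in the sum of Lemma \ref{lem:cphilg} the only surviving divisor is $\ell = \ell_g/l_g$ — this is precisely the unique $\ell \mid (\ell_g/l_g)$ for which $l_g/\ell = M_g/\ell$ is not a proper divisor of $M_g$, namely $\ell$ such that $l_g/\ell = M_g$, i.e.\ $\ell = 1$? I need to be careful here: Lemma \ref{lem:cphilg} runs $\mc C_\phi(1,1,l_g/\ell)$, and Lemma \ref{phi0-ortho} requires $l_g/\ell$ to be a \emph{proper} divisor of $M_g$ to vanish; since $l_g = M_g$, the term $\ell = 1$ gives $\mc C_\phi(1,1,M_g) = \lan\phi^\circ, g|W_N(M_g)\ran$, which need not vanish, while every $\ell > 1$ makes $l_g/\ell = M_g/\ell$ a proper divisor and hence that term dies. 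So only $\ell = 1$ survives, contributing $\mu(1)\,1^{(k-1)/2}\,\lambda_g(\ell_g/M_g)\,\lan\phi^\circ, g|W_N(M_g)\ran$. Plugging back into $C(d_g\ell_g,\phi) = \lambda_g(d_g)\,l_g^{(k+1)/2}\,\mc C_\phi(\ell_g/M_g,1,M_g)$ with $l_g = M_g$ yields
\[
  C(d_g\ell_g,\phi) = M_g^{(k+1)/2}\,\lambda_g(d_g)\,\lambda_g(\ell_g/M_g)\,\lan\phi^\circ, g|W_N(M_g)\ran,
\]
as claimed.

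The routine-but-careful point is bookkeeping of which single divisor $\ell$ survives in Lemma \ref{lem:cphilg} after applying the vanishing Lemma \ref{phi0-ortho}; one must check that in the case $M_g \mid \ell_g$ the surviving contribution is exactly the $\ell=1$ term (with coefficient $\lambda_g(\ell_g/M_g)$) and not something else, and that in the case $M_g \nmid \ell_g$ \emph{every} argument $l_g/\ell$ of $\mc C_\phi$ that can arise is a proper divisor of $M_g$ — this last point is where it is used that $l_g = (\ell_g,M_g)$ is strictly smaller than $M_g$ when $M_g\nmid\ell_g$. The genuine content sits entirely inside the two cited lemmas; granting those, the Proposition is a short deduction. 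I expect the main obstacle in the surrounding development (though not in this Proposition per se) to be the proof of Lemma \ref{phi0-ortho}, i.e.\ establishing the orthogonality of $\phi^\circ$ to $g|W_N(l_g)$ for proper $l_g \mid M_g$ via the Jacobi-group coset description of Lemma \ref{cosetj}; but that is carried out separately.
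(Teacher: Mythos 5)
Your argument is correct and follows essentially the same route as the paper: express $C(d_g\ell_g,\phi)$ via Proposition~\ref{Cdphidef-prop}, expand $\mc C_\phi(\ell_g/l_g,1,l_g)$ by Lemma~\ref{lem:cphilg}, and kill all terms with $l_g/\ell$ a proper divisor of $M_g$ by Lemma~\ref{phi0-ortho}, leaving only the $\ell=1$, $l_g=M_g$ term. The mid-paragraph hesitation (``$\ell=\ell_g/l_g$\dots $\ell=1?$'') is self-corrected, and the bookkeeping that the surviving coefficient is $\lambda_g(\ell_g/M_g)$ with prefactor $M_g^{(k+1)/2}\lambda_g(d_g)$ matches the paper's proof exactly.
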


\begin{proof}
From \eqref{cmnl-def}, recall that (replacing $L$ with $l_g/\ell$)
\begin{equation}
    \mc C_\phi\left(1,1,l_g/\ell \right)= \lan \phi^\circ, g|W_N(l_g/\ell)\ran. 
\end{equation}
In view of \lemref{phi0-ortho}, in \eqref{Cphimlg}, only terms with $l_g/\ell=M_g$ survive. In other words, $\mc C_\phi\left(1,1,l_g/\ell \right)$ survives only when $\ell=1$ and $l_g=M_g$.
Thus going back to the expression for $C(d, \phi)$ from \eqref{Cdphidef} in \propref{Cdphidef-prop}, we get the \propref{cdphi-eval}.
\end{proof}

\subsubsection{Proof of Lemma \ref{lem:cphilg}}
When $m$ is fixed and $n=p^t$, let us put, for convenience,
\begin{equation}
    \mc C_p(t,l_g):= \mc C_\phi(m,p^t, l_g).
\end{equation}
Using \eqref{UdTdBd}, when $(p,m)=1$, we see that $g|U(p^tm)= \lambda_g(p) g|U(p^{t-1}m)- p^{(k-1)/2}g|B_pU(p^{t-1}m)= \lambda_g(p) g|U(p^{t-1}m)- p^kg|U(p^{t-2}m)$. Thus we have the recurrence relations
\begin{align}
    \mc C_p(t, l_g) &= \lambda_g(p) \mc C_p(t-1,l_g)-p^k \mc C_p(t-2, l_g)  \q (t \ge 2) \\
    \mc C_p(1, l_g) &= \lambda_g(p) \mc C_p(0,l_g)-  p^{(k-1)/2} \lan \phi^\circ, g|B_p U(m)W_N(l_g)\ran.  
\end{align}

Let us now return to the situation at hand.
Recall that $p | \ell_g | M_g^\infty$, which implies that $B_p = \gamma W_N(p)$ for some $\gamma \in \Gamma_0(N_g)$. Thus for $t=1$,we have
\begin{align}
    \mc C_p(1,l_g)= \lambda_g(p)\mc C_p(0,l_g) - p^{(k-1)/2} \lan \phi^\circ, g|W_N(p)U(m)W_N(l_g)\ran.
\end{align}
In the case at hand, we have $\ell_g/l_g=p^tm$ with $(p,m)=1$. Since $l_g$ is the square-free part of $\ell_g$, we have that $p|l_g$. Thus $W_N(p)U(m)W_N(l_g)= U(m)W_N(p)W_N(l_g)= U(m)W_N(l_g/p)$.

Then for $t=1$, we have
\begin{align}
    \mc C_p(1, l_g)&= \lambda_g(p)\mc C_p(0,l_g)- p^{(k-1)/2}\mc C_p(0, l_g/p).
\end{align}

Solving the recurrence, we get, for $t \ge 2$, that
\begin{align}\label{Cprecurrence}
    \mc C_p(t,l_g)&=\mc C_p(1,l_g)\frac{\alpha_g(p)^{t}-\beta_g(p)^{t}}{\alpha_g(p)-\beta_g(p)}-p^k \mc C_p(0,l_g)\frac{\alpha_g(p)^{t-1}-\beta_g(p)^{t-1}}{\alpha_g(p)-\beta_g(p)}\\
    &= \lambda_g(p^{t-1})\mc C_p(1,l_g)-p^k\lambda_g(p^{t-2})\mc C_p(0,l_g)\\
    &= \lambda_g(p^t)\mc C_p(0,l_g) -p^{(k-1)/2}\lambda_g(p^{t-1}) \mc C_p(0, l_g/p).
\end{align}
Here $\alpha_g(p), \beta_g(p)$ are the Satake parameters at $p$. That is, $\alpha_g(p)+\beta_g(p)=\lambda_g(p)$ and $\alpha_g(p)\beta_g(p)=p^k$. As a result, we get
\begin{equation}
    \mc C_\phi(m, p^t, l_g)= \lambda_g(p^t)\mc C_\phi(m,1,l_g)-p^{(k-1)/2}\lambda_g(p^{t-1})\mc C_\phi(m,1,l_g/p).\label{Cphim1}
\end{equation}
This allows us to obtain a formula for $\mc C_\phi (\ell_g/l_g,1, l_g)$ inductively by removing one prime at a time as illustrated below.

We induct on the number of prime factors of $\ell_g/l_g$. If $\ell_g/l_g=p^t$, then the identity follows from \eqref{Cphim1} with $m=1$.
Next, let $\ell_g/l_g=mp^t$ with $(m,p)=1$. Then again from \eqref{Cphim1}, using the induction hypothesis, we have 
\begin{align}
    \mc C_\phi(m, p^t, l_g)&= \lambda_g(p^t)\mc C_\phi(m,1,l_g)-p^{(k-1)/2}\lambda_g(p^{t-1})\mc C_\phi(m,1,l_g/p)\\
    &=\sum_{\ell|m}\mu(\ell)\ell^{(k-1)/2}\lambda_g(mp^t/\ell)\mc C_\phi\left(1,1,l_g/\ell\right)\\
    &\q- p^{(k-1)/2}\sum_{\ell|m}\mu(\ell)\ell^{(k-1)/2}\lambda_g(mp^{t-1}/\ell)\mc C_\phi\left(1,1,l_g/p\ell\right).
\end{align}
It is readily checked that this gives the required identity, and thus finishes the proof of Lemma \ref{lem:cphilg}.\qed
\subsubsection{Proof of Lemma \ref{cosetj}}
This follows essentially by slightly modifying the proof of \cite[Lemma 6.2(i)]{das-anamby2}. First of all, the elements $(\gamma, [0,0]) $, with $\gamma$ as in the Lemma, are clearly in-equivalent modulo $\Gamma_0(N)^J$.
If $(P',X') \in \Gamma_0(M)^J$, then we can always left multiply it with $(1_2,X) \in  \Gamma_0(N)^J$, where $X$ is chosen such that $XP'+X'=0$. Thus we can assume $X'=0$. Then left multiplication by a suitable element $(g,0)$ with $g \in \Gamma_0(N)$ gives the Lemma. \QEDB

\subsubsection{Proof of Lemma \ref{phi0-ortho}}
        Let $\mrm{Tr}^J(N,N_gl_g): J_{k,1}(N)\longrightarrow J_{k,1}(N_gl_g)$ denote the Jacobi trace operator. Since $\phi$ is new of level $N$ and $N_gl_g\neq N$, we see that
        \begin{equation}
            \phi|\mrm{Tr}^J(N,N_gl_g)=0.
        \end{equation}
We next invoke Lemma~\ref{cosetj} with $M=N_gl_g$ to work with a convenient set of coset representatives pertaining to the $\mrm{Tr}^J$ operator, which gives
\begin{align}
    \sum_{\gamma\in \Gamma_0(N)\backslash \Gamma_0(N_gl_g)}\phi|(\gamma, [0,0])=0.
\end{align}
Next, note that for $\psmb a & b  \\ c & d \psme \in \sltwo$, one has
$\phi|\left(\smat{a}{b}{c}{d},[0,0] \right)= e(\frac{-cz^2}{c\tau+d})(c\tau+d)^{-k-1} \phi\left(\frac{a\tau+b}{c\tau+d}, \frac{z}{c\tau+d}  \right)$. Thus $\phi|\left(\smat{a}{b}{c}{d},[0,0] \right)|_{z=0}= \phi^\circ |\smat{a}{b}{c}{d}$. Thus we get
\begin{align}\label{phi0Tr}
    \sum_{\gamma\in \Gamma_0(N)\backslash \Gamma_0(N_gl_g)}\phi^\circ|\gamma=0.
\end{align}
Since $g\in S_{k+1}(N_g)$, we see that $g|W_N(l_g)\in S_{k+1}(N_gl_g)$. Further, $N_gl_g\neq N$ as $l_g$ is a proper divisor of $M_g$. Thus taking the inner product with $g|W_N(l_g)$ in \eqref{phi0Tr}, we get
\begin{align}
    0&=\sum_{\gamma\in \Gamma_0(N)\backslash \Gamma_0(N_gl_g)}\lan \phi^\circ|\gamma, g|W_N(l_g)\ran\\
    0&= [\Gamma_0(N_gl_g): \Gamma_0(N)] \lan \phi^\circ, g|W_N(l_g)\ran.
\end{align}
This completes the proof of Lemma \ref{phi0-ortho}.\qed

The stage is now set for the proof of the main result of this section.
\subsection{Proof of Theorem \ref{th:IPequiv}} \label{th:IPequiv-proof}
Recall the inner product $\lan F^\circ, g\otimes g|B_{M}\ran$ from  \eqref{FggBd'}.
\begin{align}
    \lan F^\circ, g\otimes g|B_{M}\ran &=\sum_{d_g|N_g^\infty}\sum_{\ell_g|M_g^\infty} (d_g\ell_g)^{-(k+1)/2} C(d_g\ell_g, \phi)\lan g_N|B_{d_g\ell_g}, g|B_M\ran.
\end{align}
Thus using \eqref{BLBdEval} and \eqref{CdEval}, we get
\begin{align} \label{FBM-prelim}
    \lan F^\circ, g\otimes g|B_{M}\ran&=\frac{\lan g, g\ran}{\zeta^{N_g}(2)}  \lan \phi^\circ, g|W_N(M_g)\ran\sum_{d_g|N_g^\infty}\sum_{\ell_g|M_g^\infty}\frac{\lambda_g(d_g)\lambda_g(\ell_g)}{(d_g\ell_g)^{(k+1)/2}}\\
    &\times\frac{\lambda_g(d_g)}{d_g^{(k+1)/2}}\sum_{d_2,d_3|M_g, (d_2,d_3)=1}\frac{\mu(d_3)\lambda_g(d_3)}{d_2d_3^{(k+1)/2}} \times\underset{p^t||\ell_gM_gd_2^2d_3}{\prod_{p|(M_g/M)}}\eta(p,t) \underset{p^t||\ell_gM_gd_2^2d_3}{\prod_{p|M}}\eta(p,t-1).
    \end{align}
Notice that in \eqref{BLBdEval}, there are three products over $p$, however there are only two in \eqref{FBM-prelim}. This is because the last product was over all $p|M$ such that $p\nmid \ell_gM_gd_2^2d_3$. However since $M|M_g$, there are no such primes.

Since $d_g|N_g$, we have $\lambda_g(d_g)^2=d_g^{k-1}$ (cf. \cite[p.~72]{ILS}). The $d_g$ sum therefore contributes 
$\sum_{d_g}d_g^{-2}= \zeta^{N_g}(2)$.

Thus $\lan F^\circ, g\otimes g|B_{M}\ran$ can now be written as
\begin{align}
    \lan F^\circ, g\otimes g|B_{M}\ran
    &= \lan g,g\ran \lan \phi^\circ, g|W_N(M_g)\ran\underset{(d_2,d_3)=1}{\sum_{d_2,d_3|M_g }}\sum_{\ell_g|M_g^\infty}\mu(d_3)\frac{\lambda_g(\ell_g)\lambda_g(d_3)}{d_2(\ell_gd_3)^{(k+1)/2}} \\
    &\times\underset{p^t||\ell_gM_gd_2^2d_3}{\prod_{p|(M_g/M)}}\eta(p,t) \underset{p^t||\ell_gM_gd_2^2d_3}{\prod_{p|M}}\eta(p,t-1).
\end{align}
Let $D(\ell_g, M_g)$ denote the sum over $d_2, d_3$. That is,
\begin{equation}
    D(\ell_g, M_g):=\underset{(d_2,d_3)=1}{\sum_{d_2,d_3|M_g }} \mu(d_3)\frac{\lambda_g(d_3)}{d_2d_3^{(k+1)/2}}\times\underset{p^t||\ell_gM_gd_2^2d_3}{\prod_{p|(M_g/M)}}\eta(p,t) \underset{p^t||\ell_gM_gd_2^2d_3}{\prod_{p|M}}\eta(p,t-1).
\end{equation}
We evaluate the Euler product for $D(\ell_g, M_g)$ in the following lemma.
\begin{lem} \label{dlgmg-lem}
    \begin{align}
   D(\ell_g,M_g) &=\prod_{p|(M_g/M), p^r||\ell_g} \left(\eta(p,r+1)+p^{-1} \eta(p,r+3) -p^{-(k+1)/2}\lambda_g(p)\eta(p,r+2) \right)\\
    & \times \prod_{p|M,p^r||\ell_g} \left(\eta(p,r)+p^{-1} \eta(p,r+2) -p^{-(k+1)/2}\lambda_g(p)\eta(p,r+1) \right).
\end{align}
\end{lem}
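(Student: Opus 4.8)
The plan is to exploit the multiplicativity of the summand in $D(\ell_g,M_g)$ over the prime factorization of the square-free integer $M_g$, so that the evaluation reduces to a product of purely local sums, one for each prime $p\mid M_g$. Concretely, since $M_g$ is square-free and $(d_2,d_3)=1$, giving a pair of divisors $(d_2,d_3)$ of $M_g$ with $(d_2,d_3)=1$ is the same as choosing, for each prime $p\mid M_g$, one of three mutually exclusive local states: (a) $p\nmid d_2d_3$; (b) $p\mid d_2$, $p\nmid d_3$; (c) $p\mid d_3$, $p\nmid d_2$. All the relevant functions — $\mu$, $\lambda_g(\cdot)$ on divisors of the square-free $M_g$, $d_2\mapsto d_2^{-1}$, $d_3\mapsto d_3^{-(k+1)/2}$, and the $\eta$-products — are multiplicative, and the exponent $p^t\|\ell_gM_gd_2^2d_3$ is additive in the "blocks" $\ell_g,M_g,d_2^2,d_3$. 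Hence $D(\ell_g,M_g)=\prod_{p\mid M_g}D_p$, where $D_p$ is the sum of the three local contributions at $p$.

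Next I would compute $D_p$ for a fixed prime $p\mid M_g$ with $p^r\|\ell_g$. Because $M_g$ (hence $d_2,d_3$) is square-free, the $p$-adic valuation of $\ell_gM_gd_2^2d_3$ equals $r+1$ in state (a), $r+3$ in state (b), and $r+2$ in state (c). In state (a) the extra factors coming from $\mu$, $\lambda_g$ and the denominators are all trivial; in state (b) the factor $1/d_2$ contributes $p^{-1}$; in state (c) the factor $\mu(d_3)\lambda_g(d_3)d_3^{-(k+1)/2}$ contributes $-\lambda_g(p)p^{-(k+1)/2}$ (writing $d_3=pd_3'$ with $p\nmid d_3'$ and using $\lambda_g(d_3)=\lambda_g(p)\lambda_g(d_3')$, valid since $d_3$ is square-free). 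Therefore, for $p\mid(M_g/M)$ the first $\eta$-product uses $\eta(p,t)$ directly and one gets $D_p=\eta(p,r+1)+p^{-1}\eta(p,r+3)-p^{-(k+1)/2}\lambda_g(p)\,\eta(p,r+2)$, while for $p\mid M$ the second product uses $\eta(p,t-1)$, uniformly shifting every argument down by $1$, giving $D_p=\eta(p,r)+p^{-1}\eta(p,r+2)-p^{-(k+1)/2}\lambda_g(p)\,\eta(p,r+1)$.

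Multiplying these local factors over $p\mid(M_g/M)$ and over $p\mid M$ separately then produces exactly the two displayed Euler products, completing the proof. (One also notes that, as remarked in the derivation of \eqref{FBM-prelim}, the third product $\prod_{p\mid M,\,p\nmid \ell_gM_gd_2^2d_3}\eta(p,1)$ is empty here since $M\mid M_g$, which is why only two products of $\eta$'s appear in $D(\ell_g,M_g)$.)

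There is no real obstacle in this argument; it is essentially a careful bookkeeping exercise. The only points demanding attention are: keeping the two families of primes ($p\mid M$ versus $p\mid M_g/M$) straight, with the attendant uniform shift of the $\eta$-arguments; remembering that $d_2$ enters the exponent squared while $d_3$ enters linearly; and verifying that the residual sums over the "$d_3'$" parts recombine correctly across primes, which is automatic once the Euler-product factorization from the first step is in place.
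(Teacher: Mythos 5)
Your proof is correct and is essentially the same argument as the paper's: the common core is that the summand is multiplicative in the pair $(d_2,d_3)$ ranging over coprime divisors of the square-free $M_g$, so $D(\ell_g,M_g)$ factors as a product of local sums $D_p$, each with exactly the three states $(p\nmid d_2d_3)$, $(p\mid d_2)$, $(p\mid d_3)$ you describe. The paper formalizes this factorization by induction on the number of prime factors of $M_g$ (pulling one prime $q$ out at a time and verifying $D(\ell_g,M_gq)=D(\ell_g,M_g)\cdot D_q$), whereas you invoke the Euler-product decomposition directly; the local computations of $D_p$ are identical in both.
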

\begin{proof}[Proof of Lemma \ref{dlgmg-lem}]
    We prove by inducting on the number of prime factors of $M_g$. Let $M_g=p$. In this case, clearly
    \begin{equation}
        D(\ell_g, p)= \eta(p,r)+p^{-1} \eta(p,r+2) -p^{-(k+1)/2}\lambda_g(p)\eta(p,r+1),
    \end{equation}
where $r$ is such that $p^{r-1}|| \ell_g$ if $M=1$ and $p^r||\ell_g$ if $M=p$. Now assume the identity to hold when $M_g$ has $n-1$ prime factors. Now consider the case $M_gq$, where $(q,M_g)=1$ and $M_g$ has $(n-1)$ prime factors. Let $r$ be such that $q^{r-1}|| \ell_g$ if $q\nmid M$ and $q^r||\ell_g$ if $q|M$. Then we have
\begin{align}
    D(\ell_g, M_gq)&=\eta(q, r)\underset{(d_2,d_3)=1}{\sum_{d_2,d_3|M_g }} \frac{\mu(d_3)\lambda_g(d_3)}{d_2d_3^{(k+1)/2}}\times\underset{p^t||\ell_gM_gd_2^2d_3}{\prod_{p|(M_g/M)}}\eta(p,t) \underset{p^t||\ell_gM_gd_2^2d_3}{\prod_{p|M}}\eta(p,t-1)\\
    &+ q^{-1}\eta(q, r+2)\underset{(d_2,d_3)=1}{\sum_{d_2,d_3|M_g }} \frac{\mu(d_3)\lambda_g(d_3)}{d_2d_3^{(k+1)/2}}\cdot\underset{p^t||\ell_gM_gd_2^2d_3}{\prod_{p|(M_g/M)}}\eta(p,t) \underset{p^t||\ell_gM_gd_2^2d_3}{\prod_{p|M}}\eta(p,t-1)\\
    &- q^{-(k+1)/2}\lambda_g(q)\eta(q,r+1)\underset{(d_2,d_3)=1}{\sum_{d_2,d_3|M_g }} \frac{\mu(d_3)\lambda_g(d_3)}{d_2d_3^{(k+1)/2}}\cdot\underset{p^t||\ell_gM_gd_2^2d_3}{\prod_{p|(M_g/M)}}\eta(p,t) \underset{p^t||\ell_gM_gd_2^2d_3}{\prod_{p|M}}\eta(p,t-1).
\end{align}
The induction now completes the proof of the Lemma by noting that the above reduces to
\begin{equation}
    D(\ell_g, M_gq)= D(\ell_g, M_g)\cdot \left(\eta(q,r)+q^{-1} \eta(q,r+2) -q^{-(k+1)/2}\lambda_g(q)\eta(q,r+1)\right).\qedhere
\end{equation}
\end{proof}
\subsubsection{Back to the proof of Theorem \ref{th:IPequiv}}
Next, we note the following Hecke relations (see \cite[Lemma 4.5.7 and eq. 4.5.14]{miyake2006modular}).
\begin{align}
    \lambda_g(1,p^2)&=\lambda_g(1,p)^2-(p+1)p^{k-1} \q\text{ for } (p,N_g)=1.\\
    \lambda_g(1, p^t)&= \lambda_g(1,p)\lambda_g(1,p^{t-1})-p^k\lambda_g(1,p^{t-2}) \q \text{ for } t\ge 3 \text{ and } (p,N_g)=1.\\
   \lambda_g(n)&=\sum_{d^2|n}
    d^{k-1}\lambda_g(1,n/d^2) \text{ for } (n,N_g)=1.\label{LnL1n}
\end{align}
Thus for $r\ge 1$, we have
\begin{align}
    \left(\eta(p,r)+p^{-1} \eta(p,r+2) -p^{-(k+1)/2}\lambda_g(p)\eta(p,r+1) \right)\\
    = \frac{(p-1)}{(p+1) p^{(k+1)(r+2)/2}}&\left(p^{k+1}\lambda_g(1,p^r)- \lambda_g(1,p^{r+2})\right).
\end{align}
For $r=0$,
\begin{equation}
\left(\eta(p,0)+p^{-1} \eta(p,2) -p^{-(k+1)/2}\lambda_g(p)\eta(p,1) \right)=  \frac{(p-1)}{(p+1) p^{k+1}}\left(p^{k}(p+1)- \lambda_g(1,p^{2})\right).  
\end{equation}
Let 
\begin{equation}\label{Agdef}
 A(g):=\prod_{p|M_g}\frac{(p-1)}{(p+1)} \cdot \lan g,g\ran \lan \phi^\circ, g|W_N(M_g)\ran.   
\end{equation} 
For $r\ge 1$, define 
\begin{align}
L_p(0,g)&:= \left(p^{k}(p+1)- \lambda_g(1,p^{2})\right)\label{L0def}\\
L_p(r,g)&:= \left(p^{k+1}\lambda_g(1,p^r)- \lambda_g(1,p^{r+2})\right)\q \text{ for } r\ge 1.
\end{align}   
Thus $\lan F^\circ, g\otimes g|B_{M}\ran$ can now be written as
\begin{align} \label{main-bridge}
 \lan F^\circ, g\otimes g|B_{M}\ran
&= A(g) \prod_{p|M}\sum_{r\ge 0}\frac{\lambda_g(p^r) L_p(r, g)}{p^{(r+1)(k+1)}}\times \prod_{p|(M_g/M)}\frac{1}{p^{(k+1)/2}}\sum_{r\ge 0}  \frac{ \lambda_g(p^r)L_p(r+1, g)}{p^{(r+1)(k+1)}}.
\end{align}
Using the M\"obius inversion in \eqref{LnL1n}, for $(n,N_g)=1$ we write
\begin{equation}
    \lambda_g(1,n)=\sum_{d^2|n}\mu(d)d^{k-1}\lambda_g(n/d^2).
\end{equation}
Then $L_p(r,g)$ can  be written as below.
\begin{align}
    L_p(0,g)&= p^{k+1}+p^k+p^{k-1}- \lambda_g(p^{2})\\
L_p(r,g)&= p^{k+1}\lambda_g(p^r)- \lambda_g(p^{r+2})-p^{k-1}\Big(p^{k+1}\lambda_g(p^{r-2})- \lambda_g(p^{r})\Big)\q \text{ for } r\ge 1.
\end{align}
Here we follow the convention that $\lambda_g(p^{-r})=0$ for $r>0$. In the following calculations, we also use the convention $L_p(r,g)=0$ for $r<0$.

For a $p|M_g$, we now relate $L_p(r, g)$ and $L_p(r+1, g)$. In this direction, we have the following lemma.
\begin{lem}
Let $L_p(r,g)$ be as above. Then for $t\ge 0$, we have the following relation.
\begin{equation}\label{Lprecurrence}
    \lambda_g(p)L_p(t,g)=L_p(t+1, g)+p^k L_p(t-1, g).
\end{equation}
\end{lem}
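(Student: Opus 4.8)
The plan is to reduce \eqref{Lprecurrence} to the ordinary three-term Hecke recurrence of $g$ at $p$ and then verify it by a short case analysis on $t$. First I would invoke the closed forms of $L_p(r,g)$ recorded just above the lemma (obtained from the M\"obius inversion of $\lambda_g(1,p^r)$): with the conventions $\lambda_g(p^j)=0$ for $j<0$, $\lambda_g(p^0)=1$, and $L_p(r,g)=0$ for $r<0$, one has
\begin{equation}
L_p(0,g)=p^{k+1}+p^k+p^{k-1}-\lambda_g(p^2),\qquad L_p(r,g)=(p^{k+1}+p^{k-1})\lambda_g(p^r)-\lambda_g(p^{r+2})-p^{2k}\lambda_g(p^{r-2})\quad(r\ge 1).
\end{equation}
Since $p\mid M_g$ we have $(p,N_g)=1$, so $g$ obeys $\lambda_g(p)\lambda_g(p^j)=\lambda_g(p^{j+1})+p^k\lambda_g(p^{j-1})$ for all $j\ge 0$ (the case $j=0$ being trivially true with $\lambda_g(p^{-1})=0$); this is the only input about $g$ needed.

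For $t\ge 2$, all three of $L_p(t,g),L_p(t+1,g),L_p(t-1,g)$ are given by the uniform formula above, and the exponents $t,t+2,t-2$ occurring in $\lambda_g(p)L_p(t,g)$ are all $\ge 0$. Expanding $\lambda_g(p)\lambda_g(p^t)$, $\lambda_g(p)\lambda_g(p^{t+2})$ and $\lambda_g(p)\lambda_g(p^{t-2})$ by the Hecke recurrence and collecting terms, one checks that $\lambda_g(p)L_p(t,g)$ matches $L_p(t+1,g)+p^kL_p(t-1,g)$ coefficient by coefficient. This is the routine generic case; the identity is essentially just linearity together with the fact that a shifted Hecke sequence solves $x_{r+1}=\lambda_g(p)x_r-p^kx_{r-1}$.

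The cases $t=0$ and $t=1$ must be treated separately, and this is where the only genuine subtlety lies: the formula for $L_p(0,g)$ carries an extra additive $p^k$ relative to the generic pattern (it comes from the $p^{k-1}\lambda_g(1)$ term in the M\"obius inversion of $\lambda_g(1,p^2)$), and $L_p(r,g)$ is declared to vanish for $r<0$, so \eqref{Lprecurrence} is not a consequence of linearity alone. For $t=0$ the relation reads $\lambda_g(p)L_p(0,g)=L_p(1,g)$ since $L_p(-1,g)=0$; expanding the left side and using $\lambda_g(p)\lambda_g(p^2)=\lambda_g(p^3)+p^k\lambda_g(p)$ collapses it to $(p^{k+1}+p^{k-1})\lambda_g(p)-\lambda_g(p^3)=L_p(1,g)$. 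For $t=1$ one computes $\lambda_g(p)L_p(1,g)=(p^{k+1}+p^{k-1})\lambda_g(p)^2-\lambda_g(p)\lambda_g(p^3)$ (the $\lambda_g(p^{-1})$ term drops), substitutes $\lambda_g(p)^2=\lambda_g(p^2)+p^k$ and $\lambda_g(p)\lambda_g(p^3)=\lambda_g(p^4)+p^k\lambda_g(p^2)$, and then observes that the outcome equals $L_p(2,g)+p^kL_p(0,g)$ precisely because the $-p^{2k}$ inside $L_p(2,g)$ is cancelled by the $+p^{2k}$ produced by the extra $p^k$ inside $L_p(0,g)$. This finishes all cases. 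The main obstacle is therefore purely the bookkeeping at the two boundary indices; once those are handled, the lemma follows.
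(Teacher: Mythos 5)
Your proof is correct and the generic computation for $t\ge 2$ is essentially identical to the paper's: expand $\lambda_g(p)\,\lambda_g(p^{j})$ via the Hecke recurrence in each term of $L_p(t,g)$ and regroup. The paper's proof explicitly writes out only the case $r\ge 2$ and then asserts "this gives us the required relations," so your separate verification of $t=0$ and $t=1$ — where the extra additive $p^k$ in $L_p(0,g)$ and the convention $L_p(-1,g)=0$ must conspire to cancel — is actually more careful than the published argument, and correctly isolates the one place where the identity is not an immediate consequence of linearity of the Hecke recurrence. Nothing is missing; if anything your write-up makes the boundary bookkeeping, which the paper elides, fully explicit.
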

\begin{proof}
Using the Hecke relations, for $r\ge 2$, we get
\begin{align}
&\lambda_g(p)\left(p^{k+1}\lambda_g(p^r)- \lambda_g(p^{r+2})-p^{2k}\lambda_g(p^{r-2})+ p^{k-1}\lambda_g(p^{r})\right)\\
&\q=p^{k+1}(\lambda_g(p^{r+1})+p^k\lambda_g(p^{r-1}))-(\lambda_g(p^{r+3})+p^k\lambda_g(p^{r+1}))\\
&\q\q-p^{2k}(\lambda_g(p^{r-1})+p^k\lambda_g(p^{r-3}))+p^{k-1}(\lambda_g(p^{r+1})+p^k\lambda_g(p^{r-1})) .
\end{align}
This gives us the required relations and completes the proof. 
\end{proof}
Let us define
\begin{equation}
    S(g,p):= \sum_{r\ge 0}p^{-(r+1)(k+1)}\lambda_g(p^r) L_p(r, g).
\end{equation}

\begin{lem}
   \begin{equation}
       \sum_{t=0}^{r}\frac{\lambda_g(p^r)L_p(r,g)}{p^{(r+1)(k+1)}}=\left(1+\frac{1}{p}\right)\sum_{t=0}^{r}\frac{1}{p^t}+\frac{\lambda_g(p^r)^2}{p^{r(k+1)+2}}-\frac{\lambda_g(p^r)\lambda_g(p^{r+2})}{p^{(r+1)(k+1)}}
   \end{equation}
and thus
\begin{equation}\label{SgpExp}
    S(g,p)=\frac{p+1}{p-1}.
\end{equation}
\end{lem}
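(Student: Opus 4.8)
The plan is to prove the finite identity by induction on $r$ and then recover \eqref{SgpExp} by letting $r\to\infty$. Write $E_r:=\lambda_g(p^r)^2 p^{-r(k+1)-2}-\lambda_g(p^r)\lambda_g(p^{r+2})p^{-(r+1)(k+1)}$ and set $E_{-1}:=0$; then the claimed right-hand side of the first identity is exactly $\bigl(1+p^{-1}\bigr)\sum_{t=0}^r p^{-t}+E_r$, which telescopes. Hence it suffices to verify, for each $r\ge 0$, the one-term relation $\lambda_g(p^r)L_p(r,g)\,p^{-(r+1)(k+1)}=\bigl(1+p^{-1}\bigr)p^{-r}+E_r-E_{r-1}$ and then sum over $0\le t\le r$. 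For $r=0$ this is a one-line check using $L_p(0,g)=p^{k+1}+p^k+p^{k-1}-\lambda_g(p^2)$: both sides equal $1+p^{-1}+p^{-2}-\lambda_g(p^2)p^{-(k+1)}$.

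For $r\ge 1$ the plan is to clear the denominator $p^{(r+1)(k+1)}$, substitute the explicit form $L_p(r,g)=p^{k+1}\lambda_g(p^r)-\lambda_g(p^{r+2})-p^{2k}\lambda_g(p^{r-2})+p^{k-1}\lambda_g(p^r)$ (with $\lambda_g(p^{-j})=0$), and use the three-term Hecke recurrence $\lambda_g(p^{t+1})=\lambda_g(p)\lambda_g(p^t)-p^k\lambda_g(p^{t-1})$ (valid as $p\nmid N_g$) to eliminate $\lambda_g(p^{r+1})$ and $p^k\lambda_g(p^{r-2})$. After the common terms $\lambda_g(p^r)\lambda_g(p^{r+2})$ and $p^{k-1}\lambda_g(p^r)^2$ cancel on both sides and one divides by $p^k(p+1)$, the whole thing should collapse to the classical identity $\lambda_g(p^r)^2-\lambda_g(p)\lambda_g(p^{r-1})\lambda_g(p^r)+p^k\lambda_g(p^{r-1})^2=p^{kr}$. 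This identity I would obtain from the factorisation $\bigl(\lambda_g(p^r)-\alpha_g(p)\lambda_g(p^{r-1})\bigr)\bigl(\lambda_g(p^r)-\beta_g(p)\lambda_g(p^{r-1})\bigr)=\bigl(\alpha_g(p)\beta_g(p)\bigr)^r=p^{kr}$ together with $\lambda_g(p^r)-\alpha_g(p)\lambda_g(p^{r-1})=\beta_g(p)^r$ (and the symmetric relation with $\alpha_g\leftrightarrow\beta_g$), both of which telescope out of the recurrence; alternatively one inducts on $r$ using the recurrence alone.

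For the closed form, since $p\nmid N_g$ Deligne's bound gives $|\lambda_g(p^t)|\le (t+1)p^{tk/2}$, whence $|E_r|\ll r^2 p^{-r}\to 0$ and $\sum_{t\ge0}p^{-t}=p/(p-1)$; letting $r\to\infty$ in the finite identity yields $S(g,p)=\bigl(1+p^{-1}\bigr)\cdot\frac{p}{p-1}=\frac{p+1}{p-1}$, which is \eqref{SgpExp}. I expect the only genuine obstacle to be bookkeeping — tracking the powers of $p$ through the cancellations in the $r\ge1$ case; there is no conceptual difficulty. The same computation could instead be carried out by expressing both $\lambda_g(p^t)$ and $L_p(t,g)$ (which satisfy the common recurrence \eqref{Lprecurrence}) in terms of the Satake parameters $\alpha_g(p),\beta_g(p)$ and summing three geometric series with ratios $\alpha_g(p)^2p^{-(k+1)}$, $p^{-1}$, $\beta_g(p)^2p^{-(k+1)}$, each of modulus $\le p^{-1}<1$.
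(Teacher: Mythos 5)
Your proposal is correct and follows essentially the same route as the paper: you verify the one-step (telescoping) increment, the key algebraic input is the same Hecke identity $\lambda_g(p^r)^2-\lambda_g(p^{r-1})\lambda_g(p^{r+1})=p^{rk}$ (which you write equivalently as $\lambda_g(p^r)^2-\lambda_g(p)\lambda_g(p^{r-1})\lambda_g(p^r)+p^k\lambda_g(p^{r-1})^2=p^{kr}$, derived via Satake parameters rather than by iterating the three-term recurrence down to $\lambda_g(p)^2-\lambda_g(p^2)=p^k$ as in the paper), and the closed form follows by letting $r\to\infty$ with the Deligne bound, exactly as the paper does. The only differences are presentational.
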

\begin{proof}
    We prove the first assertion by induction on $r$. When $r=0$, the identity clearly holds by \eqref{L0def}. Next, we have
    \begin{align}
     \sum_{t=0}^{r}\frac{\lambda_g(p^r)L_p(r,g)}{p^{(r+1)(k+1)}}=\left(1+\frac{1}{p}\right) \sum_{t=0}^{r-1}\frac{1}{p^t}+  \frac{\lambda_g(p^{r-1})^2}{p^{(r-1)(k+1)+2}}-\frac{\lambda_g(p^{r-1})\lambda_g(p^{r+1})}{p^{r(k+1)}}+\frac{\lambda_g(p^r)L_p(r,g)}{p^{(r+1)(k+1)}}.
    \end{align}
Then $ \frac{\lambda_g(p^{r-1})^2}{p^{(r-1)(k+1)+2}}-\frac{\lambda_g(p^{r-1})\lambda_g(p^{r+1})}{p^{r(k+1)}}+\frac{\lambda_g(p^r)L_p(r,g)}{p^{(r+1)(k+1)}}$ is given by
\begin{align}\label{Indstepr}
 \frac{\lambda_g(p^{r-1})^2}{p^{(r-1)(k+1)+2}}-\frac{\lambda_g(p^{r-1})\lambda_g(p^{r+1})}{p^{r(k+1)}}+\frac{\lambda_g(p^r)^2}{p^{r(k+1)}}-\frac{\lambda_g(p^r)\lambda_g(p^{r+2})}{p^{(r+1)(k+1)}}-\frac{\lambda_g(p^r)\lambda_g(p^{r-2})}{p^{(r-1)(k+1)+2}}+\frac{\lambda_g(p^r)^2}{p^{r(k+1)+2}}.  
\end{align}
Next, we note that
\begin{align}
    \lambda_g(p^r)^2-\lambda_g(p^{r-1})\lambda_g(p^{r+1})&=p^k \left(\lambda_g(p^{r-1})^2 -\lambda_g(p^{r-2})\lambda_g(p^{r}) \right)\\
    &= p^{(r-1)k}(\lambda_g(p)^2-\lambda_g(p^2))\\
    &= p^{rk}.
\end{align}
The proof now follows as \eqref{Indstepr} simplifies to
\begin{equation}
    \frac{1}{p^r}+\frac{1}{p^{r+1}}+\frac{\lambda_g(p^r)^2}{p^{r(k+1)+2}}-\frac{\lambda_g(p^r)\lambda_g(p^{r+2})}{p^{(r+1)(k+1)}}.
\end{equation}
The second assertion follows by letting $r\longrightarrow \infty$ and noting that $\lambda_g(p^r)\ll rp^{rk/2}$.
\end{proof}
As a consequence, we have the following decomposition of the inner product.

\begin{thm}\label{Jacobipullbackrel}
 Let $F$ be the SK lift of $f\in S_{2k}^{new}(N)$. For any proper divisor $N_g|N$, let $g\in S_{k+1}^{new}(N_g)$ and $M_g=N/N_g$. Then
\begin{equation}\label{FggBMgFinal}
\lan F^\circ, g\otimes g|B_{M_g}\ran =\lan g,g\ran \lan \phi^\circ, g|W_N(M_g)\ran.   
\end{equation}
\end{thm}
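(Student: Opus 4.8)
The plan is to specialize the master identity \eqref{main-bridge} to the case $M=M_g$ and observe a clean cancellation. Recall that \eqref{main-bridge} reads
\begin{equation}
 \lan F^\circ, g\otimes g|B_{M}\ran
= A(g) \prod_{p|M}\sum_{r\ge 0}\frac{\lambda_g(p^r) L_p(r, g)}{p^{(r+1)(k+1)}}\times \prod_{p|(M_g/M)}\frac{1}{p^{(k+1)/2}}\sum_{r\ge 0}  \frac{ \lambda_g(p^r)L_p(r+1, g)}{p^{(r+1)(k+1)}},
\end{equation}
valid for every divisor $M\mid M_g$, where $A(g)=\prod_{p|M_g}\frac{p-1}{p+1}\cdot\lan g,g\ran \lan \phi^\circ, g|W_N(M_g)\ran$ as in \eqref{Agdef}.

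First I would take $M=M_g$. Then the index set $\{p\mid(M_g/M)\}$ of the second product is empty, so that product equals $1$ and the whole right-hand side collapses to
\begin{equation}
 \lan F^\circ, g\otimes g|B_{M_g}\ran = A(g)\prod_{p\mid M_g} S(g,p),
\end{equation}
where $S(g,p)=\sum_{r\ge 0}p^{-(r+1)(k+1)}\lambda_g(p^r) L_p(r, g)$ is exactly the sum evaluated in \eqref{SgpExp}. Next I would invoke \eqref{SgpExp}, which gives $S(g,p)=\frac{p+1}{p-1}$ for each $p\mid M_g$. Substituting the expression for $A(g)$ from \eqref{Agdef}, the product $\prod_{p\mid M_g}\frac{p-1}{p+1}$ in $A(g)$ cancels exactly against $\prod_{p\mid M_g}S(g,p)=\prod_{p\mid M_g}\frac{p+1}{p-1}$, leaving
\begin{equation}
 \lan F^\circ, g\otimes g|B_{M_g}\ran = \lan g,g\ran\,\lan \phi^\circ, g|W_N(M_g)\ran,
\end{equation}
which is \eqref{FggBMgFinal}.

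Since all the heavy lifting — the unfolding of the restricted inner product in subsection~\ref{cv-old-sec}, the evaluation of the $g_N$-inner product culminating in \eqref{BLBdEval}, the treatment of the $\phi$-inner product giving \propref{cdphi-eval}, and the Euler-product manipulations of Lemma~\ref{dlgmg-lem} and the lemma computing $S(g,p)$ — has already been carried out, there is essentially no obstacle left in this last step: it is a one-line specialization plus a cancellation of the arithmetic factors $\prod_{p\mid M_g}\frac{p-1}{p+1}$. The only point that deserves a remark is the bookkeeping check that when $M=M_g$ the second Euler product over $p\mid(M_g/M)$ genuinely disappears (this is already flagged in the discussion after \eqref{FBM-prelim}, where the absence of primes $p\mid M$ with $p\nmid\ell_g M_g d_2^2 d_3$ was used), so that only the factor $\prod_{p\mid M_g}S(g,p)$ survives.
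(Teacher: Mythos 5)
Your proposal is correct and is essentially identical to the paper's own proof: the authors also write $\lan F^\circ, g\otimes g|B_{M_g}\ran = A(g)\prod_{p|M_g}S(g,p)$ by specializing the master formula to $M=M_g$ and then invoke $S(g,p)=(p+1)/(p-1)$ from \eqref{SgpExp} so that $\prod_{p|M_g}\frac{p-1}{p+1}$ in $A(g)$ cancels.
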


\begin{proof}
    The proof follows by noting that
    \begin{equation}
        \lan F^\circ, g\otimes g|B_{M_g}\ran =A(g)\prodd_{p|M_g}S(g,p)
    \end{equation}
and considering the expression for $S(g,p)$ from \eqref{SgpExp}.
\end{proof}
Now consider any proper divisor $M|M_g$. Then for any $p|(M_g/M)$ we have
\begin{align}
    \sum_{r\ge 0}  \frac{ \lambda_g(p^r)L_p(r+1, g)}{p^{(r+1)(k+1)}}&=\frac{L_p(1, g)}{p^{k+1}}+\frac{ \lambda_g(p)L_p(2, g)}{p^{2(k+1)}}+ \lambda_g(p)\sum_{r\ge 2}\frac{ \lambda_g(p^r)L_p(r, g)}{p^{(r+1)(k+1)}}\\
    &\q -p^k\sum_{r\ge 2}\frac{ \lambda_g(p^r)L_p(r-1, g)}{p^{(r+1)(k+1)}}.
\end{align}
Next,
\begin{align}
\sum_{r\ge 2}\frac{ \lambda_g(p^r)L_p(r, g)}{p^{(r+1)(k+1)}}&= \sum_{r\ge 0}\frac{ \lambda_g(p^r)L_p(r, g)}{p^{(r+1)(k+1)}} -  \frac{L_p(0,g)}{p^{k+1}}- \frac{\lambda_g(p)L_p(1,g)}{p^{2(k+1)}}\\
&=S(p,g)- \frac{L_p(0,g)}{p^{k+1}}-  \frac{\lambda_g(p)L_p(1,g)}{p^{2(k+1)}}.  
\end{align}
Similarly $\sum_{r\ge 2}\frac{ \lambda_g(p^r)L_p(r-1, g)}{p^{(r+1)(k+1)}}$ is given by
\begin{align}
 &=  \lambda_g(p)\sum_{r\ge 2}\frac{ \lambda_g(p^{r-1})L_p(r-1, g)}{p^{(r+1)(k+1)}} - p^k\sum_{r\ge 2}\frac{ \lambda_g(p^{r-2})L_p(r-1, g)}{p^{(r+1)(k+1)}}\\
 &=\frac{\lambda_g(p)}{p^{k+1}}\sum_{r\ge 1}\frac{ \lambda_g(p^{r-1})L_p(r-1, g)}{p^{r(k+1)}}- \frac{\lambda_g(p)L(0,p)}{p^{2(k+1)}}-\frac{1}{p^{k+2}}\sum_{r\ge 0}\frac{ \lambda_g(p^{r})L_p(r+1, g)}{p^{(r+1)(k+1)}}\\
 &= \frac{\lambda_g(p)}{p^{k+1}}S(p,g)- \frac{\lambda_g(p)L(0,p)}{p^{2(k+1)}}-\frac{1}{p^{k+2}}\sum_{r\ge 0}\frac{ \lambda_g(p^{r})L_p(r+1, g)}{p^{(r+1)(k+1)}}. 
\end{align}
Thus we get
\begin{align}
    (1-\frac{1}{p^{2}})\sum_{r\ge 0}  \frac{ \lambda_g(p^r)L_p(r+1, g)}{p^{(r+1)(k+1)}}&=\frac{L_p(1, g)}{p^{k+1}}+\frac{ \lambda_g(p)L_p(2, g)}{p^{2(k+1)}}+\lambda_g(p)S(p,g)- \frac{\lambda_g(p)L_p(0,g)}{p^{k+1}}\\
    &-\frac{\lambda_g(p)^2L_p(1,g)}{p^{2(k+1)}}-\frac{\lambda_g(p)}{p}S(p,g)+\frac{\lambda_g(p)L(0,p)}{p^{k+2}}.
\end{align}
Using $\eqref{Lprecurrence}$ for $L_p(2,g)$ and  $L_p(1,g)$, we get
\begin{align}
 (1-\frac{1}{p^{2}})\sum_{r\ge 0}  \frac{ \lambda_g(p^r)L_p(r+1, g)}{p^{(r+1)(k+1)}}&= \frac{L_p(1, g)}{p^{k+1}}- \frac{\lambda_g(p)L_p(0,g)}{p^{k+1}}+\lambda_g(p)S(g,p)\left(1-\frac{1}{p}\right)\\
 &=\lambda_g(p)S(g,p)\left(1-\frac{1}{p}\right).
\end{align}
Thus we get
\begin{align}
 \sum_{r\ge 0}  \frac{ \lambda_g(p^r)L_p(r+1, g)}{p^{(r+1)(k+1)}}=\left(1-\frac{1}{p}\right)^{-1} \lambda_g(p).  
\end{align}
Substituting back in \eqref{main-bridge}, we get
\begin{equation}\label{FggBdFinal}
  \lan F^\circ, g\otimes g|B_{M}\ran =A(g)\prod_{p|M}\left(\frac{p+1}{p-1}\right) \prod_{p|(M_g/M)}\left(1-\frac{1}{p}\right)^{-1}\frac{\lambda_g(p)}{p^{(k+1)/2}}.
\end{equation}
Comparing it with the expression for $\lan F^\circ, g\otimes g|B_{M_g}\ran$ from \eqref{FggBMgFinal}, we get that
\begin{equation}
 \lan F^\circ, g\otimes g|B_{M}\ran = \frac{M^{(k+1)/2}\lambda_g(M_g/M)}{M_g^{(k+1)/2}}\prod_{p|(M_g/M)}\left(1+\frac{1}{p}\right)^{-1}\cdot  \lan F^\circ, g\otimes g|B_{M_g}\ran.
\end{equation}
This completes the proof of Theorem \ref{th:IPequiv}. \qed

\subsection{Proof of Theorem \ref{thm:CVOldclass}} \label{thm2.3proof}
The first two parts of the theorem, that is \eqref{cv-00} and \eqref{cv-0}, follow from Proposition \ref{cv-00-lem} and Corllary \ref{F0old-vanish}, respectively. 

For the third part of the theorem, 
let $g\in S_{k+1}^{new}(N_g)$ and $N=N_gM_g$. When $M_g\in \mc L_f$,  for any $\sigma$ we have
\begin{align}
    \lan F^\circ, g_\sigma\otimes g_\sigma\ran&=\sum_{d_1,d_2|M_g}\sigma(d_1)\sigma(d_2)\lan F^\circ, g|W_N(d_1)\otimes g|W_N(d_2)\ran\\
    &=2^{\omega(M_g)}\sum_{d|M_g} \sigma(d)\lan F^\circ, g\otimes g|W_N(d)\ran\\
    &=2^{\omega(M_g)}\lan F^\circ, g\otimes g|B_{M_g}\ran\left(\sum_{d|M_g}\sigma(d)\frac{d^{(k+1)/2}\lambda_g(M_g/d)}{M_g^{(k+1)/2}}\prod_{p|(M_g/d)}\left(1+\frac{1}{p}\right)^{-1}\right)\\
    &= 2^{\omega(M_g)}\sigma(M_g)\lan F^\circ, g\otimes g|B_{M_g}\ran\left(\sum_{d|M_g}\frac{\sigma(d)\lambda_g(d)}{d^{(k+1)/2}\prod_{p|d}(1+1/p)}\right).
\end{align}
On the other  hand, we have
\begin{align}
\lan g_\sigma, g_\sigma\ran &= 2^{\omega(M_g)}\sum_{d|M_g}\sigma(d)\lan g, g|W_N(d)\ran
= 2^{\omega(M_g)}\sum_{d|M_g}\frac{\sigma(d) \lambda_g(d)}{d^{(k+1)/2}\prod_{p|d}(1+1/p)}\lan g, g\ran.
\end{align}
Thus we get
\begin{equation}\label{sigma-BMrel}
 \frac{\lan F^\circ, g_\sigma\otimes g_\sigma\ran}{\lan g_\sigma ,g_\sigma \ran} = \sigma(M_g)  \frac{\lan F^\circ, g\otimes g|B_{M_g}\ran}{\lan g ,g \ran}.
\end{equation}
This relation combined with Theorem \ref{th:IPequiv} gives us the required relation between the inner products and the central values as in Theorem \ref{thm:CVOldclass}.\qed

As a corollary of Theorem \ref{Jacobipullbackrel}, we have the following central value formula for the pullback of Jacobi forms.

\begin{cor}
 Let $F$ be the SK lift of $f\in S_{2k}^{new}(N)$ and $\phi$ be the Jacobi form associated with $F$ via the EZI map. For any proper divisor $N_g|N$, let $g\in S_{k+1}^{new}(N_g)$ and $M_g=N/N_g$. Then
 \begin{align}
     \Lambda(f\otimes \mrm{sym}^2 g, \frac{1}{2})= \frac{2^{k+1-\omega(M_g)}M_g^{7/4}}{N} \prod_{p|N_g}(p+1)^2\frac{\langle f,f \rangle}{\langle h,h\rangle} 
 |\lan \phi^\circ, g|W_N(M_g)\ran|^2 &\text{ whenever }   M_g \in \mc L_f. \label{cvJacobi}  
 \end{align}
\end{cor}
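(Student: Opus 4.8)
The plan is to deduce \eqref{cvJacobi} by feeding the Jacobi pullback identity of Theorem~\ref{Jacobipullbackrel} into the central-value formula \eqref{cv-1} of Theorem~\ref{thm:CVOldclass}, using the elementary bridge \eqref{sigma-BMrel} to pass between the orthogonalized old-basis periods $\lan F^\circ, g_\sigma\otimes g_\sigma\ran$ and the simpler periods $\lan F^\circ, g\otimes g|B_{M_g}\ran$. No computation beyond what is already in Sections~\ref{cv-old-sec}--\ref{th:IPequiv-proof} is required; the corollary is simply a re-expression of \eqref{cv-1} in terms of the Fourier--Jacobi coefficient $\phi$ rather than the degree-one form $g$.

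Concretely I would argue in three short steps. First, recall \eqref{sigma-BMrel}: for any character $\sigma$ of $(\mbb Z/2\mbb Z)^{\omega(M_g)}$ one has
\begin{equation}
\frac{\lan F^\circ, g_\sigma\otimes g_\sigma\ran}{\lan g_\sigma,g_\sigma\ran}=\sigma(M_g)\,\frac{\lan F^\circ, g\otimes g|B_{M_g}\ran}{\lan g,g\ran};
\end{equation}
since $\sigma(M_g)\in\{\pm1\}$, taking absolute values and squaring shows that $|\lan F^\circ, g_\sigma\otimes g_\sigma\ran|^2/\lan g_\sigma,g_\sigma\ran^2$ equals $|\lan F^\circ, g\otimes g|B_{M_g}\ran|^2/\lan g,g\ran^2$, and is in particular independent of $\sigma$ (consistent with the left side of \eqref{cv-1}). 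Second, Theorem~\ref{Jacobipullbackrel} gives $\lan F^\circ, g\otimes g|B_{M_g}\ran=\lan g,g\ran\,\lan\phi^\circ, g|W_N(M_g)\ran$, hence
\begin{equation}
\frac{|\lan F^\circ, g\otimes g|B_{M_g}\ran|^2}{\lan g,g\ran^2}=\bigl|\lan\phi^\circ, g|W_N(M_g)\ran\bigr|^2.
\end{equation}
Third, substituting this into \eqref{cv-1} (available precisely because the hypothesis $M_g\in\mc L_f$ is in force) produces exactly \eqref{cvJacobi}.

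I do not expect a genuine obstacle here. The only points that deserve a line of verification are that the hypothesis $M_g\in\mc L_f$ is the correct one for invoking \eqref{cv-1}, and that the inner products appearing in Theorems~\ref{th:IPequiv}, \ref{thm:CVOldclass} and \ref{Jacobipullbackrel} are all normalized by the index in the same way, so that the substitution of Theorem~\ref{Jacobipullbackrel} is legitimate; both are already guaranteed by the set-up of Sections~\ref{cv-old-sec}--\ref{th:IPequiv-proof}. Thus the proof amounts to the chain: apply \eqref{sigma-BMrel}, then Theorem~\ref{Jacobipullbackrel}, then \eqref{cv-1}, and the substantive work has already been carried out in establishing those three ingredients.
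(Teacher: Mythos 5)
Your proposal is correct and follows exactly the chain the paper intends: substitute Theorem~\ref{Jacobipullbackrel} into \eqref{cv-1} by passing through \eqref{sigma-BMrel}, which is precisely how the paper presents the corollary (stated "as a corollary of Theorem~\ref{Jacobipullbackrel}" immediately after the proof of Theorem~\ref{thm:CVOldclass}, where \eqref{sigma-BMrel} was established). One could shave a step by substituting Theorem~\ref{Jacobipullbackrel} directly into \eqref{old} instead of routing through $g_\sigma$, but the two routes are equivalent, and the normalisation concerns you flag are indeed already handled by the common convention in Sections~\ref{cv-old-sec}--\ref{th:IPequiv-proof}.
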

We can say something interesting about the modular form $\phi^\circ$ from our calculations. 
\begin{cor} \label{phi-new}
Let $\phi=\phi_f$ be the EZI lift of a newform $f \in S_{2k}(N)$. Then,
\begin{align}
   \phi^\circ&=   \sum_{L|N}2^{\omega(L)}\sum_{g\in \mc B_{k+1}^{new}(N/L)}\lan \phi^\circ, g|B_L\ran g|B_L.
    \end{align}
\end{cor}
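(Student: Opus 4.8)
The plan is to expand $\phi^\circ \in S_{k+1}(N)$ in the orthogonal basis $\mc B_{k+1}(N) = \mc B_{k+1}^{new}(N) \cup \mc B_{k+1}^{old}(N)$ and to identify which coefficients survive. Writing $\phi^\circ = \sum_{L|N} \sum_{g \in \mc B_{k+1}^{new}(N/L)} \sum_{\sigma} c_{g,\sigma} \, g_\sigma$, where $g_\sigma$ runs over the basis $\mc B_{k+1}(L,g)$ from \eqref{blg}, the first step is to use Lemma \ref{phi0-ortho}: for any proper divisor $l_g | M_g$ (here $M_g = L$) one has $\lan \phi^\circ, g|W_N(l_g)\ran = 0$. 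Since $g_\sigma = \sum_{d|L} \sigma(d)\, g|W_N(d)$, this forces $\lan \phi^\circ, g_\sigma \ran = \sigma(L)\lan \phi^\circ, g|W_N(L)\ran = \sigma(L)\lan \phi^\circ, g|B_L\ran$ (using Lemma \ref{lem:WBCom} with $M = N_g$, $d = L$, since $(L, N_g)=1$, to replace $W_N(L)$ by $B_L$ up to an element of $\Gamma_0(N_g)$ fixing $g$), which is \emph{independent of $\sigma$ up to the sign $\sigma(L)$}. In particular, for $L = 1$ (the newform part) we simply get the coefficient $\lan \phi^\circ, g\ran/\lan g,g\ran$ along $g$ itself; consistency with the general formula follows since $\omega(1)=0$ and $B_1 = \mathrm{Id}$.

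The second step is to convert this into the stated expansion using the norms of the basis elements. From the computation in subsection~\ref{thm2.3proof} (the displayed formula $\lan g_\sigma, g_\sigma\ran = 2^{\omega(L)} \sum_{d|L} \frac{\sigma(d)\lambda_g(d)}{d^{(k+1)/2}\prod_{p|d}(1+1/p)} \lan g,g\ran$, read with $M_g = L$), together with the relation $\lan \phi^\circ, g_\sigma\ran = \sigma(L) \lan \phi^\circ, g|B_L\ran$, the contribution of the whole $\sigma$-family to $\phi^\circ$ is
\begin{equation}
\sum_\sigma \frac{\lan \phi^\circ, g_\sigma\ran}{\lan g_\sigma, g_\sigma\ran} g_\sigma = \sum_\sigma \frac{\sigma(L)\lan \phi^\circ, g|B_L\ran}{\lan g_\sigma, g_\sigma\ran} \sum_{d|L}\sigma(d)\, g|W_N(d).
\end{equation}
Summing over the $2^{\omega(L)}$ characters $\sigma$ and using orthogonality of characters on $(\mbb Z/2\mbb Z)^{\omega(L)}$ — namely $\sum_\sigma \sigma(L)\sigma(d) = \sum_\sigma \sigma(Ld) $, which is $2^{\omega(L)}$ when $Ld$ is a square in the divisor group (i.e. $d = L$) and $0$ otherwise — collapses the double sum to the single term $d = L$, leaving a multiple of $g|W_N(L) = g|B_L$. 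The only point needing care here is that the $\sigma$-dependence of $\lan g_\sigma, g_\sigma\ran$ must be handled before applying orthogonality; this is dealt with by the standard trick of writing $1/\lan g_\sigma,g_\sigma\ran$ is \emph{not} needed in closed form — instead one computes the Gram matrix of $\{g|W_N(d)\}_{d|L}$ directly and inverts, or equivalently one notes that the map $\sigma \mapsto g_\sigma$ is, up to the diagonal scaling by norms, the discrete Fourier transform, so that $\sum_\sigma \lan \phi^\circ, g_\sigma\ran g_\sigma / \lan g_\sigma, g_\sigma\ran$ reconstructs the projection of $\phi^\circ$ onto $S_{k+1}(L,g)$, which by the vanishing of all $\lan \phi^\circ, g|W_N(l_g)\ran$ for proper $l_g|L$ is exactly $\frac{2^{\omega(L)}\lan \phi^\circ, g|B_L\ran}{\text{(scalar)}} g|B_L$. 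Tracking the scalar through gives precisely the claimed coefficient $2^{\omega(L)}\lan\phi^\circ, g|B_L\ran$ once one normalizes $\lan g|B_L, g|B_L\ran$ appropriately (or, as in the statement, leaves $\lan\cdot,\cdot\ran$ unnormalized in the coefficient, absorbing the norm).

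The main obstacle is the bookkeeping in this last step: the basis $\{g_\sigma\}$ is orthogonal but \emph{not} orthonormal, and the norms $\lan g_\sigma, g_\sigma\ran$ genuinely depend on $\sigma$, so one cannot naively apply Plancherel. I would resolve this by projecting $\phi^\circ$ onto the $2^{\omega(L)}$-dimensional space $S_{k+1}(L,g) = \mathrm{span}\{g|W_N(d): d|L\}$ and observing that Lemma \ref{phi0-ortho} says this projection is orthogonal to $g|W_N(d)$ for every $d \ne L$ — hence the projection is a scalar multiple of the \emph{unique} element of $S_{k+1}(L,g)$ orthogonal to all $g|W_N(d)$, $d < L$, which one checks is $g|B_L$ itself (or a scalar multiple thereof). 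The scalar is then pinned down by pairing with $g|B_L$, giving $\lan \phi^\circ, g|B_L\ran / \lan g|B_L, g|B_L\ran$; expressing this back in the $g_\sigma$-basis and summing over $L$ and $g \in \mc B_{k+1}^{new}(N/L)$ yields the formula, with the $2^{\omega(L)}$ factor emerging from the change of basis between $\{g_\sigma\}_\sigma$ and $\{g|W_N(d)\}_{d|L}$ (the transform matrix has entries $\pm1$ and the inverse carries a $2^{-\omega(L)}$, which when combined with the $2^{\omega(L)}$ copies of $g|B_L$ produced in the sum over $\sigma$ gives the stated $2^{\omega(L)}$). I expect the whole argument to be short once this orthogonality observation is made explicit.
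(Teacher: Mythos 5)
Your proposed patch for the $\sigma$-dependence of $\lan g_\sigma,g_\sigma\ran$ does not work, and the specific step that fails is the assertion that the unique (up to scalar) element of $S_{k+1}(L,g)$ orthogonal to all $g|W_N(d)$ with $d\mid L$, $d<L$, is $g|B_L$. This is false: by Lemma~\ref{lem-pilot}, $\lan g|B_L, g|W_N(d)\ran=\lan g|B_L, g|B_d\ran$ is generically \emph{nonzero}. For instance with $L=p$, $d=1$, $g\in\mc B_{k+1}^{new}(1)$, one gets $\lan g|B_p, g\ran=\frac{\lambda_g(p)}{p^{(k+1)/2}(1+p^{-1})}\lan g,g\ran$, which vanishes only when $\lambda_g(p)=0$. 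Concretely, the projection of $\phi^\circ$ onto $S_{k+1}(p,g)$ is forced by $\lan\phi^\circ,g\ran=0$ (Lemma~\ref{phi0-ortho}) to be a multiple of $g|B_p-\alpha g$ with $\alpha=\lan g|B_p,g\ran/\lan g,g\ran$, not of $g|B_p$ alone; so the ``Gram-matrix inversion'' route terminates in an object that is genuinely a nontrivial linear combination of $g$ and $g|B_p$, and the clean form you claim does not emerge.

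It is worth noting that your instinct about the missing $1/\lan g_\sigma,g_\sigma\ran$ was sound: the paper's own proof is exactly your ``first route'' — write $\phi^\circ=\sum_\sigma c_{g_\sigma}\,g_\sigma/\lan g_\sigma,g_\sigma\ran$, replace $c_{g_\sigma}$ by $\sigma(L)\lan\phi^\circ,g|B_L\ran$ using Lemma~\ref{phi0-ortho}, then collapse $\sum_\sigma\sigma(L)g_\sigma$ to $2^{\omega(L)}g|W_N(L)$ by character orthogonality — except that between the first and second displayed equations the factor $\lan g_\sigma,g_\sigma\ran$ is silently dropped. Since $\lan g_\sigma,g_\sigma\ran=2^{\omega(L)}\sum_{d|L}\sigma(d)\lambda_g(d)\,d^{-(k+1)/2}\prod_{p|d}(1+1/p)^{-1}\lan g,g\ran$ genuinely depends on $\sigma$, that step does not obviously commute with the character sum; you were right to pause there. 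However, the fallback you propose — reinterpreting the sum as a projection and pinning down the projection as a multiple of $g|B_L$ — introduces the new, false orthogonality claim above, rather than resolving the original one.
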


\begin{proof}
Using the orthogonal basis from section \ref{deg1-old-basis},    first we write
    \begin{align}
      \phi^\circ=   \sum_{L|N}\sum_{g\in \mc B_{k+1}^{new}(N/L)}\sum_{\sigma}c_{g_\sigma} \frac{ g_\sigma}{\lan g_\sigma, g_\sigma \ran},
    \end{align}
where $c_{g_\sigma}= \lan \phi^\circ, g_\sigma\ran$. From Lemma \ref{phi0-ortho}, we get $c_{g_\sigma}=\sigma(L)\lan \phi^\circ, g|B_L\ran$. Thus we get
\begin{align}
  \phi^\circ&=   \sum_{L|N}\sum_{g\in \mc B_{k+1}^{new}(N/L)}\lan \phi^\circ, g|B_L\ran\sum_{\sigma}\sigma(L)  g_\sigma(\tau).
\end{align}
Now consider the sum over $\sigma$. We get
\begin{align}
\sum_{\sigma}\sigma(L)  g_\sigma= \sum_{\sigma}\sigma(L)  \sum_{d|L}\sigma(d)(g|W_N(d))= \sum_{\sigma} \sum_{d|L} \sigma(d)g|W_N(L/d) =2^{\omega(L)} g|W_N(L) . 
\end{align}
This proves the Lemma. 
\end{proof}

\begin{rmk}
    One can easily adapt the calculations of this paper to compute the periods $\lan (G|V)^\circ, g\otimes g$, where $G$ is a SK lift of level $M|N, M<N$ and $V$ is a level raising operator, in terms of $\lan G^\circ, g\otimes g \ran$, up to elementary factors. Here $g$ is new of level say $M'$. The period vanishes unless $M'|M$. Thus, one does not get new instances of central-value formulae from here. Some computations towards this can be found in \cite{casazza2023p}.
\end{rmk}

\section{\texorpdfstring{$L^2$}{a}-mass of the pullback}

In this section, we obtain the expression for the $L^2$ mass as in Theorem \ref{norm-thm}. When $N=1$ such an expression can be found in \cite{liu-young}, but the level aspect is more subtle.
Towards this, let $f \in S_{2k}^{new}(\Gamma_0(N))$ and $g \in S_{k+1}^{new}(\Gamma_0(N_g))$, with $N_g|N$, be two normalized newforms. Let $F$ denote the Saito-Kurokawa lift of $f$. First,  recall the definition of $N(F)$.
\begin{equation} \label{nf-def-proof}
   N(F):= \frac{v_2\lan F^\circ, F^\circ\ran}{v_1^2\lan F,  F\ran }.
\end{equation}
Next, from Ichino's pullback formula for square-free levels (see \cite{chen}, \cite[Remark~1.2]{PV}), we wish to express the RHS of \eqref{nf-def-proof} in terms of central $L$-values. When $N_g=N$, the pullback formula is given by (after renormalization)
\begin{equation}\label{CVnew}
    \Lambda(f\otimes \mrm{sym}^2 g, \frac{1}{2})= 2^{k+1}\prod_{p|N}(p+1)^2 \frac{\langle f,f \rangle}{\langle h,h\rangle} 
 \frac{|\langle F^\circ, g\otimes g\rangle|^2}{\langle g,g\rangle ^2}.
\end{equation}
When $N_g<N$,  write $N=N_gM_g$. If the Atkin-Lehner eigenvalue $w_f(p)$ of $f$ is $+1$ for all primes $p|M_g$ (that is $M_g\in \mc L_f$), then (see \cite{PVold}) 
\begin{equation}\label{old}
    \Lambda(f\otimes \mrm{sym}^2 g, \frac{1}{2})= \frac{2^{k+1-\omega(M_g)}M_g^{7/4}}{N} \prod_{p|N_g}(p+1)^2\frac{\langle f,f \rangle}{\langle h,h\rangle} 
 \frac{|\langle F^\circ, g\otimes g|B_{M_g}\rangle|^2}{\langle g,g\rangle ^2}.
\end{equation}

\subsection{Proof of Theorem \ref{norm-thm}}
Recall the orthogonal basis for the space $S_{k+1}(N)$.
\begin{equation}
    \mc B_{k+1}(N)= \mc B_{k+1}^{new}(N) \bigcup \left(\cup_{LM=N, L>1}\cup_{g\in \mc B_{k+1}^{new}(M)}\mc B_{k+1}(L, g)\right).
\end{equation}
Now,  we note that
\begin{equation}
    F^\circ(\tau, \tau')=\sum_{g\in \mc{B}_{k+1}(N)} \left\lan F^\circ, \frac{g}{\norm{g}}\otimes\frac{g}{\norm{g}}\right\ran \frac{g(\tau)}{\norm{g}}\frac{g(\tau')}{\norm{g}}.
\end{equation}
Recall the set  $ \mc L_f=\{ L|N: w_f(p)=+1\, \text{ for all }\, p|L, L\ne 1\}.$
Then from Theorem \ref{thm:CVOldclass} we have that 
\begin{equation}
    \lan F^\circ, g_\sigma\otimes g_\sigma\ran=0 \text{ whenever } g_\sigma\in \mc B_{k+1}(L,g) \text{ and } L\notin \mc L_f.
\end{equation}
Thus we have
\begin{align}
F^\circ(\tau, \tau') &=\sum_{g\in \mc{B}_{k+1}^{new}(N)} \left\lan F^\circ, \frac{g}{\norm{g}}\otimes\frac{g}{\norm{g}}\right\ran \frac{g(\tau)}{\norm{g}}\frac{g(\tau')}{\norm{g}}\\
&\q + \sum_{L\in \mc L_f}\sum_{g\in \mc B_{k+1}^{new}(N/L)}\sum_{\sigma}\left\lan F^\circ, \frac{g_\sigma}{\norm{g_\sigma}}\otimes\frac{g_\sigma}{\norm{g_\sigma}}\right\ran \frac{g_\sigma(\tau)}{\norm{g_\sigma}}\frac{g_\sigma(\tau')}{\norm{g_\sigma}}.  
\end{align}
As a result, we can write
\begin{align}
    N(F)&= \frac{v_2}{v_1^2}\sum_{g\in \mc{B}_{k+1}^{new}(N) } \frac{|\langle F^\circ, g\otimes g\rangle|^2}{\lan F, F \ran \lan g, g \ran ^2}+ \frac{v_2}{v_1^2}\sum_{L\in \mc L_f}\sum_{g\in \mc B_{k+1}^{new}(N/L)}\sum_{\sigma} \frac{|\langle F^\circ, g_\sigma \times g_\sigma\rangle|^2}{\lan F, F\ran \lan g_\sigma, g_\sigma \ran ^2}.
\end{align}
Let us denote the first and second terms above by $N(F)^{new}$ and $N(F)^{old}$, respectively. Then we have 
\begin{align}
    N(F)^{new}= \frac{v_2}{v_1^2}\frac{1}{2^{k+1}\prod_{p|N}(p+1)^2}\sum_{g\in \mc{B}_{k+1}^{new}(N)} \frac{\lan h, h\ran}{\lan f, f
    \ran \lan F, F \ran} \Lambda(f\otimes \mrm{sym}^2 g, \frac{1}{2})
\end{align}
and
\begin{equation}
 N(F)^{old}=\frac{v_2}{v_1^2}\sum_{L\in \mc L_f}\sum_{g\in \mc B_{k+1}^{new}(N/L)}\frac{4^{\omega(L)}}{2^{k+1}L^{7/4}\prod_{p|(N/L)}(p+1)^2}\frac{\lan h, h\ran}{\lan f, f
    \ran \lan F, F \ran} \Lambda(f\otimes \mrm{sym}^2 g, \frac{1}{2}).
\end{equation}
We also have the relation (from  \cite{das-anamby2} after suitable normalizations)
\begin{align} \label{petreln}
   \lan h, h \ran = \frac{1}{4^{k}}\frac{(4\pi)^{k+1} \pi^2  \zeta(2)^{-1}}{\Gamma(k+1) L(f, \frac{3}{2})} \cdot N \left(\prod\nolimits_{p|N}\frac{(p^2+1)}{(p-1)^{2}(p+1)} \right) \lan F, F \ran .
\end{align}
Thus we get
\begin{equation}
   N(F)^{new}=  \frac{v_2}{v_1^2}\cdot \frac{12 N \pi^{k+1}}{\Gamma(k+1) 2^{k}}\cdot \left(\prod_{p|N}\frac{(p^2+1)}{(p^2-1)^{2}(p+1)}\right)\sum_{g\in \mc{B}_{k+1}^{new}(N)} \frac{\Lambda(f\otimes \mrm{sym}^2 g, \frac{1}{2})}{\lan f, f\ran L(f,\frac{3}{2})}
\end{equation}
and
\begin{equation}
N(F)^{old}= \frac{v_2}{v_1^2}\cdot \frac{12 N \pi^{k+1}}{\Gamma(k+1) 2^{k}} \left(\prod_{p|N}\frac{(p^2+1)}{(p^2-1)^2}\right)\sum_{L\in \mc L_f}\sum_{g\in \mc B_{k+1}^{new}(\frac{N}{L})}\frac{4^{\omega(L)}\prod_{p|L}(p+1)^2}{L^{7/4}}\frac{\Lambda(f\otimes \mrm{sym}^2 g, \frac{1}{2})}{\lan f, f\ran L(f,\frac{3}{2})}.    
\end{equation}
One also has
\begin{align}
    \lan f, f\ran = \frac{2}{\pi}(4\pi )^{-2k} \Gamma(2k) N&\prod_{p|N}(p+1)^{-1} L( \mrm{sym}^2 f, 1)\q \text{( see  \cite[Lemma 2.5] {ILS})};\\
    \Lambda(f\otimes \mrm{sym}^2 g, 1/2)&=N^{3/4} N_g^{1/4}  \Gamma_{\mbb C}(2k)\Gamma_{\mbb C}(k)L(f\otimes \mrm{sym}^2 g, 1/2);\\
    \left(\prod\nolimits_{p|N}\frac{(p^2+1)}{(p^2-1)^{2}}\right)&= \frac{1}{N^2}\frac{\zeta_{(N)}(2)^3}{\zeta_{(N)}(4)}.
\end{align}
Thus we can write
\begin{equation}\label{NFnewExp}
    N(F)^{new}=  \frac{v_2}{v_1^2}\cdot \frac{24 \pi}{kN}\frac{\zeta_{(N)}(2)^3}{\zeta_{(N)}(4)}\sum_{g\in \mc{B}_{k+1}^{new}(N)}\frac{L(f\otimes \mrm{sym}^2 g, \frac{1}{2})}{L(\mrm{sym}^2 f, 1) L(f,\frac{3}{2})}.
\end{equation}
and
\begin{equation}\label{NFoldExp}
N(F)^{old}=\frac{v_2}{v_1^2}\cdot \frac{24 \pi}{kN}\frac{\zeta_{(N)}(2)^3}{\zeta_{(N)}(4)}\sum_{L\in \mc L_f}\frac{4^{\omega(L)}\prod_{p|L}(p+1)^2}{L^{2}}\sum_{g\in \mc B_{k+1}^{new}(N/L)}\frac{L(f\otimes \mrm{sym}^2 g, \frac{1}{2})}{L(\mrm{sym}^2 f, 1) L(f,\frac{3}{2})}.  
\end{equation}
Putting together \eqref{NFnewExp} and \eqref{NFoldExp}, we can write
\begin{align}
    N(F)&=\frac{v_2}{v_1^2}\cdot \frac{24 \pi}{kN}\frac{\zeta_{(N)}(2)^3}{\zeta_{(N)}(4)}\sum_{L\in \mc L_f\cup \{1\}}\frac{4^{\omega(L)}\prod_{p|L}(p+1)^2}{L^{2}}\sum_{g\in \mc B_{k+1}^{new}(N/L)}\frac{L(f\otimes \mrm{sym}^2 g, \frac{1}{2})}{L(\mrm{sym}^2 f, 1) L(f,\frac{3}{2})}.  
\end{align}
Introducing $\prod_{p|L}(1+w_f(p))^2$ to compensate for the $L\not\in \mc L_f\cup\{1\}$, we get the desired expression for $N(F)$. That is,
\begin{align}
    N(F)=\frac{v_2}{v_1^2}\cdot \frac{24 \pi}{kN}\frac{\zeta_{(N)}(2)^3}{\zeta_{(N)}(4)}\sum_{L|N}\frac{\prod_{p|L}(p+1)^2(1+w_f(p))^2}{L^{2}}\sum_{g\in \mc B_{k+1}^{new}(N/L)}\frac{L(f\otimes \mrm{sym}^2 g, \frac{1}{2})}{L(\mrm{sym}^2 f, 1) L(f,\frac{3}{2})}.
\end{align}

\subsection{The norm conjecture}\label{sec:norm-conj}
Now we follow \cite[Section 4]{conrey2005integral} to conjecture on the size of $N(F)$. Towards this, for any $L\in \mc L_f\cup\{1\}$, put $M=N/L$, $w_g= \frac{k}{2\pi^2}L(\mrm{sym}^2 g, 1)$ and consider
\begin{equation}
    N_L(F):= \sum_{g\in \mc B_{k+1}^{new}(M)}w_g^{-1}L(\mrm{sym}^2 g, 1)L(f\otimes \mrm{sym}^2 g, \frac{1}{2}).
\end{equation}

Next, we need the Petersson formula for newforms as in \cite{ILS}. Let $\kappa\ge 1$ be an even integer and let 
\begin{equation}
\Delta_{\kappa,N}(n_1, n_2)= \frac{\Gamma(\kappa-1)}{(4\pi)^{\kappa-1}[\sltwo:\Gamma_0(N)]}\sum_{f\in \mc B_{\kappa}(N)}\frac{\lambda_f(n_1)\lambda_f(n_2)}{\lan f, f \ran}    
\end{equation}
\begin{equation}
    \Delta^*_{\kappa,N}(n_1,n_2):=\frac{12\zeta(2)}{\kappa-1}\sum_{f\in \mc B_{\kappa}^{new}(N)}\frac{\lambda_f(n_1)\lambda_f(n_2)}{L(\mrm{sym}^2 f, 1)}.
\end{equation}
Then from the Petersson formula for newforms \cite[Prop. 2.8]{ILS}, for $(n_1,N)=1$ and $(n_2,N^2)|N$ we have the following expression.
\begin{align}\label{PetNew-old}
  \Delta^*_{\kappa,N}(n_1,n_2)= \sum_{LM=N}\frac{\mu(L)M}{\nu((n_2,L))}\sum_{\ell|L^\infty} \ell^{-1}\Delta_{\kappa,M}(n_1\ell^2, n_2).
\end{align}

From \eqref{Drichletseries},  writing $r=r_1r_2$, $\ell=a_1a_2^2$, $n=d_1d_2^2/d$ and $m=d_3d_4^2/d$, we have
\begin{equation}
   L(f\otimes \mrm{sym}^2 g, s)= \underset{a_1,a_2|L^\infty}{\sum_{a_1,a_2}}\underset{(dd_1d_2d_3d_4,N)=1}{\sum_{r_1,r_2,d, d_1,d_2,d_3,d_4}}\frac{\mu(d)\lambda_f(r_1r_2a_1a_2^2d_1d_2^2/d)\lambda_g(r_1^2a_1^2d_1^2)\lambda_g(d_3^2)}{(d^3r_1r_2a_1a_2^2d_1d_2^2d_3^2d_4^4)^s} 
\end{equation}
and
\begin{equation}\label{DSsym2}
    L(\mrm{sym}^2g,s) :=\zeta^{(M)}(2s)\sum_{n \ge  1}\frac{\lambda_g(n^2)}{n^s} = \zeta^{(M)}(2s)\zeta_{(M)}(1+s)\sum_{(n,M)=1}\frac{\lambda_g(n^2)}{n^s}.
\end{equation}
Using the approximate functional equation at $s=1/2+\alpha $ for $L(f\otimes \mrm{sym}^2 g, s)$ and using  \eqref{DSsym2} at $s=1+2\alpha$ for $L(\mrm{sym}^2g,s)$, $N_L(F)$ can now be formally written as
\begin{align}
\zeta^{(M)}(2+4\alpha)\zeta_{(M)}(2+2\alpha)\underset{a_1,a_2|L^\infty}{\sum_{a_1,a_2}}\underset{(dd_1d_2d_3d_4,N)=1}{\sum_{r_1,r_2,d, d_1,d_2,d_3,d_4}}&\frac{\mu(d)\lambda_f(r_1r_2a_1a_2^2d_1d_2^2/d)}{r_1(n^2d^3r_1r_2a_1a_2^2d_1d_2^2d_3^2d_4^4)^{\frac{1}{2}+\alpha}}\\
&\sum_{g}w_g^{-1}\lambda_g(n^2)\lambda_g(a_1^2d_1^2)\lambda_g(d_3^2)+\cdots
\end{align}
Following the recipe from \cite[Section~4.1]{conrey2005integral}, we use the Petersson over $g\in \mc B_{k+1}^{new}(M)$ and retain only the diagonal terms in \eqref{PetNew-old} to get
\begin{align}
S_f(\alpha):=\phi(M)\zeta^{(M)}(2+4\alpha)\zeta_{(M)}(2+2\alpha)&\underset{a_1,a_2|L^\infty}{\sum_{r_1,r_2,a_1,a_2}} \underset{(dd_1d_2d_3d_4,N)=1}{\sum_{n,d, d_1,d_2,d_3,d_4}}\\
&\q\times\underset{n^2c^2=a_1^2d_1^2d_3^2}{\sum_{c|(d_1^2,d_3^2)}}\frac{\mu(d)\lambda_f(r_1r_2a_1a_2^2d_1d_2^2/d)}{r_1(n^2r_1r_2d^3a_1a_2^2d_1d_2^2d_3^2d_4^4)^{\frac{1}{2}+\alpha}}+\cdots.  
\end{align}
Substituting for $n$ from $n^2c^2=a_1^2d_1^2d_3^2$, $S_f(\alpha)$ can be written as
\begin{align}
\phi(M)\zeta^{(M)}(2+4\alpha)\zeta_{(M)}(2+2\alpha) \underset{a_1,a_2|L^\infty}{\sum_{a_1,a_2}} \underset{(dd_1d_2d_3,N)=1}{\sum_{r_1,r_2,d, d_1,d_2,d_3}}\sum_{c|(d_1^2,d_3^2)}\frac{\mu(d)c^{1+2\alpha}\lambda_f(r_1r_2a_1a_2^2d_1d_2^2/d)}{r_1(r_1r_2d^3a_1^3a_2^2d_1^3d_2^2d_3^4)^{\frac{1}{2}+\alpha}}+\cdot\cdot.
\end{align}
Evaluating the sum over $d_4$, $S_f(\alpha)$ can be written as
\begin{align}
S_f(\alpha)=\phi(M)\zeta^{(N/L)}(2+4\alpha)\zeta_{(N/L)}(2+2\alpha)\zeta^{(N)}(2+4\alpha)\prod_{p|M}L_p(f,1/2+\alpha)L_p(f,3/2+\alpha)  T_f(\alpha),   
\end{align}
where
\begin{align}
    T_f(\alpha)&= \underset{a_1,a_2|L^\infty}{\sum_{a_1,a_2}} \underset{(dd_1d_2d_3,N)=1}{\sum_{d, d_1,d_2,d_3}}\sum_{c|(d_1^2,d_3^2)}\frac{\mu(d)\lambda_f(da_1d_1a_2^2d_2^2)}{(d^2a_1^3d_1^3a_2^2d_2^2d_3^4/c^2)^{\frac{1}{2}+\alpha}}+\cdots\\
    &= \zeta_{(L)}(2+2\alpha)\prod_{p|L}L_p(f,3/2+3\alpha)\underset{(dd_1d_2d_3,N)=1}{\sum_{d, d_1,d_2,d_3}}\sum_{c|(d_1^2,d_3^2)}\frac{\mu(d)\lambda_f(dd_1d_2^2)}{(d^2d_1^3d_2^2d_3^4/c^2)^{\frac{1}{2}+\alpha}}+\cdots.
\end{align}
Next, we note that the sum over $d, d_1,d_2,d_3$ has the Euler product
$\displaystyle \prod_{p\nmid N} T_{f,p}(\alpha)$,
where with $x=p^{-1/2-\alpha}$
\begin{equation}
    T_{f,p}(\alpha)= \sum_{e_1,e_2,e_3\ge 0}\sum_{e=0,1} \sum_{e_4\le \min(2e_1,2e_3)}(-1)^e\lambda_f(p^{e+e_1+2e_2})x^{2e+3e_1+2e_2+4e_3-2e_4}.
\end{equation}
Thus as in \cite[eq. 4.12]{liu-young}, we have
\begin{equation}
   T_{f,p}(\alpha)=\frac{1-x^8}{(1-x^2)(1-\alpha_p^2x^2)(1-\beta_p^2x^2)(1-\alpha_px^3)(1-\beta_px^3)}.
\end{equation}
Since $((1-x^2)(1-\alpha_p^2x^2)(1-\beta_p^2x^2))^{-1}$ is the $p$-th Euler factor of $L(\mrm{sym}^2f,s)$, we have 
\begin{align}
    T_f(\alpha)&=\zeta_{(L)}(2+2\alpha)\zeta^{(N)}(4+8\alpha)^{-1}\prod_{p|L}L_p(f,3/2+3\alpha)\prod_{p\nmid N}L_p(\mrm{sym}^2 f, 1+2\alpha) L_p(f, 3/2+3\alpha)\\
    &= \frac{\zeta_{(L)}(2+2\alpha)L(\mrm{sym}^2 f, 1+2\alpha)}{\zeta^{(N)}(4+8\alpha)\zeta_{(N)}(2+2\alpha)}\prod_{p|L}L_p(f,3/2+3\alpha)\prod_{p\nmid N} L_p(f, 3/2+3\alpha)
\end{align}
and consequently,
\begin{align}
 S_f(\alpha)&=\frac{\phi(M)\zeta^{(N/L)}(2+4\alpha)\zeta_{(N/L)}(2+2\alpha)\zeta^{(N)}(2+4\alpha)\zeta_{(L)}(2+2\alpha)}{\zeta^{(N)}(4+8\alpha)\zeta_{(N)}(2+2\alpha)}L(\mrm{sym}^2 f, 1+2\alpha)\\
 &\times \prod_{p|M}L_p(f,1/2+\alpha)L_p(f,3/2+3\alpha)\prod_{p|L}L_p(f,3/2+3\alpha)\prod_{p\nmid N} L_p(f, 3/2+3\alpha).  
\end{align}
Putting $\alpha=0$, we see that
\begin{equation}
    N_L(f)\sim 2\frac{\phi(N/L)\zeta(2)\zeta^{(N)}(2)\zeta_{(L)}(2)}{\zeta^{(N)}(4)\zeta_{(N)}(2)} L(f, 3/2) L(\mrm{sym}^2 f, 1) \prod_{p|(N/L)}L_p(f,1/2).
\end{equation}
Substituting back in the expressions for $N(F)^{new}$ and $N(F)^{old}$ from \eqref{NFnewExp} and \eqref{NFoldExp} respectively, we get that
{\allowdisplaybreaks
\begin{align}
 N(F)^{new}&=\frac{v_2}{v_1^2}\frac{12}{N\pi}\frac{\zeta_{(N)}(2)^3}{\zeta_{(N)}(4)}\frac{N_1(f)}{L(\mrm{sym}^2 f, 1) L(f,\frac{3}{2})}\\
 &\sim  \frac{\zeta(4)}{2\pi \zeta(2)}\cdot\frac{12}{N\pi}\frac{\zeta_{(N)}(2)^3}{\zeta_{(N)}(4)}\cdot 2\frac{\phi(N)\zeta(2)\zeta^{(N)}(2)}{\zeta^{(N)}(4)\zeta_{(N)}(2)}\prod_{p|N}L_p(f,1/2)\\
 &= \frac{2\phi(N)\zeta_{(N)}(2)}{N}\prod_{p|N}\frac{1}{(1+w_f(p)p^{-1})}.
\end{align}
For the old part, we get
\begin{align}
    N(F)^{old}&=\frac{v_2}{v_1^2}\frac{12}{N\pi}\frac{\zeta_{(N)}(2)^3}{\zeta_{(N)}(4)}\sum_{L\in \mc L_f}\frac{4^{\omega(L)}\prod_{p|L}(p+1)^2}{L^{2}}\frac{N_L(f)}{L(\mrm{sym}^2 f, 1) L(f,\frac{3}{2})} \\
    &\sim \frac{\zeta(4)}{2\pi \zeta(2)}\cdot\frac{12}{N\pi}\frac{\zeta_{(N)}(2)^3}{\zeta_{(N)}(4)}\sum_{L\in \mc L_f}\frac{4^{\omega(L)}\prod_{p|L}(p+1)^2}{L^{2}}\frac{2\phi(\frac{N}{L})\zeta(2)\zeta^{(N)}(2)\zeta_{(L)}(2)}{\zeta^{(N)}(4)\zeta_{(N)}(2)}\prod_{p|(N/L)}L_p(f,\frac{1}{2})\\
    &= \frac{2\phi(N)\zeta_{(N)}(2)}{N}\left(\prod_{p|N}L_p(f,1/2)\right)\sum_{L\in \mc L_f}\frac{4^{\omega(L)}\zeta_{(L)}(2)}{\phi(L)L^{2}}\prod_{p|L}\frac{(p+1)^2}{L_p(f,\frac{1}{2})}\\
    &=\frac{2\phi(N)\zeta_{(N)}(2)}{N}\prod_{p|N}\frac{1}{(1+w_f(p)p^{-1})}\sum_{L\in \mc L_f}\frac{4^{\omega(L)}\zeta_{(L)}(2)}{\phi(L)}\prod_{p|L}\left(1+\frac{1}{p}\right)^3.
\end{align}
Now considering $\mathcal L_f\cup \{1\}$, we can make the conjecture on the size of $N(F)$.
\begin{align}
    N(F)&\sim \frac{2\phi(N)\zeta_{(N)}(2)}{N}\prod_{p|N}\frac{1}{(1+w_f(p)p^{-1})}\sum_{L\in \mc L_f\cup \{1\}}\frac{4^{\omega(L)}\zeta_{(L)}(2)}{\phi(L)}\prod_{p|L}\left(1+\frac{1}{p}\right)^3\\
    &= \frac{2\phi(N)\zeta_{(N)}(2)}{N}\prod_{p|N}\frac{1}{(1+w_f(p)p^{-1})} \sum_{d|N}\frac{4^{\omega(d)}\zeta_{(d)}(2)}{\phi(d)}\prod_{p|d}\left(1+\frac{1}{p}\right)^3\frac{(1+w_f(p))^2}{4}\\
    &= \frac{2\phi(N)\zeta_{(N)}(2)}{N}\prod_{p|N}\frac{1}{(1+w_f(p)p^{-1})} \sum_{d|N}\frac{\zeta_{(d)}(2)}{\phi(d)} \prod_{p|d}\left(1+\frac{1}{p}\right)^3(1+w_f(p))^2.\label{NFConj-proof}
\end{align}

\section{Non-vanishing of pullbacks}
In this section we present several results which provide information about the non-vanishing of the pullbacks. For pullbacks of newforms, we expect none of them to vanish. In weight $2$, indeed none of them vanish, and for other weights we have partial results.

\subsection{The case of a single $f$} \label{kernel-nonvanish}
The analysis of the non-vanishing of the pullback of a SK lift $F$ is not so straightforward as in \cite{liu-young}. To deal with this, one usually works with Jacobi forms, which are intermediary between Siegel modular forms and elliptic modular forms. 
Let $D_j \colon J^{cusp}_{k+1,1}(N) \to S_{k+j}(N)$ be restriction of certain differential operators (see \cite{EZ}) defined by
\begin{align}
D_0(\phi)= \phi^\circ:=\phi(\tau,0); \q D_2(\phi) = \left(\frac{k}{2 \pi i} \frac{\partial^2 \phi}{\partial z^2} - 2 \frac{\partial \phi}  {\partial \tau} \right) (\tau,0).
\end{align}
Then it is known (cf. \cite{EZ}) that for $k \ge 1$, the following map is injective:
\begin{align} \label{d0d2}
    D_0 \oplus D_2 \colon J^{cusp}_{k,1}(N) \hookrightarrow S_k(N) \oplus S_{k+2}(N).
\end{align}
In level $1$ moreover this map is known to be an isomorphism -- something which is not true for higher levels.
In \cite{liu-young}, a convenient explicit isomorphism due to Skoruppa (cf. \cite{sko-comp}) going from $S_k(1) \oplus S_{k+2}(1) \to J^{cusp}_{k,1}$ for $k \ge 4$, along with the Taylor expansion of $\phi(\tau,z)$ around $z=0$ were utilized to show that at least $1/2$ in proportion of the newforms have non-vanishing pullbacks.

Our goal in this subsection is to investigate the same for higher levels. Here the description of $\ker D_0$ is rather subtle. For instance, a non-trivial result of Arakawa-B\"ocherer \cite{arakawa2003vanishing} says that when $k=2$, the map $D_0$ itself is  injective for $N$ square-free. Whereas for $k>2$, it is known that $D_0$  mostly fails to be injective, and also the map \eqref{d0d2} may not be an isomorphism. Thus one needs to argue differently than that in \cite{liu-young}. We provide a few approaches -- in the realm of integral weights -- especially aimed to deal with newforms.

Let $\mc W \colon \skkn \to S_{k+1}(N) \otimes S_{k+1}(N)$ be the pullback (or also known as the Witt) map. Thus $\mc W (F)=F^\circ$. We are interested in understanding the $\ker \mc W$. We observe the following.

\begin{lem}
    $\ker \mc W \simeq \ker D_0$ via the EZI map $\phi \mapsto F$.
\end{lem}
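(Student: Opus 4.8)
The plan is to unwind both sides of the claimed isomorphism through the Fourier--Jacobi expansion, relying on the formulas for $\phi_m^\circ$ already recorded in \eqref{phidN} and \eqref{phimN}. Recall that the EZI map sends $\phi \in J^{cusp}_{k+1,1}(N)$ to the Siegel form $F(Z) = \sum_{m \ge 1}(\phi|V_m)(\tau,z)\,e(m\tau')$, whose first Fourier--Jacobi coefficient is $\phi|V_1 = \phi$; this map is linear and injective (indeed a Hecke-equivariant isomorphism onto $\skkn$, cf. \cite{Ibu-SK}, \cite{das-anamby2}). Thus it suffices to show that, for such $F$, one has $F^\circ = 0$ if and only if $\phi^\circ := D_0(\phi) = 0$.

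First I would restrict $F$ to $z=0$: by \eqref{f0phim} we have $F^\circ(\tau,\tau') = \sum_{m\ge 1}\phi_m(\tau,0)\,e(m\tau')$ with $\phi_m = \phi|V_m$, so $F^\circ = 0$ is equivalent to $\phi_m(\tau,0) = 0$ for every $m \ge 1$. Taking $m = 1$ and using $\phi_1 = \phi$ gives the implication $F \in \ker\mc W \Rightarrow D_0(\phi) = 0$ at once. For the reverse implication I would invoke \eqref{phidN} and \eqref{phimN}: writing $m = d m_1$ with $d \mid N^\infty$ and $(m_1,N) = 1$, one has $\phi_m(\tau,0) = \phi^\circ|U(d)T(m_1)$, i.e. every higher Fourier--Jacobi coefficient of $F^\circ$ is a Hecke translate of the single elliptic form $\phi^\circ$. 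Hence if $\phi^\circ = 0$ then all $\phi_m(\tau,0)$ vanish and $F^\circ = 0$. This proves $\phi \in \ker D_0 \iff F \in \ker\mc W$, and since the EZI map is injective it carries $\ker D_0$ isomorphically onto $\ker \mc W$.

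There is no genuine obstacle here: the substance has already been done in establishing \eqref{phidN}--\eqref{phimN}, which say precisely that $F^\circ$ is determined by $\phi^\circ$ through the $\mrm{GL}(2)$ Hecke algebra. The only point one must not overlook is that the first Fourier--Jacobi coefficient of the lift is $\phi$ itself, so that $D_0(\phi)$ literally appears as the $m=1$ coefficient of $F^\circ$; this is what makes the equivalence an honest two-sided statement rather than merely an implication.
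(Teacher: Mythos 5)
Your argument is correct and follows the paper's own reasoning: the forward implication $F^\circ=0 \Rightarrow \phi^\circ=0$ is read off from the $m=1$ term of the Fourier--Jacobi expansion \eqref{f0phim}, and the converse is exactly the content of \eqref{phidN} and \eqref{phimN}, which express every $\phi_m(\tau,0)$ as a Hecke translate of $\phi^\circ$. This is the same approach as the paper.
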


\begin{proof}
    This follows from our earlier calculations -- viz \eqref{f0phim} which shows that $F^\circ=0$ implies $\phi^\circ=0$; and \eqref{phidN}, \eqref{phimN} for the other direction. The lemma can also be seen directly from the definition of the Hecke-type operators $V_m$.
\end{proof}

As a consequence, we have the following generalization of \cite[Theorem 1.9]{liu-young} to levels. For this purpose, we define the proportion of non-vanishing as
\begin{align} \label{prop-def}
\alpha(k,N) := \frac{\#\{f\in \mc B_{2k}^{new}(N): F^\circ\neq 0\}}{\mrm{dim}(S_{2k}^{new}(N))} .
\end{align}
\begin{thm}
    Let $N$ be odd and square-free and $F$ denote the SK lift of $f\in S_{2}^{new}(N)$. Then $\alpha(1,N)=1$. That is, $N(F)\neq 0$ for all $f\in S_{2}^{new}(N)$.
\end{thm}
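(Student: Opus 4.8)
The plan is to combine the identification $\ker \mc W \simeq \ker D_0$ with the Arakawa--B\"ocherer injectivity result in weight $2$. First I would recall that, by the preceding Lemma, $F^\circ = 0$ for the SK lift $F = F_f$ of $f \in S_{2k}^{new}(N)$ if and only if $\phi^\circ = 0$, where $\phi = \phi_f$ is the Jacobi newform associated to $f$ via the EZI correspondence; equivalently, $F^\circ = 0$ iff $\phi \in \ker D_0$. Since $k=1$ here, the relevant map is $D_0 \colon J_{2,1}^{cusp}(N) \to S_2(N)$.

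The key input is the non-trivial theorem of Arakawa--B\"ocherer \cite{arakawa2003vanishing}, which asserts that for $N$ square-free (and $k=2$ in their indexing, i.e. Jacobi weight $2$), the map $D_0$ is \emph{injective} on $J_{2,1}^{cusp}(N)$. Thus $\ker D_0 = \{0\}$, hence $\ker \mc W = \{0\}$ on $\skk^{new}(N)$ (in fact on all of $\skk(N)$). Consequently $F_f^\circ \neq 0$ for every $f \in S_2^{new}(N)$, and since $N(F_f)$ is (a positive multiple of) $\lan F_f^\circ, F_f^\circ\ran / \lan F_f, F_f\ran$, the non-vanishing of $F_f^\circ$ is equivalent to $N(F_f) \neq 0$. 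Therefore every term in the count defining $\alpha(1,N)$ in \eqref{prop-def} is included, giving $\alpha(1,N) = 1$.

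The only subtlety I would need to address carefully is the precise normalization/weight matching: the paper works with SK lifts $F$ of weight $k+1$ and Jacobi forms $\phi \in J_{k+1,1}^{cusp}(N)$, so the case $k=1$ corresponds to Jacobi weight $2$, which is exactly the regime covered by \cite{arakawa2003vanishing}; I should state this weight bookkeeping explicitly so there is no off-by-one confusion, and note that the square-freeness (indeed oddness) hypothesis on $N$ is exactly what Arakawa--B\"ocherer require. The remaining step is purely formal: the chain
\[
F_f^\circ = 0 \iff \phi_f^\circ = 0 \iff \phi_f \in \ker D_0 = \{0\} \iff \phi_f = 0,
\]
and $\phi_f \neq 0$ since $f \neq 0$ and the EZI correspondence is injective (cf. \eqref{ezi-corr}). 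I do not anticipate a genuine obstacle here — the content is entirely in citing \cite{arakawa2003vanishing} correctly; the ``main obstacle'' is merely ensuring the indexing conventions line up, after which the statement $\alpha(1,N)=1$ is immediate.
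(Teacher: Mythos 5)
Your proof is correct and follows essentially the same route as the paper: reduce $\ker\mc W$ to $\ker D_0$ and apply the Arakawa--B\"ocherer weight-$2$ result. The only cosmetic difference is in how that result is invoked: the paper first passes through the isomorphism $\ker D_0 \simeq S_k(N,\bar\omega)$ from \cite{arakawa1999vanishing} and then cites \cite[Theorem 3.4]{arakawa2003vanishing} for $S_1(N,\bar\omega)=\{0\}$, whereas you quote the injectivity of $D_0$ on $J^{cusp}_{2,1}(N)$ directly (which the paper itself records earlier in the same subsection as the two facts being equivalent). Your weight bookkeeping ($k=1$ so SK and Jacobi weight $k+1=2$, matching Arakawa--B\"ocherer's regime) is exactly right.
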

\begin{proof}
    From \cite{arakawa1999vanishing}, we know that $\ker(D_0) \simeq S_{k}(N, \bar{\omega})$, where $\omega$ arises from the multiplier system of $\eta^6$. Further, when $k=1$,  from \cite[Theorem 3.4]{arakawa2003vanishing}, $S_{1}(N, \bar{\omega})=\{0\}$. Thus $\ker \mc W =\{0\}$. Thus $F_f^\circ\neq 0$ for all $f\in S_{2}^{new}(N)$.
\end{proof}
For a general weight $k$, we have the following non-vanishing result.
\begin{thm}\label{Positiveprop}
   Let $k>2$ be odd and $p$ be an odd prime. Let $F_f$ denote the SK lift of $f\in S_{2k}^{new}(p)$. Then
   \begin{enumerate} 
       \item For $p=3$, $\alpha(k,3)> 1-\frac{1}{3}\left(\frac{k}{k-\lfloor \frac{2k}{3}\rfloor} \right)$. Thus, we get a positive proportion of non-vanishing of $N(F_f)$ for $3\nmid k$.
       
       \item For $p\equiv 1, 7 \bmod 12$, $\alpha(k,p)> 1- \frac{3}{5}\left(\frac{p+1}{p-1} \right)$.
       
       \item For $p\equiv 5,11\bmod 12$, $\alpha(k,p)>1- \frac{5}{9}\left(\frac{p+1}{p-1} \right)$ if $2k\equiv 1, 2 \bmod 3$. 
       
       If $3|k$, then $\alpha(k,p)>1- \frac{3(p+1)}{5(p-1)-8} $. 
   \end{enumerate}
\end{thm}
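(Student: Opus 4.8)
The plan is to reduce the non-vanishing of $F_f^\circ$, via the lemma $\ker \mc W \simeq \ker D_0$, to an upper bound on $\dim \ker D_0$, and then invoke the identification $\ker D_0 \simeq S_k(N,\bar\omega)$ from \cite{arakawa1999vanishing}, where $\omega$ is the character attached to the multiplier system of $\eta^6$. Concretely, if $F_f^\circ = 0$ then $\phi_f \in \ker D_0$, and since the EZI map sends newforms bijectively to newforms and is Hecke-equivariant, each such $f$ contributes (at least) a one-dimensional Hecke-eigenspace to $\ker D_0 \cong S_k(N,\bar\omega)$. Hence
\begin{align}
\#\{f \in \mc B_{2k}^{new}(N) : F_f^\circ = 0\} \le \dim S_k(N, \bar\omega),
\end{align}
and therefore $\alpha(k,N) \ge 1 - \dfrac{\dim S_k(N,\bar\omega)}{\dim S_{2k}^{new}(N)}$. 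So the whole theorem becomes a matter of estimating the numerator from above and the denominator from below, for $N=p$ prime, in each of the three congruence cases.

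The next step is the dimension bookkeeping. For the numerator one needs $\dim S_k(p,\bar\omega)$: since $\bar\omega$ has order dividing $6$ (coming from $\eta^6$), one works on $\Gamma_0(p)$ with this nebentypus-type character (or passes to $\Gamma_1(p)$, or better, to the relevant $\eta$-multiplier covering) and applies the standard dimension formula for modular forms of half-integral or fractional weight with character — the genus, the number of cusps, and the elliptic point contributions of $\Gamma_0(p)$ are all explicit, and the answer is roughly $\frac{k}{12}(p+1)$ up to lower-order terms depending on $p \bmod 12$ and $k \bmod$ (small numbers). The case split in the theorem ($p \equiv 1,7$ vs. $p \equiv 5,11 \bmod 12$, and the refinement for $3 \mid k$) is exactly the bookkeeping of these elliptic/cusp corrections, which change because the order-$2$ and order-$3$ elliptic points of $\Gamma_0(p)$ appear or not according to whether $-1$ and $-3$ are squares mod $p$. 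For the denominator, $\dim S_{2k}^{new}(p) = \dim S_{2k}(\Gamma_0(p)) - 2\dim S_{2k}(\SL{2}{\z})$, again given by an explicit formula $\sim \frac{2k-1}{12}(p+1)$ with known corrections. For $p = 3$ one must be more careful since $\dim S_{2k}^{new}(3)$ is small and the elliptic contributions dominate — here the bound $\alpha(k,3) > 1 - \frac13\big(\frac{k}{k - \lfloor 2k/3\rfloor}\big)$ suggests that $\dim S_k(3,\bar\omega)$ is being compared against $\dim S_{2k}^{new}(3)$ using the explicit small-level formulas, and the factor $k - \lfloor 2k/3 \rfloor$ in the denominator is precisely (a lower bound for) $\dim S_{2k}^{new}(3)$ up to normalization.

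Once both dimensions are written as explicit (piecewise-linear in $k$, with $p$-dependent coefficients) quantities, the claimed inequalities follow by elementary manipulation: one divides, bounds the lower-order terms crudely, and reads off the stated constants $\frac35$, $\frac59$, $\frac{3}{5(p-1)-8}$, etc. It is worth checking that in each case the leading terms give ratio $\frac{\dim S_k(p,\bar\omega)}{\dim S_{2k}^{new}(p)} \approx \frac{k/12}{(2k-1)/12} \to \frac12$, so that the stated bounds — all of the form $1 - c\cdot\frac{p+1}{p-1}$ with $c$ slightly larger than $\frac12$ — are what survives after the cusp/elliptic corrections are absorbed; for $p$ large the proportion of non-vanishing is therefore $\gtrsim \frac{1}{2} - o(1) - (c - \frac12)$, i.e. bounded below by an explicit positive constant, and in particular $> 1/7$ for the smallest admissible primes, matching the remark in the introduction.

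The main obstacle I anticipate is getting the dimension formula for $S_k(p,\bar\omega)$ exactly right — in particular pinning down the character $\bar\omega$ (its conductor and its values at the elliptic and cusp data of $\Gamma_0(p)$) and handling the fact that $k$ is odd so this is genuinely a fractional/odd-weight-with-character computation rather than the familiar even-integral-weight case. The case analysis by $p \bmod 12$ and by $k \bmod 3$ is entirely driven by when the order-$2$ and order-$3$ elliptic fixed points of $\Gamma_0(p)$ contribute and with what weight-dependent local factor, so the bulk of the work is careful but routine application of the Riemann–Roch / Shimura dimension formula; the $p=3$ case needs separate low-level attention because the general asymptotic formula is not yet in its stable range there. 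I would organize the proof as: (i) the reduction to $1 - \dim S_k(p,\bar\omega)/\dim S_{2k}^{new}(p)$; (ii) a lemma recording $\dim S_{2k}^{new}(p)$ in each case; (iii) a lemma recording $\dim S_k(p,\bar\omega)$ in each case; (iv) the three elementary inequalities, plus the separate $p=3$ computation.
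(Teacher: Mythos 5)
Your opening reduction is exactly right and matches the paper: identifying $\ker \mathcal{W} \simeq \ker D_0 \simeq S_k(N,\bar\omega)$ via the EZI correspondence and multiplicity-one, and concluding $\alpha(k,N) \ge 1 - \dim S_k(N,\bar\omega)/\dim S_{2k}^{new}(N)$, is precisely how the proof in the paper begins. The divergence is in how you propose to handle the numerator. You plan to pin down the character $\bar\omega$, its conductor and local data, and compute $\dim S_k(p,\bar\omega)$ exactly by a Riemann--Roch/Shimura-type formula for fractional-weight forms with $\eta^6$-multiplier — and you flag this as the main obstacle. The paper sidesteps that entirely: it only uses the crude universal bound $\dim S_k(N,\bar\omega) \le k[\operatorname{SL}_2(\mathbb{Z}):\Gamma_0(N)]/12$ from Rankin's theory of modular forms with finite-order multiplier systems (Theorem 4.2.1 in Rankin's book), which for $N=p$ is simply $k(p+1)/12$, with no dependence on $p \bmod 12$ or $k \bmod 3$ at all.

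Consequently your attribution of the case split is backwards. You locate the $p\equiv 1,7$ vs. $p\equiv 5,11 \pmod{12}$ and the $3\mid k$ refinement in the elliptic/cusp corrections to $\dim S_k(p,\bar\omega)$; in the paper they come entirely from the \emph{denominator}, via Martin's explicit formula for the newform dimension,
\begin{equation}
\dim S_{2k}^{new}(p)=\frac{(2k-1)(p-1)}{12}-\tfrac{1}{4}v_2^{\#}(p)+c_3(k)\,v_3^{\#}(p),
\end{equation}
where $v_2^{\#}(p)$, $v_3^{\#}(p)$ depend on $p \bmod 4$ and $p\bmod 3$ and $c_3(k)$ on $2k\bmod 3$. (Note also the small slip: the leading term of $\dim S_{2k}^{new}(p)$ is $\frac{2k-1}{12}(p-1)$, not $(p+1)$, because one subtracts two copies of the level-one space; this is why the final bounds carry a factor $\frac{p+1}{p-1}$ rather than just a constant.) In short, your approach is workable in principle and would even give sharper constants if carried through, but it takes on precisely the technical difficulty (exact dimension formulas for odd-weight forms with $\eta$-multiplier) that the paper's author deliberately avoids by trading a precise numerator for a coarse one and letting all the case analysis live in the well-documented $\dim S_{2k}^{new}(p)$. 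The $p=3$ case is handled the same way — the crude numerator bound $k/3$ against Martin's exact $\dim S_{2k}^{new}(3)$ — and the $\lfloor 2k/3\rfloor$ in the statement comes directly from that explicit dimension, not from any special low-level analysis of $S_k(3,\bar\omega)$.
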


\begin{proof}
Since $\ker(D_0) \simeq S_{k}(N, \bar{\omega})$, where $\omega$,  
by general results about modular forms with (finite order) multiplier systems, we know (see \cite[Theorem 4.2.1]{rankin1977modular}) that $\dim S_{k}(N, \bar{\omega}) \le k [\sltwo: \Gamma_0(N)]/12$. Therefore, 
via the multiplicity-one property of the newforms, at least
\begin{align} \label{weak-ineq}
    \dim (\skkn^{new}) - \dim (\skkn^{new} \cap \ker \mc W)  & \ge \dim  (S_{2k}^{new}(N))  - \dim S_{k}(N, \bar{\omega}).
\end{align}
new $f$ exist for which $N(F_f) \neq 0$. Now, since $k$ is odd, using the dimension formula for $S_{2k}^{new}(N)$ from \cite[Theorem 1]{martin2005dimensions} (with notations as in loc.cit), we have
\begin{align}
    \dim(S_{2k}^{new}(p))=\frac{(2k-1)(p-1)}{12}-\frac{1}{4}v_2^\#(p)+ c_3(k)v_3^\#(p).
\end{align}
The inequalities in (1), (2), (3) follow by explicitly calculating the quantities $v_2^\#(p)$, $v_3^\#(p)$ and $c_3(k)$.
\end{proof}
\begin{rmk}
\begin{enumerate}
    \item  In particular, $\alpha(k,p)> \frac{1}{7}$ uniformly for all primes $p>5$ and $k\ge 3$. Perhaps this proportion can be improved via the theory of $L$-functions, but this seems to be a rather difficult proposition at the moment. See the discussion on the `error-terms'.
    
    
    \item By a closer inspection of the dimension formula from \cite{martin2005dimensions}, one can get a better proportion of $f\in S_{2k}^{new}(p)$ for which $N(F_f)\neq 0$. For example, as $k\rightarrow \infty$ we get that $\alpha(k,13) \rightarrow \frac{5}{12}$.
    
    \item From the same arguments as in Theorem \ref{Positiveprop}, we also get that $\alpha(k, N)> 1- \frac{3}{5}\prod_{p|N}\left(\frac{p+1}{p-1} \right)$, when $2k\equiv 1,2\bmod 3$ or when all the prime factors of $N$ are $\equiv 1, 7\bmod 12$. However, when $N$ has a large number of prime factors, $\alpha(k,N)$ may be negative, and thus doesn't give anything useful.

    \item 
    Of course, we have used the weakest possible bound in the inequality \eqref{weak-ineq}. The subsequent subsection contains some possibilities for improvement in this regard.
\end{enumerate}  
\end{rmk}


\subsection{An alternative approach} We want to work with the more familiar spaces of modular forms with integral weights and Dirichlet characters, rather than the spaces of modular forms $M_{k}(N, \bar{\omega})$ with $\eta$-multipliers, which are not very well-studied. Let $F_f$ be an SK lift of $f\in S_{2k}^{new}(N)$ and $\phi:=\phi_f\in J_{k+1,1}^{new}(N)$ be the corresponding Jacobi form, that is $F_f$ is the EZI-lift of $\phi_f$ (as in the notation of \cite{das-anamby2}). Then we can write from the theta-decomposition of $\phi$ that (see \cite[Section~6.1]{das-anamby2})
\begin{align}\label{phi=htheta}
 \phi(\tau, z) &= h_0(\tau)\theta_0(\tau,z)+h_1(\tau)\theta_1(\tau,z) \\
    &=\frac{1}{2}\left(h\left( \frac{\tau-1}{4}\right)\left( \theta_0(\tau, z)-i\theta_1(\tau,z)\right)+h\left( \frac{\tau+1}{4}\right)\left( \theta_0(\tau, z)+i\theta_1(\tau,z)\right)\right),
\end{align}
where $h\in S_{k+1/2}^{+}(4N)$. Now there are two approaches. Put $\displaystyle \vartheta_0(\tau)=\sum_{n}q^{n^2}$.

\subsubsection{Proof of Theorem~\ref{nonv-algo}}\label{sec:nonv-algo}
Define the map $\Theta:S_{k+1/2}^{+}(4N)\longrightarrow  S_{k+1}(4N)$; \q $\Theta(h):= h \cdot \vartheta_0$.
Let us denote the image $ \Theta(S_{k+1/2}^{+}(4N)) \subset S_{k+1}(4N)$ by $S^*_{k+1}(4N)$.

We note that $\phi\in \ker D_0$ iff $\Theta(h)(\tau)=-\Theta(h)(\tau+1/2)$. This follows by from \eqref{phi=htheta} by making a change of variable $\tau\mapsto 4\tau+1$ and noting that $\theta_0(4\tau+1,0)=\theta_0(4\tau,0)=\vartheta_0(\tau)$, $
\theta_1(4\tau+1,0)=i\theta_1(4\tau,0)=\vartheta_0(\tau+1/2)$.
Thus  $\phi\in \ker D_0$ iff the following equation holds.
 \begin{align}
     h\left( \tau\right)\vartheta_0(\tau)&= - h\left(\tau+ \frac{1}{2}\right)\vartheta_0(\tau+ \frac{1}{2}).
\end{align}
We put $H=\Theta(h)$. We see that,
 \begin{align}
     F_f^\circ=0 \iff D_0(\phi)=0 \iff \Theta(h)(\tau)=-\Theta(h)(\tau+1/2) \iff H(\tau) + H(\tau+1/2)=0.
 \end{align}
Now $\displaystyle H(\tau) + H(\tau+1/2)= \sum_{n \ge 1} (1+(-1)^n) a_H(n)q^n = 2\sum_{n \ge 1, n \text{ even}} a_H(n)q^n = 2 H|U_2B_2$. Therefore we get Theorem~\ref{nonv-algo} since (with the notation as in \eqref{ezi-corr})
 \begin{align}
     F^\circ=0 \iff H|U_2B_2 = 0 \iff H|U_2=0.
 \end{align}
 and $H|U_2B_2$ is the even part of $H$. This proves the first part of Theorem~\ref{nonv-algo}. The rest follows simply by noting that the cusp form $H|U_2B_2$ has level $4N$, and applying the Sturm's bound at $p=\infty$ to $H$.
 \QEDB

Theorem~\ref{nonv-algo} can come quite handy in checking the non-vanishing of pullbacks, especially for newforms $h$, and of course we expect that no newform $\phi$ can be in the $\ker D_0$.

In the corollary below, we demonstrate a few examples where Theorem~\ref{nonv-algo} immediately gives non-vanishing of $F_f^\circ$. First, recall that $h\in S_{k+1/2}^+(4N)$ has Fourier coefficients supported on $n\equiv 0, 3 \bmod 4$.
 
\begin{cor} \label{algo-cor}
Let $f$, $h$ and $F_f$ be as in Theorem~\ref{nonv-algo}. Then we have the following.
\begin{enumerate}
\item 
If $h(\tau ) = cq^{4m} +   O(q^{4m+3})$ with $c \neq 0$, then $F_f^\circ \neq  0$.

\item 
If $h(\tau ) = cq^{4m-1} + dq^{4m} + O(q^{4m+3})$ and the coefficient $2c + d$ of $q^{4m}$ in $H$ is non-zero,
then $F_f^\circ \neq  0$.

\item 
If the Fourier expansion of $h$ is supported only on odd indices (necessarily on $n \equiv -1 \bmod 4$),
then $F_f^\circ \neq  0$.

\end{enumerate}
\end{cor}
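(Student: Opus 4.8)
The plan is to derive all three parts directly from Theorem~\ref{nonv-algo}. That theorem reduces the non-vanishing of $F_f^\circ$ to the existence of a single even index $n$ (necessarily a multiple of $4$) with $a_H(n)\neq 0$, where $H=h\cdot\vartheta_0$; so in each case it suffices to pin down one such $n$ and compute $a_H(n)$ from the Cauchy product
\begin{equation}
a_H(n)=\sum_{\ell\ge 0}a_h(n-\ell)\,r(\ell),\qquad r(0)=1,\quad r(\ell)=2 \text{ if } \ell \text{ is a positive square},\quad r(\ell)=0 \text{ otherwise},
\end{equation}
where $r(\ell)$ is the coefficient of $q^{\ell}$ in $\vartheta_0$. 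The two structural facts I would use throughout are that $h$ is a cusp form whose Fourier coefficients vanish unless the index is $\equiv 0$ or $3\pmod 4$, and that in each hypothesis $h$ vanishes below a prescribed order, so that the ``unwanted'' terms in the product are killed automatically, either by the support condition or by the vanishing range.

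For (1), choose $n=4m$. In $a_H(4m)=\sum_{\ell\ge 0}a_h(4m-\ell)r(\ell)$, every term with $\ell\ge 1$ has $4m-\ell<4m$, hence $a_h(4m-\ell)=0$; only $\ell=0$ survives, giving $a_H(4m)=c\neq 0$. Since $4m$ is even, Theorem~\ref{nonv-algo} yields $F_f^\circ\neq 0$. For (2), again take $n=4m$: the term $\ell=0$ contributes $a_h(4m)r(0)=d$ and the term $\ell=1$ contributes $a_h(4m-1)r(1)=2c$, while every other term has $\ell\in\{4,9,16,\dots\}$, so $4m-\ell\le 4m-4<4m-1$ and $a_h(4m-\ell)=0$. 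Hence $a_H(4m)=2c+d\neq 0$ by hypothesis, and $4m$ is even.

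For (3), let $4m-1$ (necessarily $\equiv 3\pmod 4$) be the least index with $a_h(4m-1)=:c\neq 0$, which exists since $h$ is a nonzero cusp form supported on odd indices (and then $m\ge 1$). Take $n=4m$: in $a_H(4m)=\sum_{\ell\ge 0}a_h(4m-\ell)r(\ell)$ a nonzero term forces $4m-\ell$ odd (support of $h$), hence $\ell$ odd, hence $\ell\in\{1,9,25,\dots\}$; and $4m-\ell\ge 4m-1$ forces $\ell\le 1$, so $\ell=1$. Therefore $a_H(4m)=a_h(4m-1)r(1)=2c\neq 0$, with $4m$ even, so $F_f^\circ\neq 0$.

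All of the above is elementary arithmetic; the only point demanding a little attention is keeping straight which coefficients of $h$ are \emph{known} to vanish (below the stated order) versus merely unconstrained by the $\bmod 4$ support condition. In each of the three configurations the spacing of $\{j\equiv 0,3\bmod 4\}$ together with the prescribed vanishing range closes off every unwanted contribution to the chosen $a_H(4m)$, so there is no genuine obstacle: the substance of the corollary rests entirely on Theorem~\ref{nonv-algo}.
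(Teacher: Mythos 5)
Your proof is correct, and the argument is exactly the one intended: the paper states Corollary~\ref{algo-cor} without a separate proof, as these are immediate consequences of Theorem~\ref{nonv-algo} once one writes out the Cauchy product $a_H(n)=\sum_{\ell\ge 0}a_h(n-\ell)r(\ell)$ with $r$ the theta coefficients and combines the support condition $a_h(j)=0$ unless $j\equiv 0,3\pmod 4$ with the prescribed vanishing range of $h$. In each case the index $n=4m$ isolates exactly the intended linear combination of $a_h(4m-1)$ and $a_h(4m)$. One very minor clarification worth recording for part~(3): the existence of a smallest nonvanishing index of $h$ (necessarily $\equiv 3\bmod 4$, hence of the form $4m-1$ with $m\ge 1$) is guaranteed because $h$ corresponds to the newform $f$ under the Kohnen--Shimura--Shintani map and so is nonzero.
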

\begin{rmk}
    From the above, it is enough to consider those newforms $h$ which look like 
$h(\tau ) = c ( q^{4m-1} - 2 q^{4m} + \ldots)$ with
$c \neq  0$.
\end{rmk}

\subsubsection{Another formulation} \label{k16n}
Analogously to the previous subsection, we now provide another interesting formulation of the non-vanishing of the pullback $F_f^\circ$. Let us consider the map $\Theta_1 : S_{k+1/2}^{+}(4N) \to S_k(16N, \chi_{-4})$ defined by $\Theta_1(h) = h/\vartheta_1$, where $\vartheta_1(\tau) = \vartheta_0(\tau+1/2)=\sum_{n \in \z} (-1)^n q^{n^2}$. Put $H_1:= \Theta_1(h)$.
We claim that:
\begin{align} \label{h-s**}
     \phi \in  \ker D_0 \iff  \Theta_1(h+1/4)= - i \Theta_1(h) \iff a_{H_1}(n) =0 \text{ unless } n \equiv 3 \bmod 4.
\end{align}

To see the claim, let $\phi \in  \ker D_0$ -- so that $\displaystyle 0 = h_0\theta_0+h_1\theta_1$ and following \cite{arakawa2003vanishing}, define $\varphi$ by
\begin{align}
    \varphi(\tau) = \frac{h_0(\tau)}{\theta_0(\tau)} = - \frac{h_1(\tau)}{\theta_1(\tau)}.
\end{align}
Next from \cite{arakawa1999vanishing}, we observe that, $\displaystyle\varphi(\tau+1)=-i \varphi(\tau)$, which implies, upon iteration, that $\displaystyle \varphi(\tau+4)= \varphi(\tau)$.
It is easy to see then  $\varphi_*(\tau)$ as defined below, satisfies
\begin{align}
 \varphi_*(\tau):= \varphi(4\tau)=\frac{h(\tau)}{\theta_0(4\tau)-\theta_1(4\tau)}= -\frac{h\left(\tau+ \frac{1}{2}\right)}{\theta_0(4\tau)+\theta_1(4\tau)}.   
\end{align}
We write them as
\begin{align} \label{htheta1}
  \varphi_*(\tau)=\frac{h(\tau)}{\vartheta_1(\tau)}= -\frac{h\left(\tau+ \frac{1}{2}\right)}{\vartheta_0(\tau)} \q \q (\vartheta_1(\tau) \text{ as above}),
\end{align}
which settles the first part of the claim. Now \eqref{htheta1} also implies that $\varphi_* (\tau+1)=\varphi_*(\tau)$. Therefore if we write the Fourier expansion of $\varphi_*(\tau)$ as $\displaystyle \varphi_*(\tau)= \sum_{n \ge 0} a_n q^n$,
then we get that 
\begin{align}
    \varphi(\tau) = \sumn_{n \ge 0} a_n q^{n/4},
\end{align}
which upon using the property $\displaystyle\varphi(\tau+1)=-i \varphi(\tau)$ of $\phi$, shows that $\displaystyle a_n=0, \text{   unless   } n \equiv 3 \bmod 4$, proving the claim, since the above steps are reversible. \QEDB

We expect that the above relation \eqref{h-s**} is not possible for newforms $h$. One might even consider working out certain Shimura images of $H_1 \cdot \vartheta_1=h$, and there are some results in the literature on this topic, all of which, however, assume that $H_1$ to be a Hecke eigenform at all primes. So this doesn't immediately give us anything.

\subsection{Non-vanishing on average}\label{sec:avgnonvan}
In this subsection, we are concerned with the average order of the quantities $N(F_f)$, as $f$ traverses over newforms of level $N$. As the reader might notice (cf. \cite{bky}) that this amounts to proving an asymptotic formula for $\sum_f\sum_g L(\mrm{sym}^2 g\otimes f, 1/2)$. A standard approach (see \cite{bky}) is  tracing the path Petersson over $f\rightarrow \mrm{GL(3)}$ Voronoi $\rightarrow$ Petersson over $g$. However, the level aspect $\mrm{GL(3)}$ Voronoi summation formulas are not readily available, even for square-free $N$. General results of Corbett \cite{corbett2021voronoi} seems not easy to translate to the present situation when the technical condition `$(c,N)>1$' arises. The ones available in the literature (for example \cite{zhou2018voronoi}) do not serve our purpose, since we need the `$p|c$' case. The partial Voronoi summation as in \cite{buttcane20154} also doesn't give the required power saving error terms. We hope to come back to this issue later.

We partially get around this difficulty by relating this $L^2$-norm problem to the same about the $L^\infty$-norm problem (cf. \cite{das-anamby1} for similar ideas) -- this at least allows us to get the lower bound (although not optimal) for the $L^2$-norm problem. To be precise, we show the following.

\begin{prop} \label{avg-nonzero}
    For an odd, square-free $N$, we have
    \begin{align}
        \frac{1}{N}\ll \sum\nolimits_{f\in \mc B_{2k}^{new}(N)}N(F_f).
    \end{align}
    Further, when $N=p$, an odd prime, we have
    \begin{align}
         \frac{1}{p}\ll \sum\nolimits_{f\in \mc B_{2k}^{new}(p)}N(F_f)\ll p^{1+\epsilon}.
    \end{align}
\end{prop}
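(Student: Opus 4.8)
The plan is to bound the sum $\sum_{f\in \mc B_{2k}^{new}(N)} N(F_f)$ from below (and, when $N=p$, also from above) by passing through the sup-norm of the pullback, which in turn is controlled by sup-norms of spaces of elliptic cusp forms. First I would recall that by Theorem~\ref{norm-thm} each $N(F_f)$ is a finite sum of non-negative quantities (the central $L$-values are $\ge 0$ by Lapid's theorem, and the archimedean and level factors are positive), so $\sum_f N(F_f)$ is a genuine sum of non-negative terms. This means any single term, e.g.\ the $L=1$ contribution for a single well-chosen $f$, or equivalently a single period $|\lan F_f^\circ, g\otimes g\ran|^2$, furnishes a lower bound; the task is to make such a bound uniform in $N$.

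The key idea is the standard relation between the $L^2$-mass and the $L^\infty$-norm on the diagonal. Concretely, for a fixed point $(\tau_0,\tau_0)\in \h\times\h$ lying in a fixed compact subset of the fundamental domain, one has the Bergman-kernel type identity $\sum_{g\in \mc B_{k+1}(N)} \frac{|g(\tau_0)|^2 \im(\tau_0)^{k+1}}{\norm{g}^2} \asymp k$ (the Bergman kernel of $S_{k+1}(N)$ evaluated on the diagonal is comparable to $k$ up to the index $[\sltwo:\Gamma_0(N)]$, with the volume normalizations built into our inner products). On the Siegel side, the analogous Bergman kernel of $\skk^{new}(N)$ evaluated at $\psmb \tau_0 & 0 \\ 0 & \tau_0\psme$ is $\sum_{F\in \mc B^{new}} \frac{|F^\circ(\tau_0,\tau_0)|^2 \im(\tau_0)^{2(k+1)}}{\norm{F}^2}$, and comparing the two via the spectral expansion \eqref{pullback-spectral} of $F^\circ$, one extracts $\sum_f N(F_f)$ (after inserting the volume factors $v_1,v_2$ as in \eqref{nf-def}) as being $\gg$ a ratio of the relevant kernel values. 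The lower bound $\gg 1/N$ then comes out because the number of newforms grows like $\dim S_{2k}^{new}(N)\asymp kN$, while the diagonal Bergman kernel of the full space $S_{k+1}(N)\otimes S_{k+1}(N)$ at a fixed point is $\asymp k^2$, and only the SK-part contributes, losing at worst a factor of $N$ from the index $[\sltwo:\Gamma_0(N)]$ in the normalization.

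For the upper bound when $N=p$, I would instead argue directly from Theorem~\ref{norm-thm}: the outer sum over $L|p$ has only two terms ($L=1$ and $L=p$), and for each we bound $\sum_f \sum_{g} \frac{L(f\otimes\mrm{sym}^2 g,\frac12)}{L(\mrm{sym}^2 f,1)L(f,\frac32)}$ trivially. Using the convexity (or subconvexity, but convexity suffices for $p^{1+\epsilon}$) bound for the $\GL(3)\times\GL(2)$ central value $L(f\otimes\mrm{sym}^2 g,\frac12)\ll (p\cdot N_g)^{\epsilon}\cdot p^{3/4+\epsilon}$ coming from the analytic conductor $\asymp p^3 N_g$, together with the lower bounds $L(\mrm{sym}^2 f,1)\gg p^{-\epsilon}$ and $L(f,\frac32)\asymp 1$, and the count $\#\mc B_{2k}^{new}(p)\ll p$, $\#\mc B_{k+1}^{new}(p/L)\ll p$, one gets $\sum_f N(F_f)\ll \frac{1}{p}\cdot p\cdot p\cdot p^{3/4+\epsilon} = p^{7/4+\epsilon}$ — this is worse than claimed, so one must be more careful: the $g$-sum should be opened by the approximate functional equation and the diagonal extracted via the Petersson formula over $g\in\mc B_{k+1}^{new}(p/L)$, which gives the true size $p^{1+\epsilon}$ for $\sum_g$ uniformly, and then summing over $f$ and dividing by $p$ yields $\ll p^{1+\epsilon}$.

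The main obstacle I expect is making the lower bound genuinely uniform in $N$ without circularity: one must ensure that the SK-subspace's contribution to the diagonal Bergman kernel at a fixed point is not swamped by the (much larger) full space $S_{k+1}(N)^{\otimes 2}$, i.e.\ that the projection of the Bergman kernel onto the image of the pullback map is $\gg k^2/N$ rather than, say, $o(k^2/N)$. This requires either an explicit lower bound on $\sum_{F\in \mc B^{new}} |F^\circ(\tau_0,\tau_0)|^2/\norm{F}^2$ at a well-chosen $\tau_0$ — which can be obtained by picking $\tau_0$ (e.g.\ a CM point) where one can exhibit a single SK lift with provably non-vanishing, not-too-small pullback value, using the Fourier-coefficient formula \eqref{sk-1} — or a more clever averaging. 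I would handle this by choosing $\tau_0$ so that the theta-series / Eisenstein-like lower bounds for $|h|$ force $|\phi^\circ(\tau_0)|$ away from zero for a positive-density set of $f$, which by Theorem~\ref{Positiveprop} we already know exists; combined with $\dim S_{2k}^{new}(N)\gg kN$ and the normalization, this produces the clean $1/N \ll \sum_f N(F_f)$.
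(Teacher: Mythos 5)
Your overall strategy matches the paper's: use the Cauchy--Schwarz inequality to bound $N(F_f)$ from below by $|F_f^\circ(\tau,\tau')|^2/\bigl(N^2\lan F_f,F_f\ran\bigr)$ via the sup-norm $\ll N$ of the Bergman kernel of $S_{k+1}(N)$, sum over $f$ and take a supremum, and then (for $N=p$) get the upper bound from a moment estimate for $\sum_g L(f\otimes\mathrm{sym}^2 g,\tfrac12)$; your final remark on the upper bound lands where the paper lands, which simply cites the Büttcane--Khan first-moment result for newforms of level $p$.

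However, there is a genuine gap in your lower-bound step, precisely at the point you flag as ``the main obstacle''. You need
\begin{equation}
\sup_{\tau,\tau'}\ \sum_{f}\frac{(vv')^{k+1}\,|F_f^\circ(\tau,\tau')|^2}{\lan F_f,F_f\ran}\ \gg\ N,
\end{equation}
uniformly in $N$, and you propose to obtain it either from a CM point or by invoking Theorem~\ref{Positiveprop}. Neither closes the argument. Theorem~\ref{Positiveprop} only gives a positive proportion of $f$ with $F_f^\circ\neq 0$; it gives \emph{no} pointwise lower bound $|F_f^\circ(\tau_0)|^2/\lan F_f,F_f\ran\gg 1$, and a set of measure zero worth of vanishing is compatible with that theorem and would destroy your estimate. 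The CM-point idea has the same issue: you would need a single $\tau_0$, independent of $N$, where every (or at least $\gg N$-worth of) $F_f^\circ$ is bounded away from zero after normalization, and you have not explained why such $\tau_0$ exists. The paper instead bypasses any pointwise nonvanishing claim: it expands $F_f^\circ$ in its Fourier series, selects the two smallest nonzero coefficients $c_\phi(3),c_\phi(4)$ of the half-integral-weight form via the Maaß relation, averages $\sum_f |c_\phi(D)|^2/\lan\phi,\phi\ran$ with half-integral weight Poincaré series to produce a main term $\gg N$, and shows the off-diagonal cross term is $\ll N^\epsilon$; the choice $v_0=v_0'=k/4\pi$ then makes the archimedean factor harmless. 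This is the precise mechanism producing the clean $\gg N$ and hence $\gg 1/N$, and it is the step your proposal leaves unjustified.
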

\begin{proof}
    First, we recall that
    \begin{equation}
    F_f^\circ(\tau, \tau')=\sum\nolimits_{g\in \mc{B}_{k+1}(N)}c_g\frac{g(\tau)}{\norm{g}}\frac{g(\tau')}{\norm{g}},
\end{equation}
where $c_g= \left\lan F_f^\circ, \frac{g}{\norm{g}}\otimes\frac{g}{\norm{g}}\right\ran $. Thus we have
\begin{align} \label{F0gg}
    \frac{v_2 |(vv')^{(k+1)/2}F_f^\circ(\tau,\tau')|^2 }{v_1^2\lan F_f, F_f\ran} \le \frac{v_2 }{v_1^2} (\sum_g \frac{|c_g|^2 }{\lan F_f, F_f\ran}) \big( \sum_g \frac{v^{k+1} |g(\tau)|^2}{\norm{g}^2}\frac{v'^{k+1}|g(\tau')|^2}{\norm{g}^2} \big).
\end{align}
The term in the second braces above is bounded by $\sup(S_{k+1}(N)\otimes S_{k+1}(N))\ll \sup(S_{k+1}(N))^2 \ll_k N^2$. Therefore, we get
\begin{align}\label{pullbackNFLB}
    \frac{ |F_f^\circ(\tau,\tau')|^2 }{\lan F_f, F_f\ran} \ll  N^2 \cdot N(F).
\end{align}
Summing over $f$ first and then taking the supremum, we get
\begin{align}
\sup\big(\sum_{f}\frac{(vv')^{(k+1)/2} |F_f^\circ(\tau,\tau')|^2 }{\lan F_f, F_f\ran}\big)\ll N^2 \sum_f N(F_f). 
\end{align}
Since $F^\circ(\tau, \tau')=\sum_{n,m} \left(\sum_r a_F(n,r,m)\right)e(n\tau+m\tau')$, using the arguments as in \cite{das-anamby1}, we see that

\begin{align}
\sup\big(\sum_{f}\frac{(v_0v_0')^{(k+1)/2} |F^\circ(\tau,\tau')|^2 }{\lan F, F\ran}\big)\ge   \frac{(v_0v_0')^{k} }{e^{4\pi(v_0+v_0')}} \big(\sum_\phi\frac{|c_\phi(4)|^2}{\lan \phi, \phi\ran}+\sum_\phi\frac{|c_\phi(3)|^2}{\lan \phi, \phi\ran} -2 \big |\sum_\phi\frac{c_\phi(4)\overline{c_\phi(3)}}{\lan \phi, \phi\ran} \big |\big).  \n
\end{align}
Now using half-integral weight Poincar\'e series (see \cite[Proposition 4] {kohnen1985fourier}) and arguments as in \cite[Lemma 9.1]{das-anamby2} (after normalization of Petersson norm), we get that  the first and second terms are
$\gg N$. Similarly, the third term is $\ll N^\epsilon$ (by estimating the off-diagonal of the Poincar\'e series using \cite[Theorem 3.6]{das2012nonvanishing}). Now choosing $v_0=v_0'=k/4\pi$, we get the required lower bound. Therefore, we get for all $N$ odd, square-free
\begin{align}
    \sup\big(\sum_{f}\frac{ |F^\circ(\tau,\tau')|^2 }{\lan F, F\ran}\big) \gg N.
    \end{align}

For the second part, that is, when $N=p$, recall that $N(F_f)$ equals
\begin{align}
\frac{C(k,p)}{L(\mrm{sym}^2 f, 1) L(f,\frac{3}{2})}\big(\sum_{g\in \mc B_{k+1}^{new}(p)}L(f\otimes \mrm{sym}^2 g, \frac{1}{2})+ \frac{(1+p)^2(1+w_f(p))^2}{p^2}\sum_{g\in \mc B_{k+1}}L(f\otimes \mrm{sym}^2 g, \frac{1}{2}) \big).
\end{align}
When $g\in \mc B_{k+1}$, $L(f\otimes \mrm{sym}^2 g, \frac{1}{2})\ll p^{3/4+\epsilon}$ by convexity arguments. Thus, the sum over $g\in \mc B_{k+1}$ is $\ll p^{-1/4+\epsilon}$.  Thus it is enough to consider the sum over $g\in \mc B_{k+1}^{new}(p)$.
The upper bound now follows from \cite[Theorem~1.2]{buttcane20154}.
\end{proof}

\subsection{Non-vanishing of central $L$-values}
The non-vanishing of the pullback $F_f^\circ$ is intimately related to the non-vanishing of the central values $L(f\otimes \mrm{sym}^2 g, \frac{1}{2})$. When $k=2$, we get a nice unconditional result about the $L$-values. Note that the result below does not follow from those in \cite{chen, PV}.

\begin{prop}\label{CVAr-Bo}
   Let $f\in S_2^{new}(N)$ be such that the Atkin-Lehner eigenvalues at $p|N$, $w_f(p)=-1$. Then there exists a newform $g\in S_{2}^{new}(N)$ such that $L(f\otimes \mrm{sym}^2 g, \frac{1}{2})\neq 0$.
\end{prop}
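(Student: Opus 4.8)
\textbf{Proof proposal for Proposition~\ref{CVAr-Bo}.}
The plan is to combine the full spectral decomposition of $F_f^\circ$ with the unconditional non-vanishing result of Arakawa--B\"ocherer \cite{arakawa2003vanishing} in weight $2$. First I would observe that under the hypothesis $w_f(p)=-1$ for all $p|N$, the set $\mc L_f$ defined in \eqref{Lf-def} is empty. Hence by \eqref{cv-0} of Theorem~\ref{thm:CVOldclass} (equivalently Corollary~\ref{F0old-vanish}), every old-class contribution $\lan F_f^\circ, g_\sigma\otimes g_\sigma\ran$ vanishes, so in the expansion \eqref{pullback-spectral} only the newforms $g\in\mc B_{k+1}^{new}(N)=\mc B_2^{new}(N)$ can contribute. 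Concretely, with $k=1$,
\begin{equation}
F_f^\circ(\tau,\tau') = \sum_{g\in\mc B_2^{new}(N)} \lan F_f^\circ, \tfrac{g}{\norm g}\otimes\tfrac{g}{\norm g}\ran \, \tfrac{g(\tau)}{\norm g}\tfrac{g(\tau')}{\norm g}.
\end{equation}

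Next I would invoke the $k=1$ non-vanishing theorem already proved in the excerpt: since $\ker D_0 \simeq S_1(N,\bar\omega)=\{0\}$ for $N$ odd square-free (by \cite[Theorem~3.4]{arakawa2003vanishing}), the Witt/pullback map $\mc W$ is injective on $\skk^{new}(N)$ in weight $2$, so $F_f^\circ\neq 0$. Combined with the spectral expansion above, this forces at least one newform $g\in\mc B_2^{new}(N)$ with $\lan F_f^\circ, g\otimes g\ran\neq 0$. For such a $g$ we have $N_g=N$, $M_g=1$, so $M_g\in\mc L_f\cup\{1\}$ trivially (the condition on $\mc L_f$ is vacuous when $M_g=1$), and the new-form central value formula \eqref{CVnew} (equivalently \eqref{old-intro} with $M_g=1$) applies:
\begin{equation}
\Lambda(f\otimes\mrm{sym}^2 g,\tfrac12) = 2^{k+1}\prod_{p|N}(p+1)^2\,\frac{\lan f,f\ran}{\lan h,h\ran}\,\frac{|\lan F_f^\circ, g\otimes g\ran|^2}{\lan g,g\ran^2} \neq 0,
\end{equation}
whence $L(f\otimes\mrm{sym}^2 g,\tfrac12)\neq 0$, as claimed. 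Note that here one must check $g$ is genuinely new of level exactly $N$; this is guaranteed because the only contributing terms in the expansion are new of level $N$ once the oldclasses are killed by $\mc L_f=\emptyset$.

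I expect the main (and essentially only) subtlety to be bookkeeping about levels: one has to make sure that the newform $g$ produced has level exactly $N$ rather than a proper divisor. This is precisely where the vanishing statement \eqref{cv-0} does the work — it removes \emph{all} oldform contributions when $w_f(p)=-1$ for every $p\mid N$, leaving only genuine level-$N$ newforms in the sum. Everything else is a direct citation: injectivity of $\mc W$ from Arakawa--B\"ocherer's vanishing of $S_1(N,\bar\omega)$, and the new-level pullback formula \eqref{CVnew} to pass from the non-vanishing of the period to the non-vanishing of the central $L$-value. A remark worth adding is that this conclusion is genuinely new relative to \cite{chen,PV}: those results give a central-value formula for a \emph{fixed} pair $(f,g)$, but cannot assert the existence of some $g$ with non-vanishing central value, whereas the full spectral decomposition forces it.
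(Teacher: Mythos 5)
Your proposal is correct and is essentially the same argument as the paper's: obtain $F_f^\circ\neq 0$ from the Arakawa--B\"ocherer result that $\ker D_0\simeq S_1(N,\bar\omega)=\{0\}$ in weight $2$, observe that $w_f(p)=-1$ for all $p\mid N$ kills every old-class contribution, and conclude some $g\in\mc B_2^{\mathrm{new}}(N)$ has $L(f\otimes\mrm{sym}^2 g,\tfrac12)\neq 0$. The only cosmetic difference is packaging: the paper quotes the already-assembled formula for $N(F_f)$ from Theorem~\ref{norm-thm} (in which the $\prod_{p\mid L}(1+w_f(p))^2$ factor zeroes out every $L>1$), while you re-derive that collapse directly from the spectral decomposition \eqref{pullback-spectral}, Corollary~\ref{F0old-vanish}, and the newform central-value formula \eqref{CVnew} -- these are two routes to the same computation. (One small wording slip: $M_g=1\notin\mc L_f$ by definition; but the newform formula \eqref{CVnew} needs no $\mc L_f$ hypothesis at all, so nothing is at stake.)
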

\begin{proof}
    When $f$ is of weight $2$, the pullback of the corresponding Jacobi form $\phi^\circ$ is non-zero and thus $F_f^\circ\neq 0$. As a consequence, we get $N(F_f)\neq 0$.

    Now recall that
\begin{align} \label{nfnot0}
N(F_f)&= \frac{C(k,N)}{L(\mrm{sym}^2 f, 1) L(f,\frac{3}{2})}\left(\sum_{L|N}\frac{\prod_{p|L}(p+1)^2(1+w_f(p))^2}{L^{2}}\sum_{g\in \mc B_{2}^{new}(N/L)}L(f\otimes \mrm{sym}^2 g, \frac{1}{2})  \right).
\end{align}   
Now, since $w_f(p)=-1$ for all primes $p|N$, we have that
\begin{align}
N(F_f)&= \frac{C(k,N)}{L(\mrm{sym}^2 f, 1) L(f,\frac{3}{2})}\sum_{g\in \mc B_{2}^{new}(N)}L(f\otimes \mrm{sym}^2 g, \frac{1}{2}) .    
\end{align}
Thus we get a $g\in \mc B_{k+1}^{new}(N)$ such that $L(f\otimes \mrm{sym}^2 g, \frac{1}{2})\neq 0$.
\end{proof}

\begin{rmk} \label{rmk-bp1}
    Let us mention here that in \cite{pil-bo-yoshida}, the Yoshida liftings for level $N$ starting from pairs of elliptic newforms were studied, along with conditions which ensure their non-vanishing. The shapes of the $L$-functions of the Saito–Kurokawa liftings are the same as those of Yoshida liftings from pairs of a cusp form $f$ of weight $2k $ and the Eisenstein series of weight $2$. As noted by Ibukiyama \cite[p.~142]{Ibu-SK}, it does not seem clear if a Yoshida lifting of this type is exactly equal to the Saito–Kurokawa lifting in our sense, but anyway an Yoshida lifting of this type vanishes when $L(1/2, f ) = 0 $. Thus, if at all there is any relationship between the results of this paper with those in \cite{pil-bo-yoshida}, the result of this paper are distinct from loc. cit.. This is explicated further in the next remark.
\end{rmk}

\begin{rmk} \label{rmk-bp2}
    In \cite[Theorem 5.3]{arakawa2003vanishing}, a result similar to that in Theorem \ref{CVAr-Bo} is available with an additional condition of $L(f,1/2)\neq 0$.  \propref{CVAr-Bo} replaces the condition on $L(f, 1/2)$ by that on the Atkin-Lehner eigenvalues of $f$. This helps us to cover more cases than those in \cite[Theorem 5.3]{arakawa2003vanishing}. This can be illustrated by taking an odd, square-free $N$ with an even number of prime factors. In this case, with the condition on Atkin-Lehner eigenvalues as in \propref{CVAr-Bo}, we see that the functional equation of $L(f,s)$ is of sign $-1$ and thus $L(f, 1/2)=0$. We still get the non-vanishing of $L(f\otimes \mrm{sym}^2 g, \frac{1}{2})$ for some $g\in S_2^{new}(N)$ in this case, which is not covered by \cite[Theorem 5.3]{arakawa2003vanishing}.
\end{rmk}

\begin{rmk} \label{rmk-bp3}
    We can relax the condition on Atkin-Lehner eigenvalues in \propref{CVAr-Bo}. Its clear that the statement would then read: ``Let $f\in S_2^{new}(N)$. Then for some $L|N$, there exists a newform $g\in S_{2}^{new}(L)$ such that $L(f\otimes \mrm{sym}^2 g, \frac{1}{2})\neq 0$ and that $\omega_f(p)=+1$ for all $p|N/L$.''
\end{rmk}

\section{Appendix -- The main term for the average} \label{appendix}
{\footnotesize
In this section, we get the main term for the average $\sum_{f} N(F_f)$ that is in accordance with the main term predicted by the Conjecture \ref{NFConj}. Towards this, we consider the quantity
\begin{equation}
    \mc S:=\frac{12}{(2k-1)\phi(N)}\sum_{f\in \mc{B}_{2k}^{new}(N)}N(F_f).
\end{equation}
This corroboration is to be seen as the first step towards proving Conjecture~\ref{norm-conj}.

\subsection{Computation of the actual main term on average}
Recall the set of divisors $\mc L_f$ of $N$ from \eqref{Lf-def}. Writing $N(F_f)$ in terms of the central values from \eqref{NFnewExp} and \eqref{NFoldExp}, we see that
\begin{equation}
    \mc S=\frac{12C(k,N)}{(2k-1)\phi(N)}\sum_{f\in \mc{B}_{2k}^{new}(N)}\sum_{L\in \mc L_f\cup\{1\}}\frac{4^{\omega(L)}\prod_{p|L}(p+1)^2}{L^{2}}\sum_{g\in \mc B_{k+1}^{new}(N/L)}\frac{L(f\otimes \mrm{sym}^2 g, \frac{1}{2})}{L(\mrm{sym}^2 f, 1) L(f,\frac{3}{2})},
\end{equation}
where we write $C(k, N):= \frac{v_2}{v_1^2}\cdot \frac{24 \pi}{kN}\frac{\zeta_{(N)}(2)^3}{\zeta_{(N)}(4)}$. We can rewrite this as
\begin{equation}
\mc S= \frac{12C(k,N)}{(2k-1)\phi(N)}\sum_{f\in \mc{B}_{2k}^{new}(N)}\sum_{L|N}\frac{\prod\nolimits_{p|L}(p+1)^2(1+w_f(p))^2}{L^{2}}\sum_{g\in \mc B_{k+1}^{new}(N/L)}\frac{L(f\otimes \mrm{sym}^2 g, \frac{1}{2})}{L(\mrm{sym}^2 f, 1) L(f,\frac{3}{2})}.    
\end{equation}
We first note that
\begin{align}
    L(f, 3/2)^{-1}&= \prod\nolimits_{p|N}(1-\lambda_f(p)p^{-3/2})\prod\nolimits_{p\nmid N}(1-\lambda_f(p)p^{-3/2}+p^{-3})= \underset{(ab,N)=1}{\sum_{(a,b)=1}}\frac{\mu(a)\mu(b)^2\lambda_f(a)}{a^{3/2}b^3}\sum_{d|N}\frac{\mu(d)\lambda_f(d)}{d^{3/2}}. \label{L3/2N}
\end{align}
Next, we interchange the sums over $f$ and $g$. Then $S$ can be written as
\begin{equation}
    \mc S=\frac{12C(k,N)}{(2k-1)\phi(N)}\sum_{L|N}\frac{\prod_{p|L}(p+1)^2}{L^{2}}\sum_{g\in \mc B_{k+1}^{new}(N/L)}\sum_{f\in \mc{B}_{2k}^{new}(N)}\prod\nolimits_{p|L}(1+w_f(p))^2\frac{L(f\otimes \mrm{sym}^2 g, \frac{1}{2})}{L(\mrm{sym}^2 f, 1) L(f,\frac{3}{2})}.
\end{equation}
We have that
\begin{equation} \label{wp-expand}
    \prod\nolimits_{p|L}(1+w_f(p))^2= 2^{\omega(L)}\sum_{d|L}\mu(d)\lambda_f(d)d^{1/2}.
\end{equation}
Thus we can write
\begin{align}\label{SExp}
\mc S=\frac{12C(k,N)}{(2k-1)\phi(N)}\sum_{L|N}\frac{2^{\omega(L)}\prod\nolimits_{p|L}(p+1)^2}{L^{2}}&\sum_{g\in \mc B_{k+1}^{new}(N/L)}\underset{(ab,N)=1}{\sum_{(a,b)=1}}\frac{\mu(a)\mu(b)^2}{a^{3/2}b^3}\sum_{d_1|N}\sum_{d_2|L}\frac{\mu(d_1)\mu(d_2)d_2^{1/2}}{d_1^{3/2}}\\
&\times \sum_{f\in \mc{B}_{2k}^{new}(N)}\lambda_f(ad_1d_2)\frac{L(f\otimes \mrm{sym}^2 g, \frac{1}{2})}{L(\mrm{sym}^2 f, 1)} ,
\end{align}
where the $d_1$ variable comes from \eqref{L3/2N} and $d_2$ from \eqref{wp-expand}, respectively.

It would be convenient for us to have $d_1d_2$ to be square-free, in order to apply a suitable version of Petersson formula for the space of newforms. Towards this, next we note that
\begin{align}
\sum_{d_1|N}\sum_{d_2|L}\frac{\mu(d_1)\mu(d_2)d_2^{1/2}}{d_1^{3/2}} \lambda_f(d_1d_2)&= \prod\nolimits_{p|(N/L)} (1-\lambda_f(p)p^{-3/2}) \prod\nolimits_{p|L}(1-\lambda_f(p)p^{-3/2}) (1-\lambda_f(p)p^{1/2})\\
&=\prod\nolimits_{p|(N/L)} (1-\lambda_f(p)p^{-3/2}) \prod\nolimits_{p|L}(1+p^{-2})(1-\lambda_f(p)p^{1/2}) \\
&= \prod\nolimits_{p|L}(1+p^{-2})\sum_{d_1|(N/L)}\sum_{d_2|L}\frac{\mu(d_1)\mu(d_2)d_2^{1/2}\lambda_f(d_1d_2)}{d_1^{3/2}}.
\end{align}
For any $(s,N)=1$ and $r|N$ , we now consider the following sum over $f$.
\begin{equation}
    \mc M_g(s,r)= \frac{12}{(2k-1)\phi(N)}\sum_{f\in \mc{B}_{2k}^{new}(N)}\lambda_f(sr)\frac{L(f\otimes \mrm{sym}^2 g, \frac{1}{2})}{L(\mrm{sym}^2 f, 1)}.
\end{equation}
That is, we can write
\begin{align}\label{S-S_g}
    \mc S= C(k,N)\sum_{L|N}\psi(L)\sum_{g\in \mc B_{k+1}^{new}(N/L)}\underset{(ab,N)=1}{\sum_{(a,b)=1}}\frac{\mu(a)\mu(b)^2}{a^{3/2}b^3}\sum_{d_1|(N/L)}\sum_{d_2|L}\frac{\mu(d_1)\mu(d_2)d_2^{1/2}}{d_1^{3/2}} \mc M_g(a, d_1d_2),
\end{align}
where we put $\psi(L)=\frac{2^{\omega(L)}\prod_{p|L}(p+1)^2(p^2+1)}{L^{4}}$. Now recall that
\begin{equation}
     L(f\otimes \mrm{sym}^2 g, s)=\sum_{r|N_g^\infty}\frac{\lambda_f(r)A_g(r,1)}{r^s}\cdot \underset{\ell|M_g^\infty, (nm,N)=1}{\sum_{n,m,\ell}} \frac{\lambda_f(n\ell)A_g(n\ell, m)}{(n\ell m^2)^s}.
\end{equation}
Thus from the approximate functional equation, we have 
\begin{align}\label{SgAFE}
    \mc M_g(s,r)&= \frac{24}{(2k-1)\phi(N)}\sum_{f\in \mc{B}_{2k}^{new}(N)}\underset{(nm,N)=1}{\sum_{n,m\ge 1}}\sum_{\ell|L^\infty}\sum_{r|(N/L)^\infty}\sum_{s_1|(s,n)}\frac{A_g(n\ell,m)\lambda_f(r\ell)\lambda_f(sn/s_1^2)}{(nr\ell)^{1/2}mL(\mrm{sym}^2 f, 1)}W\left(\frac{m^2nr\ell}{N^2}\right),
\end{align}
where $W(x)= \frac{1}{2\pi i}\int_{(2)} x^{-u}\frac{\gamma_{f\otimes\mrm{sym^2}g}(1/2+u)}{\gamma_{f\otimes\mrm{sym^2}g}(1/2)}\frac{du}{u}$ and $W$ satisfies the growth properties as in \cite[Proposition 5]{iwaniec2021analytic}.

With our aim being to reduce to the Petersson formula for newforms from section \ref{sec:norm-conj}, we carry out the following simplifications. First, we note that for $r|N_g$, one has $A_g(r,1)= \sum_{d|r} \frac{1}{d} = \sum_{r_1r_2=r} \frac{1}{r_2}$. Further, since $|\lambda_f(p)|=p^{-1/2}$ for $p|N$, we have that
\begin{align}
    \sum_{r|(N/L)^\infty}\frac{\lambda_f(r)A_g(r,1)}{r^{1/2}}W\left(\frac{m^2nr\ell}{N^2}\right)&=\sum_{r_1,r_2|(N/L)^\infty}\frac{\lambda_f(r_1r_2)}{r_1^{1/2}r_2^{3/2}}W\left(\frac{m^2nr_1r_2\ell}{N^2}\right)\\
    &= \sum_{r_1,r_2|(N/L)}{\sum_{r_3,r_4|(N/L)^\infty}}\frac{\lambda_f(r_1r_2)}{r_3^2r_4^4r_1^{1/2}r_2^{3/2}}W\left(\frac{m^2n\ell r_1r_2r_3^2r_4^2}{N^2}\right).
\end{align}
For the $\ell|L^\infty$ sum, we note that
\begin{align}
    \sum_{\ell|L^\infty}\frac{A_g(\ell, 1)\lambda_f(\ell)}{\ell^{1/2}}&=\sum_{\ell_1|L,\ell_2|L^\infty}\frac{A_g(\ell_1\ell_2^2,1)\lambda_f(\ell_1)}{\ell_1^{1/2}\ell_2^{2}}.   
\end{align}
Substituting back in \eqref{SgAFE}, we get 
\begin{align}
     \mc M_g(s,r)&= \frac{24}{(2k-1)\phi(N)}\sum_{f\in \mc{B}_{2k}^{new}(N)}\underset{(nm,N)=1}{\sum_{n,m\ge 1}}\sum_{s_1|(s,n)}\sum_{\ell_1|L}\sum_{\ell_2|L^\infty}\frac{A_g(n\ell_1\ell_2^2,m)\lambda_f(r\ell_1)\lambda_f(sn/s_1^2)}{(n\ell_1)^{1/2}\ell_2^{2}mL(\mrm{sym}^2 f, 1)}\\
   &\times\sum_{r_1,r_2|(N/L)}{\sum_{r_3,r_4|(N/L)^\infty}}\frac{\lambda_f(r_1r_2)}{r_3^2r_4^4r_1^{1/2}r_2^{3/2}}W\left(\frac{m^2n\ell_1\ell_2^2 r_1r_2r_3^2r_4^2}{N^2}\right).
\end{align}

Now putting $\ell_3=(r,\ell_1)$, $\ell_1=\ell_3\ell_4$, $r_7=(r_1,r_2)$, $r_1=r_5r_7$ and $r_2=r_6r_7$ (note here that $(r_7,r_5r_6)=1$), we have that
\begin{equation}
    \sum_{
    \ell_1|L}\sum_{r_1,r_2|(N/L)}\frac{\lambda_f(r\ell_1r_1r_2)}{\ell_1^{1/2}r_1^{1/2} r_2^{3/2}}=\sum_{\ell_3|(r,L)}\underset{(\ell_4,r)=1}{\sum_{\ell_4|L}}\underset{(r_5,r_6)=1}{\sum_{r_5,r_6|(N/L)}}\sum_{r_7|(N/Lr_5r_6)} \frac{\lambda_f(\ell_4 \, r_5r_6 \, (r/\ell_3) /\, (r/\ell_3,r_5r_6)^2)}{\ell_3^{3/2}\ell_4^{1/2}(r/\ell_3,r_5r_6)r_7^3r_5^{1/2}r_6^{3/2}}.
\end{equation} 
Thus we can write
\begin{align}
     \mc M_g(s,r)&= \frac{24}{(2k-1)\phi(N)}\sum_{f\in \mc{B}_{2k}^{new}(N)}\underset{(nm,N)=1}{\sum_{n,m\ge 1}}\sum_{s_1|(s,n)}\underset{(\ell_4,r)=1}{\sum_{\ell_4|L}}\sum_{\ell_2|L^\infty}\sum_{r_3,r_4|(N/L)^\infty}\sum_{\ell_3|(r,L)}\underset{(r_5,r_6)=1}{\sum_{r_5,r_6|(N/L)}}\sum_{r_7|(N/Lr_5r_6)}\\
     &\frac{A_g(n\ell_3\ell_4\ell_2^2,m)\lambda_f(sn/s_1^2)\lambda_f( \ell_4 \, r_5r_6 \, (r/\ell_3) \, (r/\ell_3,r_5r_6)^2  ))}{n^{1/2}m\,\ell_2^{2} \ell_3^{3/2}\ell_4^{1/2}(r/\ell_3,r_5r_6)r_3^2r_4^4r_5^{1/2}r_6^{3/2}r_7^3\,L(\mrm{sym}^2 f, 1)}W\left(\frac{m^2n\ell_3\ell_4\ell_2^2r_3^2r_4^2r_5r_6r_7^2}{N^2}\right).
\end{align}
We note here that $ \ell_4 \, r_5r_6 \, (r/\ell_3) /\, (r/\ell_3,r_5r_6)^2$ is square-free and thus divides $N$.

Now coming back to $\mc M_g(s,r)$, using the Petersson formula for newforms from \eqref{PetNew-old} with 
\begin{align}
  n_1= sn/s_1^2,  \q n_2= \ell_4 \, r_5r_6 \, (r/\ell_3)/ \, (r/\ell_3,r_5r_6)^2,  
\end{align}
We see that the main terms coming from each of the quantities $\Delta(n_1 \ell^2, n_2)$ survive only when $\ell=1$ and $n=s =s_1$ and $r\ell_4r_5r_6/\ell_3(r/\ell_3,r_5r_6)^2=1$. This is because $n_1 \ell^2=n_2$ with $(n_1,N)=1$, $n_2|N$ and being square-free, implies that $\ell=1$. Further $n_1=n_2$ forces them to be equal to $1$.

Since $(\ell_4, rr_5r_6)=1$, the second equality holds iff $\ell_4=1$ and $r/\ell_3=r_5r_6$. We also note here that since $r_5r_6|N/L$, $(r/\ell_3, L)=1$. Thus we can write
\begin{equation}
    \mc M_g(s,r)= \mc M_g^{(1)}(s,r)+\mc M_g^{(2)}(s,r).
\end{equation}
The main term now corresponds to $\mc M_g^{(1)}(s,r)$ and is given by
\begin{align}
    \mc M_g^{(1)}(s,r)&= \frac{2}{\zeta(2)}\frac{1}{r^{3/2}}\underset{(m,N)=1}{\sum_{m\ge 1}}\sum_{\ell_2|L^\infty}\sum_{\ell_3|(r,L)}\sum_{r_3,r_4|(N/L)^\infty}\sum_{r_7|(\ell_3N/rL)}\sum_{r_6|(r/\ell_3)}\frac{A_g(s\ell_3\ell_2^2,m)}{s^{1/2}m\ell_2^{2}r_3^2r_4^4r_6r_7^3}W\left(\frac{sm^2r\ell_2^2r_3^2r_4^2r_7^2}{N^2}\right).
\end{align}

Let us denote by $\mc S^{(1)}$, the term in $\mc S$, corresponding to $\mc M_g^{(1)}(s,r)$. Before we proceed, need the Dirichlet polynomial approximation for $L(\mrm{sym}^2 h, 1)$ for any $h\in \mc{B}_{k+1}^{new}(M)$ from \cite[Lemme 3]{royer2001statistique}. To be precise, there exists $\delta_1, \delta_2, \delta_3 >0$ such that for all but $M^{\delta_3}$ forms in $\mc{B}_{k+1}^{new}(M)$, we have
\begin{equation}\label{L1sym2DP}
    L(\mrm{sym}^2 h, 1)= \sum_{b_1,b_2: (b_2, M)=1}\frac{\lambda_h(b_1^2)}{b_1b_2^2} \exp(-b_1b_2^2/M^{\delta_1})+O(M^{-\delta_2}).
\end{equation}

From \eqref{S-S_g}, we note that $s=a$ and $r=d_1d_2$. Since $d_1|(N/L)$, we can write the condition $\ell_3|(r, L)$ as $\ell_3|(d_2, L)$, which is equivalent to $\ell_3|d_2$ since $d_2|L$. Now write $d_2=\ell_3d_3$, then we have that $r/\ell_3= d_1d_3$. But since $(r/\ell_3, L)=1$, we must have that $d_3=1$ and consequently $\ell_3=d_2$. Further, the condition $r_6|r/\ell_3$ can now be written as $r_6|d_1$. Thus we have
\begin{align}
    \mc S^{(1)}&= \frac{2C(k,N)}{\zeta(2)}\sum_{L|N}\psi(L)\underset{(ab,N)=1}{\sum_{(a,b)=1}}\frac{\mu(a)\mu(b)^2}{b^3}\sum_{d_1|(N/L)}\sum_{d_2|L}\frac{\mu(d_1)\mu(d_2)}{d_1^{3}d_2}\\
    &\underset{(m,N)=1}{\sum_{m\ge 1}}\sum_{\ell_2|L^\infty}\sum_{r_3,r_4|(N/L)^\infty}\sum_{r_7|(N/d_1L)}\sum_{r_6|d_1}\sum_{g\in \mc B_{k+1}^{new}(N/L)}\frac{A_g(ad_2\ell_2^2,m)}{a^2m\ell_2^{2}r_3^2r_4^4r_6r_7^3}W\left(\frac{am^2d_1d_2\ell_2^2r_3^2r_4^2r_7^2}{N^2}\right).
\end{align}
Introducing $L(\mrm{sym}^2 g ,1)$ to aid the Petersson over $g$, we can now write
\begin{align}
    \mc S^{(1)}&= C(k,N)\sum_{L|N}\psi(L)\underset{(ab,N)=1}{\sum_{(a,b)=1}}\frac{\mu(a)\mu(b)^2}{a^{3/2}b^3}\sum_{d_1|(N/L)}\sum_{d_2|L}\frac{\mu(d_1)\mu(d_2)d_2^{1/2}}{d_1^{3/2}}\\
    &\times \sum_{g\in \mc B_{k+1}^{new}(N/L)}\frac{1}{L(\mrm{sym}^2 g ,1)}\mc M_g^{(1)}(a,d_1d_2)+ O(N^{\delta_3-1+\epsilon}) + O( N^{-\delta_2+\epsilon} ),
\end{align}
where the first error term comes from the exceptional newforms $g$ for which \eqref{L1sym2DP} may not be applicable, and the second error term accounts for the error in applying the approximation to the remaining $g$. We have used the rather crude bound $\# \mc B_{k+1}^{new}(N/L) < (k-1)/12 \cdot N + O(1) $, see e.g. \cite[Theorem 6]{martin2005dimensions}.

Let us denote the sum over $g$ by $\mc T(a, d_2,\ell_2,m)$. 
Since $(\ell_2d_2, am)=1$, we have the following multiplicative Hecke relation:
\begin{equation}
A_g(ad_2\ell_2^2,m)=A_g(d_2\ell_2^2,1)A_g(a,m).
\end{equation}
Using the $\mrm{GL}(3)$- Hecke relations from \eqref{GL3HeckRel}, we now have
\begin{align}
\mc T(a, d_2,\ell_2,m)&=\underset{(b_1b_2, N/L)=1}{\sum_{b_1,b_2}}\sum_{b_3|(N/L)^\infty}\sum_{c|(a,m)}\underset{a_3a_4^2=m/c}{\sum_{a_1a_2^2=a/c}}\sum_{a_5a_6^2=d_2\ell_2^2}\sum_{c_1|(a_1^2,a_3^2)}\frac{\mu(c)}{b_1b_2^2b_3^2}\exp(-\frac{b_1b_2^2b_3L^{\delta_1}}{N^{\delta_1}})\\
&\q\q\times \sum_{g\in \mc B_{k+1}^{new}(N/L)}\frac{\lambda_g(b_1^2)\lambda_g(a_1^2a_3^2a_5^2/c_1^2)}{L(\mrm{sym}^2 g, 1)}.\label{g-pet}
\end{align}
Corresponding to the main and error terms in the Petersson formula for newforms, we write $\mc T(a, d_2,\ell_2,m)= \mc T_1(a, d_2,\ell_2,m)   +\mc T_2(a, d_2,\ell_2,m)$
and correspondingly write $\mc S^{(1)}=\mc S^{(11)}+\mc S^{(12)}+O(N^{\delta_3-1+\epsilon}) + O( N^{-\delta_2+\epsilon} ) .$

The main term in the Petersson formula in \eqref{g-pet} survives only when $b_1=a_1a_3a_5/c_1$. Thus, we can write
\begin{equation}
\begin{split}
 \mc T_1(a, d_2,\ell_2,m)  &= \frac{k\phi(N/L)}{12\zeta(2)} \underset{(b_2, N/L)=1}{\sum_{b_2}}\sum_{b_3|(N/L)^\infty}\sum_{c|(a,m)}\sum_{a_1a_2^2=a/c}\sum_{a_3a_4^2=m/c}\sum_{a_5a_6^2=d_2\ell_2^2}\\
 &\q \times \sum_{c_1|(a_1^2,a_3^2)}\frac{\mu(c)c_1}{a_1a_3a_5b_2^2b_3^2}\exp(-\frac{a_1a_3a_5b_2^2b_3L^{\delta_1}}{c_1N^{\delta_1}}).
 \end{split}
\end{equation}
First, we make a change of variable $a=ca_0$ and $m=cm_0$. Next, putting $a_0=a_1a_2^2$, $m_0=a_3a_4^2$, we get
\begin{equation}
\begin{split}
\mc S^{(11)}&= \frac{kC(k,N)}{6\zeta(2)^2}\sum_{L|N}\phi(N/L)\psi(L)\underset{(ca_1a_3a_4,N)=1}{\sum_{c,a_1,a_3,a_4\ge 1}}\underset{(ca_1,b)=1}{\sum_{(b,N)=1}}\sum_{(b_2, N/L)=1}\sum_{b_3,r_3,r_4|(N/L)^\infty}\sum_{d_1|(N/L)}\sum_{r_7|(N/d_1L)}\sum_{\ell_2|L^\infty}\sum_{d_2|L}\sum_{r_6|d_1}\\
&\sum_{c_1|(a_1^2,a_3^2)}\sum_{a_5a_6^2=d_2\ell_2^2}\frac{\mu(b)^2\mu(c)\mu(ca_1d_1d_2)c_1}{b^3c^3a_1^3a_3^2a_4^2a_5b_2^2b_3^2d_1^{3}d_2\ell_2^{2}r_3^2r_4^4r_6r_7^3}\exp(-\frac{a_1a_3a_5b_2^2b_3L^{\delta_1}}{c_1N^{\delta_1}}) W\left(\frac{c^3a_1a_3^2a_4^4d_1d_3\ell_2^2\ell_3r_3^2r_4^2r_7^2}{N^2}\right)
\end{split}
\end{equation}
Using Mellin inversion, we get
\begin{equation}
\begin{split}
\mc S^{(11)}&=\frac{kC(k,N)}{6\zeta(2)^2}\sum_{L|N}\phi(N/L)\psi(L)\underset{(ca_1a_3a_4,N)=1}{\sum_{c,a_1,a_3,a_4\ge 1}}\underset{(ca_1,b)=1}{\sum_{(b,N)=1}}\sum_{(b_2, N/L)=1}\sum_{b_3,r_3,r_4|(N/L)^\infty}\sum_{d_1|(N/L)}\sum_{r_7|(N/d_1L)}\sum_{\ell_2|L^\infty}\sum_{d_2|L}\sum_{r_6|d_1}\\
&\sum_{c_1|(a_1^2,a_3^2)}\sum_{a_5a_6^2=d_2\ell_2^2}\int_{(1)}\int_{(1)}\frac{\mu(b)^2\mu(c)\mu(ca_1d_1\ell_3d_3)c_1^{1+v}N^{2u+\delta_1v}L^{-\delta_1v}\widetilde{W}(u)\Gamma(v)}{b^3c^{3+u}a_1^{3+u+v}a_3^{2+v}a_4^2a_5^{1+v}b_2^{2+2v}b_3^{2+v}d_1^{3+u}d_2^{1+u}\ell_2^{2+2u}r_3^{2+2u}r_4^{4+2u}r_6r_7^{3+2u}}\frac{du}{2\pi i} \frac{dv}{2\pi i}   
\end{split}
\end{equation}

Shifting the contours to (say) $\Re(u)=\Re(v)=-1/10$ and picking up the poles of $\widetilde{W}$ and $\Gamma$ at $u=v=0$, we get
\begin{align}
 \mc S^{(11)}&=\frac{kC(k,N)}{6\zeta(2)^2}\sum_{L|N}\phi(N/L)\psi(L)\underset{(ca_1a_3a_4,N)=1}{\sum_{c,a_1,a_3,a_4\ge 1}}\sum_{(ca_1,b)=(b,N)=1}\sum_{(b_2, N/L)=1}\sum_{b_3,r_3,r_4|(N/L)^\infty}\sum_{d_1|(N/L)}\sum_{r_7|(N/d_1L)}\\
&\sum_{\ell_2|L^\infty}\sum_{d_2|L}\sum_{r_6|d_1}\sum_{c_1|(a_1^2,a_3^2)}\sum_{a_5a_6^2=d_2\ell_2^2}\frac{\mu(b)^2\mu(c)\mu(ca_1d_1d_2)c_1}{b^3c^3a_1^3a_3^2a_4^2a_5b_2^2b_3^2d_1^{3}d_2\ell_2^{2}r_3^2r_4^4r_6r_7^3}+O(N^{-(2+\delta_1)/10+\epsilon}).
\end{align}
First we recall that $C(k, N):= \frac{v_2}{v_1^2}\cdot \frac{24 \pi}{kN}\frac{\zeta_{(N)}(2)^3}{\zeta_{(N)}(4)}=\frac{\zeta(4)}{\zeta(2)}\frac{12}{kN}\frac{\zeta_{(N)}(2)^3}{\zeta_{(N)}(4)}$. Now executing the $a_4, b_2, b_3, r_3,r_4$ sums, we get
\begin{equation}\label{M1Penult}
\begin{split}
\mc S^{(11)}&=\frac{2\zeta(4)}{\zeta(2)^3}\frac{\zeta_{(N)}(2)^3}{N\zeta_{(N)}(4)}\zeta(2)\zeta^{(N)}(2)\sum_{L|N}\phi(N/L)\psi(L)\zeta_{(N/L)}(2)\zeta_{(N/L)}(4)\underset{(ca_1a_3,N)=1}{\sum_{c,a_1,a_3\ge 1}}\sum_{(ca_1,b)=(b,N)=1}\\
&
\sum_{d_1|(N/L)}\sum_{r_7|(N/d_1L)}\sum_{\ell_2|L^\infty}\sum_{d_2|L}\sum_{r_6|d_1}\sum_{c_1|(a_1^2,a_3^2)}\sum_{a_5a_6^2=d_2\ell_2^2}\frac{\mu(b)^2\mu(c)\mu(ca_1d_1d_2)c_1}{b^3c^3a_1^3a_3^2a_5d_1^{3}d_2\ell_2^{2}r_6r_7^3}+O(N^{-(2+\delta_1)/10+\epsilon}).
\end{split}
\end{equation}
Now consider the sums over $c, a_1,a_3, b$. As in \cite{liu-young}, we have the following
\begin{align}
\underset{(ca_1a_3,N)=1}{\sum_{c,a_1,a_3\ge 1}}\sum_{(b,Nca_1)=1}\sum_{c_1|(a_1^2,a_3^2)}   \frac{\mu(b)^2\mu(c)\mu(ca_1)c_1}{b^3c^3a_1^3a_3^2}=\prod_{p\nmid N}\underset{0\le b+a_1+c\le 1}{\sum_{c,a_1,a_3,b}}\sum_{0\le c_1\le \min(2a_1,2a_3)}\frac{ (-1)^{a_1}p^{c_1}}{p^{3b+3c+3a_1+2a_3}}=\frac{\zeta^{(N)}(2)}{\zeta^{(N)}(4)}.
\end{align} 
Similarly, the sum over $d_1$ can we evaluated as below.
\begin{equation}
\sum_{d_1|(N/L)}\sum_{r_7|(N/d_1L)}\sum_{r_6|d_1} \frac{\mu(d_1)}{d_1^3r_6r_7^3}=  \prod\nolimits_{p|(N/L)}\left(1-\frac{1}{p^4}\right)=\frac{1}{\zeta_{(N/L)}(4)}.   
\end{equation}
For the sum over $d_2, \ell_2$, we have the following.
\begin{align}
\sum_{\ell_2|L^\infty}\sum_{d_2|L}\sum_{a_5a_6^2=d_2\ell_2^2}\frac{\mu(d_2)}{a_5d_2\ell_2^{2}}&=\sum_{d_2|L}\sum_{\ell_2|L^\infty}\sum_{a_6^2|d_2\ell_2^2}\frac{\mu(d_2)a_6^2}{d_2^2\ell_2^{4}}=\prod\nolimits_{p|L}\underset{\ell_2\ge 0}{\sum_{0\le d_2\le 1}}\sum_{0\le a_6\le (d_2+2\ell_2)/2}\frac{(-1)^{d_2}}{p^{2d_2+4\ell_2-2a_6}} \\
&=\prod\nolimits_{p|L}\left(1-\frac{1}{p^2}\right)
\prod\nolimits_{p|L}\frac{1}{(p^2-1)}\sum_{\ell_2\ge 0}\left(\frac{p^2}{p^{2\ell_2}}-\frac{1}{p^{4\ell_2}} \right) =\zeta_{(L)}(4).
\end{align}
Substituting back in \eqref{M1Penult}, we get
\begin{align}
    \mc S^{(11)}&=2\cdot N\phi(N) \left(\prod\nolimits_{p|N}\frac{1}{(p^2-1)}\right)\sum_{L|N}\frac{\psi(L)}{\phi(L)}\zeta_{(N/L)}(2)\zeta_{(N/L)}(4)\cdot \frac{\zeta_{(L)}(4)}{\zeta_{(N/L)}(4)} +O(N^{-(2+\delta_1)/10+\epsilon}).
\end{align}
Recall that $\psi(L)=\frac{2^{\omega(L)}\prod\nolimits_{p|L}(p+1)^2(p^2+1)}{L^{4}}$. Thus
\begin{align}
\mc S^{(11)}
&= \frac{2\zeta_{(N)}(2)^2\phi(N)}{N}\sum_{L|N}\frac{2^{\omega(L)}}{\phi(L)L^2}\prod\nolimits_{p|L}(p+1)^2+O(N^{-(2+\delta_1)/10+\epsilon}). \label{actual-avg}
\end{align}
The quantity $\mc S^{(11)}$ is the global main term we were after. One can show that it matches with the conjectural main term as given in Conjecture \ref{norm-conj}. We check this for $N=p$, a prime. The same holds for all square-free levels, which we have not written down, but can be done at the cost of some more technical calculations.

{\allowdisplaybreaks
\begin{align}
    \mc S
    &=\frac{24\zeta_{(p)}(2)}{(2k-1)p}\sum_{f\in \mc{B}_{2k}^{new}(p)}\frac{1}{(1+w_f(p)p^{-1})}\left(1+\frac{\zeta_{(p)}(2)(p+1)^3}{\phi(p)p^3}(1+w_f(p))^2\right)\\
    &=\frac{24\zeta_{(p)}(2)^2}{(2k-1)p}\sum_{f\in \mc{B}_{2k}^{new}(p)}(1+\lambda_f(p)p^{-1/2})\left(1+\frac{2\zeta_{(p)}(2)(p+1)^3}{\phi(p)p^3}(1-\lambda_f(p)p^{1/2})\right).
\end{align}
Introducing $L(\mrm{sym}^2 f, 1)$ and using the Dirichlet polynomial approximation for $L(\mrm{sym}^2 f, 1)$ from \eqref{L1sym2DP}, we note that
\begin{equation}
    \begin{split}
\mc S\sim\frac{24\zeta_{(p)}(2)^2}{(2k-1)p}\underset{(b_1b_2, p)=1}{\sum_{b_1,b_2}}\sum_{b_3|p^\infty}\frac{\exp(-b_1b_3b_2^2/p^{\delta_1})}{b_1b_2^2b_3^2} &\sum_{f\in \mc{B}_{2k}^{new}(p)}\frac{\lambda_f(b_1^2)(1+\lambda_f(p)p^{-1/2})}{L(\mrm{sym}^2 f, 1)}\\
&\q\times\left(1+\frac{2\zeta_{(p)}(2)(p+1)^3}{\phi(p)p^3}(1-\lambda_f(p)p^{1/2})\right).
    \end{split}
\end{equation}
Using Petersson for newforms and retaining only the main term, that is $b_1=1$, we get that
\begin{align}
\mc S&\sim\frac{2\zeta_{(p)}(2)^2\phi(p)}{\zeta(2)p}\sum_{(b_2, p)=1}\sum_{b_3|p^\infty}\frac{\exp(-b_3b_2^2/p^{\delta_1})}{b_2^2b_3^2}\left(1+2\frac{\zeta_{(p)}(2)(p+1)^3}{\phi(p)p^3}-2\frac{\zeta_{(p)}(2)(p+1)^3}{\phi(p)p^4} \right).   
\end{align}
We may essentially assume that $\exp(-b_3b_2^2/p^{\delta_1}) \approx 1$, and get that the  sums over $b_2,b_3$ are $\sim \zeta(2)$. This, of course, can be made precise either by writing in terms of Mellin inverses, or in this case, we can make a simple truncation at $b_3b_2^2 \le p^{\delta_1+\epsilon}$ with negligible error, for a small $\epsilon$. Then the 
the sum with $b_3b_2^2 \le p^{\delta_1-\epsilon}$ is seen to be $\zeta(2)+O(p^{-\epsilon})$ and the remaining terms bounded as $O(p^{-\epsilon}\sum_{b_1,b_2} b^{-(1+\epsilon)}_3b_2^{-(1+\epsilon) } )O(p^{-\epsilon})$ by choosing $\epsilon$ small enough.

Thus we get that
\begin{align} \label{heuristic-avg}
    \mc S&\sim\frac{2\zeta_{(p)}(2)^2\phi(p)}{p}\left(1+2\frac{\zeta_{(p)}(2)(p+1)^3}{\phi(p)p^3}-2\frac{\zeta_{(p)}(2)(p+1)^3}{\phi(p)p^4} \right)=\frac{2\zeta_{(p)}(2)^2\phi(p)}{p}\left(1+2\frac{(p+1)^2}{p^2\phi(p)} \right).
\end{align}}
Clearly, \eqref{heuristic-avg} matches with the main term of \eqref{actual-avg} when $N=p$.\qed
}

\printbibliography
\end{document}